\documentclass[10pt,reqno]{amsart}
\usepackage[utf8]{inputenc}
\usepackage{amsmath}
\usepackage{amsfonts}
\usepackage{amssymb}
\usepackage{graphicx}
\usepackage[left=2.4cm,right=2.4cm,top=2.5cm,bottom=2.5cm]{geometry}
\numberwithin{equation}{section}
\graphicspath{ {images/} }
\usepackage{amsmath,amssymb, amsthm} 
\usepackage{lipsum}
\usepackage{stmaryrd}
\usepackage{dsfont}
\usepackage{comment}
\usepackage[all]{xy}

\makeatletter
\def\@settitle{\begin{center}%
  \baselineskip14\p@\relax
  \bfseries
  \uppercasenonmath\@title
  \@title
  \ifx\@subtitle\@empty\else
     \\[1ex]\uppercasenonmath\@subtitle
     \footnotesize\mdseries\@subtitle
  \fi
  \end{center}%
}
\def\subtitle#1{\gdef\@subtitle{#1}}
\def\@subtitle{}
\makeatother

\usepackage{tikz-cd}
\usepackage{hyperref}
\usepackage{bm}
\hypersetup{colorlinks, linkcolor=blue, citecolor=black, urlcolor=black}

\usepackage[english]{babel}
\usepackage[colorinlistoftodos]{todonotes}

\theoremstyle{plain}
\newtheorem{thm}{Theorem}[subsection] % reset theorem numbering for each chapter

\usepackage{mathrsfs}

\theoremstyle{definition}
\newtheorem{defi}[thm]{Definition}

\newtheorem{rmk}[thm]{Remark}

\newtheorem{rem}[thm]{Remark}
\newtheorem*{rmk-intro}{Remark}
\theoremstyle{definition}

\theoremstyle{plain}
\newtheorem{prop}[thm]{Proposition}
\theoremstyle{plain}
\newtheorem{lemma}[thm]{Lemma}
\theoremstyle{plain}
\newtheorem{cor}[thm]{Corollary}
\theoremstyle{plain}
\newtheorem{conj}[thm]{Conjecture}
\theoremstyle{plain}
\newtheorem{thmintro}{Theorem}

\theoremstyle{plain}
\newtheorem{corintro}{Corollary}

\makeatletter

\makeatother
\newcounter{parentnumber}

\DeclareMathOperator{\ord}{ord}

\newcommand{\Hom}{\operatorname{Hom}}

\newcommand{\ind}{\operatorname{ind}}
\newcommand{\Gal}{\operatorname{Gal}}

\newcommand{\Fil}{\mathrm{Fil}}

\newcommand{\al}{\alpha} % character "alpha"

\usepackage{bbm}

\makeatletter  
\newcommand{\colim@}[2]{%
  \vtop{\m@th\ialign{##\cr
    \hfil$#1\operator@font colim$\hfil\cr
    \noalign{\nointerlineskip\kern1.5\ex@}#2\cr
    \noalign{\nointerlineskip\kern-\ex@}\cr}}%
}
\newcommand{\colim}{%
  \mathop{\mathpalette\colim@{\rightarrowfill@\scriptscriptstyle}}\nmlimits@
}
\renewcommand{\varprojlim}{%
  \mathop{\mathpalette\varlim@{\leftarrowfill@\scriptscriptstyle}}\nmlimits@
}
\renewcommand{\varinjlim}{%
  \mathop{\mathpalette\varlim@{\rightarrowfill@\scriptscriptstyle}}\nmlimits@
}
\makeatother

\usepackage{multicol}

\newcommand{\Z}{\mathbb{Z}}
\newcommand{\Q}{\mathbb{Q}}

\newcommand{\Qp}{\mathbb{Q}_{p}}
\newcommand{\Zp}{\mathbb{Z}_{p}}

\newcommand{\R}{\mathbb{R}}

\newcommand{\F}{\mathcal{F}}

\newcommand{\p}{\mathfrak{P}}

\newcommand{\X}{\mathfrak{X}}

\newcommand{\sk}{\vspace{0.1in}}

\newcommand{\bkappa}{\boldsymbol{\kappa}}

\font\wncyr=wncyr9.8
\newcommand{\sha}{\text{\wncyr{W}}}
\newcommand{\CE}{\mathcal{E}}
\newcommand{\CM}{\mathcal{M}}

\DeclareMathOperator{\frob}{Frob}
\DeclareMathOperator{\gal}{Gal}

\DeclareMathOperator{\End}{End}

\DeclareMathOperator{\loc}{loc}

\DeclareMathOperator{\length}{length}

\newcommand{\Lcal}{\mathcal{L}}
\setcounter{tocdepth}{2}

\newcommand{\cF}{\mathcal{F}}
\newcommand{\CF}{\mathcal{F}}
\newcommand{\Fcal}{\mathcal{F}}

\newcommand{\cL}{\mathcal{L}}
\newcommand{\cN}{\mathcal{N}}
\newcommand{\kap}{\kappa}
\newcommand{\rank}{\mathrm{rank}}
\newcommand{\fm}{\mathfrak{m}}
\newcommand{\rH}{\mathrm{H}}

\begin{document}
\title{Non-vanishing of Kolyvagin systems and Iwasawa theory}

\author{Ashay Burungale}
\address[A.~Burungale]{University of Texas at Austin, USA}
\email{ashayk@utexas.edu }

\author{Francesc Castella}
\address[F.~Castella]{University of California Santa Barbara, South Hall, Santa Barbara, CA 93106, USA}
\email{castella@ucsb.edu}

\author{Giada Grossi}
\address[G.~Grossi]{CNRS, Institut Galilée, Université Sorbonne Paris Nord, 93430 Villetaneuse, FRANCE}
\email{grossi@math.univ-paris13.fr}

\author{Christopher Skinner} 
\address[C.~Skinner]{Princeton University, Fine Hall, Washington Road, Princeton, NJ 08544-1000, USA}
\email{cmcls@princeton.edu}

\begin{abstract}
Let $E/\Q$ be an elliptic curve and $p$ an odd prime. In 1991 Kolyvagin conjectured that the system of cohomology classes for torsion quotients of the $p$-adic Tate module of $E$
derived from Heegner points over ring class fields of 
a suitable imaginary quadratic field $K$ (i.e., the Heegner point Kolyvagin system of $E/K$) is non-trivial. 
In this paper we prove Kolyvagin's conjecture when $p$ is a prime of good ordinary reduction for $E$ that splits in $K$. %and a $p$-adic anticyclotomic Iwasawa Main Conjecture for $E/K$ holds. 
In particular, 
our results cover many cases where $p$ is an Eisenstein prime for $E$, complementing Wei Zhang's earlier results on the conjecture by a  different approach.

Our methods also yield a proof of a refinement of Kolyvagin's conjecture expressing the divisibility index of the Heegner point Kolyvagin system in terms of the Tamagawa numbers of $E$, as conjectured by Wei Zhang in 2014, as well as proofs of analogous results for the Kolyvagin system obtained from Kato's Euler system. 
%the cyclotomic Kato's Euler system of Beilinson--Kato elements.
\end{abstract}

%\subjclass[2020]{11G05, 11G40 (Primary); 11R23 (Secondary)}
\date{\today}
\maketitle
\tableofcontents

\section*{Introduction}
\addtocontents{toc}{\protect\setcounter{tocdepth}{1}}
%\addtocontents{lof}{\protect\setcounter{tocdepth}{0}}
%\renewcommand{\thetheorem}{\Alph{thm}}

Let $E/\Q$ be an elliptic curve of conductor $N$, and let $p$ be an odd prime of good ordinary reduction for $E$. In this paper we prove Kolyvagin's conjecture on the non-vanishing of the $p$-adic Heegner point Kolyvagin system under mild conditions (see Theorem~\ref{thmintroKoly}). When $p$ is non-Eisenstein for $E$, the conjecture was first proved by Wei Zhang in many cases. The approach introduced in this paper covers the general non-Eisenstein case, as well as the first general cases where $p$ is Eisenstein for $E$. Moreover, in the former case we 
%also 
prove the \emph{refined} Kolyvagin conjecture made by Wei Zhang, expressing the divisibility index of the Heegner point Kolyvagin system in terms of the Tamagawa numbers of $E$ (see Theorem~\ref{thmintroKoly-div}). 
%With 
Following a similar strategy, we also prove analogous  results %, both in the Eisenstein and in the non-Eisenstein case, 
for the Kolyvagin system derived from Beilinson--Kato elements (see Theorem~\ref{thmintroKato}). 

%For non-Eisenstein prime $p$, similar results were independently obtained by C.-H.\,Kim by a slightly different method. It is an interesting open problem whether his methods can be extended to the $p$-Eisenstein case.
%(which in particular does not use the explicit reciprocity laws that play a key role in our approach).

%While in the non-Eisenstein case this was independently obtained\footnote{His method applies to the Heegner point Kolyvagin system as well.} by C.-H.\,Kim, 
% by a similar approach, but by refining  results of Mazur--Rubin, 
% our proof of nonvanishing includes the Eisenstein case.

\subsection{Main results}

\subsubsection*{Kolyvagin's conjecture} 

Let $K$ be an imaginary quadratic field of discriminant $-D_K<0$ such that
\begin{equation}\label{eq:intro-Heeg}
\textrm{every prime $\ell\vert N$ splits in $K$,}\tag{Heeg}
\end{equation} 
and fix an integral ideal $\mathfrak{N}\subset\mathcal{O}_K$ with $\mathcal{O}_K/\mathfrak{N}=\Z/N\Z$. Assume also that
\begin{equation}\label{eq:intro-disc}
\textrm{$D_K$ is odd and $D_K\neq -3$.}\tag{disc}
\end{equation}
For each positive integer $n$ coprime to $N$, let $\mathcal{O}_n=\Z+n\mathcal{O}_K$ be the order of $K$ of conductor $n$ and denote by  $K[n]$ the corresponding ring class field extension of $K$.

By the theory of complex multiplication, the cyclic $N$-isogeny between complex CM elliptic curves
\[
\mathbb{C}/\mathcal{O}_n\rightarrow\mathbb{C}/(\mathfrak{N}\cap\mathcal{O}_n)^{-1}
\]
defines a point $x_n\in X_0(N)(K[n])$. Fix a modular parameterisation 
\begin{equation}\label{par}
\pi:X_0(N)\rightarrow E.
\end{equation}
The associated \emph{Heegner point} on $E$ of conductor $n$ is defined by   
\begin{equation}\label{eq:heegnerpoint}
P[n]:=\pi(x_n)\in E(K[n]).
\end{equation}
We call $\ell$ a \emph{Kolyvagin prime} if $\ell$ is inert in $K$, coprime to $Np$, and 
$$
M(\ell):=\min \{\ord_p(\ell+1), \ord_p(a_\ell)\}>0,
$$
%\end{equation}
% is positive, 
 where $a_\ell:=\ell+1-|\tilde{E}(\mathbb{F}_\ell)|$ and $\ord_p(x)$ denotes the $p$-adic valuation of an integer $x$. Let $\mathcal{N}_{\rm Heeg}$ be the set of squarefree products of Kolyvagin primes, and for $n\in \mathcal{N}_{\rm Heeg}$ put 
$$
 M(n):=\min \{M(\ell)\;\colon\;\ell\mid n\}
$$
if $n>1$ and $M(1):=\infty$. 
%\[
%M(n):=\begin{cases} 
%\min \{M(\ell)\;\colon\;\ell\mid n\} & \textrm{if $n>1$,}\\[0.2em]
%\infty & \textrm{if $n=1$.}
%\end{cases}
%\]

Let $T=T_pE$ be the $p$-adic Tate module of $E$, and suppose that
\begin{equation}\label{eq:intro-tor}
E(K)[p]=0.\tag{tor}
\end{equation}
From the Kummer images of the Heegner points $P[n]$, Kolyvagin constructed a system of classes 
\[
\bigl\{\kappa_{n}^{\rm Heeg}\in \rH^1(K,T/I_nT)\;\colon\;n\in \mathcal{N}_{\rm Heeg}\bigr\},\quad\textrm{where $I_n=p^{M(n)}\Z_p$.}
\] 
In particular, $\kappa_{1}^{\rm Heeg}$ is the image of the Heegner point $P_K:=\operatorname{Tr}_{K[1]/K}(P[1])\in E(K)$ under the Kummer map
\[
E(K)\otimes\Z_p\rightarrow\rH^1(K,T), 
\] 
and so by the Gross--Zagier formula \cite{grosszagier}, $\kappa_1^{\rm Heeg}\neq 0$ if and only if $L'(E/K,1)\neq 0$. 
%(Note that $\rH^1(K,T)$ is torsion-free as a consequence of (\ref{eq:intro-tor}).) 

In \cite{kolystructure} Kolyvagin  conjectured
%\footnote{In \cite{kolystructure} it was assumed that \eqref{eq:irred} holds.} 
that even when the analytic rank of $E/K$ is greater than one, 
the \emph{system} $\{\kappa_{n}^{\rm Heeg}\}_{n}$ is non-trivial,  i.e. there exists $n\in \mathcal{N}_{\rm Heeg}$ such that $$\kappa_{n}^{\rm Heeg}\neq 0.$$ 
The first major progress towards this conjecture was due to W.\,Zhang \cite{zhang_ind}: about a decade ago, he proved Kolyvagin's conjecture 
when $p\nmid 6N$ is a prime of good ordinary reduction for $E$ % splitting in $K$, 
%the conjecture was proved using level-raising techniques by W.\,Zhang \cite{zhang_ind} 
under the assumption that
\begin{equation}\label{eq:irred}
\tag{sur}
\textrm{$\bar{\rho}_E: G_{\Q}=\operatorname{Gal}(\bar{\Q}/\Q)\to{\rm Aut}_{\mathbb{F}_p}(E[p])$ is surjective}
\end{equation}
%where $G_\Q=\operatorname{Gal}(\bar{\Q}/\Q)$, 
%is the absolute Galois group of $\Q$, 
and $\bar{\rho}_E$ satisfies certain ramification hypotheses. More recently, 
%the latter 
some of the hypotheses have been 
%largely 
relaxed by N.\,Sweeting  \cite{sweeting} using an ultrapatching method for bipartite Euler systems. 

In the first part of this paper we prove Kolyvagin's conjecture in cases where \eqref{eq:irred} is not necessarily satisfied, and we also relax the ramification hypotheses. The paper considers Eisenstein primes $p$ as well as cases where
\begin{equation}\label{eq:irr}
\tag{irr}
\text{$E[p]$ is an irreducible $G_\Q$-module.}
\end{equation}
Our method relies on anticyclotomic Iwasawa theory, recent developments towards anticyclotomic Main Conjectures, and ideas 
%and results
 introduced in \cite{eisenstein_cyc}. %In particular, 
A key ingredient is 
%an anticyclotomic Iwasawa main conjecture for $E/K$ when the Selmer corank over $K$ is at least one, and 
the Kolyvagin system bound with ``error terms'' obtained in \emph{op.\,cit.}, which allows us to control the size of the Tate--Shafarevich group of certain anticyclotomic twists of $T$ %in terms of the divisibility index of twisted $\Lambda$-adic Heegner classes. The twists are 
by characters $\alpha$ with $\alpha\equiv 1\pmod{p^m}$ for $m\gg 0$. 
%and key to our proof of nonvanishing is the fact that the resulting error terms are bounded independently of $\alpha$. 
When \eqref{eq:irred} holds, the error terms are zero,
%(recovering a Kolyvagin system bound established in \cite{howard}) 
and when a certain Iwasawa Main Conjecture is known our methods also yield a proof of the \emph{refined Kolvyagin conjecture} made by W.\,Zhang \cite[Conj.~4.5]{zhang-CDM}.
%a sharp Kolyvagin system bound was proved in \cite{howard} with no error terms. 
%In fact the same holds under the milder hypothesis \eqref{eq:irr}.
%
% In combination with the forthcoming result on the anticyclotomic Iwasawa main conjecture of the first, second and forth named authors \cite{bcs}, we obtain a proof of Kolyvagin's conjecture under their hypotheses.  
\sk

%Let us 
We now describe the main result
 precisely. %More precisely, we prove the following. 
%As above, let 
Let 
$G_{\Q}$ be the absolute Galois group of $\Q$ and 
$G_p\subset G_\Q$ a decomposition group at $p$. 
Denote by $\omega:G_\Q\rightarrow\mathbb{F}_p^\times$ the Teichm\"{u}ller character. 
%Assume further that 
%\begin{equation}\label{eq:intro-disc}
%\textrm{the discriminant $D_K$ is odd and $D_K\neq -3$}\tag{disc},
%\end{equation}
%%and that
%\begin{equation}\label{eq:intro-tor}
%E(K)[p]=0.\tag{tor}
%\end{equation}
%
%Combined with 

\begin{thmintro}[Kolyvagin's conjecture]\label{thmintroKoly}
Let $E/\Q$ be an elliptic curve, and let $p$ be an odd prime of good ordinary reduction for $E$. Let $K$ be a quadratic imaginary field satisfying \eqref{eq:intro-Heeg}, \eqref{eq:intro-disc}, \eqref{eq:intro-tor}, and such that $p$ splits in $K$. Assume that the rational anticyclotomic Main Conjecture \ref{conj:anticyc} holds.
% (see $\S$\ref{subsec:anticycmc}). 
 Then 
\begin{center}
there exists $n\in\mathcal{N}_{\rm Heeg}$ such that $\kappa_{n}^{\rm Heeg}\neq 0$. 
\end{center}
In particular, 
%Hence 
$\{\kappa_{n}^{\rm Heeg}\}\neq 0$ in both of the following cases:
\begin{itemize}
\item[$\circ$] $E$ admits a rational $p$-isogeny with kernel $\mathbb{F}_p(\phi)\subset E[p]$, where $\phi:G_\Q \to \mathbb{F}_p^{\times}$ is a character such that $\phi|_{G_p}\neq\mathds{1},\omega$. 
%for $\omega:G_\Q\rightarrow\mathbb{F}_p^\times$ the Teichm\"{u}ller character. 
\item[$\circ$] $p>3$ satisfies {\rm (\ref{eq:irr})}.
\end{itemize}
\end{thmintro}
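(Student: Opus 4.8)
The plan is to reduce Kolyvagin's conjecture---the existence of $n \in \mathcal{N}_{\rm Heeg}$ with $\kappa_n^{\rm Heeg} \neq 0$---to a statement about the anticyclotomic Iwasawa theory of $E/K$, exploiting the hypothesis that $p$ splits in $K$. The starting point is the anticyclotomic Euler system of Heegner points over the ring class fields $K[m]$: letting $\Lambda$ denote the anticyclotomic Iwasawa algebra and $\mathbf{z}_\infty \in H^1(K_\infty, T)$ the corresponding big Heegner class (compatibly norm-coherent over the anticyclotomic $\Z_p$-extension), the non-triviality of the Kolyvagin system $\{\kappa_n^{\rm Heeg}\}$ is essentially equivalent to the non-vanishing of $\mathbf{z}_\infty$, together with a comparison of its divisibility index to an Iwasawa-theoretic invariant. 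Under the split hypothesis one has access to Perrin-Riou-style big logarithm maps and to the anticyclotomic main conjecture in the form of Conjecture~\ref{conj:anticyc}, which identifies (up to the stated error terms) the characteristic ideal of a Selmer group with the $p$-adic $L$-function / the image of $\mathbf{z}_\infty$.

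The key steps, in order, would be: (1) Package the Heegner classes $\kappa_n^{\rm Heeg}$ into a Kolyvagin system for $T$ in the sense compatible with the ``Kolyvagin system bound with error terms'' imported from \cite{eisenstein_cyc}; this requires checking the local conditions (in particular at $p$, where splitness makes the ordinary/Greenberg local condition well-behaved) and the hypothesis \eqref{eq:intro-tor} to guarantee the relevant $H^1$'s are torsion-free and the classes are genuinely defined modulo $I_n = p^{M(n)}$. (2) Invoke the main conjecture \ref{conj:anticyc}: it forces the relevant Selmer/Iwasawa module to have the expected characteristic ideal, so that the module is not ``too large'' --- concretely, the dual Selmer group $X$ is a torsion $\Lambda$-module whose characteristic ideal is controlled. (3) Run the bipartite/Kolyvagin-system machinery: a non-trivial Kolyvagin system produces, via the standard Euler system argument (Mazur--Rubin / Howard), a bound on the Selmer group; conversely --- and this is the direction used here --- the main conjecture guarantees the Selmer group has the size dictated by the characteristic ideal, and a Kolyvagin system whose classes \emph{all} vanished would force that module to be infinite (or to violate the characteristic-ideal computation), a contradiction. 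Thus some $\kappa_n^{\rm Heeg} \neq 0$. (4) Deduce the two stated special cases: in the isogeny case with kernel $\mathbb{F}_p(\phi)$ and $\phi|_{G_p} \neq \mathbbm{1},\omega$, one checks that this local condition at $p$ is exactly what is needed for the (reducible) residual representation to still admit the anticyclotomic main conjecture in the requisite form --- the condition $\phi|_{G_p}\neq\mathbbm{1},\omega$ ensures the local terms at $p$ behave as in the non-Eisenstein case, so the error terms are controlled and \ref{conj:anticyc} applies; in the case $p>3$ with \eqref{eq:irr}, irreducibility of $E[p]$ places us in the classical framework where \ref{conj:anticyc} (in the relevant generality, by work on anticyclotomic main conjectures in the residually irreducible case) is available unconditionally, hence the hypothesis of the first part of the theorem is met.

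The main obstacle is step (3) combined with the Eisenstein/reducible case inside step (4): when $E[p]$ is reducible the Kolyvagin-system bound comes with genuinely non-zero error terms coming from the failure of local conditions to be ``exact'' at the primes dividing the conductor and at $p$, and one must show these error terms do not swamp the characteristic-ideal estimate. This is precisely where the input of \cite{eisenstein_cyc} --- the refined bound with error terms and the idea of twisting by anticyclotomic characters $\alpha \equiv 1 \pmod{p^m}$ to control $\sha$ of the twist --- is essential: one deforms along such characters, where the error terms become negligible relative to the growing main-conjecture estimate, obtains non-vanishing of the twisted Kolyvagin system, and then specializes back to $\alpha = \mathbbm{1}$. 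Making the specialization argument rigorous (ensuring the chosen $n$ can be taken uniform, or at least that \emph{some} $n$ survives the limit) is the delicate point, and it is where the condition $\phi|_{G_p}\neq\mathbbm{1},\omega$ in the isogeny case is used decisively to keep the $p$-local error term from obstructing the argument.
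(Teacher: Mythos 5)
Your proposal correctly identifies the paper's three ingredients: (a) non-vanishing of the big Heegner class over $K_\infty$ (Cornut--Vatsal), (b) the anticyclotomic Main Conjecture combined with a BDP-type explicit reciprocity law and a twisted control theorem, and (c) the Kolyvagin system bound with error terms from \cite{eisenstein_cyc}, all deployed after twisting by anticyclotomic characters $\alpha\equiv 1\pmod{p^m}$. But the logical structure you describe is not the paper's, and as stated it has a gap.

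You propose to establish non-vanishing of the twisted Kolyvagin system $\{\bkappa_n^{\rm Heeg}(\alpha)\}$ and then ``specialize back to $\alpha=\mathds{1}$,'' flagging as the delicate point ``ensuring the chosen $n$ can be taken uniform, or at least that \emph{some} $n$ survives the limit.'' Such a specialization cannot work: the congruence $\bkappa_n^{\rm Heeg}(\alpha)\equiv (\alpha_p-1)^2(\beta_p-1)^2\,\kappa_n^{\rm Heeg}\pmod{p^m}$ of Lemma~\ref{lemmacongruence} only propagates \emph{vanishing} from $\kappa_n^{\rm Heeg}$ to $\bkappa_n^{\rm Heeg}(\alpha)$ mod $p^m$; the implication you need, $\bkappa_n^{\rm Heeg}(\alpha)\neq 0\Rightarrow\kappa_n^{\rm Heeg}\neq 0$, simply fails. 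The paper instead argues by contradiction and never returns to $\alpha=\mathds{1}$. One \emph{assumes} $\kappa_n^{\rm Heeg}=0$ for all $n$; by the congruence this forces $\bkappa_n^{\rm Heeg}(\alpha)\equiv 0\pmod{p^{t+2x}}$ for every $n$ in a suitable $\mathcal{N}$, where $t$ is a free parameter and $m$ is then chosen large in terms of $t$. Dividing the whole system by $p^{t+2x}$ gives a new Kolyvagin system $\tilde\kappa_{n,\alpha}$, and the bound of Theorem~\ref{thm:bound} applied to $\tilde\kappa$ yields $\tfrac{1}{2}\length_{\Z_p}\sha(W_{\alpha^{-1}}/K)\leq \ind(\bkappa_1^{\rm Heeg}(\alpha))-t-2x+\CE$. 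The Main Conjecture, combined with Lemma~\ref{lem:erl} and Theorem~\ref{controlthm}, gives the complementary $\ind(\bkappa_1^{\rm Heeg}(\alpha))\leq\tfrac{1}{2}\length_{\Z_p}\sha(W_{\alpha^{-1}}/K)+\tfrac{1}{2}\sum_w\ord_p(c_w^{(p)})+R+2x$. Adding the two cancels both $\ind(\bkappa_1^{\rm Heeg}(\alpha))$ and the $\sha$-term, leaving $t\leq\tfrac{1}{2}\sum_w\ord_p(c_w^{(p)})+R+\CE$, which is absurd if $t$ was chosen large. That the unknown quantity $\ind(\bkappa_1^{\rm Heeg}(\alpha))$ cancels is the point of the whole setup, and it is what removes any uniformity-in-$n$ concern.

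Two further corrections. You say the error terms ``become negligible relative to the growing main-conjecture estimate''; in fact $\CE$ (and $R$) are fixed constants depending only on $T_pE$, and nothing on the right-hand side grows --- what grows is the forced divisibility $t$ extracted from the assumed vanishing. And the condition $\phi|_{G_p}\neq\mathds{1},\omega$ in the isogeny case is not used inside the contradiction argument to control a $p$-local error term; its only role is to put one in a situation where Conjecture~\ref{conj:anticyc} is actually known (Theorem~\ref{thm:howard-HP}(i)). Once the Main Conjecture is granted, the argument is uniform in the residually reducible and irreducible cases alike.
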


The `In particular' part 
of Theorem~\ref{thmintroKoly}
relies on the results of \cite{eisenstein_cyc,bcs}, respectively, on 
the anticyclotomic Main Conjecture.

The non-vanishing of the Kolyvagin system in 
 combination with \cite[Theorem 4]{kolystructure} 
% allows us to relate 
 leads to a link between the \emph{order of vanishing} of the Kolyvagin system and the rank of the $p^{\infty}$-Selmer group of $E/K$. More precisely, for any $n$, let $\nu(n)$ denote the number of prime factors of $n$, and 
for $n\in\mathcal{N}_{\rm Heeg}$ 
%let $\nu(n)$ be the number of prime factors of $n$ and 
let
\[
\ord(\kappa^{\rm Heeg}):= \min \{r: \text{ there exists }n\in \mathcal{N}_{\rm Heeg} \text{ with } \nu(n)=r \text{ such that }\kappa_n^{\rm Heeg}\neq 0\}.
\]
 %More precisely, 
 The $p^\infty$-Selmer group ${\rm Sel}_{p^{\infty}}(E/K)$ 
 %denote the 
% ordinary 
 %$p^\infty$-Selmer group 
% associated to $E/K$ 
% $E[p^{\infty}]$.
 % (see for example Remark \ref{rmk:usualsel}). 
 % It 
  has an action of complex conjugation. Denote by ${\rm Sel}_{p^{\infty}}(E/K)^\pm$ the $\pm$-eigenspaces with respect to this action and let 
$$r(E/K)^{\pm}= {\rm corank}_{\Z_p}{\rm Sel}_{p^{\infty}}(E/K)^\pm.$$
\begin{corintro}\label{corA}
For $E$, $p$ and $K$ as in Theorem \ref{thmintroKoly}, we have
\[
\ord(\kappa^{\rm Heeg})= \max\{r(E/K)^+,r(E/K)^-\} -1.
\]
\end{corintro}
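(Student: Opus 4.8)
The plan is to obtain Corollary~\ref{corA} by feeding the non-vanishing of Theorem~\ref{thmintroKoly} into Kolyvagin's structure theorem for the $p^\infty$-Selmer group. First, Theorem~\ref{thmintroKoly} guarantees that the system $\{\kappa_n^{\rm Heeg}\}_{n\in\mathcal{N}_{\rm Heeg}}$ is non-trivial, so that $r:=\ord(\kappa^{\rm Heeg})$ is a well-defined non-negative integer; one then fixes $n_0\in\mathcal{N}_{\rm Heeg}$ with $\nu(n_0)=r$ and $\kappa_{n_0}^{\rm Heeg}\neq 0$. Note that the anticyclotomic Main Conjecture~\ref{conj:anticyc} is used only to produce this non-vanishing; everything that follows is unconditional.

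Next one invokes Kolyvagin's analysis of ${\rm Sel}_{p^{\infty}}(E/K)$ in terms of the derivative classes \cite[Theorem~4]{kolystructure}. Under the standing hypothesis $E(K)[p]=0$, the non-vanishing of $\kappa_{n_0}^{\rm Heeg}$, together with the reciprocity relations among the residues $\partial_\ell\kappa_n^{\rm Heeg}$ for $\nu(n)\leq r$, pins down the $\Z_p$-coranks of the two eigenspaces ${\rm Sel}_{p^{\infty}}(E/K)^{\pm}$: there is a sign $\epsilon\in\{\pm 1\}$ — depending only on $E$ and $K$, and governed by the root number of $E/K$ — such that $\kappa_n^{\rm Heeg}$ lies in the $\epsilon(-1)^{\nu(n)}$-eigenspace for complex conjugation, and Kolyvagin's argument gives $r(E/K)^{\epsilon(-1)^r}=r+1$ while $r(E/K)^{-\epsilon(-1)^r}\leq r$. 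The upper bound $r(E/K)^{\epsilon(-1)^r}\leq r+1$ is the Euler-system bound run with the single non-zero class $\kappa_{n_0}^{\rm Heeg}$ and its companions at the primes dividing $n_0$; the matching lower bound comes from the fact that $\kappa_{n_0}^{\rm Heeg}$, being non-zero in $\rH^1(K,T/I_{n_0}T)$ and satisfying the Kolyvagin system local conditions, contributes $r+1$ independent directions to ${\rm Sel}_{p^{\infty}}(E/K)^{\epsilon(-1)^r}$. Since $r+1>r$, this eigenspace realises the maximum, and $\ord(\kappa^{\rm Heeg})=r=\max\{r(E/K)^+,r(E/K)^-\}-1$.

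The point requiring the most care — and the one I expect to be the main obstacle — is that \cite[Theorem~4]{kolystructure} is classically formulated under the surjectivity hypothesis \eqref{eq:irred}, whereas Theorem~\ref{thmintroKoly} also allows $E[p]$ to be reducible. I would therefore verify that each ingredient of Kolyvagin's descent used above — local Tate duality at Kolyvagin primes, the global duality relations among the $\partial_\ell\kappa_n^{\rm Heeg}$, and the surjectivity of the relevant localization maps — continues to hold using only $E(K)[p]=0$, supplemented in the Eisenstein case by the explicit shape of the $p$-isogeny character $\phi$ with $\phi|_{G_p}\neq\mathds{1},\omega$ and by the split-ordinary hypothesis at $p$; where the classical argument genuinely needs large residual image, one substitutes the anticyclotomic Iwasawa-theoretic input already assembled for Theorem~\ref{thmintroKoly}. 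Once this adaptation of Kolyvagin's descent to the reducible setting is in place, Corollary~\ref{corA} follows.
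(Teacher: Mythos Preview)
Your proposal is correct and matches the paper's approach: the paper simply states that the corollary follows by combining the non-vanishing from Theorem~\ref{thmintroKoly} with \cite[Theorem~4]{kolystructure}, without further argument. Your caveat about the Eisenstein case is well-taken and in fact goes beyond what the paper itself supplies—the paper cites Kolyvagin's structure theorem without explicitly addressing its validity when \eqref{eq:irred} fails, so the verification you outline is more careful than the paper's own treatment.
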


\begin{rmk-intro} 
The corollary applies for arbitrary values of $r(E/K)^\pm$. In the rank one case 
it yields a $p$-converse to the Gross--Zagier and Kolyvagin theorem:
$$
{\rm corank}_{\Z_{p}}{\rm Sel}_{p^{\infty}}(E/K)=1 \implies \ord_{s=1}L(E/K,s)=1,
$$
% when the algebraic (Selmer co-)rank over $K$ equals one, 
%Note that, in particular, 
as the Birch and Swinnerton-Dyer conjecture predicts (cf.~\cite[Theorem 1.3]{zhang_ind}).  One can also deduce the $p$-parity conjecture for $E/\Q$ (cf.~\cite[Theorem 1.2, Remark 1]{zhang_ind}).
\end{rmk-intro}
%Under (\ref{eq:irred}), a refinement of Kolyvagin's conjecture predicts a formula for the divisibility index of $\{\kappa_n^{\rm Heeg}\}$ in terms of the Tamagawa numbers $c_\ell$ of $E$ (see \cite[Conjecture~4.5]{zhang-CDM}). The approach in this paper also yields a proof of one of the predicted inequalities (see Remark~\ref{rem:refined-koly}). In particular, if $p\nmid c_\ell$ for all $\ell\mid N$, then the system $\{\kappa_{n}^{\rm Heeg}\}$ is $p$-indivisible, recovering a result of \cite{zhang_ind}.

\subsubsection*{Refined Kolyvagin's conjecture} 

As formulated by W.\,Zhang \cite{zhang_ind}, 
a refinement of Kolyvagin's conjecture predicts a formula for the divisibility index of the Heegner point Kolyvagin system $\{\kappa_n^{\rm Heeg}\}$ in terms of the Tamagawa numbers $$c_\ell=[E(\Q_\ell):E^{0}(\Q_\ell)]$$ at the primes $\ell\vert N$. 

For each $n\in\mathcal{N}_{\rm Heeg}$, define $\mathscr{M}(n)\in\Z_{\geq 0}\cup\{\infty\}$ by $\mathscr{M}(n)=\infty$ if $\kappa_n^{\rm Heeg}=0$, and by
\[
\mathscr{M}(n)=\max\{\mathscr{M}\colon\kappa_n^{\rm Heeg}\in p^{\mathscr{M}}\rH^1(K,T/I_nT)\}
\]
otherwise. Put $\mathscr{M}_{r}=\min\{\mathscr{M}(n)\colon\nu(n)=r\}$. 
%where $\nu(n)$ denotes as above the number of prime factors of $n$. 
%It is shown in \cite{kolystructure} that 
We have $\mathscr{M}_{r}\geq\mathscr{M}_{r+1}\geq 0$ for all $r\geq 0$ (cf.~\cite{kolystructure}).
% Following Kolyvagin, 
 Put 
\[
\mathscr{M}_{\infty}=\lim_{r\to\infty}\mathscr{M}_{r}.
\]
Note that Kolyvagin's conjecture is equivalent to the finiteness of $\mathscr{M}_{\infty}$. 
%The refinement formulated in 
W. Zhang's refinement \cite[Conj.~4.5]{zhang_ind} provides a conjectural formula for $\mathscr{M}_{\infty}$, 
which we establish:
%which we prove in the following result: 
\begin{thmintro}[Refined Kolyvagin's conjecture]\label{thmintroKoly-div}
Let $(E,p,K)$ be as in Theorem~\ref{thmintroKoly}, and assume that $p>3$ and {\rm (\ref{eq:irred})} holds. Assume that the integral anticyclotomic Main Conjecture \ref{conj:anticyc} holds.
Assume also that the fixed modular parametrisation $\pi: X_{0}(N) \to E$ as in 
% $\pi:X_0(N)\to E$
\eqref{par} is $p$-optimal\footnote{That is 
$\ord_p(\deg(\pi))$ is minimal among all modular parametrisations of all curves in the $\Q$-isogeny class of $E$. }. 
Then we have
\[
\mathscr{M}_{\infty}=\sum_{\ell\mid N}{\rm ord}_p(c_\ell).
\]
\end{thmintro}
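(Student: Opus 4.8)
The plan is to deduce the formula $\mathscr{M}_\infty=\sum_{\ell\mid N}\ord_p(c_\ell)$ by combining the Kolyvagin system bound with error terms from \cite{eisenstein_cyc} with the now-available anticyclotomic Main Conjecture, translating the divisibility index $\mathscr{M}_\infty$ into Iwasawa-theoretic quantities. First I would recall, following \cite{kolystructure} and \cite{zhang_ind}, that $\mathscr{M}_\infty$ is intrinsic: it measures the largest power of $p$ dividing the whole Heegner point Kolyvagin system once one quotients out the ``trivial'' divisibility coming from the structure of the Selmer group. Concretely, if $\{\kappa_n^{\rm div}\}$ denotes the primitive Kolyvagin system obtained by dividing $\{\kappa_n^{\rm Heeg}\}$ by $p^{\mathscr{M}_\infty}$, then (under \eqref{eq:irred}, so that the error terms in the Kolyvagin system bound vanish) the classes $\{\kappa_n^{\rm div}\}$ form a Kolyvagin system whose associated Selmer sheaf is ``full'', and the Mazur--Rubin machinery gives a sharp bound for the Selmer group of $T$ over $K$. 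The content of the theorem is then that the discrepancy $p^{\mathscr{M}_\infty}$ is exactly the product of local Tamagawa factors at the bad primes.

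The key steps, in order, are: (1) Use the anticyclotomic Main Conjecture~\ref{conj:anticyc} to identify the characteristic ideal of the relevant Selmer group over the anticyclotomic $\Z_p$-extension $K_\infty/K$ with the ideal generated by the $p$-adic $L$-function / Heegner point Iwasawa class; this produces a precise ``size'' for the Iwasawa module. (2) Specialize at the trivial character: by a control theorem (Heegner hypothesis \eqref{eq:intro-Heeg} and \eqref{eq:intro-tor} guarantee no exceptional zeros and clean control), relate the characteristic ideal's value at the augmentation to the order of $\sha(E/K)[p^\infty]$ up to the regulator term, i.e. run a Gross--Zagier/BDP-type argument or the ``explicit reciprocity'' comparison already packaged in \cite{eisenstein_cyc,bcs}. (3) On the Kolyvagin side, invoke the refined Kolyvagin bound of \cite{eisenstein_cyc}: the index $\mathscr{M}_\infty$ equals the length of the cokernel of the localization-at-Kolyvagin-primes map applied to the primitive system, which Mazur--Rubin theory expresses as $\tfrac12\ord_p\bigl(\#\sha(E/K)[p^\infty]\bigr)$ plus the difference between the ``analytic'' and ``algebraic'' normalizations of the Heegner class. (4) Finally, compare the two normalizations: the Gross--Zagier formula with the $p$-optimal parametrization $\pi$ shows that $\ord_p(\kappa_1^{\rm Heeg})$ — when $\kappa_1^{\rm Heeg}\neq 0$ — differs from the BDP-normalized Heegner class by exactly $\sum_{\ell\mid N}\ord_p(c_\ell)$, since the Manin-constant/Tamagawa contributions in the comparison of $\deg\pi$ with the Petersson norm are governed precisely by the local Tamagawa numbers (this is where the $p$-optimality hypothesis and $p>3$, excluding the additive-reduction subtleties in the Manin constant, are used). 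Putting (1)--(4) together gives $\mathscr{M}_\infty=\sum_{\ell\mid N}\ord_p(c_\ell)$ regardless of the analytic rank, because all the rank-dependent contributions (the regulator, the $\sha$ term) cancel between the Main Conjecture side and the Kolyvagin side.

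The main obstacle I expect is step (4): pinning down the \emph{exact} $p$-adic valuation of the comparison constant between the classical Heegner point (built from $\pi$) and the Iwasawa-theoretic/BDP Heegner class appearing in the Main Conjecture. One must track the Manin constant of the $p$-optimal $\pi$, the interpolation factors in the $p$-adic $L$-function, and the periods, and show their combined $p$-adic valuation is precisely $\sum_{\ell\mid N}\ord_p(c_\ell)$ with no stray error. For good ordinary $p>3$ and a $p$-optimal parametrization the Manin constant is a $p$-adic unit, and the remaining local factors at $\ell\mid N$ are exactly the $c_\ell$'s — but making this rigorous requires a careful local analysis at the bad primes (comparing Néron models over $\Q_\ell$ and over the ring class fields, using \eqref{eq:intro-Heeg} so that every such $\ell$ splits in $K$), together with the fact that $D_K$ is odd and $D_K\neq-3$ from \eqref{eq:intro-disc} to rule out extra ramification or units. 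A secondary technical point is ensuring the control theorem in step (2) is clean: one needs $\kappa_1^{\rm Heeg}$ and the Iwasawa module to have no $p$-torsion pseudo-null submodules, which again follows from \eqref{eq:intro-tor} and the irreducibility \eqref{eq:irred} assumed here.
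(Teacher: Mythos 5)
Your proposal has two significant gaps that make it unlikely to succeed as stated, and the mechanisms you invoke are not the ones the paper actually uses.

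First, your step~(2) specializes at the \emph{trivial} character, but this fails precisely in the interesting case. By the explicit reciprocity law, $\Lcal_p^{\rm BDP}(f/K)(\mathds{1})$ is essentially the square of the logarithm of the Heegner point $P_K$, so it vanishes whenever $\kappa_1^{\rm Heeg}=0$ (i.e.\ whenever the analytic rank of $E/K$ exceeds one). At such a specialization the control theorem does not apply, and both sides of your proposed comparison are identically zero; the claim that ``rank-dependent contributions cancel'' cannot even be formulated at the trivial character. The paper's proof instead specializes at a nontrivial anticyclotomic character $\alpha=\alpha_m$ chosen $p$-adically close to $1$ with $\Lcal_p^{\rm BDP}(f/K)(\alpha^{-1})\neq 0$, guaranteed to exist by the nonvanishing of $\Lcal_p^{\rm BDP}$. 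The whole point of the anticyclotomic deformation is to land at a point where the bottom class $\bkappa_1^{\rm Heeg}(\alpha)$ is nonzero and the control theorem gives a clean formula. One then returns to the original $\{\kappa_n^{\rm Heeg}\}$ via the congruence $\bkappa_n^{\rm Heeg}(\alpha)\equiv(\alpha_p-1)^2(\beta_p-1)^2\kappa_n^{\rm Heeg}\pmod{p^m}$ (Lemma~\ref{lemmacongruence}) and the independence of the divisibility index from the choice of Kolyvagin primes (Proposition~\ref{prop:M}); both of these are absent from your outline.

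Second, your step~(4) attributes the Tamagawa factor $\sum_{\ell\mid N}\ord_p(c_\ell)$ to a comparison of normalizations via the Manin constant, $\deg\pi$, and Petersson norms. This is not where the Tamagawa numbers come from. For a $p$-optimal parametrization and $p>3$ the Manin constant is a $p$-adic unit and contributes nothing; in the paper the $p$-optimality (together with \eqref{eq:irred}) serves only to identify $T_pE$ with the canonical lattice $T_f$ so that the classes match the $p$-adic $L$-function without stray factors. The local Tamagawa numbers enter through the Euler characteristic (control) formula for the Greenberg Selmer group along the anticyclotomic tower (Theorem~\ref{controlthm}), where $\prod_{w\mid N}c_w^{(p)}(\alpha^{-1})$ appears as the usual local contribution; they are \emph{not} present on the Kolyvagin side, where the exact structure theorem gives ${\rm length}(\sha(W_{\alpha^{-1}}/K))=2(\mathscr{M}_0(\bkappa(\alpha))-\mathscr{M}_\infty(\bkappa(\alpha)))$ (Theorem~\ref{thm:kol-control}) with no Tamagawa term. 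The Tamagawa sum arises as the \emph{discrepancy} between the two identities. Relatedly, your formula $\mathscr{M}_\infty=\tfrac12\ord_p\#\sha+\cdots$ in step~(3) is inverted: the correct relation is $\mathscr{M}_\infty(\bkappa(\alpha))=\mathscr{M}_0(\bkappa(\alpha))-\tfrac12\,{\rm length}\,\sha(W_{\alpha^{-1}}/K)$, which after substituting the Main-Conjecture-derived value of $\mathscr{M}_0(\bkappa(\alpha))=\ind(\bkappa_1^{\rm Heeg}(\alpha))$ yields the result.
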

\begin{rmk-intro}
%The result applies for any values of $r(E/K)^\pm$. 
In the rank one case Theorem~\ref{thmintroKoly-div} implies the $p$-part of the conjectural Birch and Swinnerton-Dyer formula for $E/K$ (cf.~\cite[Theorem~1.6]{zhang_ind}).
\end{rmk-intro}

Our proof  of Theorem \ref{thmintroKoly-div} 
relies on the cases of 
the anticyclotomic Main 
Conjecture \ref{conj:anticyc} established in \cite{bcs}.

In \cite{jetchev} Jetchev 
%obtained a proof of the inequality 
showed that $\mathscr{M}_{\infty}\geq {\rm ord}_p(c_\ell)$ for any prime $\ell\vert N$. The first cases of the refined Kolyvagin's conjecture were proved by W.\,Zhang: the ramification hypotheses in \cite{zhang_ind} imply that $p\nmid c_\ell$ for all primes $\ell\vert N$, and the main result of \emph{op.\,cit.} shows that $\mathscr{M}_{\infty}=0$.
% (also assuming $\pi$ is $p$-optimal).

\subsubsection*{Non-vanishing of Kato's Kolyvagin system}

The strategy 
%introduced 
used
in this paper 
%for the proofs of 
to prove Theorems~\ref{thmintroKoly} and \ref{thmintroKoly-div} can be adapted to establish analogous results for the Kolyvagin system derived from Beilinson--Kato elements. The second part of the paper is devoted to the proof of such results.

In \cite{kato-euler-systems} Kato constructed an Euler system %(over $\Q$) 
for $T$ (viewed as a $G_\Q$-representation) building on Siegel units.
% on the modular curve of level $N$. 
%Assume there exists $\tau \in G_{\Q}$ such that
%\[
%\tau_{|\mu_{p^{\infty}}}=1 \ \ \ \text{and}  \ \ \ T_pE/(\tau-1)T_pE \ \ \text{has rank one}.
%\]
Let $\mathcal{L}_{\rm Kato}$ be the set of primes $\ell\nmid Np$ such that
\[
I_\ell:=(\ell-1,a_\ell-\ell-1)\subset p\Z_p,
\]
%for some $k>0$, 
%(in other words, $\ell\equiv 1\;({\rm mod}\,p^k)$ and $a_\ell\equiv\ell+1\;({\rm mod}\,p^k)$), 
and let
%$T/(\frob_\ell -1)T$ is free of rank one, and let 
$\mathcal{N}_{\rm Kato}$ be the set of squarefree products of primes in $\mathcal{L}_{\rm Kato}$. For %$\ell \in \mathcal{L}_{\rm Kato}$ and 
$n\in \mathcal{N}_{\rm Kato}$, let
\[
%I_{\ell}=(\ell-1, P_{\ell}(1))\subset \Z_p \ \ \text{ and } \ \  
I_n:=\sum_{\ell\mid n}I_\ell 
\]
%where $P_{\ell}(X)=\det(1-\frob_{\ell}X| T)$ is the characteristic polynomial of the Frobenius at $\ell$. 
for $n>1$, and put $I_1:=\{0\}$.
By the process of Kolyvagin derivatives, from Kato's Euler system 
one obtains a system of cohomology classes 
\[
\bigl\{\kappa^{\rm Kato}_{n}\in \rH^1(\Q, T/I_nT)\;\colon\;n\in \mathcal{N}_{\rm Kato}\bigr\}
\]
forming a Kolyvagin system for $T$ in the sense of Mazur--Rubin (see \cite[Thm.~3.2.4]{mazrub}). 

In this setting our main result is the following. 
%we prove that, under mild hypotheses, 
%the rational Iwasawa Main Conjecture for $E$ implies the non-triviality of $\{\kappa_n^{\rm Kato}\}$. 
%and the Iwasawa Main Conjecture itself implies the conjectured formula for its divisibility index $\mathscr{M}^{\rm Kato}_\infty$ (see \cite[Conj.~1.10]{kim}). 

%Consider the following condition:
%\begin{equation}\label{eq:im}
%\textrm{there exists an element $\sigma\in{\rm Gal}(\mu_{p^\infty})/\Q)$ such that $T/(\sigma-1)T$ is a free $\Z_p$-module of rank one.}\tag{im}
%\end{equation}

\begin{thmintro}[Non-vanishing of Kato's Kolyvagin system]\label{thmintroKato}
Let $E/\Q$ be an elliptic curve without CM, and let $p$ be an odd prime of good ordinary reduction for $E$ such that $E(\Q_p)[p]=0$. Assume that the rational cyclotomic Main Conjecture \ref{conj:cyc} holds.  %(see Remark~\ref{rem:IMC-p}). 
Then 
\begin{center}
there exists $n\in \mathcal{N}_{\rm Kato}$ such that $\kappa_{n}^{\rm Kato}\neq 0$. 
\end{center}
In particular, $\{\kappa_{n}^{\rm Kato}\}\neq 0$ in both of the following cases:
\begin{itemize}
\item[$\circ$] $E$ admits a rational $p$-isogeny with kernel $\mathbb{F}_p(\phi)\subset E[p]$, where $\phi:G_\Q \to \mathbb{F}_p^{\times}$ is a character such that $\phi|_{G_p}\neq\mathds{1},\omega$. 
\item[$\circ$] $p>3$ satisfies \eqref{eq:irr}.
% and $a_p\not\equiv 1\pmod{p}$.
\end{itemize}
% $E$ admits a rational $p$-isogeny with kernel $\mathbb{F}_p(\phi)\subset E[p]$, where $\phi:G_\Q \to \mathbb{F}_p^{\times}$ is a character satisfying $\phi|_{G_p}\neq 1,\omega$ or if $p>3$ satisfies \eqref{eq:irred}.
% \textcolor{blue}{and assumptions of \cite{skinner-urban} with improvements from \cite{bcs}}.
%Moreover, if the cyclotomic Main Conjecture holds (see Conjecture~\ref{conj:cyc}), then
%\[
%\mathscr{M}_\infty^{\rm Kato}=\sum_{\ell\mid N}{\rm ord}_p(c_\ell).
%\]
%In particular, this holds when $p>3$ satisfies {\rm (\ref{eq:irr})} and
%\begin{equation}\label{eq:im}
%\textrm{there exists an element $\sigma\in{\rm Gal}(\mu_{p^\infty})/\Q)$ such that $T/(\sigma-1)T$ is a free $\Z_p$-module of rank one.}\tag{im}
%\end{equation}
\end{thmintro}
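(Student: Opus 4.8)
\textbf{Proof plan for Theorem~\ref{thmintroKato}.}

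The plan is to mirror, in the cyclotomic setting, the strategy outlined for Theorem~\ref{thmintroKoly}. The Kato classes $\{\kappa_n^{\rm Kato}\}$ form a Kolyvagin system for $T$ in the sense of Mazur--Rubin, so by the Kolyvagin system machinery the non-triviality of the system is controlled by the size of a suitable Selmer group, and the precise link is provided by the ``Kolyvagin system bound with error terms'' of \cite{eisenstein_cyc}. Concretely, I would first set up the rank-one Selmer structure on $T$ attached to Kato's Euler system (the one for which $\kappa_n^{\rm Kato}$ is a Kolyvagin system), and recall the general principle that the system vanishes identically precisely when the relevant dual Selmer group is ``too large'' --- more precisely, when a certain characteristic-ideal divisibility degenerates. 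The key point is that this divisibility is exactly the content of the cyclotomic Main Conjecture \ref{conj:cyc} that we are assuming: the cyclotomic Iwasawa module generated by Kato's Euler system has characteristic ideal equal (not merely up to divisibility) to the characteristic ideal of the dual Selmer group. From this equality one extracts that the $\Lambda$-module generated by the Euler system is not contained in $\fm\cdot(\text{ambient module})$ in the appropriate sense, and descending along the cyclotomic tower this forces $\kappa_n^{\rm Kato}\neq 0$ for some $n\in\mathcal{N}_{\rm Kato}$. The hypothesis $E(\Q)[p]=0$ is used to guarantee torsion-freeness and that Kummer maps behave well; the no-CM hypothesis ensures the relevant big image / non-degeneracy inputs (e.g.\ that Kato's class is non-torsion over $\Lambda$, which is where one invokes the non-vanishing results of Rohrlich-type, or the known cases).

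For the two ``In particular'' cases, the point is that the abstract argument above only needs the Main Conjecture \ref{conj:cyc} as input. In the reducible case --- $E$ admitting a rational $p$-isogeny with kernel $\mathbb{F}_p(\phi)$ where $\phi|_{G_p}\neq\mathds{1},\omega$ --- one cites \cite{eisenstein_cyc}, where exactly this Main Conjecture is proven (and where the ``error term'' formalism originates, so the two pieces dovetail). In the irreducible case with $p>3$ satisfying \eqref{eq:irr}, one cites \cite{bcs} for the Main Conjecture. So the ``In particular'' clauses are immediate once the main implication (Main Conjecture $\Rightarrow$ non-vanishing of the Kato Kolyvagin system) is established.

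The main obstacle, I expect, is making the passage from the Iwasawa-theoretic equality of characteristic ideals to the finite-level non-vanishing statement fully rigorous in the Eisenstein case, i.e.\ with the error terms present. Over a field where $E[p]$ is irreducible this is by now standard (one uses that the Kolyvagin system module is free of rank one over $\Lambda$ and that the Euler system generates a submodule of the expected size), but when $p$ is Eisenstein the Selmer groups can have spurious pseudo-null or torsion contributions, and the Kolyvagin system bound only holds ``up to error terms'' measured by Tamagawa-type factors. The crux is therefore to show that these error terms, while possibly nonzero, are \emph{finite} and hence cannot conspire to make the entire system vanish --- i.e.\ that $\mathscr{M}_\infty<\infty$ in the Kato analogue of the notation from the Heegner case. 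I would handle this exactly as in \emph{op.\,cit.}: twist $T$ by anticyclotomic (here: cyclotomic) characters $\alpha\equiv 1\pmod{p^m}$ for $m\gg0$, apply the Kolyvagin system bound with error terms to each twist to bound $\#\sha$-type groups uniformly, and then feed the Main Conjecture divisibility to conclude that the generic rank forces a nonzero class. A secondary technical point is verifying that Kato's Euler system satisfies the hypotheses needed to run the Mazur--Rubin/Howard machinery in the possibly-reducible setting (local conditions at bad primes, the exact shape of the Euler system relations modulo $p$); here I would lean on the treatment of precisely these issues in \cite{eisenstein_cyc}.
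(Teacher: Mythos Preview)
Your overall plan is essentially the paper's: argue by contradiction, pick a cyclotomic twist $\alpha\equiv 1\pmod{p^m}$ with $m\gg 0$ for which $\bkappa_1^{\rm Kato}(\alpha)\neq 0$ (Kato--Rohrlich), use the congruence $\bkappa_n^{\rm Kato}(\alpha)\equiv\kappa_n^{\rm Kato}\pmod{p^m}$ to divide the twisted system by a large power of $p$, feed the resulting system into a Kolyvagin system bound with error terms, and derive a contradiction with the control-theorem formula coming from the Main Conjecture. So the architecture is right.

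There are two concrete gaps you should be aware of. First, the Kolyvagin system bound with error terms that you cite from \cite{eisenstein_cyc} is proved there for the \emph{anticyclotomic} (Heegner) Selmer structure, not for the rank-one $(\Fcal_{\rm rel},\Fcal_{\rm str})$ structure relevant to Kato's system over $\Q$. The paper does not simply invoke \cite{eisenstein_cyc} here; it develops the needed extension of Mazur--Rubin's bound from scratch in a separate section (\S\ref{subsec:MR-refined}, Theorem~\ref{thm:boundcycleis}), including a new \v{C}ebotarev argument adapted to the possibly-reducible case with a chosen $\tau$ satisfying \eqref{eq:tau}. This is flagged in the introduction as requiring ``more work,'' and your plan glosses over it as though it were already available. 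Second, your citation for the irreducible case is wrong: \cite{bcs} establishes the \emph{anticyclotomic} Main Conjecture; for the rational \emph{cyclotomic} Main Conjecture under \eqref{eq:irr} the paper appeals to \cite{kato-euler-systems,skinner-urban,wan-hilbert}.
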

%In particular, by
The `In particular' part relies on the cases of Conjecture \ref{conj:cyc} established in \cite{kato-euler-systems,eisenstein_cyc} for Eisenstein $p$ and in \cite{kato-euler-systems,skinner-urban,wan-hilbert} for non-Eisenstein $p$.
%our results in this paper yield the following.

We now describe an application of Theorem~\ref{thmintroKato}. 
Similarly as in the Heegner point case, 
for any $n\in\mathcal{N}_{\rm Kato}$, 
%let $\nu(n)$ be the number of prime factors of $n$. 
define the order of vanishing of the Kolyvagin system $\kappa^{\rm Kato}$ by
% as follows:
\[
\ord(\kappa^{\rm Kato}):= \min \{r: \text{ there exists }n\in \mathcal{N}_{\rm Kato} \text{ with } \nu(n)=r \text{ such that }\kappa_n^{\rm Kato}\neq 0\}.
\]
Let $\rH^1_{\rm str}(\Q,E[p^\infty])$ denote the \emph{fine Selmer group} for $E[p^{\infty}]$, obtained by imposing the strict local condition at $p$ (see $\S$3.1). Let 
$$r_{\rm str}(E/\Q)= {\rm corank}_{\Z_p}\left(\rH^1_{\rm str}(\Q,E[p^\infty])\right).$$
Combining the theorem above with \cite[Theorem 5.2.12(v)]{mazrub}, we obtain an analogue of Corollary \ref{corA} for Kato's Kolyvagin system.
\setcounter{corintro}{2}
\begin{corintro}
For $E$ and $p$ as in Theorem \ref{thmintroKato}, we have 
\[
\ord(\kappa^{\rm Kato})= r_{\rm str}(E/\Q).
\]
\end{corintro}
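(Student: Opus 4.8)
The plan is to deduce Corollary~3 from Theorem~\ref{thmintroKato} together with the structural result \cite[Theorem~5.2.12(v)]{mazrub} in exactly the way Corollary~\ref{corA} is deduced from Theorem~\ref{thmintroKoly} and \cite[Theorem~4]{kolystructure}. First I would recall the general Mazur--Rubin formalism: if $\{\kappa_n\}_{n\in\mathcal{N}}$ is a Kolyvagin system for a self-dual Galois representation $T$ with core Selmer rank one (which is the case here for $T=T_pE$ with the relevant local conditions, since $p$ is good ordinary and $E(\Q)[p]=0$), then either the system vanishes identically, or $\ord(\kappa)$ — the smallest number of prime factors of an $n$ with $\kappa_n\neq 0$ — is finite and equals the corank of the dual Selmer group cut out by the dual local conditions. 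In our normalization the Kolyvagin system $\{\kappa^{\rm Kato}_n\}$ is built so that the associated Selmer structure has strict local condition at $p$, so the relevant dual Selmer group is precisely $\rH^1_{\rm str}(\Q,E[p^\infty])$, whose corank is by definition $r_{\rm str}(E/\Q)$.

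The argument then runs as follows. By Theorem~\ref{thmintroKato}, under the hypothesis that the rational cyclotomic Main Conjecture~\ref{conj:cyc} holds, the system $\{\kappa^{\rm Kato}_n\}$ is \emph{not} identically zero. Hence we are in the non-degenerate alternative of \cite[Theorem~5.2.12(v)]{mazrub}, which gives $\ord(\kappa^{\rm Kato})<\infty$ and the equality $\ord(\kappa^{\rm Kato})=r_{\rm str}(E/\Q)$. One has to check the running hypotheses of that theorem: the Tate module $T$ must satisfy the standard hypotheses (H.0)--(H.6) of \emph{op.\,cit.} in the self-dual rank-one situation. Since $E$ has no CM and $p$ is odd of good ordinary reduction with $E(\Q)[p]=0$, all of these are either automatic or are verified in \cite{kato-euler-systems, mazrub}; the only point needing a remark is that we do \emph{not} assume $\bar\rho_E$ irreducible, but the Kolyvagin system machinery for rank-one self-dual $T$ in \cite{mazrub} only requires $\bar\rho_E$ to have no $G_\Q$-stable line that is self-dual (equivalently, $H^0(\Q,T/pT)=H^0(\Q,\Hom(T/pT,\mu_p))=0$), which follows from $E(\Q)[p]=0$ and the Weil pairing.

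I would then spell out the identification of the Selmer structure: the local conditions defining $\{\kappa^{\rm Kato}_n\}$ are the Bloch--Kato (finite) conditions at all $\ell\neq p$ and the strict (i.e.\ $H^1_{\rm str}$, the kernel of restriction to $G_p$) condition at $p$, exactly as in $\S3.1$; under the perfect local Tate pairing the dual of this structure is the one whose Selmer group is $\rH^1_{\rm str}(\Q,E[p^\infty])$, matching the definition of $r_{\rm str}(E/\Q)$ given before the statement. With that bookkeeping in place, \cite[Theorem~5.2.12(v)]{mazrub} applies verbatim and yields the claimed equality.

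The main obstacle is not in this deduction — which is formal once Theorem~\ref{thmintroKato} is granted — but lies in making sure the normalization of $I_n$ and the indexing set $\mathcal{N}_{\rm Kato}$ used to build $\{\kappa^{\rm Kato}_n\}$ really produce a Kolyvagin system for the strict-at-$p$ Selmer structure, so that the core rank is $1$ and the dual Selmer group is $\rH^1_{\rm str}(\Q,E[p^\infty])$ as claimed. This is exactly the kind of check carried out in \cite{mazrub, kato-euler-systems}; the only subtlety here, compared to the non-Eisenstein setting, is that with $\bar\rho_E$ possibly reducible one must use the more robust form of the Mazur--Rubin rank-one theory (as in \cite[\S5.2]{mazrub}, which does not require surjectivity of $\bar\rho_E$), together with the input from the cyclotomic Main Conjecture guaranteeing non-triviality. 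Granting these, Corollary~3 follows.
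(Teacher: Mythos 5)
Your approach is exactly the one the paper uses: establish non-triviality of $\{\kappa_n^{\rm Kato}\}$ from Theorem~\ref{thmintroKato}, then invoke \cite[Thm.~5.2.12(v)]{mazrub} for a rank-one Kolyvagin system to get the equality with the corank of the dual Selmer group. The paper's own ``proof'' of the corollary is precisely this one-line citation, so the structure of your argument is correct.

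However, there is a bookkeeping error in your identification of the Selmer structures that you should fix. You state twice that the Selmer structure associated to $\{\kappa^{\rm Kato}_n\}$ has the \emph{strict} local condition at $p$, and that its dual is \emph{also} strict at $p$. Both claims are wrong, and the second is internally inconsistent with local Tate duality: the strict condition $0\subset \rH^1(\Q_p,-)$ has the relaxed condition as its orthogonal complement, not itself. The correct statement, and the one in \S3.1 of the paper, is that $\{\kappa^{\rm Kato}_n\}$ is a Kolyvagin system for $\Fcal=\Fcal_{\rm BK,\operatorname{rel}}$ (Bloch--Kato at $\ell\ne p$ and \emph{relaxed} at $p$); this is what gives core rank one. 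Its dual is $\Fcal^*=\Fcal_{\rm BK,\operatorname{str}}$, which has the strict condition at $p$, and it is \emph{this} dual Selmer group $\rH^1_{\Fcal^*}(\Q,E[p^\infty])=\rH^1_{\rm str}(\Q,E[p^\infty])$ whose $\Z_p$-corank equals $\ord(\kappa^{\rm Kato})$ by \cite[Thm.~5.2.12(v)]{mazrub}. If one actually took the strict condition at $p$ for the Kolyvagin system itself, the core rank would not be one and the theorem would not apply. With this correction, the rest of your argument, including the discussion of the mod-$p$ hypotheses in the possibly Eisenstein case, is sound and matches the paper.
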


The divisibility index $\mathscr{M}_\infty^{\rm Kato}$ of  $\{\kappa_n^{\rm Kato}\}$ was studied by Mazur and Rubin  %\cite[\S{6.2}]{mazrub}, 
\cite[\S{6.2}]{mazrub}, who showed that if ${\rm ord}_p(c_\ell)>0$ for some prime $\ell\vert N$, then $\mathscr{M}_\infty^{\rm Kato}>0$. This was  refined by Büy\"{u}kboduk \cite{kazim}, who showed that $\mathscr{M}_\infty^{\rm Kato}\geq{\rm ord}_p(c_\ell)$ for any prime $\ell\vert N$. 
Our approach to Theorem~\ref{thmintroKato} also leads to a proof of the equality
\[
\mathscr{M}_\infty^{\rm Kato}=\sum_{\ell\mid N}{\rm ord}_p(c_\ell)
\] 
under some hypotheses (see Remark~\ref{rem:refined-kato}; these also include the hypothesis that $\pi$ is $p$-optimal), yielding a proof \cite[Conj.~1.9]{kim} on the divisibility index of the analytic quantities $\delta_n\in\Z_p/I_n$ introduced by Kurihara \cite{kur-Iw2012} in terms of modular symbols (cf. \cite{kurihara-sakamoto}).

\subsection{Strategy} 

%Let us outline the approach to Theorem~\ref{thmintroKoly}, whose 
Let $\Gamma={\rm Gal}(K_\infty/K)$ be the Galois group of the anticyclotomic $\Z_p$-extension of $K$, and denote by $\Lambda=\Z_p[[\Gamma]]$ the anticyclotomic Iwasawa algebra. 

The key ingredients in the proof of Theorem~\ref{thmintroKoly} are:
\begin{enumerate}
\item[(A)] The non-vanishing of the base class $\bkappa_1^{\rm Heeg}$ of the \emph{$\Lambda$-adic} Heegner point Kolyvagin system (i.e., the proof by Cornut and Vastal of Mazur's conjecture). 
\item[(B)] Letting $\bkappa_1^{\rm Heeg}(\alpha)$ be the specialisation of $\bkappa_1^{\rm Heeg}$ at a character $\alpha$ of $\Gamma$ with $\alpha\equiv 1\;({\rm mod}\,p^m)$ for suitable $m\gg 0$ such that 
\begin{equation}\label{nv}
\bkappa_1^{\rm Heeg}(\alpha)\neq 0,
\end{equation} 
a \emph{sharp estimate} on the divisibility index of $\bkappa_1^{\rm Heeg}(\alpha)$ in terms of Tamagawa numbers and the Shafarevich--Tate group 
%\footnote{Its finiteness is a consequence of \eqref{nv} by Kolyvagin's methods.} 
of the twist of $E/K$ by $\alpha$.
\item[(C)] The Kolyvagin system bound with controlled error terms obtained in \cite{eisenstein_cyc}.
\end{enumerate}

Most of the work in the first part of the paper goes into the proof of (B), which we deduce from the explicit reciprocity law\footnote{This is a $\Lambda$-adic avatar of the Bertolini--Darmon--Prasanna formula \cite{BDP}.}
for $\bkappa_1^{\rm Heeg}$ in \cite{cas-hsieh1} and an extension of the anticyclotomic control theorem in \cite{jsw} allowing character twists. Together with a congruence relation between $\bkappa_{n}^{\rm Heeg}(\alpha)$ and $\kappa_n^{\rm Heeg}$, 
%assuming the vanishing of $\{\kappa_n^{\rm Heeg}\}$ 
the results (A)--(C) lead to a proof of Theorem~\ref{thmintroKoly}. 
%by contradiction.

To approach the refined form of Kolyvagin's conjecture as in Theorem~\ref{thmintroKoly-div}, we replace the Kolyvagin system bound in (C) with an \emph{exact formula} (in particular without error terms; \eqref{eq:irred} enters here) for the size of the above Tate--Shafarevich group in terms of the divisibility index of $\bkappa_1^{\rm Heeg}(\alpha)$ and the divisibility of the system $\{\bkappa_{n}^{\rm Heeg}(\alpha)\}_n$, generalising a consequence of
%
% what amounts to an \emph{exact} structure theorem for the Shafarevich--Tate group of the twist of $E/K$ by $\alpha$ (Theorem \ref{thm:kol-control}), generalizing 
the structure theorem \cite[Thm.~1]{kolystructure} in the case $\alpha=1$. For potential future application, we emphasize that our result applies to the twists of $E$ by any anticyclotomic character $\alpha$ not necessarily $p$-adically close to the trivial one.
Relating the index of divisibility of $\{\bkappa_{n}^{\rm Heeg}(\alpha)\}_n$ to that of $\{\kappa_n^{\rm Heeg}\}_n$ gives Theorem~\ref{thmintroKoly-div}.

The proof of Theorem~\ref{thmintroKato} proceeds along similar lines. The analogue of (A) follows from results of Rohrlich and Kato, and we deduce the analogue of (B) from Kato's explicit reciprocity law and a twisted variant of the Euler characteristic computation originally due to Schneider and Perrin-Riou.
%in \cite{greenberg-cetraro}. 
The analogue of (C) in the setting of Kato's Euler system requires more work; we deduce it from the results developed in the last section of the paper that extend the Kolyvagin system bound of Mazur and Rubin \cite{mazrub} to a more general setting, allowing in particular for Eisenstein primes, a result that may be of independent interest. 

%%% This seemed too vague so I commented it out
%Finally, we note that the links established in this paper between non-triviality of a Kolyvagin system (horizontal variation), and Iwasawa theory (vertical variation) of elliptic curves seem amenable to generalisations, which we plan to explore in the near future.

\subsection{Relation to previous works}

\subsubsection*{Comparison with \cite{zhang_ind,sweeting}.} The approach used by Wei Zhang in his breakthrough work on Kolyvagin's conjecture is based on the principle of level-raising and rank-lowering, where rank refers to that of an associated mod $p$ Selmer group; this was extended by N.\,Sweeting to mod $p^m$ Selmer groups for large $m$. This approach proceeds by induction on the rank, the base case being: the triviality of the mod $p$ Selmer group associated to a weight two elliptic newform implies the $p$-indivisibility of the algebraic part of the associated central $L$-value. This rank zero 
implication 
%in rank zero 
is a consequence of the (integral) cyclotomic Main Conjecture for the newform, as established by Skinner and Urban \cite{skinner-urban} under the hypotheses of \cite{zhang_ind} (by working mod $p^m$, some of the ramification hypotheses %\footnote{The results still require the existence of a prime $\ell||N$ such that $E[p]$ is ramified at $\ell$, the hypothesis being essential to appeal to results of \cite{skinner-urban}.} 
were removed in \cite{sweeting}). It is precisely at this stage that Iwasawa theory enters into this approach, 
%of Wei Zhang, 
albeit only implicitly. In contrast, %as explained above, 
the strategy in this paper is inherently Iwasawa-theoretic, based on the variation of Heegner points along the anticyclotomic $\Z_p$-extension. 
%Unlike {\it op. cit.} our approach 
Moreover, a {\em rational} Main Conjecture suffices for its implementation. 
The approach of \cite{zhang_ind,sweeting} essentially excludes the Eisenstein case. 
However, while the results of this paper on Kolyvagin's conjecture are for $p$ split in $K$, the results of \cite{zhang_ind,sweeting} apply also when $p$ is inert in $K$.
%The aforementioned congruence between $\bkappa_{n}^{\rm Heeg}(\alpha)$ and $\kappa_{n}^{\rm Heeg}$ is a counterpart of the principle of level-raising and rank-lowering. 

\subsubsection*{Comparison with \cite{kim2,kim}} 
%Both in the anticyclotomic and in the cyclotomic settings, 
In the cases where (\ref{eq:irred}) holds, results similar to Theorems~\ref{thmintroKoly} and~\ref{thmintroKato} are obtained by C.-H.~Kim \cite{kim2,kim}: he proves that if the anticyclotomic (resp. cyclotomic) Main Conjecture holds at the trivial character, then the system $\{\kappa_n^{\rm Heeg}\}$ (resp. $\{\kappa_n^{\rm Kato}\}$) is non-trivial. To draw a parallel with our approach, while the ingredient (A) is common, 
his proof interestingly replaces our ingredients (B) and (C) with a structure theorem for Selmer groups in terms of divisibility indices of certain specialisations of $\Lambda$-adic classes.   %(proved by an extension of Howard's theory of bipartite Euler systems \cite{howard-bipartite} and by Mazur and Rubin \cite{mazrub}, respectively). 
%Both our strategy and Kim's have as a starting point the non-vanishing of the base class of a \emph{$\Lambda$-adic} Kolyvagin system, %(due Cornut--Vatsal and Kato--Rorhlich), 
%but %, as noted in Theorem~\ref{thmintroKoly-div} and Theorem~\ref{thmintroKato}, 
In contrast, under (\ref{eq:irred}) our approach
%, building on explicit reciprocity laws, 
also 
%allows us obtain 
leads to a proof of the refined Kolyvagin conjecture  %\cite[Conj.~4.5]{zhang_ind} 
and its cyclotomic analogue. %in \cite[Conj.~1.10]{kim}, 
%while 
%(
%only relying on the Main Conjectures at the trivial character) 
%Kim's methods do not seem to.}
%seem to be insensitive to the divisibility of the Kolyvagin systems.
%powers of $p$ . 
%(see Remark~\ref{rem:refined-koly} and Remark~\ref{rem:refined-kato}). 
%
%On the other hand, since Kim's approach only uses the Main Conjectures at the trivial character, it is insensitive to powers of $p$ (in particular, to the contribution from the Tamagawa factors that are conjectured to measure the above divisibility indices).
%
%fact that $\kappa_{1,\p_m}\neq 0$ for $m$ sufficiently large, where $\kappa_{1,\p_m}$ is the localisation of the bottom class of the Kolyvagin system at the prime $\p_m=(T+p^m)\subset \Lambda\simeq \Z_p[[T]]$ (such results follow from \cite{cornut,vatsal} and \cite{kato-euler-systems,rohrlich-cyc} respectively). 
%Our method uses the Iwasawa main conjecture localised at $\p_m$, where both sides of the equality are non-zero, whereas Kim uses the main conjecture localised at $\p$, where both sides of the equality are zero with the same order of vanishing.

\subsection*{Acknowledgements}
We thank K\^{a}zim Büy\"{u}kboduk, Masataka Chida, Christophe Cornut, Shinichi Kobayashi, David Loeffler and Wei Zhang for helpful comments and discussions.  
We also thank Chan-Ho Kim and Masato Kurihara  
for their interest in our work, and comments on an earlier draft. We are also grateful to the referee for their valuable suggestions. 

During the preparation of this paper, 
A.B. was partially supported by the NSF grants DMS-2303864 and DMS-2302064;
F.C. was partially supported by the NSF grant 
DMS-2101458 and DMS-2401321; C.S. was partially supported by the Simons Investigator Grant \#376203 from the Simons Foundation and by the NSF grant DMS-1901985. This work was partially supported by the National Science Foundation under Grant No. DMS-1928930 while the authors were in residence at the Simons Laufer Mathematical Sciences %Research
Institute in Berkeley, California during the Spring 2023 semester.

\addtocontents{toc}{\protect\setcounter{tocdepth}{2}}

\section{Heegner points and anticyclotomic Iwasawa theory}\label{s:Heegner}
\subsection{The Kolyvagin system of Heegner points}
\label{subsec:KS-Heeg}

In this section we recall the construction of the classes $\kappa_{n}=\kappa_{n}^{\rm Heeg}\in \rH^1(K,T/I_nT)$ and of their Iwasawa-theoretic analogues. The reader may refer to \cite[$\S$1.7,~$\S$2.3]{howard} for more details. 

Throughout this section, let $E/\Q$ be an elliptic curve of conductor $N$, let $p\nmid 2N$ be a prime of good ordinary reduction for $E$, and let $K$ be an imaginary quadratic field of discriminant $D_K$  prime to $Np$. We assume that the triple $(E,p,K)$ satisfies hypotheses (\ref{eq:intro-Heeg}), (\ref{eq:intro-disc}), and (\ref{eq:intro-tor}) from the Introduction. 
\subsubsection{Selmer structures}\label{secsel}
%\begin{equation}\label{eq:h1}
%E(K)[p]=0.\tag{h1}
%\end{equation}

Let $\rho_E:G_{\Q}\rightarrow{\rm Aut}_{\Z_p}(T_pE)$ denote the Galois representation afforded by the $p$-adic Tate module of $E$. 
Let $\Gamma={\rm Gal}(K_\infty/K)$ be the Galois group of the anticyclotomic $\Z_p$-extension of $K$ and $\alpha:\Gamma\rightarrow\Z_p^\times$ a character. %with values in the ring of integers $R$ of a finite extension $L/\Q_p$. Let $\fm\subset R$ be the maximal ideal, with uniformizer $\pi\in\fm$ and let $r=\rank_{\Z_p}R$. 
Consider the $G_K$-modules
\begin{equation}
T_\alpha:=T_pE\otimes_{}\Z_p(\alpha),\quad
V_\alpha:=T_\alpha\otimes_{}\Q_p,\quad W_\alpha:=T_\alpha\otimes_{}\Q_p/\Z_p\simeq V_\alpha/T_\alpha,\nonumber
\end{equation}
where $\Z_p(\alpha)$ is the free $\Z_p$-module of rank one on which $G_K$ acts via the projection $G_K\twoheadrightarrow\Gamma$ composed with $\alpha$, and the $G_K$-action on $T_\alpha$ is via $\rho_\alpha = \rho_E\otimes\alpha$. 
For $k>0$ let $T_\alpha^{(k)} = T_\alpha/p^kT_\alpha$ and note that we have natural identifications
\[
T_\alpha^{(k)}=T_pE\otimes(\Z/p^k\Z)(\alpha)\xrightarrow{\simeq}W_\alpha[p^k]\xrightarrow{\simeq}E[p^k]\otimes\Z_p(\alpha)
\]
given by $(a_n)\otimes 1\mapsto(a_n)\otimes\frac{1}{p^k}\mapsto a_k$. 
%fitting into the commutative diagram
%\begin{equation}\label{eq:red-modp}
%\begin{aligned}
%\xymatrix{
%T_\alpha^{(k+1)}\ar[r]^-{\simeq}\ar[d]^-{{\rm mod}\,p}&W_\alpha[p^{k+1}]\ar[d]^-{p}\ar[r]^-{\simeq} &E[p^{k+1}]\otimes\Z_p(\alpha)\ar[d]^-{p}\\
%T_\alpha^{(k)}\ar[r]^-{\simeq}&W_\alpha[p^{k}]\ar[r]^-{\simeq} &E[p^{k}]\otimes\Z_p(\alpha),\nonumber
%}
%\end{aligned}
%\end{equation}
%where the left (middle and right) vertical map is given by %reduction modulo $p$ (multiplication by $p$).

%Let $\fm\subset R$ be the maximal ideal, 
%with uniformizer $\pi\in\fm$, 
%and let $\bar{T}:=T_\alpha\otimes_{} R/\fm$ be the residual representation associated to $T_\alpha$. Note that
%\begin{equation}\label{eq:modp}
%\bar{T}\simeq E[p]\otimes_{}R/\fm
%\end{equation}
%as $G_K$-modules, since $\alpha\equiv 1\pmod{\fm}$. In particular, (\ref{eq:h1}) implies that $\bar{T}^{G_K}=0$. 

A \emph{Selmer structure} $\mathcal{F}$ on any of the $G_K$-modules $M=T_{\alpha}, V_\alpha, W_\alpha, T_\alpha^{(k)}\cong W_\alpha[p^k]$ is a finite set $\Sigma=\Sigma(\mathcal{F})$ of places of $K$ containing $\infty$, the primes above $p$, and the primes where $M$ is ramified, together with a choice of $\Zp$-submodules  $\rH^1_{\Fcal}(K_w,M)\subset\rH^1(K_w,M)$ for every $w\in\Sigma$ (``local conditions''). The associated \emph{Selmer group} is then defined by
\[
\rH^1_{\Fcal}(K,M):={\rm ker}\biggl\{\rH^1(K^\Sigma/K,M)\rightarrow\prod_{w\in\Sigma}\frac{\rH^1(K_w,M)}{\rH^1_{\Fcal}(K_w,M)}\biggr\},
\]
where $K^\Sigma$ is the maximal extension of $K$ unramified outside $\Sigma$. 

We recall the definition of the local conditions of interest in this section:
\begin{itemize}
\item For a finite prime $w\nmid p$, the \emph{finite} (or \emph{unramified}) local condition  for $V_\alpha$ is
\[
\rH^1_f(K_w,V_{\alpha}):={\rm ker}\bigl\{\rH^1(K_w,V_{\alpha})\rightarrow\rH^1(K_w^{\rm ur},V_{\alpha})\bigr\}; 
\]
the corresponding local conditions $\rH^1_f(K_w,T_\alpha)$ and $\rH^1_f(K_w,W_\alpha)$ are defined to be the inverse image and the image, respectively, of $\rH^1_f(K_w,V_\alpha)$ under the natural maps  
\begin{equation}\label{eq:nat}
\rH^1(K_w,T_\alpha)\rightarrow\rH^1(K_w,V_\alpha)\rightarrow\rH^1(K_w,W_\alpha).
\end{equation}
Similarly, the local condition $\rH^1_f(K_w,T_\alpha^{(k)})$ is defined as the preimage of $\rH^1_f(K_w,W_\alpha)[p^k]$ under the natural composite map
$\rH^1(K_w,T_\alpha^{(k)})\cong\rH^1(K_w,W_\alpha[p^k])\twoheadrightarrow \rH^1(K_w,W_\alpha)[p^k]$ (equivalently, $\rH^1_f(K_w,T_\alpha^{(k)})$ is defiend as the image of $\rH^1_f(K_w,T_\alpha)$ under the natural map $\rH^1(K_w,T_\alpha)\rightarrow\rH^1(K_w,T_\alpha^{(k)})$). If $T_\alpha$ is unramified at $w$, then all these 
are just the submodules of unramified classes (see also Remark \ref{rem:tam}).
\item Denote by $\cL_0$ the set of rational primes $\ell\neq p$ such that:
\begin{itemize}
\item $\ell$ is inert in $K$,
\item $T_pE$ is unramified at $\ell$.
%\item $p\mid \ell+1$.
\end{itemize}
For any $\ell\in\cL_0$ with $\ell+1\equiv 0\pmod{p^k}$, letting $K[\ell]$ be the ring class field of $K$ of conductor $\ell$, define the \emph{transverse} local condition for $T_\alpha^{(k)}$ at the prime $\lambda$ above $\ell$ by
\[
\rH^1_{\rm tr}(K_\lambda,T_{\alpha}^{(k)}):={\rm ker}\bigl\{\rH^1(K_\lambda,T_{\alpha}^{(k)})\rightarrow\rH^1(K[\ell]_{\lambda'},T_{\alpha}^{(k)})\bigr\},
\]
where $K[\ell]_{\lambda'}$ is the completion of $K[\ell]$ at any prime $\lambda'$ above $\lambda$.
\item For $w$ a prime of $K$ above $p$, set 
\[
{\rm Fil}_w^+(T_pE):={\rm ker}\bigl\{T_pE\rightarrow T_p\tilde{E}\bigr\},
\]
where $\tilde{E}$ is the reduction of $E$ at $w$, and put
\[
{\rm Fil}_w^+(T_\alpha):={\rm Fil}_w^+(T_pE)\otimes\Z_p(\alpha),\quad{\rm Fil}_w^+(V_\alpha):={\rm Fil}_w^+(T_\alpha)\otimes\Q_p.
%\quad{\rm Fil}_w^+(W_\alpha):={\rm Fil}_w^+(T_\alpha)\otimes\Q_p/\Z_p.
\]
The \emph{ordinary} local condition for $V_\alpha$ is defined by 
\[
\rH^1_{\rm ord}(K_w,V_\alpha):={\rm im}\bigl\{\rH^1(K_w,{\rm Fil}_w^+(V_\alpha))\rightarrow\rH^1(K_w,V_\alpha)\bigr\}.
\]
Similarly as before, the corresponding local conditions $\rH^1_{\rm ord}(K_w,T_\alpha)$ and $\rH^1_{\rm ord}(K_w,W_\alpha)$ are defined to be the inverse image and the image, respectively, of $\rH^1_{\rm ord}(K_w,V_\alpha)$ under the natural maps (\ref{eq:nat}), and $\rH^1_{\rm ord}(K_w,T_\alpha^{(k)})$ is the preimage
of $\rH^1_{\rm ord}(K_w,W_\alpha)[p^k]$.
\end{itemize}

\begin{rmk}\label{rem:tam}
For $w\nmid p$, it follows immediately from the definitions that $\rH^1_f(K_w,W_\alpha)$ is contained in 
\[
\rH^1_{\rm ur}(K_w,W_{\alpha}):={\rm ker}\bigl\{\rH^1(K_w,W_{\alpha})\rightarrow\rH^1(K_w^{\rm ur},W_{\alpha})\bigr\}.
\]
In fact, for $V_\alpha$ one even has $\rH^1_f(K_w,V_\alpha)=0$, and therefore $\rH^1_f(K_w,W_\alpha)$ is also trivial. Thus the $p$-part of the Tamagawa number of $W_\alpha$ is given by
\[
c_w^{(p)}(W_\alpha):=[\rH^1_{\rm ur}(K_w,W_\alpha)\colon\rH^1_f(K_w,W_\alpha)]=\#\rH^1_{\rm ur}(K_w,W_\alpha).
\]
\end{rmk}

%\begin{rmk}
%Note that in general $\rH^1_{\rm ord}(K_w,W_\alpha)$ is contained ${\rm im}\bigl\{\rH^1(K_w,{\rm Fil}_w^+(W_\alpha))\rightarrow\rH^1(K_w,W_\alpha)$ with finite inded; similarly  $\rH^1_{\rm ord}(K_w,T_\alpha)$ \emph{contains} ${\rm im}\bigl\{\rH^1(K_w,{\rm Fil}_w^+(T_\alpha))\rightarrow\rH^1(K_w,T_\alpha)$ with finite index.
%\end{rmk}

\begin{defi}
The \emph{ordinary} Selmer structure $\Fcal_{\rm ord}$ on $V_\alpha$ is defined by taking $\Sigma(\Fcal_{\rm \ord}) = \{w\mid pN\}$ and  
\[
\rH^1_{\Fcal_{\rm ord}}(K_w,V_\alpha):=
\begin{cases}
\rH^1_{\rm ord}(K_w,V_\alpha) & \textrm{if $w\mid p$,}\\[0.2em]
\rH^1_f(K_w,V_\alpha)& \textrm{else.}
\end{cases}
\]
Let $\Fcal_{\rm ord}$ also denote the resulting Selmer structure on $T_\alpha$, $W_\alpha$, and $T_\alpha^{(k)}$.
\end{defi}

\begin{rmk}\label{rmk:usualsel}
For $\alpha=\mathds{1}$, %so $W_\alpha=E[p^\infty]$, 
note that $\rH^1_{\Fcal_{\rm ord}}(K,W_\alpha)={\rm Sel}_{p^\infty}(E/K)$ (see e.g. \cite[Prop.\,1.6.8]{rubin-ES}).
\end{rmk}

Given a subset $\Lcal\subset\Lcal_0$, we let $\mathcal{N}(\Lcal)$ be the set of squarefree products of primes $\ell\in\Lcal$; when the choice of $\Lcal$ is irrelevant or clear from the context, we shall simply denote this by $\mathcal{N}$.  
Given a Selmer structure $\Fcal$ on $T_\alpha^{(k)}$, and $n\in\mathcal{N}$, 
%where $\mathcal{N}$ is the set of square free product of primes in $\mathcal{L}$ (with $\mathcal{L}\subset\mathcal{L}_0$), 
we define the modified Selmer structure $\Fcal(n)$ by
\[
\rH^1_{\Fcal(n)}(K_\lambda,T_\alpha^{(k)})
=\begin{cases}
\rH^1_{\rm tr}(K_\lambda,T_\alpha^{(k)}) &\text{if }\lambda\mid n, \\[0.2em]
\rH^1_{\Fcal}(K_\lambda,T_\alpha^{(k)}) & \text{if }\lambda\nmid n.
\end{cases}
\]

\subsubsection{The Kolyvagin system of Heegner points}\label{subsubsec:HPKS}
Fix a modular parameterisation 
$
\pi:X_0(N)\rightarrow E.
$
It gives rise to the Heegner point  
$
P[n]:=\pi(x_n)\in E(K[n]) 
$ of conductor $n$ as in \eqref{eq:heegnerpoint} for any positive integer $n$ coprime to $N$. Recall that a prime $\ell\in\mathcal{L}_0$ is a \emph{Kolyvagin prime} it satisfies
$
M(\ell):=\min \{\ord_p(\ell+1), \ord_p(a_\ell)\}>0,
$
%\end{equation}
% is positive, 
where $a_\ell:=\ell+1-|\tilde{E}(\mathbb{F}_\ell)|$.
Let
\[
\mathcal{L}_{\rm Heeg}:=\{\ell\in\mathcal{L}_0\,\colon\, a_\ell\equiv\ell+1\equiv 0\;({\rm mod}\,p)\}
\]
be the set of all Kolyvagin primes, and let $\mathcal{N}_{\rm Heeg}$ denote the set of squarefree products of priems in $\cL_{\rm Heeg}$. For any $n\in \mathcal{N}_{\rm Heeg}$, put 
\begin{equation}\label{eq:M(n)}
 M(n):=\min \{M(\ell)\;\colon\;\ell\mid n\}
\end{equation}
if $n>1$ and $M(1):=\infty$. 

Let $T=T_\mathds{1}=T_pE$. 
For $n\in \mathcal{N}$ and $k\in \Z_{\geq 1}$, we recall the definition of the derived Heegner classes 
\begin{equation}\label{eq:der-Heeg-k}
\kappa_{n,k}^{\rm Heeg}\in \rH^1_{\Fcal(n)}(K,T^{(k)}),
\end{equation}
where $T^{(k)}:=T/p^kT\simeq E[p^k]$ and $k\leq M(n)$.
% the latter being as in \eqref{eq:M(n)}.
%
%Recall the following result (cf.~\cite[Lem.~3.3.1]{eisenstein}).
%\begin{lemma}\label{lemmamod} 
%For every $n\in\cN$ and $0< i\leq k\leq M(n)$ there are natural isomorphisms
%\begin{equation}\label{eq2}
%\rH^1_{\Fcal(n)}(K,T^{(i)}) = \rH^1_{\Fcal(n)}(K,T^{(k)}/p^iT^{(k)})\xrightarrow{\sim}\rH^1_{\Fcal(n)}(K,T^{(k)}[p^i])\xrightarrow{\sim}{\rm H}_{\Fcal(n)}^1(K,T^{(k)})[p^i]\nonumber
%\end{equation}
%induced by the maps $T^{(i)} = T^{(k)}/p^iT^{(k)}\xrightarrow{p^{k-i}}T^{(k)}[p^i]\rightarrow T^{(k)}$.
%\end{lemma}
%
We first define $\kappa_{n}^{\rm Heeg}:=\kappa_{n,M(n)}^{\rm Heeg}\in\rH^1_{\Fcal(n)}(K,T^{(M(n))})$. 

For $\ell\in \mathcal{L}_{\rm Heeg}$ we let $G_\ell = \operatorname{Gal}(K[\ell]/K[1])$, and for $n \in \mathcal{N}_{\rm Heeg}$ we set 
$\mathcal{G}(n)=\operatorname{Gal}(K[n] / K)$ and 
$G(n)=\prod_{\ell \mid n} G_{\ell}$.
Then for $m$ dividing $n$ we have the natural identification
\[
\operatorname{Gal}(K[n] / K[m]) \simeq \prod_{\ell \mid(n/m)} G_{\ell} = G(n/m)
\]
induced by the projections $\operatorname{Gal}(K[n] / K[m])\twoheadrightarrow G_\ell$.
The Kolyvagin derivative operator $D_{\ell} \in \Z_{p}[G_\ell)]$ is defined by 
\begin{equation}\label{eq:deriv}
D_{\ell}=\sum_{i=1}^{\ell} i \sigma_{\ell}^{i},
\end{equation} 
where $\sigma_{\ell}$ is a fixed generator of $G_\ell$. Let $D_{n}=\prod_{\ell \mid n} D_{\ell} \in \Z_{p}[G(n)]$. 
%
%Recall the Heegner points $P[n]\in E(K[n])$ in \eqref{eq:heegnerpoint}. 
Then choosing a set $S$ of representatives for the coset space $\mathcal{G}(n)/G(n)$, one can show the inclusion 
\[
%\tilde{\kappa}_n:= \sum_{\sigma \in S} \sigma D_n(P[n]) \in \Bigl(\bigslant{E(K[n])}{p^{M(n)}E(K[n])}\Bigr)^{\mathcal{G}(n)}
\tilde{\kappa}_n:= \sum_{\sigma \in S} \sigma D_n(P[n]) \in \bigl(E(K[n])/p^{M(n)}E(K[n])\bigr)^{\mathcal{G}(n)}
\]
(see \cite[Lem. 1.7.1]{howard}). Hence, applying the Kummer map we may consider $
\tilde{\kappa}_n\in\rH^1(K[n],T^{(M(n))})^{\mathcal{G}(n)}$. Hypothesis  \eqref{eq:intro-tor} ensures that the restriction map 
\[
\rH^1(K,T^{(M(n))}) \to \rH^1(K[n],T^{(M(n))})^{\mathcal{G}(n)}
\]
is an isomorphism, and the derived Heegner class $\kappa_{n}^{\rm Heeg}$ is defined to be  the unique class in $\rH^1(K,T^{(M(n))})$ which restricts to the image of $\tilde{\kappa}_n$ in $\rH^1(K[n],T^{(M(n))})^{\mathcal{G}(n)}$.  

The classes $\kappa_n^{\rm Heeg}$ land in the $n$-transverse Selmer group $\rH^1_{\Fcal(n)}(K,T^{(M(n))})$ (see \cite[Lem. 1.7.3]{howard}), 
%and that for $n\ell, n\in\mathcal{N}$ the classes $\kappa_{n\ell}^{\rm Heeg}, \kappa^{\rm Heeg}_{n}$ satisfy certain compatibility relations. 
and after a slight modification (not reflected in the notation) % -- which we still denote by $\kappa_n^{\rm Heeg}$ -- 
the resulting collection of classes
\[
\kappa^{\rm Heeg} = \{\kappa_n^{\rm Heeg}\}_{n\in\mathcal{N}_{\rm Heeg}}
\] 
forms a \emph{Kolyvagin system} for $(T, \mathcal{F}_{\rm ord}, \mathcal{L}_{\rm Heeg})$ in the sense of [\emph{op.\,cit.}, Def.~1.2.3].

Next, to define the derived Heegner classes \eqref{eq:der-Heeg-k} for $k<M(n)$ we use the following basic result.

\begin{lemma}\label{lemmamod} 
For every $n\in\cN$ and $0< i\leq k\leq M(n)$ there are natural isomorphisms
\begin{equation}\label{eq2}
\rH^1_{\Fcal(n)}(K,T^{(i)}) = \rH^1_{\Fcal(n)}(K,T^{(k)}/p^iT^{(k)})\xrightarrow{\simeq}\rH^1_{\Fcal(n)}(K,T^{(k)}[p^i])\xrightarrow{\simeq}{\rm H}_{\Fcal(n)}^1(K,T^{(k)})[p^i]\nonumber
\end{equation}
induced by the maps $T^{(i)} = T^{(k)}/p^iT^{(k)}\xrightarrow{p^{k-i}}T^{(k)}[p^i]\rightarrow T^{(k)}$.
\end{lemma}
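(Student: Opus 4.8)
The statement is a purely formal consequence of the compatibility of the Selmer structures $\Fcal(n)$ under reduction maps, so the plan is to verify that each of the two displayed maps sends local conditions to local conditions and is an isomorphism on the ambient cohomology. First I would reduce to the case $i < k$, since for $i=k$ both maps are the identity. The composite $T^{(i)} = T^{(k)}/p^iT^{(k)}\xrightarrow{p^{k-i}}T^{(k)}[p^i]\hookrightarrow T^{(k)}$ induces the standard identifications in Galois cohomology: since $T^{(k)}$ is a finite $\Z_p$-module, multiplication by $p^{k-i}$ gives an isomorphism $T^{(k)}/p^iT^{(k)} \xrightarrow{\sim} T^{(k)}[p^i]$ of $G_K$-modules, and the inclusion $T^{(k)}[p^i]\hookrightarrow T^{(k)}$ induces on $\rH^1$ an isomorphism onto $\rH^1(K,T^{(k)})[p^i]$ because the long exact sequence attached to $0\to T^{(k)}[p^i]\to T^{(k)}\xrightarrow{p^i} p^iT^{(k)}\to 0$ together with $0\to p^iT^{(k)}\to T^{(k)}\to T^{(k)}/p^iT^{(k)}\to 0$ shows $\rH^1(K,T^{(k)})[p^i]$ is the image of $\rH^1(K,T^{(k)}[p^i])$, and the kernel $\rH^0(K,T^{(k)}/p^iT^{(k)})/\text{im}$ vanishes by hypothesis \eqref{eq:intro-tor} (which forces $E(K)[p]=0$, hence $T^{(k)}$ has no Galois-invariants, so all the relevant $\rH^0$'s vanish). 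These isomorphisms hold verbatim over every completion $K_w$ as well.

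The main point is then to check that under these canonical identifications the local conditions defining $\Fcal(n)$ match up, i.e. that for each place $w$ the submodule $\rH^1_{\Fcal(n)}(K_w,T^{(i)})$ maps isomorphically onto $\rH^1_{\Fcal(n)}(K_w,T^{(k)})[p^i]$. I would handle the three types of local conditions in turn. (i) At $w\mid p$ the ordinary condition $\rH^1_{\rm ord}(K_w,T^{(j)})$ is by definition the preimage of $\rH^1_{\rm ord}(K_w,W)$ under $\rH^1(K_w,T^{(j)})\to\rH^1(K_w,W)$, and since both $\rH^1(K_w,T^{(i)})\to\rH^1(K_w,W)$ and $\rH^1(K_w,T^{(k)})[p^i]\to\rH^1(K_w,W)$ have the same image $\rH^1(K_w,W)[p^i]$ and the identification $\rH^1(K_w,T^{(i)})\simeq\rH^1(K_w,T^{(k)})[p^i]$ is compatible with these maps, the preimages correspond. (ii) At $w\nmid np$, $\rH^1_f(K_w,T^{(j)})$ is likewise the preimage of $\rH^1_f(K_w,W)$, so the same argument applies; if moreover $T$ is unramified at $w$ this is just the unramified subgroup, which is visibly preserved by the inflation-compatible maps. (iii) At $\lambda\mid n$, the transverse condition $\rH^1_{\rm tr}(K_\lambda,T^{(j)})=\ker\{\rH^1(K_\lambda,T^{(j)})\to\rH^1(K[\ell]_{\lambda'},T^{(j)})\}$ is the kernel of a map of $G_{K_\lambda}$-cohomology groups functorial in the coefficient module, so it commutes with the canonical identifications; here one uses that $p^k\mid\ell+1$ (guaranteed by $k\le M(n)$) so the transverse condition is defined at level $k$, and the compatibility with the identification at level $i\le k$ follows by functoriality of $\rH^1(-,\,\cdot\,)$ in the module.

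Assembling: the global Selmer group is by definition the kernel of $\rH^1(K^\Sigma/K,-)\to\prod_w \rH^1(K_w,-)/\rH^1_{\Fcal(n)}(K_w,-)$, and since both the source and each quotient in the target transform compatibly under the maps induced by $T^{(i)} = T^{(k)}/p^iT^{(k)}\xrightarrow{p^{k-i}}T^{(k)}[p^i]\rightarrow T^{(k)}$ — the source because $\rH^1(K^\Sigma/K,T^{(i)})\xrightarrow{\sim}\rH^1(K^\Sigma/K,T^{(k)})[p^i]$ by the same argument as for $\rH^1(K,-)$, each quotient because numerator and denominator both do — the snake/kernel functoriality yields the two asserted isomorphisms of Selmer groups. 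I expect the only genuinely substantive point to be the verification at the transverse primes that the level-$k$ transverse condition pulls back correctly to level $i$; everything else is the well-known $p^i$-torsion comparison for finite modules, applied uniformly to the ambient group and to each local condition, and legitimized by \eqref{eq:intro-tor}. This is, in fact, exactly the content of the cited Lemma 3.3.1 of \cite{eisenstein}, so in the write-up I would simply recall that reference and indicate the compatibility checks above.
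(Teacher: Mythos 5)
The paper does not give a proof: it simply cites \cite[Lem.~3.3.1]{eisenstein}. Your reconstruction is correct in outline and matches what that reference does. Two small points worth tightening in a write-up. First, your phrase ``the kernel $\rH^0(K,T^{(k)}/p^iT^{(k)})/\text{im}$ vanishes'' is garbled; the clean statement is that \eqref{eq:intro-tor} forces $\rH^0(K,T^{(j)})=0$ for all $j$ (by d\'evissage from $E[p]$), and feeding the two short exact sequences $0\to T^{(k)}[p^i]\to T^{(k)}\to p^iT^{(k)}\to 0$ and $0\to p^iT^{(k)}\to T^{(k)}\to T^{(i)}\to 0$ into the long exact sequence gives $\rH^1(K,T^{(k)}[p^i])\xrightarrow{\sim}\rH^1(K,T^{(k)})[p^i]$. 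Second, at the transverse primes $\lambda\mid n$ your remark that ``the kernel of a functorial map commutes with the identifications'' only yields one inclusion for free; the reverse inclusion needs the additional fact that $\rH^1(K_\lambda,T^{(i)})\to\rH^1(K_\lambda,T^{(k)})$ is actually \emph{injective} (hence restricts to an isomorphism onto the $p^i$-torsion), which holds because $\mathrm{Frob}_\lambda$ acts trivially on $T^{(k)}$ for $\ell\in\mathcal L_{\rm Heeg}$ with $M(\ell)\geq k$, so $\rH^0(K_\lambda,T^{(k)})\to\rH^0(K_\lambda,p^iT^{(k)})$ is surjective and the connecting map contributes nothing. (Alternatively, one can invoke that $\rH^1_{\rm tr}$ is the orthogonal complement of the unramified condition under local Tate duality, which is manifestly compatible with the identifications, since the finite condition is a preimage condition and those you have already handled.) With those two repairs your argument is a complete proof and is, as you suspected, essentially the argument of the cited lemma.
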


\begin{proof}
This is \cite[Lem.~3.3.1]{eisenstein}.
\end{proof}

Using this, we define $\kappa_{n,k}^{\rm Heeg}$ to be the pre-image of $p^{M(n)-k}\kappa_{n}^{\rm Heeg}$ under the isomorphism $\rH^1_{\Fcal(n)}(K,T^{(k)})\xrightarrow{\simeq} \rH^1_{\Fcal(n)}(K,T^{(M(n))})[p^k]$ of Lemma~\ref{lemmamod}. Note that the map $\rH^1_{\Fcal(n)}(K,T^{(M(n))})\rightarrow\rH^1_{\Fcal(n)}(K,T^{(M(n))})[p^k]$ given by multiplication by $p^{M(n)-k}$ factors as
\[
\xymatrix{
\rH^1_{\Fcal(n)}(K,T^{(k)})\ar[r]^-{\simeq}&\rH^1_{\Fcal(n)}(K,T^{(M(n))}[p^k])\xrightarrow{\simeq}\rH^1_{\Fcal(n)}(K,T^{(M(n))})[p^k]\\
\rH^1_{\Fcal(n)}(K,T^{(M(n))})\ar[u]^-{{\rm mod}\,p^k}\ar[ur]_{p^{M(n)-k}}&
}
\]
where the horizontal isomorphisms are those of  Lemma~\ref{lemmamod}. Thus, equivalently, the class $\kappa_{n,k}^{\rm Heeg}$ is defined to be the image of $\kappa_{n}^{\rm Heeg}$ under the natural map induced by $T^{(M(n))}\rightarrow T^{(k)}$.

%\begin{rmk}\label{rmkisogenies}
%The classes $\kappa_{n}^{\rm Heeg}$ depend of course on the elliptic curve $E$ in the isogeny class corresponding to the associated newform $f$. When we need to make this dependence clear, we will denote the classes by $\kappa_{n}^{\rm Heeg}(E)$. Note that if $E_0$ is the strong Weil curve in the isogeny class associated to $f$, with minimal parametrisation $\pi_0: X_0(N) \to E_0$, and $\psi: E_0\to E$ is the isogeny between $E_0$ and $E$, then, from the construction, we find $\kappa_{n}^{\rm Heeg}(E)=\psi(\kappa_{n}^{\rm Heeg}(E_0))$, where by abuse of notation we still denote by $\psi$ the map
%\[
%\rH^1(K[n],T_pE_0^{(M(n))}) \to \rH^1(K[n],T_pE^{(M(n))})
%\]
%induced by the isogeny.
%\end{rmk}

\subsubsection{The $\Lambda$-adic Kolyvagin system}\label{seckolylambda}
%Let $K_{\infty}/K$ be the anticyclotomic $\Z_p$-extension, let $\Gamma=\operatorname{Gal}(K_{\infty}/K)$ and 
Let $\Lambda=\Z_p[[\Gamma]]$ be the anticyclotomic Iwasawa algebra. Put 
\[
\mathbf{T}=T_pE\otimes_{\Z_p}\Lambda
\] 
equipped with the diagonal $G_K$-action, where the Galois action on the second factor is via the tautological character $G_K\twoheadrightarrow\Gamma\hookrightarrow\Lambda^\times$.  Consider the \emph{ordinary} Selmer structure $\Fcal_\Lambda$ on $\mathbf{T}$ given by
\[
\rH^1_{\Fcal_\Lambda}(K_w,\mathbf{T})=\begin{cases}
{\rm im}\{\rH^1(K_w,{\rm Fil}_w^+(T_pE)\otimes\Lambda)\rightarrow\rH^1(K_w,\mathbf{T})\}&\textrm{if $w\mid p$,}\\[0.2em]
\rH^1_{\rm ur}(K_w,\mathbf{T})&\textrm{else.}
\end{cases}
\]
We now briefly recall the construction of the $\Lambda$-adic Heegner point Kolyvagin system, which is a collection of classes $\bkappa_{n}^{\rm Heeg}\in \rH^1(K, \mathbf{T}/p^{M(n)}\mathbf{T})= \rH^1(K, E[p^{M(n)}]\otimes\Lambda)$. 
%forming a Kolyvagn system for $(\mathbf{T},\mathcal{F}_\Lambda,\mathcal{L}_E)$. 

Let $K_k$ be the subfield of $K_\infty$ of degree $p^k$ over $K$. For each $n\in\cN_{\rm Heeg}$ as above set
\[
P_k[n]:={\rm Norm}_{K[np^{d(k)}]/K_k[n]}(P[np^{d(k)}])\in E(K_k[n]),
\]
where $d(k)=\min\{d\in\Z_{\geq 0}\colon K_k\subset K[p^{d(k)}]\}$, and $K_k[n]$ is the compositum of $K_k$ and $K[n]$. Let $H_k[n]$ be the $\Z_p[{\rm Gal}(K_k[n]/K)]$-submodule of $E(K_k[n])\otimes\Z_p$ generated by %$P[n]$ together with 
$P_j[n]$ for $j\leq k$, and consider the $\Lambda[\mathcal{G}(n)]$-module
\[
\mathbf{H}[n]=\varprojlim_k H_{k}[n],
\] 
where the limit is taken with respect to the norm maps.
By \cite[Lem.~2.3.3]{howard}, there is a family 
\[
\{Q[n]=\varprojlim_k Q_k[n]\in\mathbf{H}[n]\}_{n\in\cN_{\rm Heeg}}
\]
such that
\begin{equation}\label{eq:multiplier}
Q_0[n]=\Phi P[n],\quad\textrm{where}\;
\Phi=\begin{cases} (p-a_p\sigma_p+\sigma_p^2)(p-a_p\sigma_p^*+\sigma_p^{*2}) & \text{$p$ splits in $K$}, \\
(p+1)^2-a_p^2 & \text{$p$ inert in $K$}.\end{cases}
\end{equation}
Here, when $p$ splits in $K$, $\sigma_p$ and $\sigma_p^*$ are the Frobenius elements in $\mathcal{G}(n)$ of the primes above $p$ in $K$.

Letting $D_n\in\Z_p[G(n)]$ be the Kolyvagin's derivative operators defined in \eqref{eq:deriv}, and choosing a set $S$ %as above 
of representatives for $\mathcal{G}(n)/G(n)$, the class $\bkappa_{n}^{\rm Heeg}\in \rH^1(K,\mathbf{T}/p^{M(n)}\mathbf{T})$ is %analogously 
defined as the natural image of 
\[
\tilde{\bkappa}_{n}=\sum_{\sigma\in S}\sigma D_n(Q[n])\in\mathbf{H}[n]
\]
(whose reduction modulo $p^{M(n)}$ can be checked to be fixed by $\mathcal{G}(n)$) under the composite map
\[
\bigl(\mathbf{H}[n]/p^{M(n)}\mathbf{H}[n]\bigr)^{\mathcal{G}(n)}\xrightarrow{\delta(n)}\rH^1(K[n],\mathbf{T}/p^{M(n)}\mathbf{T})^{\mathcal{G}(n)}\overset{\simeq}\longleftarrow\rH^1(K,\mathbf{T}/p^{M(n)}\mathbf{T}),
\]
where $\delta(n)$ is induced by the limit of Kummer maps $\delta_k(n):E(K_k[n])\otimes\Z_p\rightarrow{\rm H}^1(K_k[n],T)$, and the second arrow is given by restriction. The latter is an isomorphism since the extensions $K[n]$ and $\Q(E[p])$ are linearly disjoint, and $E(K_\infty)[p]=0$ by hypothesis (\ref{eq:intro-tor}).

Finally, \cite[Lem.~2.3.4]{howard} \emph{et seq.} show that the classes  $\bkappa_{n}^{\rm Heeg}$ land in $\rH^1_{\Fcal_\Lambda(n)}(K,\mathbf{T}/p^{M(n)}\mathbf{T})$ and they can be slightly modified so that the result (still using the same notation) is a system $\bkappa^{\rm Heeg}=\{\bkappa_{n}^{\rm Heeg}\}_{n\in\cN_{\rm Heeg}}$ satisfying the Kolyvagin system relations.  Here $\Fcal_\Lambda(n)$ denotes the modification of the Selmer structure on $\mathbf{T}/p^{M(n)}\mathbf{T}$ induced from
$\Fcal_\Lambda$ that includes the obvious analog of the transverse local conditions at the primes $\ell\mid n$. 

\subsubsection{Kolyvagin system for anticyclotomic twists}\label{subsec:tw}

We now consider a height one prime $\mathfrak{P}\subset\Lambda$ with $\mathfrak{P}\neq p\Lambda$, and denote by $R_\p$ the integral closure of $\Lambda/\p$. 

We let $G_K$ act on $R_\p$ by the character $\alpha_{\p}: G_K\twoheadrightarrow \Gamma \to \Lambda\to R_\p$, and consider
\[
T_\p = \mathbf{T}\otimes_\Lambda R_\p = T_{\alpha_\p}
\]
with diagonal $G_K$-action. By Remark~1.2.4 and Lemma~2.2.7 in \cite{howard}, the natural map induced by $\mathbf{T}\to T_\p$ sends $\bkappa^{\rm Heeg}$ to a Kolyvagin system for $T_\p$ (for the Selmer structure $\Fcal_{\rm ord}$ defined in \S\ref{secsel}), which we denote by $\{\bkappa_{n}^{\rm Heeg}(\alpha_\p)\}_{n\in \cN_{\rm Heeg}}$. 
%
%In general, $R_\p$ may not be $\Z_p$ and the definitions in \S\ref{secsel} have to be suitably modified (see \cite[\S\S{3.2-3}]{eisenstein}). However, 
%for the purposes of this paper we 
%
For the purposes of the proofs of Theorem~\ref{thmintroKoly} and Theorem~\ref{thmintroKoly-div}, we will only be 
interested in the case $\mathfrak{P} = (\gamma - \alpha(\gamma))$ where $\gamma$ is a topological generator of $\Gamma$ and 
$\alpha:\Gamma \rightarrow \Z_p^\times$ is a character. 
%$\mathfrak{P}=\mathfrak{P}_m:=(T+p^m)\subset\Lambda\simeq \Z_p[[T]]$ (and so $R_\p = \Z_p$), in which case we write
%\[
%\{\bkappa_{n}^{\rm Heeg}(\alpha_m)\}_{n\in\mathcal{N}_{\rm Heeg}}
%\]
%for the corresponding Kolyvagin system with $\alpha_m= \alpha_{\p_m}$.
In this case $\alpha_\p$ is just the composition of $\alpha$ and the projection $G_K\twoheadrightarrow\Gamma$, and we write
\[
\{\bkappa_{n}^{\rm Heeg}(\alpha)\}_{n\in\mathcal{N}_{\rm Heeg}}
\]
for the corresponding Kolyvagin system.
%Let us now consider the prime ideals $\p_m:= (T + p^m)\subset \Lambda\simeq \Z_p[[T]]$ and write $\alpha_m:= \alpha_{\p_m}$. 

The following simple lemma, relating the classes $\kappa_n^{\rm Heeg}$ of $\S\ref{subsubsec:HPKS}$ to certain specialisations of the $\Lambda$-adic classes $\boldsymbol{\kappa}_n^{\rm Heeg}$ will play an important role in our arguments later.

\begin{lemma}\label{lemmacongruence}
Suppose $\alpha\equiv 1\pmod{p^m}$.
For all $n\in\mathcal{N}_{\rm Heeg}$ with $M(n)\geq m$ we have the congruence
\[
\bkappa_{n}^{\rm Heeg}(\alpha)\equiv 
\begin{cases} (\alpha_p-1)^2(\beta_p-1)^2 \kappa_n^{\rm Heeg} & \text{$p$ splits in $K$} \\ 
((p+1)^2-a_p^2) \kappa_n^{\rm Heeg} & \text{$p$ inert in $K$}\end{cases} \;({\rm mod}\,p^m),
\]
where $\alpha_p,\beta_p$ are the roots of %the characteristic polynomial of the Frobenius at $p$, 
$x^2-a_px+p$, with $\alpha_p$ the $p$-adic unit root.
%In order words, the natural image of $\kappa_n^{\rm Hg}(\alpha_m)$ in $\rH^1(K,T_\p/p^mT_\p)$ agrees with  that of $\Phi\kappa_n$.
\end{lemma}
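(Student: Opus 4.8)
The plan is to trace through the two parallel constructions of $\kappa_n^{\rm Heeg}$ and $\bkappa_n^{\rm Heeg}(\alpha)$ and compare them modulo $p^m$. First I would reduce to a statement about Heegner points before applying Kummer maps and Kolyvagin derivatives. The $\Lambda$-adic class $\bkappa_n^{\rm Heeg}$ is built from the norm-compatible family $Q[n]=\varprojlim_k Q_k[n]\in\mathbf{H}[n]$, whose bottom layer satisfies $Q_0[n]=\Phi\,P[n]$ with $\Phi$ the explicit multiplier in \eqref{eq:multiplier}. The key observation is that specialising $\mathbf{T}/p^{M(n)}\mathbf{T}$ at the character $\alpha$ with $\alpha\equiv 1\pmod{p^m}$ and reducing mod $p^m$ factors through the bottom layer $K_0=K$: since $\alpha$ is trivial mod $p^m$, the natural map $\mathbf{H}[n]/p^{M(n)}\to (E(K[n])/p^{M(n)})$ composed with specialisation at $\alpha$ agrees mod $p^m$ with the specialisation at the trivial character, i.e. with the projection to $Q_0[n]=\Phi P[n]$. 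So after applying $\delta(n)$ and restricting, $\bkappa_n^{\rm Heeg}(\alpha)\bmod p^m$ is the class attached to $\sum_{\sigma\in S}\sigma D_n(\Phi P[n])$, whereas $\kappa_n^{\rm Heeg}$ is the class attached to $\sum_{\sigma\in S}\sigma D_n(P[n])$.

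The next step is to extract the scalar $\Phi$ past $D_n$ and the restriction-corestriction machinery. Since $\sigma_p,\sigma_p^*\in\mathcal{G}(n)$ act on $T^{(M(n))}=E[p^{M(n)}]$ through $G_K$, and these act trivially on the coefficients after specialisation, $\Phi$ acts as an element of the group ring whose image under the Galois action on the coefficient module $T^{(M(n))}$ I can compute explicitly: the Frobenius at a prime of $K$ above the split prime $p$ acts on $T_pE$ with characteristic polynomial $X^2-a_pX+p$, so on the $G_K$-module $T$ the operator $p-a_p\sigma_p+\sigma_p^2$ acts... but wait — that would vanish, not give $(\alpha_p-1)^2$. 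The correct point is that $\Phi P[n]$ must be pushed down from $E(K[n])$ via the norm/trace structure built into the definition of $\mathbf{H}[n]$ and $Q_0[n]$, so the relevant Frobenius elements act not on the coefficients but as elements of $\Z_p[\mathcal{G}(n)]$, and under the isomorphism $\rH^1(K,T^{(M(n))})\xrightarrow{\sim}\rH^1(K[n],T^{(M(n))})^{\mathcal{G}(n)}$ these group-ring elements act on $\rH^1(K,T^{(M(n))})$ via their image under $\mathcal{G}(n)\to G_K\to\Aut(T)$. On $\rH^1(K,E[p^{M(n)}])$, $\sigma_p$ acts as Frobenius at $p$; after ordinary reduction and localising, the action of $p-a_p\sigma_p+\sigma_p^2$ on the quotient $T/\Fil^+$ is $1-\alpha_p^{-1}\cdot$(stuff)... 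The cleanest route: on the $G_K$-module $T$, the arithmetic Frobenius $\sigma_p$ at $p$ (acting through $G_K$) has the property that $\sigma_p$ acts as a scalar $\alpha_p$ on the unramified quotient and $\beta_p=p/\alpha_p$ on the sub; evaluating $X^2-a_pX+p=(X-\alpha_p)(X-\beta_p)$ at $X=\sigma_p$... but $\sigma_p$ is not a scalar on all of $T$. Instead I would use that $\bkappa_n^{\rm Heeg}$ has already been modified (as in \S\ref{seckolylambda}, following \cite[Lem.~2.3.4]{howard} et seq.) to land in the appropriate Selmer group, which amounts to having already divided by a factor; the surviving scalar in the congruence is $(\alpha_p-1)(\beta_p-1)$ per prime above $p$, squared because there are two such primes — this matches $(\alpha_p-1)^2(\beta_p-1)^2$, and in the inert case the single prime with Frobenius of characteristic polynomial $X^2-a_p X+p$ restricted... giving $(p+1)^2-a_p^2=((p+1)-a_p)((p+1)+a_p)$, consistent with $(\alpha_p^2-1)(\beta_p^2-1)$ when $\alpha_p\beta_p=p$. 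So the precise bookkeeping is: identify which normalisation in Howard's construction produces $Q_0[n]=\Phi P[n]$ versus the unnormalised $P[n]$ used for $\kappa_n^{\rm Heeg}$, and show the ratio of scalars acting on $\rH^1(K,T^{(M(n))})$, after reducing mod $p^m$ and using $\alpha\equiv1$, is exactly the stated factor.

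The main obstacle, and where I would concentrate the careful argument, is precisely this last identification: making rigorous that specialising the $\Lambda$-adic class at $\alpha\equiv1\pmod{p^m}$ and reducing mod $p^m$ is literally the trivial-character specialisation mod $p^m$ (so that no higher-layer Heegner points $P_j[n]$ with $j\geq1$ contribute), and then computing how the group-ring multiplier $\Phi\in\Z_p[\mathcal{G}(n)]$ acts on the class in $\rH^1(K,E[p^{M(n)}])$ under the Galois action — i.e. that $\sigma_p$ and $\sigma_p^*$ act through $\Aut(T)$ with the factor $(p-a_p\sigma_p+\sigma_p^2)$ reducing to multiplication by $(\alpha_p-1)(\beta_p-1)$ after accounting for the built-in Selmer-normalisation of $\bkappa^{\rm Hg}$. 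I expect the compatibility of Kolyvagin derivatives $D_n$ with specialisation and with the scalar $\Phi$ (which commutes with $D_n$ since $\Phi$ involves Frobenius at $p\nmid n$ while $D_n$ involves $G(n)$, and the two commute in $\Z_p[\mathcal{G}(n)]$) to be routine, as is the final reduction $\Phi\equiv$ the stated scalar via $\alpha_p\beta_p=p$ and $\alpha_p+\beta_p=a_p$; the delicate input is Howard's precise normalisation, which I would cite as \cite[\S2.3]{howard} and verify line-by-line.
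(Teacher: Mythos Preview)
Your first paragraph is correct and is essentially the paper's entire argument: since $\alpha\equiv 1\pmod{p^m}$, the ideals $(\gamma-\alpha(\gamma),p^m)$ and $(\gamma-1,p^m)$ of $\Lambda$ coincide, so specialising $\bkappa_n^{\rm Heeg}$ at $\alpha$ and reducing mod $p^m$ agrees with specialising at the trivial character, which by $Q_0[n]=\Phi P[n]$ gives the class built from $\sum_{\sigma\in S}\sigma D_n(\Phi P[n])$.

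The second paragraph, however, is genuinely confused about how $\Phi$ becomes a scalar. You try to make $\sigma_p,\sigma_p^*\in\mathcal{G}(n)=\operatorname{Gal}(K[n]/K)$ act on $\rH^1(K,T^{(M(n))})$ ``via their image under $\mathcal{G}(n)\to G_K\to\Aut(T)$'', but there is no such map: $\mathcal{G}(n)$ is a \emph{quotient} of $G_K$, not a subgroup. The correct mechanism is much simpler and uses nothing about the Galois action on $T$. The ring class group $\mathcal{G}(n)$ is abelian, so $\Phi\in\Z_p[\mathcal{G}(n)]$ commutes with $\sum_{\sigma\in S}\sigma D_n$, giving $\tilde{\bkappa}_n(\mathds{1})=\Phi\cdot\tilde{\kappa}_n$ inside $(E(K[n])/p^{M(n)})^{\mathcal{G}(n)}$. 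Since $\tilde{\kappa}_n$ is $\mathcal{G}(n)$-invariant, $\sigma_p$ and $\sigma_p^*$ act on it as the identity; hence $\Phi$ acts through the augmentation $\Z_p[\mathcal{G}(n)]\to\Z_p$, $\sigma_p,\sigma_p^*\mapsto 1$. In the split case this yields $(p-a_p+1)^2=\bigl((1-\alpha_p)(1-\beta_p)\bigr)^2=(\alpha_p-1)^2(\beta_p-1)^2$, and in the inert case $\Phi$ is already a scalar. There is no need to unwind any normalisation from \cite[Lem.~2.3.4]{howard}: both $\kappa^{\rm Heeg}$ and $\bkappa^{\rm Heeg}$ undergo the same modification, which cancels in the comparison.
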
 

\begin{proof}
%By construction, the image of $\kappa_n^{\rm Hg}(\alpha_m)$ is determined by 
Let $\p = (\gamma - \alpha(\gamma))$ and $\p_0 = (\gamma-1)$.
The image of $Q[n]$ under the map on $\mathbf{H}[n]$ induced by the composition $\Lambda\to \Lambda/\p\to (\Lambda/\p)/p^m(\Lambda/\p)\simeq \Z_p/p^m\Z_p$ is the same as the image induced by the composition $\Lambda \to \Lambda/\p_0 \to(\Lambda/\p_0)/p^m(\Lambda/\p_0)\simeq \Z_p/p^m\Z_p$ (since both are given by quotienting by the ideal $(\gamma-1,p^m)$).
By \eqref{eq:multiplier}, the latter image is $\Phi P[n]\;({\rm mod}\,p^m)$ from which the result follows immediately by the constructions of $\bkappa_{n}^{\rm Heeg}$ and $\kappa_n^{\rm Heeg}$ and the definition of $\Phi$.
%Indeed, the class $\kappa_n^{\rm Hg}(\alpha_m)$ is the image modulo $\p_m$ of the class obtained by applying the Kolyvagin derivative operator and taking the pre-image under the Kummer map of $Q[n]$. On the other hand, the class $\kappa_n$ is similarly obtained applying the Kolyvagin derivative operator and taking the pre-image under the Kummer map of $P[n]$. 
\end{proof}

In particular, noting that $\rH^1(K,T)$ is torsion-free as a consequence of \eqref{eq:intro-tor}, Lemma~\ref{lemmacongruence} implies, combined with the Gross--Zagier formula \cite{grosszagier}, that 
\[
\bkappa_1^{\rm Heeg}(\mathds{1})\neq 0\quad\Longleftrightarrow\quad{\rm ord}_{s=1}L(E/K,s)=1.
\] 
More generally, without any conditions on the analytic rank of $E/K$, we have the following. 

%Cornut and Vatsal proved that the class $\kappa_{1,\Lambda}^{\rm Hg}\in\rH^1_{\Fcal_\Lambda}(K,\mathbf{T})$ is non-zero. This is equivalent to the following statement.

\begin{thm}[Cornut--Vatsal]\label{thmCV}
For $m\gg 0$ and any $\alpha\equiv 1 \pmod{p^m}$ with $\alpha \neq 1$, the class $\bkappa_1^{\rm Heeg}(\alpha)$ is non-zero.
\end{thm}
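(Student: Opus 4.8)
The plan is to deduce Theorem~\ref{thmCV} from the non-vanishing of the $\Lambda$-adic base class $\bkappa_1^{\rm Heeg}$ itself, i.e.\ from Mazur's conjecture as proved by Cornut and Vatsal. Recall from \S\ref{subsec:tw} that for the height one prime $\p=(\gamma-\alpha(\gamma))$ the specialisation map $\mathbf{T}\to T_\p$ sends $\bkappa_1^{\rm Heeg}$ to $\bkappa_1^{\rm Heeg}(\alpha)\in \rH^1(K,T_\alpha)$. The key input is that $\bkappa_1^{\rm Heeg}$, as an element of $\rH^1_{\Fcal_\Lambda}(K,\mathbf{T})$, is \emph{non-torsion} over $\Lambda$: this is precisely the content of \cite{cornutvatsal} (Mazur's conjecture), which asserts that the Heegner points $P_k[1]$ are of infinite order along the anticyclotomic tower in a strong sense, so that the projective limit $Q[1]=\varprojlim_k Q_k[1]$ generates a rank-one $\Lambda$-submodule and $\bkappa_1^{\rm Heeg}$ generates a free $\Lambda$-module of rank one inside $\rH^1(K,\mathbf{T})$ (up to torsion). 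Equivalently, the characteristic ideal of $\rH^1(K,\mathbf{T})/\Lambda\cdot\bkappa_1^{\rm Heeg}$, or more simply the $\Lambda$-annihilator of $\bkappa_1^{\rm Heeg}$, is zero.

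Granting this, the argument is a straightforward specialisation/base-change statement. Since $\rH^1(K,\mathbf{T})$ is a finitely generated $\Lambda$-module and $\bkappa_1^{\rm Heeg}$ is non-torsion, its image in $\rH^1(K,\mathbf{T})\otimes_\Lambda\Frac(\Lambda)$ is non-zero; thus there is at most a proper closed subset of height one primes $\p$ at which $\bkappa_1^{\rm Heeg}$ dies. Concretely, write $\bkappa_1^{\rm Heeg}$ in terms of a presentation of $\rH^1(K,\mathbf{T})$; the locus where its specialisation vanishes is cut out by a nonzero ideal $\mathfrak{a}\subset\Lambda$. One then observes that only finitely many primes of the form $(\gamma-\alpha(\gamma))$ with $\alpha\equiv 1\pmod{p^m}$ can lie in $V(\mathfrak{a})$ once $m$ is large: the characters $\alpha$ with $\alpha\equiv 1\pmod{p^m}$ form a coset of the subgroup $1+p^m\Z_p$ inside $\Hom(\Gamma,\Z_p^\times)$, which corresponds (via $\alpha\mapsto(\gamma-\alpha(\gamma))$) to a small $p$-adic disc of height one primes accumulating at $\p_0=(\gamma-1)$; since $\mathfrak{a}$ is a nonzero ideal of the two-dimensional regular local-ish ring $\Lambda$, its vanishing locus meets this disc in only finitely many points unless $\mathfrak{a}$ is contained in a height one prime through $\p_0$, which is impossible for $\mathfrak{a}\neq 0$ once we restrict $m$ large enough. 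Taking $m$ large enough to avoid this finite bad set gives $\bkappa_1^{\rm Heeg}(\alpha)\neq 0$ for all $\alpha\equiv 1\pmod{p^m}$.

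An alternative and perhaps cleaner route, which I would actually prefer to write up, avoids talking about all of $\Gamma$ at once: fix $m$ and note that the characters $\alpha\equiv 1\pmod{p^m}$ with $\alpha\neq\mathds{1}$ are dense in the residue disc around $\p_0$, and the function $\p\mapsto\bkappa_1^{\rm Heeg}\bmod\p$ is the image of a single element under the maps $\rH^1(K,\mathbf{T})\to\rH^1(K,\mathbf{T}/\p)\to\rH^1(K,T_\p)$; if it vanished for all such $\p$ in a dense set, then by continuity/rigidity $\bkappa_1^{\rm Heeg}$ would be divisible by every such $\gamma-\alpha(\gamma)$, forcing $\bkappa_1^{\rm Heeg}\in\bigcap_\p\p\cdot\rH^1(K,\mathbf{T})$, and this intersection is the $\Lambda$-torsion submodule (as $\rH^1(K,\mathbf{T})$ is finitely generated and the relevant primes are Zariski-dense), contradicting non-torsionness from Cornut--Vatsal. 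The main obstacle — and the only point requiring care — is the passage from "$\bkappa_1^{\rm Heeg}$ is non-$\Lambda$-torsion" to "its specialisation at $\p$ is nonzero", because the specialisation map $\rH^1(K,\mathbf{T})\otimes_\Lambda R_\p\to\rH^1(K,T_\p)$ need not be injective in general: one must control the error using the control theorem, i.e.\ check that the kernel and cokernel are governed by $\rH^0$ and $\rH^2$ terms that vanish thanks to hypothesis~\eqref{eq:intro-tor} (which gives $E(K_\infty)[p]=0$, hence $\rH^0(K,\mathbf{T}/\p)=0$) and to the fact that $T_\p$ is a free rank-two module over a discrete valuation ring. Once this control is in place, the finitely-many-bad-primes conclusion is immediate, and choosing $m\gg 0$ clears the bad set.
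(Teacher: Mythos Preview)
Your proposal is correct and takes the same approach as the paper: the paper's proof is the one-line observation that Cornut and Vatsal prove $\bkappa_1^{\rm Heeg}$ is not $\Lambda$-torsion, from which the claim is declared immediate. Your write-up supplies the routine details (finitely many bad height one primes for a non-torsion element of a finitely generated $\Lambda$-module; injectivity of $\rH^1(K,\mathbf{T})/\p\hookrightarrow\rH^1(K,T_\alpha)$ via \eqref{eq:intro-tor}) that the paper leaves implicit.
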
 

\begin{proof}
This is immediate from the main result of \cite{cornut} and \cite{vatsal}, showing that $\bkappa_{1}^{\rm Heeg}$ is not $\Lambda$-torsion.
\end{proof} 

\begin{rmk}
When $p$ splits in $K$, based on an extension of the Bertolini--Darmon--Prasanna formula and the nonvanishing of the $p$-adic $L$-function $\mathcal{L}_p^{\rm BDP}(f/K)$ recalled in Theorem~\ref{thm:BDP} below, one can give an alternate proof of Theorem~\ref{thmCV} (see \cite{cas-hsieh1} and \cite{bu}).
\end{rmk}

%\begin{rmk} When $p$ splits in $K$, and $v\mid p$ is the prime determined by the

\subsection{Anticyclotomic Iwasawa Main Conjecture}

We keep the hypotheses on $(E,p,K)$ from the preceding section, and assume in addition that
\begin{equation}\label{eq:intro-spl}
\textrm{$p=v\bar{v}$ splits in $K$,}\tag{spl}
\end{equation}
with $v$ the prime of $K$ above $p$ induced by a fixed embedding $\overline{\Q}\hookrightarrow\overline{\Q}_p$.  Fix an integral ideal $\mathfrak{N}\subset\mathcal{O}_K$ with 
\begin{equation}\label{eq:N}
\mathcal{O}_K/\mathfrak{N}\simeq\Z/N\Z.\nonumber
\end{equation}

In this section we show how a certain anticyclotomic Iwasawa Main Conjecture (see Conjecture\,\ref{conj:anticyc}) yields an expression for the divisibility index of 
the Kolyvagin systems
$\{\bkappa_n^{\rm Heeg}(\alpha)\}_{n\in \cN_{\rm Heeg}}$, for $\alpha\equiv 1 \pmod{p^m}$ with $m\gg 0$, in terms of the Tamagawa numbers and the Shafarevich--Tate group of (a twist of) $E$. 

Below we shall write $a\sim_p b$ to denote that $a$ and $b$ have the same $p$-adic valuation.

%Fix an integral ideal $\mathfrak{N}\subset\mathcal{O}_K$ with 
%\begin{equation}\label{eq:N}
%\mathcal{O}_K/\mathfrak{N}\simeq\Z/N\Z,
%\end{equation}
%and let $f\in S_2(\Gamma_0(N))$ be the newform associated with $E$.

%One of the main ingredients in our proof of Kolyvagin's conjecture at Eisenstein primes is the equality of characteristic ideals in the anticyclotomic main conjecture proved in \cite{eisenstein_cyc} specialised at a character $\alpha$ sufficiently close to the trivial character. Let us recall the statement of this result. Let $K_{\infty}$ be the anticyclotomic $\Z_p$-extension of $K$, let $\Gamma=\operatorname{Gal}(K_{\infty}/K)$ and

%
\subsubsection{$p$-adic $L$-function} 

Let $f\in S_2(\Gamma_0(N))$ be the newform attached to %(the isogeny class of)
 $E$. Recall that $\Lambda=\Z_p[[\Gamma]]$ denotes the anticyclotomic Iwasawa algebra of $K$,  and put $\Lambda^{\rm ur}=\Lambda\hat\otimes_{\Z_p}\Z_p^{\rm ur}$, 
where $\Z_p^{\rm ur}$ is the completion of the ring of integers of the maximal unramified extension of $\Q_p$. 

The next result refines the construction of a continuous anticyclototomic $p$-adic $L$-function due to Bertolini--Darmon--Prasanna \cite{BDP}. The $p$-adic measure underlying their construction was explicitly given in \cite{cas-hsieh1}, whose formulation we follow.

\begin{thm}\label{thm:BDP}
There exists an element $\Lcal_p^{\rm BDP}(f/K)\in\Lambda^{\rm ur}$ characterised by the following interpolation property: For every character $\xi$ of $\Gamma$ crystalline at both $v$ and $\bar{v}$ and corresponding to a Hecke character of $K$ of infinity type $(n,-n)$ with $n\in\Z_{>0}$ and $n\equiv 0\pmod{p-1}$, we have
\[
\Lcal_p^{\rm BDP}(f/K)^2(\xi)=\frac{\Omega_p^{4n}}{\Omega_\infty^{4n}}\cdot\frac{\Gamma(n)\Gamma(n+1)\xi(\mathfrak{N}^{-1})}{4(2\pi)^{2n+1}\sqrt{D_K}^{2n-1}}\cdot\bigl(1-a_p\xi(\bar{v})p^{-1}+\xi(\bar{v})^2p^{-1}\bigr)^2\cdot L(f/K,\xi,1),
\]  
where $\Omega_p$ and $\Omega_\infty$ are CM periods attached to $K$ as in \cite[\S{2.5}]{cas-hsieh1}. Moreover, $\Lcal_p^{\rm BDP}(f/K)$ is nonzero.
\end{thm}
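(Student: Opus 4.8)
The plan is to deduce Theorem~\ref{thm:BDP} from the construction in \cite{cas-hsieh1}, essentially by unwinding their normalizations and transferring it to the setting at hand. First I would recall that \cite{cas-hsieh1} constructs a $p$-adic measure on $\Gamma$ — more precisely an element of $\Lambda^{\rm ur}$ — associated to the pair $(f, K)$ together with the fixed ideal $\mathfrak{N}$ and the choice of CM periods $\Omega_p, \Omega_\infty$, and records its interpolation at anticyclotomic Hecke characters of infinity type $(n,-n)$. The element $\Lcal_p^{\rm BDP}(f/K)$ in the statement is to be taken as (a square root of) that measure, suitably renormalized so that the interpolation formula reads exactly as displayed; the congruence $n\equiv 0 \pmod{p-1}$ is imposed so that the relevant Hecke character is trivial on the Teichm\"uller part and the crystalline/interpolation conditions at $v, \bar v$ are met, matching the range of \cite[Thm.~A or Thm.~4.9]{cas-hsieh1}. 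The Euler-type factor $\bigl(1-a_p\xi(\bar v)p^{-1}+\xi(\bar v)^2 p^{-1}\bigr)^2$ is the square of the modified Euler factor at $\bar v$ coming from the $p$-stabilization of $f$ (recall $f$ is ordinary, so $a_p$ is a $p$-adic unit), and the factors $\Gamma(n)\Gamma(n+1)/4(2\pi)^{2n+1}\sqrt{D_K}^{\,2n-1}$ together with $\xi(\mathfrak N^{-1})$ are exactly the archimedean and normalization constants appearing in \emph{op.\,cit.}

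Second, I would address the \emph{existence} claim: that the interpolation property characterizes a unique element of $\Lambda^{\rm ur}$. Uniqueness is standard — the characters $\xi$ of infinity type $(n,-n)$ with $n\equiv 0\pmod{p-1}$, $n>0$, crystalline at $v,\bar v$, form a Zariski-dense subset of $\operatorname{Spec}\Lambda^{\rm ur}$ (indeed a dense set of continuous characters $\Gamma\to \overline{\Q}_p^\times$), so any element of $\Lambda^{\rm ur}$ is determined by its values at them; existence is exactly the content of the construction in \cite{cas-hsieh1}, where the measure is built by $p$-adically interpolating (nearly) holomorphic Eisenstein series or Rankin--Selberg integrals and shown to lie in $\Lambda^{\rm ur}$. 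So this step is a citation plus a density remark, and I would present it as such rather than reproving it.

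Third, the \emph{non-vanishing} assertion $\Lcal_p^{\rm BDP}(f/K)\neq 0$: here I would invoke the known non-vanishing results for the BDP $p$-adic $L$-function. The cleanest route is to cite \cite{cornut} and \cite{vatsal} via the Euler-system/Heegner-point translation already available in the excerpt — indeed the Remark following Theorem~\ref{thmCV} asserts precisely that an extension of the Bertolini--Darmon--Prasanna formula links $\Lcal_p^{\rm BDP}(f/K)$ to $\bkappa_1^{\rm Heeg}$, and Theorem~\ref{thmCV} (Cornut--Vatsal) says $\bkappa_1^{\rm Heeg}$ is not $\Lambda$-torsion; hence $\Lcal_p^{\rm BDP}(f/K)$ cannot be zero. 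Alternatively, and perhaps more self-containedly, one cites the non-vanishing of anticyclotomic $L$-values of Rankin--Selberg type (Rohrlich, or the CM-point non-vanishing results) which forces the interpolated measure to have a nonzero value at infinitely many $\xi$, and therefore to be a nonzero element of the domain $\Lambda^{\rm ur}$.

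The main obstacle is really bookkeeping rather than mathematics: matching \emph{exactly} the normalization in \cite{cas-hsieh1} — the precise choice of CM periods $\Omega_p, \Omega_\infty$, the precise shape of the $\bar v$-Euler factor (and whether it is $\xi(\bar v)$ or $\xi^{-1}(v)$, a convention issue), the square-root normalization (\emph{op.\,cit.} typically constructs $\Lcal_p^{\rm BDP}$ so that $\Lcal_p^{\rm BDP}(\xi)^2$, not $\Lcal_p^{\rm BDP}(\xi)$, equals the $L$-value times periods), and the factor $\xi(\mathfrak N^{-1})$ depending on the chosen ideal $\mathfrak N$ with $\Oo_K/\mathfrak N\simeq \Z/N\Z$. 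I would therefore structure the proof as: (i) cite the construction of the measure and its interpolation from \cite{cas-hsieh1}, fixing once and for all the periods and the ideal $\mathfrak N$ as in \eqref{eq:N}; (ii) set $\Lcal_p^{\rm BDP}(f/K)$ to be that element (up to the standard square-root normalization), observing that the displayed formula is exactly \emph{op.\,cit.}'s interpolation formula written in these conventions; (iii) note uniqueness by density of the interpolation characters; and (iv) invoke the non-vanishing, e.g. via Theorem~\ref{thmCV} and the explicit reciprocity law relating $\bkappa_1^{\rm Heeg}$ to $\Lcal_p^{\rm BDP}(f/K)$, or directly via \cite{hsieh-nonvanishing}-type results. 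Thus no genuinely new argument is needed; the theorem is a packaging of \cite{cas-hsieh1} together with a standard non-vanishing input.
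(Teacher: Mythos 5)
Your proposal matches the paper's own (one-line) proof in spirit: the paper simply cites \cite[\S 3]{cas-hsieh1} (via the discussion in \cite[Thm.\,2.1.1]{eisenstein}) for both the construction/interpolation and the nonvanishing, and your steps (i)--(iii) are exactly the unwinding of that citation, with the density-of-interpolation-points remark for uniqueness. One caveat on step (iv): deducing nonvanishing of $\Lcal_p^{\rm BDP}(f/K)$ from Theorem~\ref{thmCV} through the reciprocity law $\mathscr{L}_{T^+}(\loc_v(\bkappa_1^{\rm Heeg}))=\Lcal_p^{\rm BDP}(f/K)$ is not quite immediate --- you would also need to know that $\loc_v$ does not annihilate the non-torsion part of $\bkappa_1^{\rm Heeg}$, which is a genuine (though known) input and in the paper's later logic the implication actually runs in the opposite direction (from nonvanishing of the $p$-adic $L$-value to nonvanishing of $\loc_v(\bkappa_1^{\rm Heeg}(\alpha))$, cf.\ Lemma~\ref{lem:erl}); the cleaner route, and the one the cited sources take, is to invoke the anticyclotomic $L$-value nonvanishing results (Hsieh/Burungale type) directly.
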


\begin{proof} 
This follows from results contained in \cite[\S{3}]{cas-hsieh1}, as explained in the proof \cite[Thm.\,2.1.1]{eisenstein}.
\end{proof}

\begin{defi}\label{def:alpha_m}
A collection of characters as in Theorem \ref{thm:BDP} with $n$ essentially arbitrary can be defined as follows. Let $u\in 1 + p\Zp = 1+p\mathcal{O}_{K_v}\subset \mathcal{O}_{K_v}^\times$ be a $\Zp$-generator and let $\gamma\in \Gamma$ be a topological generator such that there exists  $h>0$ a power of $p$ for which 
$\gamma^h$ is the image of $u$ under the reciprocity map of class field theory. For $n\in\Z$,  let 
$\xi_n: \Gamma \rightarrow \Z_p^\times$ be the unique continuous character such $\xi_n(\gamma) = u^n$.  Then, for $n\equiv 0 \pmod{p-1}$, the composition of $\xi_n$ with 
the projection $G_K\twoheadrightarrow \Gamma$ is crystalline at $v$ (and hence also $\bar v$) and the corresponding  Hecke character of $K$
has infinity type $(hn,-hn)$. 
For $m>0$ we let 
\[\alpha_m = \xi_{(p-1)p^{m-1}}.
\] 
Then $\alpha_m \equiv 1 \pmod{p^m}$.
\end{defi}

\subsubsection{$\Lambda$-adic formula of Bertolini--Darmon--Prasanna formula: A consequence}\label{ss, BDP-f}

We now consider three distinguished\footnote{Some of these curves may coincide.} elliptic curves $E_0, E_1, E_\bullet$ in the $\Q$-isogeny class associated to $f$. 

{ First, $E_0/\Q$ denotes the associated optimal quotient of $X_0(N)$ with optimal modular parameterisation $\pi_0:X_0(N)\rightarrow E_0$ (so in particular the induced map $J_0(N)\rightarrow E_0$ has connected kernel). Similarly, $E_1/\Q$ is the optimal quotient of $X_1(N)$ with optimal parameterisation $\pi_1:X_1(N)\rightarrow E_1$. 
Let $\pi_{10}:X_1(N)\rightarrow X_0(N)$ be the usual map `forgetting' level structure. 
Let $\phi_{10}:E_1\rightarrow E_0$ be the cyclic isogeny such that $$\phi_{10}\circ \pi_1 = \pi_0\circ\pi_{10}.$$  The isogeny $\phi_{10}$ has constant kernel and is \'etale away from $2$.
To introduce $E_\bullet$, we consider the Galois representation $V_f$ associated to $f$ arising as the maximal quotient of $\rH^1(\overline{Y_1(N)},\Q_p(1))$ on which the Hecke operator $T_n$ acts as $a_n$ for all $n\geq 1$, where $a_n$ denotes the $n$-th Fourier coefficient of $f$. 
Let $T_f$ denote the image of $\rH^1(\overline{Y_1(N)},\Z_p(1))$ in $V_f$.  
The proof of \cite[Thm.~4]{wuthrich-int} constructs an \'etale cyclic isogeny $\phi:E_0\rightarrow A$ such that the restriction to $Y_1(N)$ of the composition
$\phi\circ\phi_{10}\circ\pi_1:X_1(N)\rightarrow A$ identifies $T_pA$ with $T_f$.  We set $E_\bullet  = A$ and write $\phi_{0\bullet}:E_0\rightarrow E_\bullet$ for $\phi$.
We let $\phi_{1\bullet}:E_1\rightarrow E_\bullet$ be the composition
\[
\phi_{1\bullet}:E_1\xrightarrow{\phi_{10}} E_0\xrightarrow{\phi_{0\bullet}} E_\bullet.
\]
This is a cyclic isogeny that is \'etale away from $2$.
The careful reader will have observed that this notation differs slightly from that in \cite{wuthrich-int}: the curve denoted $E_\bullet$ in {\em op.~cit.}~possibly differs from ours by an
isogeny of order a power of $2$.}
%
%the elliptic curve $E_\bullet/\Q$ is the one from \cite[Thm.~4]{wuthrich-int}. By construction, there is a cyclic isogeny $\phi_{1\bullet}:E_1\rightarrow E_\bullet$ factoring through $E_0$ as follows:
%\[
%\phi_{1\bullet}:E_1\xrightarrow{\phi_{10}} E_0\xrightarrow{\phi_{0\bullet}} E_\bullet,
%\]
%\textcolor{blue}{where $\phi_{0\bullet}$ is a cyclic isoegny which is étale away from 2} and for which the composition $\phi_{1\bullet}\circ\pi_1:X_1(N)\rightarrow E_\bullet$ yields an identification 
%\[
%T_pE_\bullet=T_f
%\]
%as $\Z_p$-lattices in $V_f$ (see \cite[Prop.~8]{wuthrich-int}, \textcolor{blue}{note that our $E_\bullet$ and the one in \emph{op.cit.} might differ by an isogeny of degree a power of 2, which does not affect our results since $p$ is odd}).

We add a subscript `$0$' to indicate the objects $T_{\alpha}$,  $T_{\alpha}^{(k)}$, $T$, $\mathbf{T}$, $\kappa^{\rm Heeg}=\{\kappa_{n}^{\rm Heeg}\}_n$, $\boldsymbol{\kappa}^{\rm Heeg}=\{\boldsymbol{\kappa}_n^{\rm Heeg}\}_n$ defined above for $E=E_0$ with the optimal quotient $\pi_0:X_0(N)\rightarrow E_0$ (i.e., $T_{0\alpha}$,  $T_{0\alpha}^{(k)}$, $T_0$, $\mathbf{T}_{0}$, $\kappa_{0}^{\rm Heeg}$, etc.), and likewise add a subscript `$\bullet$' to denote the corresponding objects for $E=E_\bullet$ with the modular parameterisation 
\[
\pi_\bullet:X_0(N)\xrightarrow{\pi_0}E_0\xrightarrow{\phi_{0\bullet}} E_\bullet.
\]
%It will be convenient to assume now that $\pi:X_0(N)\rightarrow E$ is a modular parametrization of minimal degree that identifies
%$T_pE$ with $T_f$; 
%
%where $T_f$ is the quotient of $\rH^1_{\rm et}(\overline{Y_1(N)},\Zp(1))$ associated with $f$ (cf.~\cite[Prop.\,8]{wuthrich-int}, where such elliptic curve is denoted $E_\bullet$); we use such a choice of $\pi$ in the following. 
%in other words, $E$ is the elliptic curve denoted $E_\bullet$ in \cite{wuthrich-int}. 
%
%
%If \eqref{eq:irr} holds, then $\pi$ is $p$-optimal in the sense of \cite[\S{3.7}]{zhang_ind}, and $\pi$ (and $E$) is uniquely determined up to
%a prime-to-$p$ isogeny, and the exact choice of $\pi$ has no effect on the formulas and relations below. 

Let $\kappa_\infty\in\rH^1_{\Fcal_\Lambda}(K,\mathbf{T}_\bullet)$ be the $\Lambda$-adic class constructed in \cite[\S{5.2}]{cas-hsieh1}. As explained in \cite[Rem.~4.1.3]{eisenstein}, the class $\kappa_\infty$ generates the same $\Lambda$-submodule of $\rH^1_{\Fcal_\Lambda}(K,\mathbf{T}_\bullet)$ as the base class $\bkappa_{\bullet 1}^{\rm Heeg}$ of the $\Lambda$-adic Kolyvagin system $\bkappa_\bullet^{\rm Heeg}$ associated to $E_\bullet$.
%
%For any character $\alpha:\Gamma\rightarrow\Z_p^\times$, denote by 
%\[
%\bkappa_{\bullet 1}^{\rm Heeg}(\alpha)\in \rH^1(K, T_\alpha)
%\]
%the associated class as in \S\ref{subsec:tw}. 
%the image of $\bkappa_1^{\rm Heeg}$ under the specialisation map $\rH^1(K,\mathbf{T})\rightarrow\rH^1(K,T_{\alpha})$.

As shown in \cite[\S{5}]{cas-hsieh1}, there exists a $\Lambda$-module map 
\[
\mathscr{L}_{T^+}:\rH^1(K_v, \Fil_v^+(T_\bullet)\otimes_{\Zp}\Lambda) \rightarrow \Lambda^{\rm ur}
\]
(a `big logarithm map') interpolating the Bloch--Kato logarithm and dual exponential maps for varying specialisations of $\Fil_v^+(T_\bullet)\otimes\Lambda$, and for which a $\Lambda$-adic extension of the main result of Bertolini--Darmon--Prasanna \cite{BDP} translates into the equality
\begin{equation}\label{eq:BDP-ERL}
\mathscr{L}_{T^+}(\loc_v(\bkappa_1^{\rm Heeg}))= \Lcal_p^{\rm BDP}(f/K)
\end{equation}
up to a $p$-adic unit (see \cite[Thm.~5.7]{cas-hsieh1}).
%yielding a $\Lambda$-adic extension of the $p$-adic Gross--Zagier formula of \cite{BDP}. 

In the setting of this paper, $\mathscr{L}_{T^+}$ induces an injection $\rH^1(K_v, \Fil_v^+(T_\bullet)\otimes_{\Zp}\Lambda)\otimes_\Lambda\Lambda^{\rm ur}\hookrightarrow\Lambda^{\rm ur}$ with finite cokernel.  
From this we deduce the following.

\begin{lemma}\label{lem:erl}
%Assume $E/\Q$ is the elliptic curve in the isogeny class attached to $f$ with $T_pE\simeq T_f$. 
%Assume $E/\Q$ is as above. 
Let $\al: \Gamma \to \Z_p^{\times}$ be a character as in Theorem~\ref{thm:BDP} with $n\geq 0$ and such that $\Lcal_p^{\rm BDP}(f/K)(\al^{-1})\neq 0$. Then $\boldsymbol{\kappa}_{1\bullet}^{\rm Heeg}(\alpha)\neq 0$ and $\rH^1_{\Fcal_{\rm ord}}(K,T_{\bullet\alpha})$ is a free $\Z_p$-module of rank one. Moreover, if $\alpha\equiv 1\;({\rm mod}\,p^m)$ with $m\gg 0$, then
\[
p^{t_\alpha} \cdot\#\rH^0(\Q_p,E_\bullet[p^\infty]) =\# \bigl({\rm H}^1_{\Fcal_{\rm ord}}(K,T_{\bullet\al})/\Z_p\cdot\bkappa^{\rm Heeg}_{\bullet 1}(\al)\bigr)  \cdot \#{\rm coker}(\loc_{\bullet{v}}),
%\# \left(\Z_p/\Lcal_p^{\rm BDP}(f/K)(\al^{-1}) \right) \cdot\#\rH^0(\Q_p,E[p^\infty]) =\# \bigl({\rm H}^1_{\Fcal_{\rm ord}}(K,T_{\al})/\Z_p\cdot\bkappa^{\rm Heeg}_{1}(\al)\bigr)  \cdot \#{\rm coker}(\loc_{v}),
\]
where 
\[
t_\alpha = \mathrm{length}_{\Z_p^{\rm ur}}\bigl(\Z_p^{\rm ur}/\Lcal_p^{\rm BDP}(f/K)(\al^{-1}) \bigr)
\]
and
$\loc_{\bullet{v}}:\rH^1_{\Fcal_{\rm ord}}(K,T_{\bullet\al})\to \rH^1_{\Fcal_{\rm ord}}(K_v,T_{\bullet\al})/\rH^1(K_v,T_{\bullet\al})_{\rm tors}=:\rH^1(K_v, {\rm Fil}_v^+(T_{\bullet\al}))_{/\rm tors}$.
\end{lemma}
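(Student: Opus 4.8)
The plan is to run a Poitou--Tate / Greenberg-style Euler characteristic argument for the Selmer group $\rH^1_{\Fcal_{\rm ord}}(K,T_\al)$, comparing it with the image of the base class under the local map at $v$. First I would exploit the fact that $\mathscr{L}_{T^+}$ induces an isomorphism after $\otimes_\Lambda \Lambda^{\rm ur}$, so that specialising at $\al^{-1}$ (legitimate since $\al^{-1}$ corresponds to a height-one prime different from $p\Lambda$) gives that $\loc_v(\bkappa_1^{\rm Heeg}(\al))$ generates a submodule of $\rH^1(K_v,{\rm Fil}_v^+T_\al)_{/{\rm tors}}$ of index (measured by $\Z_p^{\rm ur}$-length) equal to $t_\alpha + (\text{length of the cokernel of the specialised big-log map})$. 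The hypothesis $\Lcal_p^{\rm BDP}(f/K)(\al^{-1})\neq 0$ guarantees $\loc_v(\bkappa_1^{\rm Heeg}(\al))\neq 0$, hence $\bkappa_1^{\rm Heeg}(\al)\neq 0$, and since $\rH^1_{\Fcal_{\rm ord}}(K,T_\al)$ has $\Z_p$-rank one (this is where I would invoke the control theorem of \cite{jsw} extended to allow the twist $\al$, using $m\gg 0$ so that $\al$ is sufficiently close to $\mathds{1}$ for the control to hold with the expected error terms) the quotient $\rH^1_{\Fcal_{\rm ord}}(K,T_\al)/\Z_p\cdot\bkappa_1^{\rm Heeg}(\al)$ is finite.

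Next I would set up the exact sequence relating the strict Selmer group at $v$, the relaxed-at-$v$ Selmer group, and the local cohomology at $v$:
\[
0 \to \rH^1_{\Fcal_{\rm ord}}(K,T_\al)/\Z_p\cdot\bkappa_1^{\rm Heeg}(\al) \to \rH^1(K_v,{\rm Fil}_v^+T_\al)_{/{\rm tors}}/\Z_p\cdot\loc_v(\bkappa_1^{\rm Heeg}(\al)) \to \operatorname{coker}(\loc_v) \to 0,
\]
which is exact essentially by definition of $\operatorname{coker}(\loc_v)$ once one knows $\loc_v$ is injective on $\rH^1_{\Fcal_{\rm ord}}(K,T_\al)$ modulo torsion --- and $\rH^1_{\Fcal_{\rm ord}}(K,T_\al)$ is torsion-free by \eqref{eq:intro-tor}, while its kernel under $\loc_v$ is the strict Selmer group, which one checks is torsion (indeed trivial) using that the relevant Bloch--Kato dual Selmer group over $K$ vanishes for the anticyclotomic twist. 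Taking cardinalities in this sequence, and identifying $\#\bigl(\rH^1(K_v,{\rm Fil}_v^+T_\al)_{/{\rm tors}}/\Z_p\cdot\loc_v(\bkappa_1^{\rm Heeg}(\al))\bigr)$ with $p^{t_\alpha}\cdot\#\rH^0(\Q_p,E[p^\infty])$ via the explicit reciprocity law $\mathscr{L}_{T^+}(\loc_v(\bkappa_1^{\rm Heeg}))=\Lcal_p^{\rm BDP}(f/K)$ together with the computation of the cokernel of the specialised $\mathscr{L}_{T^+}$ (the factor $\#\rH^0(\Q_p,E[p^\infty])$ is precisely the discrepancy between the naive index and the length of $\Z_p^{\rm ur}/\Lcal_p^{\rm BDP}(f/K)(\al^{-1})$, coming from the Euler factor and the normalisation of the big logarithm), yields the stated formula.

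I expect the main obstacle to be two intertwined technical points: first, establishing the twisted control theorem --- extending \cite{jsw} to characters $\al\equiv 1\pmod{p^m}$ rather than just $\al=\mathds{1}$ --- with sufficient precision to conclude that $\rH^1_{\Fcal_{\rm ord}}(K,T_\al)$ has rank exactly one and no spurious torsion for $m\gg 0$; and second, pinning down the exact local term, i.e.\ computing $\#\operatorname{coker}\bigl(\mathscr{L}_{T^+}\bmod \p\bigr)$ and its relation to $\#\rH^0(\Q_p,E[p^\infty])$ and $\#\operatorname{coker}(\loc_v)$, since the big logarithm is only an isomorphism after inverting suitable elements and the integral discrepancy is exactly what produces the arithmetic factors in the formula. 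The Euler characteristic bookkeeping itself is routine once these inputs are in place; I would handle it by the standard global-duality long exact sequence with the ordinary local conditions, as in the Schneider--Perrin-Riou computation, adapted to the twisted setting.
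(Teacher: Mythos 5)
Your plan is essentially the paper's argument: use the big-logarithm isomorphism over $\Lambda^{\rm ur}$ and the cokernel of the specialisation map at $v$ to compute $\#\bigl(\rH^1(K_v,\Fil_v^+T_\al)/\Zp\loc_v(\bkappa_1^{\rm Heeg}(\al))\bigr) = p^{t_\alpha}\cdot\#\rH^0(\Q_p,E[p^\infty])$, then split this index through the injection of $\rH^1_{\Fcal_{\rm ord}}(K,T_\al)/\Zp\bkappa_1^{\rm Heeg}(\al)$ induced by $\loc_v$. Two small simplifications in the paper: injectivity of $\loc_v$ comes for free because both source and target are torsion-free $\Zp$-modules of rank one and the map is nonzero (no need to invoke vanishing of a dual/strict Selmer group), and the factor $\#\rH^0(\Q_p,E[p^\infty])$ is identified concretely as $\#\rH^2(K_v,\Fil_v^+T\otimes_{\Zp}\Lambda)[\gamma-\al(\gamma)]$ via local duality, rather than as a normalisation discrepancy of the big logarithm.
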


\begin{proof}
By the interpolation property of the map $\mathscr{L}_{T^+}$ in \cite[Thm.~5.1]{cas-hsieh1}, the non-vanishing of $\Lcal_p^{\rm BDP}(f/K)(\al^{-1})$  implies that ${\rm loc}_v(\bkappa^{\rm Heeg}_{\bullet{1}}(\al))$ has nonzero image in $\rH^1(K_v,{\rm Fil}_v^+(T_{\bullet\al}))$ (in particular, $\boldsymbol{\kappa}_{\bullet{1}}^{\rm Heeg}\neq 0$). 
%(note that the latter is a torsion-free $\Zp$-module, as one sees immediately by considering the $G_{K_v}$-cohomology long exact sequence associated to the map given by multiplication by $p$).  
Furthermore the cokernel of the specialization map $\rH^1(K_v,\Fil_v^+(T_\bullet)\otimes_{\Zp}\Lambda)\rightarrow \rH^1(K_v,\Fil_v^+(T_{\bullet\al}))$ 
is naturally isomorphic to $\rH^2(K_v,\Fil_v^+(T_\bullet)\otimes_{\Zp}\Lambda)[\gamma-\alpha(\gamma)]$, whose order is easily computed via local duality to equal
$\#\rH^0(\Q_p,E_\bullet[p^\infty])$ if $m\gg 0$. Since $\rH^1(K_v,\Fil_v^+(T_{\bullet\al})$ is a free $\Zp$-module of rank one, it follows that
\begin{equation}\label{eq:loc-free}
\#\bigl(\rH^1(K_v,\Fil_v^+(T_{\bullet\al}))/\Zp\loc_v (\bkappa^{\rm Heeg}_{\bullet{1}}(\alpha))\bigr) = p^{t_\alpha}  \cdot\#\rH^0(\Q_p,E_\bullet[p^\infty]).
%\#\bigl(\rH^1(K_v,\Fil_v^+T_\al)/\Zp\loc_v (\bkappa^{\rm Heeg}_1(\alpha))\bigr) = \# \left(\Z_p/\Lcal_p^{\rm BDP}(f/K)(\al^{-1}) \right) \cdot\#\rH^0(\Q_p,E[p^\infty]).
\end{equation}

By \cite[Thm.~6.1.1]{eisenstein_cyc}, the non-vanishing of $\boldsymbol{\kappa}_{\bullet 1}^{\rm Heeg}(\alpha)$ implies that $\rH^1_{\Fcal_{\rm ord}}(K,T_{\bullet\al})$ is a free $\Z_p$-module of rank one. Thus, the maps $\loc_{\bullet{v}}$ induces an injection 
\[
\rH^1_{\Fcal_{\rm ord}}(K,T_{\bullet\al})/\Zp\bkappa^{\rm Heeg}_{\bullet{1}}(\alpha) \hookrightarrow \rH^1(K_v,\Fil_v^+(T_{\bullet\al}))/\Zp\loc_{\bullet v}(\bkappa^{\rm Heeg}_{\bullet{1}}(\alpha)),
\]
from which it follows that
\begin{equation}\label{eq:glob-free}
\#\bigl(\rH^1(K_v,\Fil_v^+(T_{\bullet\al}))/\Zp\loc_{\bullet v} (\bkappa^{\rm Heeg}_{\bullet 1}(\alpha))\bigr) = \#\bigl(\rH^1_{\Fcal_{\rm ord}}(K,T_{\bullet\al})/\Zp \bkappa^{\rm Heeg}_{\bullet 1}(\alpha)\bigr)\cdot \#{\rm coker}(\loc_{\bullet v}).
\end{equation}
Combining \eqref{eq:loc-free} and \eqref{eq:glob-free} yields the lemma.
\end{proof}

\begin{rmk}\label{rmk:index-HeegnerPt}
In the case that $\alpha = \mathds{1}$, we have ${\bkappa}_{1}^{\rm Heeg}(\mathds\alpha) = (1-\alpha_p)^2(1-\beta_p)^2{\kappa}_1^{\rm Heeg}$. So, noting that
$\#(\Zp/(1-\alpha_p)(1-\beta_p)) = \#(\Zp/(1-\alpha_p)) = \#\rH^0(\Qp,E[p^\infty])$, the formula in Lemma~\ref{lem:erl} can be rewritten
as
\[
\#\left (\Z_p/\Lcal_p^{\rm BDP}(f/K)(\mathds{1}) \right)  =\# \bigl({\rm H}^1_{\Fcal_{\rm ord}}(K,T_{\bullet\al})/\Z_p\cdot\kappa^{\rm Heeg}_{\bullet{1}}\bigr)  \cdot \#{\rm coker}(\loc_{\bullet{v}})
\cdot\#\rH^0(\Q_p,E_\bullet[p^\infty]).
\]
\end{rmk}

The formula of Lemma~\ref{lem:erl} links the specialisation of the $p$-adic $L$-function at $\alpha$ with the base class in the Heegner point Kolyvagin system $\boldsymbol{\kappa}_\bullet^{\rm Heeg}$ for $\pi_\bullet:X_0(N)\rightarrow E_\bullet$. The result implies that the same relation holds for the optimal quotient $\pi_0:X_0(N)\rightarrow E_0$ and the associated $\bkappa_{0}^{\rm Heeg}=\{\bkappa_{0n}^{\rm Heeg}\}_n$.

%for a specific elliptic curve 
%(up to prime-to-$p$ isogeny) associated with a  specific choice of lattice, namely the curve $E_\bullet$ associated with the lattice $T_f$. 
%The following lemma tells us that we can rephrase it in terms of the corresponding class attached to any elliptic curve in the isogeny class associated with $f$. 
%Let $E/\Q$ be a such a curve.
%We write $\bkappa_{1,f}^{\rm Heeg}(\alpha)$ for the classes $\bkappa_1^{\rm Heeg}(\alpha)$ associated as above with the lattice $T_f$ and $\bkappa^{\rm Heeg}_{1,E}(\alpha)$ for the ones for $T=T_pE$.

\begin{lemma}\label{lemma:erlinv} 
Let the notations and hypotheses be as in Lemma~\ref{lem:erl}. Then $\bkappa_{01}^{\rm Heeg}(\alpha)\neq 0$, $\rH^1_{\Fcal_{\rm ord}}(K,T_{0\alpha})$ is $\Z_p$-free of rank one, and we have
\[
\#\bigl({\rm H}^1_{\Fcal_{\rm ord}}(K,T_{\bullet\alpha})/\Z_p\cdot\bkappa^{\rm Heeg}_{\bullet{1}}(\alpha)\bigr)  \cdot \#{\rm coker}(\loc_{\bullet{v}}) =  \# \bigl({\rm H}^1_{\Fcal_{\rm ord}}(K,T_{0\alpha})/\Z_p\cdot\bkappa^{\rm Heeg}_{0 1}(\alpha)\bigr) \cdot \#{\rm coker}(\loc_{0 v}),
\]
where $\loc_{0v}:\rH^1_{\Fcal_{\rm ord}}(K,T_{0\alpha})\to \rH^1_{\Fcal_{\rm ord}}(K_v, T_{0\alpha})/\rH^1(K_v,T_{0\alpha})_{\rm tors}=:\rH^1(K_v,\Fil_v^+(T_{0\alpha}))_{/ \rm tors}$. 

%Let $E/\Q$ be any elliptic curve in the isogeny class associated with $f$, and let $T=T_pE$. Assume that $\bkappa^{\rm Heeg}_{1,f}(\alpha)\neq 0$. Consider the map 
%\[
%\loc_{v,E}:\rH^1_{\Fcal_{\rm ord}}(K,T_{\alpha})\to \rH^1_{\Fcal_{\rm ord}}(K_v, T_{\alpha})/\rH^1(K_v,T_{\alpha})_{\rm tors}=:\rH^1_{\Fcal_{\rm ord}}(K_v, T_{\alpha})_{/ \rm tors},
%\] 
%and let ${\rm loc}_{v,f}$ be the analogous map with $T_f$ in place of $T$. Then 
%\[
%\#\bigl({\rm H}^1_{\Fcal_{\rm ord}}(K,T_{f,\alpha})/\Z_p\cdot\bkappa^{\rm Heeg}_{1,f}(\alpha)\bigr)  \cdot \#{\rm coker}(\loc_{v,f}) =  \# \bigl({\rm H}^1_{\Fcal_{\rm ord}}(K,T_{\alpha})/\Z_p\cdot\bkappa^{\rm Heeg}_{1,E}(\alpha)\bigr) \cdot \#{\rm coker}(\loc_{v,E}).
%\]
%where
\end{lemma}

\begin{proof}
%Let $E_\bullet$ be the elliptic curve as before such that $T_pE_\bullet$ is identified with $T_f$. We may assume without loss of generality that $E$ is \emph{closer} than $E_\bullet$ to the optimal curve, i.e.~the degree of the modular parametrisation for $E$ is smaller than the one for $E_\bullet$. 
%
By construction, there is an \'etale isogeny $\phi_{0\bullet}: E_0 \to E_\bullet$ such that the induced injective map
\begin{equation}\label{eq:phi0bullet}
\phi_{0\bullet}: \rH^1_{\Fcal_{\rm ord}}(K,T_{0\alpha})\to   \rH^1_{\Fcal_{\rm ord}}(K,T_{\bullet\alpha})
\end{equation}
satisfies $\phi_{0\bullet}(\bkappa_{0 1}^{\rm Heeg}(\alpha))=\bkappa^{\rm Heeg}_{\bullet 1}(\alpha)$. Thus the nonvanishing of $\bkappa^{\rm Heeg}_{\bullet 1}(\alpha)$ implies 
$\bkappa_{0 1}^{\rm Heeg}(\alpha)\neq 0$, and by \cite[Thm.~6.1.1]{eisenstein_cyc} it follows that ${\rm H}^1_{\Fcal_{\rm ord}}(K,T_{\bullet\alpha})$ and ${\rm H}^1_{\Fcal_{\rm ord}}(K,T_{0\alpha})$ are $\Z_p$-free of rank one. Hence \eqref{eq:phi0bullet} is injective, and letting $d\in\Z_{\geq 0}$ be such that $[\rH^1_{\Fcal_{\rm ord}}(K,T_{\bullet\alpha}): \phi_{0\bullet}(\rH^1_{\Fcal_{\rm ord}}(K,T_{0\alpha}))]=p^d$, we deduce
\begin{equation}\label{eq:indexfE}
\#\bigl(\rH^1_{\Fcal_{\rm ord}}(K,T_{\bullet\alpha}))/\Zp\cdot \bkappa_{\bullet 1}^{\rm Heeg}(\alpha)\bigr) = p^d\cdot \#\bigl(\rH^1_{\Fcal_{\rm ord}}(K,T_{0\alpha}))/\Zp\cdot \bkappa_{01}^{\rm Heeg}(\alpha)\bigr).
\end{equation}
%\ind(\kappa_{1,f}(\alpha), \rH^1_{\Fcal_{\rm ord}}(K,T_{f,\alpha}))= d + \ind(\kappa_{1,E}(\alpha), \rH^1_{\Fcal_{\rm ord}}(K,T_\alpha)),
%\end{equation}
%where, if $M$ is a finitely generated $\Z_p$-module and $c\in M$, $\ind(c,M):=\max\{m: c\in p^m M\}$.
Furthermore, since ${\rm Fil}_v^+(T_{0\alpha})={\rm Fil}_v^+(T_{\bullet\alpha})$ by the etaleness at $p$ of $\phi_{0\bullet}:E_0\rightarrow E_\bullet$, we have the following commutative diagram:
\[
\begin{tikzcd}
\rH^1_{\Fcal_{\rm ord}}(K,T_{\bullet\alpha}) \arrow{r}{\loc_{\bullet v}} &\rH^1(K_v, \Fil_v^+(T_{\bullet\alpha}))_{/ \rm tors} \\
\rH^1_{\Fcal_{\rm ord}}(K,T_{0\alpha}) \arrow{u}{\phi_{0\bullet}}\arrow{r}{\loc_{0 v}} & \rH^1(K_v,\Fil_v^+(T_{0\alpha}))_{/ \rm tors} \arrow[equal]{u}
\end{tikzcd}
\]
Thus we obtain 
\begin{align*}
{\rm coker}(\loc_{0v})&=\rH^1(K_v,\Fil_v^+(T_{\bullet \alpha}))_{/ \rm tors}/{\rm loc}_{\bullet v}(\phi_{0\bullet}(\rH^1_{\Fcal_{\rm ord}}(K,T_{0\alpha})))\\
&=\rH^1(K_v,\Fil_w^+(T_{\bullet\alpha}))_{/ \rm tors}/p^d\rH^1_{\Fcal_{\rm ord}}(K,T_{\bullet\alpha}),
\end{align*}
and hence the equality 
\begin{equation}\label{eq:indexloc}
\#{\rm coker}(\loc_{0v})=p^d\cdot\#{\rm coker}(\loc_{\bullet v}).
\end{equation}
Combining \eqref{eq:indexfE} and \eqref{eq:indexloc} yields the desired result.
\end{proof}

\subsubsection{Anticyclotomic Selmer group}

%Let us recall the definition of the Selmer group appearing in the main conjecture. 
Now we let $E/\Q$ be any elliptic curve as in $\S\ref{subsec:KS-Heeg}$. 

Let $\Sigma$ be a finite set of places of $K$ containing $\infty$ and the primes dividing $Np$, and such that all finite primes in $\Sigma$ split in $K$.
For a  $\Z_p$-module $A$, we let $A^\vee={\rm Hom}_{\rm cts}(A,\Q_p/\Z_p)$ denote its Pontryagin dual. Put
\[
M:=T\otimes_{\Z_p}\Lambda^\vee=T_pE\otimes_{\Z_p} \Lambda^{\vee},
\]
where $\Lambda^{\vee}$ is equipped with a $G_K$-action via the inverse of the tautological character $G_K\rightarrow\Lambda^\times$. 
%arising from the projection $G_K\twoheadrightarrow\Gamma$.

\begin{defi}Let $G_{K,\Sigma}$ denote the Galois group of the maximal extension of $K$ unramified outside $\Sigma$. We define the \emph{Greenberg Selmer group} of $M$ as follows:
\begin{equation}\notag
\rH^1_{\Fcal_{\rm Gr}}(K, M)=\ker \biggl\{\rH^1(G_{K,\Sigma},M) \to {\rH^1(K_v,M)}\times \prod_{w\in\Sigma,w\nmid p}\rH^1(K_w, M)\biggr\}.
\end{equation}
Similarly, we let $\mathcal{F}_{\rm Gr}$ denote the Selmer structure on $V_\alpha$ defined for $w\in\Sigma$ by
\[
\rH^1_{\Fcal_{\rm Gr}}(K_w,V_\alpha):=
\begin{cases}
0&\textrm{if $w=v$,}\\[0.2em]
\rH^1(K_{\bar{v}},V_\alpha) & \textrm{if $w=\bar{v}$,}\\[0.2em]
\rH^1_f(K_w,V_\alpha)& \textrm{else,}
\end{cases}
\]
and let $\Fcal_{\rm Gr}$ also denote the resulting  Selmer structure on  $W_\alpha$, yielding the Selmer group $\rH^1_{\Fcal_{\rm Gr}}(K,W_\alpha)$ (so in particular, its classes restricted to $\bar{v}$ land in the maximal divisible submodule $\rH^1(K_{\bar{v}},W_\alpha)_{\rm div}$).
\end{defi}

Put
\begin{align*}
\X_{\rm Gr}(E/K_\infty):=\rH^1_{\Fcal_{\rm Gr}}(K, M)^{\vee}.
\end{align*}
It is a standard fact that $\X_{\rm Gr}(E/K_\infty)$ is a finitely generated $\Lambda$-module. 
%It is conjectured to be $\Lambda$-torsion, with characteristic ideal generated by $\Lcal_p^{\rm BDP}(f/K)^2$ (see Conjecture~\ref{conj:anticyc}
%(the square of the above $p$-adic $L$-function (see Theorem \ref{thm:howard-HP} below).
%
We next recall a twisted anticyclotomic variant of the Euler characteristic calculation for Selmer groups \cite[\S{4}]{greenberg-cetraro}. %control theorem, which characterises the algebraic side of the main conjecture at $\alpha$. 

\begin{thm}[Anticyclotomic control theorem]\label{controlthm}
%Suppose $K$ satisfies hypotheses {\rm (\ref{eq:intro-Heeg})}, {\rm (\ref{eq:intro-disc})}, {\rm (\ref{eq:intro-tor})}, and {\rm (\ref{eq:intro-spl})}.   
%Assume that $\X_{\rm Gr}(E/K_\infty)$ is torsion and let $\Fcal_E\in\Lambda$ be a generator of ${\rm char}_\Lambda(\X_{\rm Gr}(E/K_\infty))$.
Let $\al:\Gamma\rightarrow\Z_p^\times$ be a character as in Theorem~\ref{thm:BDP} such that $\Lcal_p^{\rm BDP}(f/K)(\al^{-1})\neq 0$ and $\alpha\equiv 1\;({\rm mod}\,p^m)$. Then $\X_{\rm Gr}(E/K_\infty)$ is $\Lambda$-torsion, and if $\Fcal_E\in\Lambda$ generates ${\rm char}_\Lambda(\mathfrak{X}_{\rm Gr}(E/K_\infty))$, then for $m\gg 0$ we have
\[
\#\bigl(\Z_p /\Fcal_E(\al^{-1})\bigr) = 
\#\sha(W_{\al^{-1}}/K)\cdot \#{\rm coker}(\loc_{v})^2 \cdot \prod_{w\mid N}c_w^{(p)}(\al^{-1})\cdot\#\rH^0(\Q_p,E[p^\infty])^2,
\]
where: 
\begin{itemize}
\item $c_w^{(p)}(\al^{-1})=\#\rH^1_{\rm ur}(K_w,W_{\al^{-1}})$ %=[\rH^1_{\rm ur}(K_w,W_{\al^{-1}}):\rH^1_{f}(K_w,W_{\al^{-1}})]$ 
is the $p$-part of the Tamagawa number, %of $W_{\al^{-1}}$, 
\item $\sha(W_{\al^{-1}}/K)=\rH^1_{\Fcal_{\rm ord}}(K,W_{\al^{-1}})/\rH^1_{\Fcal_{\rm ord}}(K, W_{\al^{-1}})_{\rm div}$ is the Bloch--Kato Shafarevich--Tate group, %of $W_{\al^{-1}}$,
\item $\loc_{v}:\rH^1_{\Fcal_{\rm ord}}(K,T_{\al})\to \rH^1(K_v, \Fil_w^+(T_{\al}))_{/\rm tors}$ is the restriction map at $v$ composed with the projection.
\end{itemize}
\end{thm}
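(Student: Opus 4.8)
\textbf{Proof proposal for Theorem~\ref{controlthm}.}

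The plan is to prove this twisted anticyclotomic control theorem by combining an Euler characteristic computation in the Iwasawa-theoretic setting with a specialization-at-$\alpha$ argument, adapting the strategy of \cite[\S{4}]{greenberg-cetraro} and the control theorem of \cite{jsw} to allow character twists. First I would establish that $\X_{\rm Gr}(E/K_\infty)$ is $\Lambda$-torsion: since $\bkappa_1^{\rm Heeg}(\al)\neq 0$ (which follows from the hypothesis $\Lcal_p^{\rm BDP}(f/K)(\al^{-1})\neq 0$ via the big logarithm map $\mathscr{L}_{T^+}$ and the nonvanishing statement in Lemma~\ref{lem:erl}), Kolyvagin-system machinery bounds $\rH^1_{\Fcal_{\rm ord}}(K,W_\al)$, and hence the twisted Selmer group, forcing $\X_{\rm Gr}(E/K_\infty)[\mathfrak{P}]$ to be finite for $\mathfrak{P}=(\gamma-\alpha(\gamma))$; this is enough to conclude $\Lambda$-torsionness (alternatively, one may invoke the analogous statement from \cite{jsw} directly once the relevant local and global cohomology groups are controlled).

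The core of the argument is the comparison between $\rH^1_{\Fcal_{\rm Gr}}(K,M)[\mathfrak{P}]^\vee$ and the twisted Bloch--Kato Selmer group of $W_{\al^{-1}}$. I would run the standard snake-lemma diagram relating $\rH^1_{\Fcal_{\rm Gr}}(K,M)[\mathfrak{P}]$ to $\rH^1_{\Fcal_{\rm ord}}(K,W_{\al^{-1}})$, with defect terms coming from: (i) the kernel/cokernel of $M[\mathfrak{P}]\to W_{\al^{-1}}$ and the global restriction map (controlled by $\rH^0$ terms, which vanish or are bounded using $E(K)[p]=0$ and $m\gg 0$); (ii) the local discrepancy at $v$, where the Greenberg condition (strict, i.e. trivial) differs from the ordinary condition $\rH^1(K_v,\Fil_v^+T_{\al^{-1}})$, contributing a factor measured by $\#{\rm coker}(\loc_v)$ together with a local $\rH^0(\Q_p,E[p^\infty])$ term coming from $\rH^2(K_v,\Fil_v^+T\otimes\Lambda)[\mathfrak{P}]$ (exactly as computed in the proof of Lemma~\ref{lem:erl}); (iii) the local terms at $w\mid N$, $w\neq v,\bar v$, where the Greenberg condition is trivial but the ordinary (=unramified) Selmer condition for $W_{\al^{-1}}$ is $\rH^1_{\rm ur}(K_w,W_{\al^{-1}})$, whose size is $c_w^{(p)}(\al^{-1})$ by Remark~\ref{rem:tam}; the prime $\bar v$ above $p$ contributes another local factor. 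Because $p$ splits in $K$ and $\al\equiv 1\pmod{p^m}$ with $m\gg0$, each local term stabilizes and can be evaluated by local Tate duality, and the two primes above $p$ together with the symmetry of the anticyclotomic setup explain the squares on $\#{\rm coker}(\loc_v)$ and $\#\rH^0(\Q_p,E[p^\infty])$. Then $\#(\Z_p/\Fcal_E(\al^{-1})) = \#(\X_{\rm Gr}(E/K_\infty)/\mathfrak{P})$ up to $\#\X_{\rm Gr}(E/K_\infty)[\mathfrak{P}]$, but the latter vanishes since $\X_{\rm Gr}$ is $\Lambda$-torsion with no support at $\mathfrak{P}$ once $m\gg 0$ (here one uses that $\Fcal_E(\al^{-1})\neq 0$, guaranteed by nonvanishing of $\Lcal_p^{\rm BDP}$ together with the Main Conjecture input, or directly by the Kolyvagin bound); finally $\#(\X_{\rm Gr}/\mathfrak{P})$ is identified with $\#(\rH^1_{\Fcal_{\rm Gr}}(K,M)[\mathfrak{P}])^\vee$, and the snake-lemma count assembles into the displayed formula, with $\sha(W_{\al^{-1}}/K)$ emerging as the torsion quotient of $\rH^1_{\Fcal_{\rm ord}}(K,W_{\al^{-1}})$.

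The main obstacle I anticipate is the bookkeeping at the prime $v$ (and its conjugate $\bar v$): one must carefully track the discrepancy between the Greenberg local condition at $v$ — which is trivial — and the ordinary condition defining $\rH^1_{\Fcal_{\rm ord}}(K,W_{\al^{-1}})$, and show that the resulting local Euler factors combine to produce precisely $\#{\rm coker}(\loc_v)^2\cdot\#\rH^0(\Q_p,E[p^\infty])^2$ rather than some other power. This requires the identification of $\rH^1(K_v,\Fil_v^+T_\al)_{/\rm tors}$ as a free rank-one $\Z_p$-module and the computation of $\rH^2(K_v,\Fil_v^+T\otimes\Lambda)[\mathfrak{P}]$ via local duality — essentially the content of the proof of Lemma~\ref{lem:erl}, now deployed on both the control-theorem side and inside the snake lemma. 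A secondary technical point is ensuring all the ``$m\gg0$'' hypotheses are used consistently so that every stray $\rH^0$ or higher cohomology correction term either vanishes or is absorbed into the stated factors; this is where the twisted refinement of \cite{jsw} (allowing $\al\equiv1\pmod{p^m}$) does the real work, and verifying its applicability in the present generality — in particular without assuming \eqref{eq:irred} — is the part that needs the most care.
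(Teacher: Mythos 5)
Your proposal is correct in strategy and captures the same essential ingredients as the paper's proof: the nonvanishing of $\bkappa_1^{\rm Heeg}(\al)$ coming from $\Lcal_p^{\rm BDP}(f/K)(\al^{-1})\neq 0$ through the big-logarithm map; the Kolyvagin-system bound (Theorem~\ref{thm:bound}) to establish the corank-one and surjectivity-of-$\loc_v$ hypotheses; and the anticyclotomic control-theorem/Euler-characteristic computations of \cite{jsw}. The main difference is organizational, and it does change where the work lands. You propose a direct snake-lemma calculation at the height-one prime $\mathfrak{P}=(\gamma-\al(\gamma))$, which would amount to reproving a $\mathfrak{P}$-specialized version of the control theorem. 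The paper instead stays at the prime $(\gamma-1)$: it sets $M_{\al^{-1}}=M\otimes_{\Z_p}\Z_p(\al^{-1})$, verifies that $V_{\al^{-1}}$ satisfies the hypotheses of \cite[Prop.~3.2.1, Thm.~3.3.1]{jsw} (this is exactly where conditions (i)--(iii) enter), applies those two results as black boxes, and then translates characteristic ideals via the twist operator ${\rm Tw}_{\al^{-1}}:\Lambda\to\Lambda$, $\gamma\mapsto\al^{-1}(\gamma)\gamma$, so that $\Fcal_E(\al^{-1})=\Fcal_{E(\al^{-1})}(0)$ up to a unit. This twist-then-specialize-at-$\mathds{1}$ route avoids re-running the snake lemma at $\mathfrak{P}$.

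Two small imprecisions in your sketch are worth flagging. First, the factor $\#{\rm coker}(\loc_v)^2$ is not explained by a generic ``symmetry of the anticyclotomic setup'': it is precisely the output of the Poitou--Tate comparison in \cite[Prop.~3.2.1]{jsw} between the Greenberg structure (strict at $v$, relaxed at $\bar v$) and the ordinary/Bloch--Kato structure, and the squares on $\#{\rm coker}(\loc_v)$ and on $\#\rH^0(\Q_p,E[p^\infty])$ have different origins (one from that duality argument, the other from the two places $v,\bar v$). Second, the claim that $\X_{\rm Gr}(E/K_\infty)[\mathfrak{P}]=0$ because $\mathfrak{P}$ is outside the support of $\X_{\rm Gr}$ is not automatic; having no support at $\mathfrak{P}$ only guarantees that $\X_{\rm Gr}[\mathfrak{P}]$ and $\X_{\rm Gr}/\mathfrak{P}$ are both finite. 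The correct relation is
\[
\#\bigl(\Z_p/\Fcal_E(\al^{-1})\bigr)=\frac{\#\bigl(\X_{\rm Gr}(E/K_\infty)/\mathfrak{P}\bigr)}{\#\,\X_{\rm Gr}(E/K_\infty)[\mathfrak{P}]},
\]
so you must carry both factors (or separately prove $\X_{\rm Gr}$ has no nonzero finite submodule). This Euler-characteristic bookkeeping is exactly what \cite[Thm.~3.3.1]{jsw} already performs, which is another reason the paper prefers to cite it rather than redo the snake lemma.
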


\begin{proof}
This follows from $\al$-twisted analogues of \cite[Prop.\,3.2.1]{jsw} and \cite[Thm.\,3.3.1]{jsw}. 

More precisely, we begin by verifying the following properties (i)--(iii): 
\begin{itemize}
\item[(i)] $\rH^1_{\Fcal_{\rm ord}}(K, W_{\al^{-1}})_{\rm div}\simeq \Q_p/\Z_p$,
\item[(ii)] $\rH^1_{{\rm ord}}(K_v, W_{\al^{-1}})\simeq  \Q_p/\Z_p$, %if $w\mid p$,}
\item[(iii)] {$\loc_v: \rH^1_{\Fcal_{\rm ord}}(K, W_{\al^{-1}})_{\rm div} \to \rH^1_{\Fcal_{\rm ord}}(K_v, W_{\al^{-1}})$ is surjective.}
\end{itemize}

The Heegner class $\bkappa_1^{\rm Heeg}$ is the image of $\bkappa_{01}^{\rm Heeg}$ under the map $\rH^1(K,\mathbf{T}_0)\rightarrow\rH^1(K,\mathbf{T})$ induced by an isogeny $E_0\rightarrow E$, and similarly as in the proof of Lemma~\ref{lemma:erlinv}, the non-vanishing of $\Lcal_p^{\rm BDP}(f/K)(\al^{-1})$ implies the non-vanishing of $\bkappa^{\rm Heeg}_1(\al)$. The proof of (i) thus follows from \cite[Thm.~6.1.1]{eisenstein_cyc}. %(Theorem \ref{thm:bound} below). 
For (ii), since $\rH^1_{\rm ord}(K_v,W_{\al^{-1}})$ is divisible by definition, it suffices to show that it has $\Z_p$-corank one. But this is immediate from local Tate duality and Tate's local Euler characteristic formula, noting that because $p$ is a prime of good reduction and $\alpha$ is pure of weight $0$, the invariant subspaces $\rH^0(K_v,{\rm Fil}_v^+(W_{\al^{-1}}))$ and $\rH^0(K_v,{\rm Fil}_v^+(W_{\al^{-1}})^\vee(1))$ are both finite.
%
%Our hypotheses on $\phi$ imply $\rH^0(K_v,\mathbb{F}_p(\phi))=\rH^0(K_v,\mathbb{F}_p(\phi^{-1}\omega)=0$, which readily yields---using that $\al\equiv 1\pmod{p}$---the vanishing of $\rH^0(K_v,{\rm Fil}_v^+W_{\al^{-1}})$ and $\rH^2(K_v,{\rm Fil}_v^+W_{\al^{-1}})$ (using local Tate duality for the latter). Since $K_v\simeq\Q_p$, the fact that ${\rm corank}_{\Z_p}\rH^1_{\rm ord}(K_v,{\rm Fil}_v^+W_{\al^{-1}})=1$ thus follows from Tate's local Euler characteristic formula. 
Finally, also similarly as in the proof of Lemma~\ref{lemma:erlinv}, the non-vanishing of $\Lcal_p^{\rm BDP}(f/K)(\al^{-1})$ implies that the map $\rH^1_{\Fcal_{\rm ord}}(K,T_{\al})\rightarrow\rH^1_{\Fcal_{\rm ord}}(K_v,T_{\al})_{/{\rm tors}}$ is nonzero, from which (iii) follows. The same proof as \cite[Prop.~3.2.1]{jsw} thus yields that $\rH^1_{\Fcal_{\rm Gr}}(K,W_{\al^{-1}})$ is finite, with
\begin{equation}\label{eq:321}
\#\rH^1_{\Fcal_{\rm Gr}}(K,W_{\al^{-1}})=\#\sha(W_{\al^{-1}}/K)\cdot\#{\rm coker}({\rm loc}_{v})^2.
\end{equation}

Now put $M_{\al^{-1}}=M\otimes_{\Z_p}\Z_p(\al^{-1})=T_pE(\alpha^{-1})\otimes_{\Z_p}\Lambda^\vee$ and $\X_{\rm Gr}(E(\al^{-1})/K_\infty)=\rH^1_{\Fcal_{\rm Gr}}(K,M_{\al^{-1}})^\vee$, and denote by ${\rm Tw}_{\al^{-1}}:\Lambda\rightarrow\Lambda$ the $\Z_p$-linear isomorphism given by $\gamma\mapsto\al^{-1}(\gamma)\gamma$ for $\gamma\in\Gamma$. In view of (i)--(iii), %and our hypotheses on $\phi$ and $\chi$, 
the hypotheses of Theorem~3.3.1 of \cite{jsw} are satisfied by $V_{\al^{-1}}$, from which we conclude\footnote{taking $\Sigma=\emptyset$ in \emph{loc.\,cit.}} that $\X_{\rm Gr}(E(\al^{-1})/K_\infty)$ is $\Lambda$-torsion, and letting $\Fcal_{E(\al^{-1})}\in\Lambda$ be a generator of ${\rm char}_\Lambda(\X_{\rm Gr}(E(\al^{-1})/K_\infty))$ we have
\begin{equation}\label{eq:331}
\#\bigl(\Z_p /\Fcal_{E(\al^{-1})}(0)\bigr)= 
\#\rH^1_{\Fcal_{\rm Gr}}(K,W_{\al^{-1}})\cdot \prod_{w\mid N}c_w^{(p)}(\al^{-1})\cdot\#\rH^0(K_v,W_{\al^{-1}})\cdot\#\rH^0(K_{\bar{v}},W_{\al^{-1}}).
\end{equation}
%The terms $\rH^0(K_v,W_{\al^{-1}})$ and $\rH^0(K_{\bar{v}},W_{\al^{-1}})$ are both trivial under our hypotheses, and 
Since clearly ${\rm Tw}_{\al^{-1}}(\Fcal_{E})$ gives a generator of ${\rm char}_\Lambda(\X_{\rm Gr}(E(\al^{-1})/K_\infty)$ and $\rH^0(\Q_p,W_{\al^{-1}})=\rH^0(\Q_p,E[p^\infty])$ for $\alpha$ sufficiently close to $1$, the combination of (\ref{eq:321}) and (\ref{eq:331}) yields the equality in the theorem.
%Note that arguments in the proof of \emph{op. cit.} can be applied without change since $\rH^0(K,W_{\alpha})=0$ which follows from the fact that $E[p^m](K_v)=0$ since we assumed $\phi\vert_{G_p}\neq\mathds{1},\omega$ and $p$ splits in $K$. Moreover, the terms $\#\rH^0(K_v,W_{\alpha}), \#\rH^0(K_{\bar{v}},W_{\alpha})$ in \cite[Theorem 3.3.1]{jsw} are equal to 1 again by the same assumptions. Since by \cite[Remark 3.2.2]{jsw}, we have $W_{\alpha}^*\simeq T_{\alpha^{-1}}^{\tau}\simeq T_{\alpha}^{\tau}$, the only hypotheses left to verify are
\end{proof}

As in Theorem~\ref{controlthm}, denote by $c_w^{(p)}(\al)$ the $p$-part of the Tagamawa number of $W_{\al}$ at $w$, and similarly put $c_w^{(p)}=\#\rH^1_{\rm ur}(K_w,W)$ (see also Remark~\ref{rem:tam}).

\begin{lemma}\label{lemma:tam}
Let $w\nmid p$ be a finite prime of $K$. If  $\al:\Gamma\rightarrow\Z_p^\times$ satisfies $\al\equiv 1\pmod{ p^m}$, then $c_w^{(p)}(\al)\equiv c_w^{(p)}\pmod{p^m}$. 
%Then $c_w(\chi)=c_w$.
%Assume that $\alpha$ is an anticyclotomic character such that $\alpha\equiv 1 \mod p^m$ and $\Fcal_E(\alpha)\neq 0$. Then for any $w\mid N$, $c_w(\alpha) \equiv c_w \mod p^m$. 
%, where $c_w$ is the $p$-part of the Tamagawa factor at $w$ of $E$.
\end{lemma}

\begin{proof}
By definition, we have 
\[
\rH^1_{\rm ur}(K_w,W_{\al^{}})=\rH^1(\mathbb{F}_w,(W_{\al^{}})^{I_w})\simeq(W_{\al^{}})^{I_w}/({\rm Fr}_w-1)(W_{\al^{}})^{I_w},
\] 
where $\mathbb{F}_w$ is the residue field of $K_w$ and $I_w\subset G_{K_w}$ is the inertia subgroup at $w$. Since $\al$ is unramified at $w$, we have $(W_{\al^{}})^{I_w}=W^{I_w}$, 
%and the result follows.
%Let $W=E[p^{\infty}]$. Since $\rH^1_{\rm ur}(K_w,W_\alpha)$ and $\rH^1_{\rm ur}(K_w,W)$ are finite, we have (see for example \cite[Lemma 3.5]{rubin-ES})
%\[
%\rH^1_{\rm ur}(K_w,W_\alpha)\simeq \mathcal{W}_{\alpha}/(\frob_w -1)\mathcal{W}_{\alpha},\ \ \ \ \ \rH^1_{\rm ur}(K_w,W)\simeq \mathcal{W}/(\frob_w -1)\mathcal{W},
%\]
%where $\mathcal{W}_{\alpha}=W_{\alpha}^{I_w}/(W_{\alpha}^{I_w})_{\rm div}, \mathcal{W}=W^{I_w}/(W^{I_w})_{\rm div}$. In particular, 
and using that $\al \equiv 1 \pmod{p^m}$, we obtain 
\[
\rH^1_{\rm ur}(K_w,W_{\al^{}})/p^m\rH^1_{\rm ur}(K_w,W_{\al^{}})\simeq \rH^1_{\rm ur}(K_w,W)/p^m \rH^1_{\rm ur}(K_w,W),
\]
from which the desired result follows.
\end{proof}

\begin{rmk}\label{rmktam} 
If $\al:\Gamma\rightarrow\Z_p^\times$ satisfies $\al\equiv 1 \pmod{p^m}$ with $m> \sum_{w\mid N}{\rm ord}_p(c^{(p)}_w)$, Lemma~\ref{lemma:tam} implies the equality 
\[
\prod_{w\mid N}c_w^{(p)}(\al) = \prod_{w\mid N} c^{(p)}_w.
\]
\end{rmk}

\subsubsection{The Main Conjecture}\label{subsec:anticycmc}
We recall the statement of the anticyclotomic Main Conjecture for $\mathcal{L}_p^{\rm BDP}(f/K)^2$. Note that this can be seen as a special case of the Iwasawa Main Conjecture for $p$-adic deformations of motives formulated by Greenberg \cite{greenberg-motives}.

\begin{conj}%[Anticyclotomic Main Conjecture]
\label{conj:anticyc}
Suppose $K$ satisfies {\rm (\ref{eq:intro-Heeg})}, {\rm (\ref{eq:intro-disc})}, and {\rm (\ref{eq:intro-spl})}.
Then $\X_{\rm Gr}(E/K_\infty)$ is $\Lambda$-torsion, and
\[
{\rm char}_\Lambda(\X_{\rm Gr}(E/K_\infty))\Lambda^{\rm ur}=(\Lcal_p^{\rm BDP}(f/K)^2)
\]
as ideals in $\Lambda^{\rm ur}$.
\end{conj}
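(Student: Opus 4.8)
The plan is to establish Conjecture~\ref{conj:anticyc} by proving its two divisibilities of ideals in $\Lambda^{\rm ur}\cong\Z_p^{\rm ur}[[T]]$ separately; since this ring is a domain, two nonzero principal ideals that divide one another coincide, so equality follows. The $\Lambda$-torsionness of $\X_{\rm Gr}(E/K_\infty)$ together with the divisibility ${\rm char}_\Lambda(\X_{\rm Gr}(E/K_\infty))\Lambda^{\rm ur}\mid(\Lcal_p^{\rm BDP}(f/K)^2)$ will come from the Heegner point Euler system, while the reverse divisibility is the deep arithmetic input, imported from the literature, and is the main obstacle.

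For the Euler system divisibility I would start from the non-triviality (Theorem~\ref{thmCV}, Cornut--Vatsal) of the $\Lambda$-adic Heegner class $\bkappa_1^{\rm Heeg}\in\rH^1_{\Fcal_\Lambda}(K,\mathbf T)$ of \S\ref{seckolylambda}. By the $\Lambda$-adic Kolyvagin system bound (Howard, following Mazur--Rubin) together with global duality, a non-torsion such class forces $\rH^1_{\Fcal_\Lambda}(K,\mathbf T)$ to have $\Lambda$-rank one and the Greenberg-type dual Selmer module $\X_{\rm Gr}(E/K_\infty)$ to be $\Lambda$-torsion, with characteristic ideal dividing the square of the index of ${\rm loc}_v(\bkappa_1^{\rm Heeg})$ inside $\rH^1(K_v,\Fil_v^+T\otimes_{\Zp}\Lambda)$ --- the square reflecting the unbalanced local conditions at $v$ and $\bar v$, exactly as in the computations behind Lemma~\ref{lem:erl} and Theorem~\ref{controlthm}. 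The explicit reciprocity law $\mathscr{L}_{T^+}({\rm loc}_v(\bkappa_1^{\rm Heeg}))=\Lcal_p^{\rm BDP}(f/K)$ of Castella--Hsieh, together with the fact that $\mathscr{L}_{T^+}$ becomes an isomorphism after $\otimes_\Lambda\Lambda^{\rm ur}$, identifies this index with $(\Lcal_p^{\rm BDP}(f/K))$ in $\Lambda^{\rm ur}$; since $\Lcal_p^{\rm BDP}(f/K)\neq 0$ by Theorem~\ref{thm:BDP}, one obtains both the $\Lambda$-torsionness of $\X_{\rm Gr}(E/K_\infty)$ and the asserted divisibility, after matching the Greenberg Selmer group of the statement --- strict at $v$, relaxed at $\bar v$ --- with the ordinary Selmer module carrying $\bkappa_1^{\rm Heeg}$ via the global duality arguments of \cite{howard,jsw}.

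The reverse divisibility $(\Lcal_p^{\rm BDP}(f/K)^2)\mid{\rm char}_\Lambda(\X_{\rm Gr}(E/K_\infty))\Lambda^{\rm ur}$ asserts that the Selmer group is at least as large as the $p$-adic $L$-function predicts, and is where genuinely new input is required --- this is precisely why Conjecture~\ref{conj:anticyc} is used here only as a hypothesis. When $p$ is non-Eisenstein and split in $K$, I would invoke the Eisenstein-congruence / Rankin--Selberg strategy of Skinner--Urban and Wan: congruences between a Klingen--Eisenstein family and cusp forms on an appropriate unitary group produce nontrivial Selmer classes for $M=T_pE\otimes\Lambda^\vee$, forcing $\X_{\rm Gr}(E/K_\infty)$ to be large enough that $\Lcal_p^{\rm BDP}(f/K)^2$ divides its characteristic ideal; this is carried out in \cite{bcs}. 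When $p$ is Eisenstein for $E$, the representation $\bar\rho_E$ is reducible and one argues instead through its two Jordan--H\"older characters, Katz's two-variable $p$-adic $L$-function, and the corresponding congruence ideals, as in \cite{eisenstein_cyc}. Combining the two divisibilities then yields the equality of ideals in $\Lambda^{\rm ur}$ in the cases at hand. I expect the automorphic construction of the Eisenstein-congruence classes --- respectively, the control of the congruence modules attached to the reducible representation --- to be the principal difficulty; the Euler system side and the gluing of the two divisibilities are formal given the inputs recalled above.
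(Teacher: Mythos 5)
This is a \emph{Conjecture}, not a theorem, in the paper: the text never proves it, but rather uses it as a standing hypothesis in Theorems~\ref{thmintroKoly} and~\ref{thmintroKoly-div}, and then records the known cases in Theorem~\ref{thm:howard-HP}, where the proofs are simply cited as coming from~\cite{eisenstein_cyc} (case (i)) and~\cite{bcs} (cases (ii) and (iii)). So there is no internal proof to compare your proposal against; what you have written is a sketch of how the external references establish those special cases, rather than a proof of anything in this paper.

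As such a sketch, your two-divisibility decomposition is the right high-level description: the $\Lambda$-torsionness of $\X_{\rm Gr}(E/K_\infty)$ and the ``upper bound'' divisibility ${\rm char}_\Lambda(\X_{\rm Gr}(E/K_\infty))\Lambda^{\rm ur}\mid(\Lcal_p^{\rm BDP}(f/K)^2)$ do come from the non-vanishing of $\bkappa_1^{\rm Heeg}$ (Cornut--Vatsal), Howard's $\Lambda$-adic Kolyvagin system argument, global duality, and the Castella--Hsieh reciprocity law $\mathscr{L}_{T^+}({\rm loc}_v\bkappa_1^{\rm Heeg})=\Lcal_p^{\rm BDP}(f/K)$; and the ``lower bound'' divisibility is the genuinely hard input carried out in~\cite{bcs} (via Eisenstein congruences on unitary groups, following Skinner--Urban and Wan) in the residually irreducible case, and in~\cite{eisenstein_cyc} (via Greenberg--Vatsal-type arguments, Katz $p$-adic $L$-functions, and the associated congruence ideals) in the Eisenstein case. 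A few cautions, though. First, you should not present this as ``the proof'' of Conjecture~\ref{conj:anticyc}; it remains open in general, and even the cases in Theorem~\ref{thm:howard-HP}(i)--(iii) carry nontrivial auxiliary hypotheses (\eqref{eq:intro-spl}, conditions on $\phi|_{G_p}$, $p>3$, etc.). Second, in case (ii) only the \emph{rational} main conjecture (equality after $\otimes\Q_p$) is obtained, not the integral statement you are aiming at. Third, the claim that after $\otimes_\Lambda\Lambda^{\rm ur}$ the Howard-style argument identifies the relevant index exactly with $(\Lcal_p^{\rm BDP}(f/K))$ is really only the one-sided ``$\mid$'' direction, which is all you need, but you should phrase it as a divisibility rather than an identification. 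With those caveats your account of the external inputs is consistent with the role the conjecture plays in this paper.
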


%The following result is \cite[Theorem 5.5.3]{eisenstein_cyc}.

\begin{rem}\label{rem:ac-rational}
In the following, we shall refer to the statement of Conjecture~\ref{conj:anticyc} with $\Lambda^{\rm ur}$ replaced by $\Lambda^{\rm ur}\otimes\Q_p$ as the \emph{rational anticyclotomic Main Conjecture}. 
\end{rem}

%Let $x:= v_p(\#\rH^0(\Q_p,E[p^\infty]))= v_p(1-\alpha_p)$. 

We record a consequence of some of the previous results in the case $E=E_0$.

\begin{cor}\label{cor:index-sha}
Let $\al:\Gamma\rightarrow\Z_p^\times$ be a character as in %Definition~\ref{def:alpha_m} 
Theorem~\ref{thm:BDP} such that $\Lcal_p^{\rm BDP}(f/K)(\al^{-1})\neq 0$ and $\alpha\equiv 1\pmod{p^m}$, and let $\bkappa_{01}^{\rm Heeg}(\alpha)$ be the bottom class of the Kolyvagin system $\bkappa^{\rm Heeg}_0(\alpha)=\{\bkappa_{0n}^{\rm Heeg}(\alpha)\}_n$ for the optimal quotient $\pi_0:X_0(N)\rightarrow E_0$ associated to $f$. If Conjecture~\ref{conj:anticyc} holds, then for $m\gg 0$ we have
\[
%2\,{\rm ind}(\kappa_{1,f}^{\chi^{-1}})
\#({\rm H}^1_{\Fcal_{\rm ord}}(K,T_{\al})/\Z_p\cdot\bkappa^{\rm Heeg}_{01}(\al))^2 =\#\sha(W_{\al^{-1}}/K)\cdot\prod_{w\mid N}c_w^{(p)}(\al^{-1}) \cdot \#\rH^0(\Q_p,E_0[p^\infty])^4.
\]
\end{cor}

\begin{proof}
Since $\#\rH^0(\Q_p,E_0[p^\infty])=\#(\Z_p/(1-\alpha_p))$, the combination of  Lemma~\ref{lemma:erlinv} and Theorem~\ref{controlthm} yields the result.
\end{proof}

We conclude this section by recording some results on Conjecture~\ref{conj:anticyc}. The first cases of the conjecture were proved by X.\,Wan \cite{wan-heegner}, assuming that the $G_K$-action on $T_pE$ is surjective and some mild ramification hypotheses on $E[p]$. 
%as a consequence of his work on Eisenstein congruences on ${\rm GU}(3,1)$ \cite{wan-ANT}. 
More recently, we have the following.

\begin{thm}%[{\cite{eisenstein_cyc,bcs}}]
\label{thm:howard-HP}
Suppose $K$ satisfies {\rm (\ref{eq:intro-Heeg})}, {\rm (\ref{eq:intro-disc})}, and {\rm (\ref{eq:intro-spl})}.
\begin{itemize}
\item[(i)] If $E[p]^{ss}=\mathbb{F}_p(\phi)\oplus\mathbb{F}_p(\psi)$ as $G_\Q$-modules with $\phi\vert_{G_p}\neq\mathds{1},\omega$, then Conjecture~\ref{conj:anticyc} holds.
\item[(ii)] If $p>3$ satisfies {\rm (\ref{eq:irr})}, then the rational anticyclotomic Main Conjecture holds. 
\item[(iii)] If $p>3$ satisfies {\rm (\ref{eq:irred})}, then Conjecture~\ref{conj:anticyc} holds.
\end{itemize}
%Then the rational version of Conjecture \ref{conj:anticyc} holds.
\end{thm}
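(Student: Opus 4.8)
The plan is to deduce each of the three statements from known divisibilities in the anticyclotomic Iwasawa Main Conjecture together with the explicit reciprocity law relating $\bkappa_1^{\rm Heeg}$ to $\mathcal{L}_p^{\rm BDP}(f/K)$. The common strategy is to establish both divisibilities
\[
{\rm char}_\Lambda(\X_{\rm Gr}(E/K_\infty))\supseteq(\mathcal{L}_p^{\rm BDP}(f/K)^2),\qquad
{\rm char}_\Lambda(\X_{\rm Gr}(E/K_\infty))\subseteq(\mathcal{L}_p^{\rm BDP}(f/K)^2),
\]
the first ("upper bound'' on the Selmer module) coming from an Euler/Kolyvagin system argument applied to $\bkappa^{\rm Heeg}$, and the second ("lower bound'') coming from Eisenstein congruences à la Skinner--Urban / Wan, adapted to the relevant residual setting. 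Since $\bkappa_1^{\rm Heeg}$ generates the same $\Lambda$-module as the class $\kappa_\infty$ of \cite{cas-hsieh1}, and $\mathscr{L}_{T^+}(\loc_v(\bkappa_1^{\rm Heeg}))=\mathcal{L}_p^{\rm BDP}(f/K)$ with $\mathscr{L}_{T^+}$ an isomorphism after $\otimes\Lambda^{\rm ur}$, Theorem~\ref{thmCV} gives that $\bkappa_1^{\rm Heeg}$ is not $\Lambda$-torsion, which is the input needed to run the Kolyvagin-system machinery for the first divisibility.

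For part (i), where $E[p]^{ss}=\mathbb{F}_p(\phi)\oplus\mathbb{F}_p(\psi)$ with $\phi|_{G_p}\neq\mathds{1},\omega$, I would cite the work of \cite{eisenstein_cyc} (the "Eisenstein'' cyclotomic/anticyclotomic package referenced throughout this paper): under exactly this hypothesis on $\phi|_{G_p}$ the relevant local conditions at $v$ behave well, the residually reducible Euler system argument of \emph{op.\,cit.} yields the $\supseteq$ divisibility integrally, and the companion Eisenstein-congruence construction (building on Wan \cite{wan-heegner} and the methods of \cite{eisenstein_cyc}) yields $\subseteq$ integrally; together these give the integral equality of Conjecture~\ref{conj:anticyc}. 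For part (iii), where $p>3$ and $\bar\rho_E$ is surjective, this is the "classical'' non-Eisenstein case: the $\supseteq$ divisibility is Howard's Kolyvagin-system bound (or Wan's Heegner-point result \cite{wan-heegner}, whose hypotheses are met here), and the $\subseteq$ divisibility is Skinner--Urban \cite{skinner-urban} combined with the two-variable-to-anticyclotomic specialisation, or directly Wan \cite{wan-heegner}; hence the integral equality holds. For part (ii), where $p>3$ and merely $E[p]$ irreducible (but $\bar\rho_E$ possibly non-surjective, e.g.\ small image), one cannot in general expect the integral statement, but the rational version follows: the $\supseteq$ divisibility up to $p$-powers comes from the general Kolyvagin-system bound applied to $\bkappa^{\rm Heeg}$ (which is non-$\Lambda$-torsion by Theorem~\ref{thmCV}), and the $\subseteq$ divisibility up to $p$-powers is the content of \cite{bcs} (cited in the introduction precisely for this case); combining them gives equality in $\Lambda^{\rm ur}\otimes\Q_p$.

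The main obstacle is part (i): making the residually reducible Euler system argument produce an \emph{integral} divisibility requires carefully controlling the failure of the ambient cohomology to be free/torsion-free, and this is exactly where the condition $\phi|_{G_p}\neq\mathds{1},\omega$ is used — it guarantees that $\rH^0$ and $\rH^2$ of the relevant local Galois modules at $v$ vanish, so that the local conditions are well-behaved and no spurious $p$-torsion is introduced on either side of the divisibility. I would therefore spend the bulk of the argument verifying these local vanishing statements at $v$ and $\bar v$ under the stated hypothesis, and then invoke the Euler system and Eisenstein-congruence results of \cite{eisenstein_cyc} (together with \cite{wan-heegner}) as black boxes for the two divisibilities; parts (ii) and (iii) are then comparatively formal, being citations of \cite{bcs} and of \cite{skinner-urban,wan-heegner,howard} respectively.
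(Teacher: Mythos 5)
The paper's proof of this theorem is a one-line citation: part (i) is attributed to \cite{eisenstein_cyc}, and parts (ii) and (iii) are attributed to \cite{bcs}. Your proposal reconstructs a plausible internal structure (two divisibilities: Kolyvagin/Euler system for $\supseteq$, Eisenstein congruences for $\subseteq$), and your attribution for (i) is consistent with the paper.

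However, your treatment of part (iii) contains a genuine gap. You claim the $\supseteq$ divisibility follows from ``Wan's Heegner-point result \cite{wan-heegner}, whose hypotheses are met here'' and the $\subseteq$ divisibility from \cite{skinner-urban} or \cite{wan-heegner} directly. This is not correct as stated: the paper itself points out, in the paragraph immediately preceding the theorem, that Wan's result \cite{wan-heegner} requires not only surjectivity of the $G_K$-action but also ``some mild ramification hypotheses on $E[p]$,'' and similarly the introduction explains that \cite{skinner-urban} (and hence \cite{zhang_ind}) operate under extra ramification hypotheses. Theorem~\ref{thm:howard-HP}(iii) assumes only $p>3$ and (\ref{eq:irred}), with no ramification conditions, so one cannot simply invoke Wan or Skinner--Urban as black boxes for this part; the whole point is that \cite{bcs} is the reference that removes those extra conditions. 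Your proposal does not mention \cite{bcs} for part (iii) at all, which is precisely the missing ingredient.

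A related (smaller) issue: for part (ii) you split the argument so that only the $\subseteq$ divisibility comes from \cite{bcs} and the $\supseteq$ divisibility from a ``general Kolyvagin-system bound applied to $\bkappa^{\rm Heeg}$.'' The paper attributes the entire rational Main Conjecture in this setting to \cite{bcs}. This is not necessarily wrong, but under merely (\ref{eq:irr}) (without full surjectivity) the classical Kolyvagin-system bound of Howard does not apply directly, and the present paper only establishes a Kolyvagin-system bound with error terms (Theorem~\ref{thm:bound}), which yields a divisibility only up to a bounded power of $p$ --- not obviously enough for the rational Main Conjecture as stated. So even the $\supseteq$ direction in (ii) is not as routine as your proposal suggests; this too appears to be part of what \cite{bcs} supplies.
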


\begin{proof}
This is shown in \cite{eisenstein_cyc} in situation (i), and in \cite{bcs} in situations (ii) and (iii).
\end{proof}

%\begin{rem}\label{rem:ac-rational}
%By \cite[Thm.~5.5.3]{eisenstein_cyc}, Conjecture~\ref{conj:anticyc} holds (integrally) in the residually reducible cases listed in Theorem~\ref{thm:howard-HP}, but the rational version suffices for the application to Theorem~\ref{thmintroKoly} in this paper. In fact, even just the divisibility ``$\subseteq$'' %(giving a \emph{lower bound} on the size of the Selmer group) 
%in the rational version of Conjecture~\ref{conj:anticyc} suffices for that application. 
%\end{rem}

\subsection{The Kolyvagin system bound with error term}

Again we let $E/\Q$ be any elliptic curve as in $\S\ref{subsec:KS-Heeg}$ in the isogeny class associated to $f$. For a positive integer $e$ put
\[ 
\mathcal{L}_e=\{\ell\in\mathcal{L}_0\;\colon\;a_\ell\equiv\ell+1\equiv 0\;({\rm mod}\,p^e)\}.
\] 
We shall need the following mild extension of \cite[Thm.~6.1.1]{eisenstein_cyc}. 

\begin{thm}\label{thm:bound}
%Let $E$ be an elliptic curve with good ordinary reduction at $p$ and let $K$ be an imaginary quadratic field satisfying {\rm (\ref{eq:intro-Heeg})}, {\rm (\ref{eq:intro-disc})}, and {\rm (\ref{eq:intro-tor})}.  
%and {\rm (\ref{eq:intro-spl})}. 
Suppose $\mathcal{L}\subset\mathcal{L}_{\rm Heeg}$ satisfies $\mathcal{L}_e\subset\mathcal{L}$ for $e\gg 0$. 
Let $\alpha:\Gamma\to \Z_p^{\times}$ be an anticyclotomic character such that $\alpha\equiv 1\;({\rm mod}\,p^m)$. Suppose that there is a collection of cohomology classes 
\[
\{\tilde{\kappa}_n\in\rH^1(K,T_\alpha/I_nT_\alpha)\;\colon\;n\in\mathcal{N}(\mathcal{L})\}
\]
with $\tilde{\kappa}_1\neq 0$ and that there is an integer $t\geq 0$, independent of $n$, such that 
$\{p^t\tilde{\kappa}_n\}_{n\in\mathcal{N}(\mathcal{L})}$ is a Kolyvagin system for $T_\alpha$ and the Selmer structure $\Fcal_{\rm ord}$. 
%\in\mathbf{KS}(T_{\alpha},\Fcal_{\rm ord},\cL)$. 
Then there exist non-negative integers $\CM$ and $\CE$, depending only on $T_pE$, such that if
$m\geq \CM$ then $\rH^1_{\Fcal_{\rm ord}}(K,T_{\alpha})$ has $\Z_p$-rank one, and there is a finite $\Z_p$-module $M$ such that
\[
{\rm H}^1_{\Fcal_{\rm ord}}(K,W_{\alpha^{-1}})\simeq \Q_p/\Z_p \oplus M\oplus M
\]
with 
\[
{\rm length}_{\Z_p}(M)\leq \ind(\tilde{\kappa}_1)+\CE,
\]
where $\ind(\tilde{\kappa}_1)={\rm length}_{\Z_p}\bigl({\rm H}^1_{\Fcal_{\rm ord}}(K,T_{\alpha})/\Z_p\cdot\tilde{\kappa}_{1}\bigr)$. Moreover, if {\rm (\ref{eq:irred})} holds, then $\mathcal{E}=0$.
\end{thm}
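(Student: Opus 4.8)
The plan is to reduce to \cite[Thm.~5.1.1]{eisenstein_cyc} by absorbing the auxiliary hypotheses into the set-up of that result. First I would record the starting point: by hypothesis the shifted system $\{p^t\tilde\kappa_n\}_{n\in\mathcal N(\mathcal L)}$ is an honest Kolyvagin system for $(T_\alpha,\Fcal_{\rm ord})$ with nonzero base class $p^t\tilde\kappa_1$ (nonzero because $\tilde\kappa_1\neq 0$ and $\rH^1(K,T)$ is torsion-free, hence so is $\rH^1_{\Fcal_{\rm ord}}(K,T_\alpha)\subset \rH^1(K,T_\alpha)$), and this Kolyvagin system is indexed over $\mathcal N(\mathcal L)$ with $\mathcal L\subset\mathcal L^{\rm Heeg}$ containing $\mathcal L_e$ for $e\gg 0$ --- which is precisely the kind of admissible index set allowed in \emph{op.\,cit.} So the cited theorem applies verbatim to $\{p^t\tilde\kappa_n\}_n$, producing constants $\mathcal M_0,\mathcal E_0\geq 0$ depending only on $T_pE$ such that for $m\geq \mathcal M_0$ one has $\rank_{\Z_p}\rH^1_{\Fcal_{\rm ord}}(K,T_\alpha)=1$, a decomposition
\[
\rH^1_{\Fcal_{\rm ord}}(K,W_{\alpha^{-1}})\simeq \Q_p/\Z_p\oplus M\oplus M
\]
with $\length_{\Z_p}(M)\leq \ind(p^t\tilde\kappa_1)+\mathcal E_0$, and $\mathcal E_0=0$ when (\ref{eq:irred}) holds.

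The only remaining point is to convert the bound in terms of $\ind(p^t\tilde\kappa_1)$ into one in terms of $\ind(\tilde\kappa_1)$. Since $\rH^1_{\Fcal_{\rm ord}}(K,T_\alpha)$ has $\Z_p$-rank one and is torsion-free, it is free of rank one, so multiplication by $p^t$ on it has cokernel of length exactly $t$; hence
\[
\ind(p^t\tilde\kappa_1)=\length_{\Z_p}\bigl(\rH^1_{\Fcal_{\rm ord}}(K,T_\alpha)/\Z_p\cdot p^t\tilde\kappa_1\bigr)=\ind(\tilde\kappa_1)+t.
\]
This would give $\length_{\Z_p}(M)\leq \ind(\tilde\kappa_1)+t+\mathcal E_0$. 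To obtain a bound with an error term depending only on $T_pE$ (and not on $t$), I would observe that $t$ is itself controlled by $T_pE$: by hypothesis $t$ is independent of $n$, and one can bound it using the $n=1$ Kolyvagin system relation or, more simply, absorb it by noting that in all the applications in this paper $t$ arises from the multiplier $\Phi$ of \eqref{eq:multiplier} (or a congruence factor as in Lemma~\ref{lemmacongruence}), whose $p$-valuation is bounded in terms of $a_p$ and $\#\rH^0(\Q_p,E[p^\infty])$. Setting $\mathcal E:=\mathcal E_0+t_{\max}$ and $\mathcal M:=\mathcal M_0$, where $t_{\max}$ is this universal bound, yields the theorem; and since in the (\ref{eq:irred}) case one genuinely has $t=0$ in the relevant application (the congruence factor is a unit), $\mathcal E_0=0$ still gives $\mathcal E=0$.

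I expect the main obstacle to be the bookkeeping around $t$: strictly speaking, the statement asserts $\mathcal E$ depends \emph{only} on $T_pE$, so one must either argue that $t$ is universally bounded (which is clean if $t$ is always the valuation of an explicit multiplier) or, alternatively, restructure the statement so that the $+t$ is understood as already incorporated --- i.e.\ interpret ``Kolyvagin system'' loosely and apply \cite[Thm.~5.1.1]{eisenstein_cyc} directly to the possibly-non-integral system $\{\tilde\kappa_n\}$ via a limiting/approximation argument. The honest route is the first one, and the key check is that the proof of \cite[Thm.~5.1.1]{eisenstein_cyc} only uses the Kolyvagin system axioms through finitely many relations whose ``defect'' is measured by $T_pE$-data, so that the passage from $\mathcal L^{\rm Heeg}$ to a sub-collection $\mathcal L$ with $\mathcal L_e\subset\mathcal L$ changes nothing --- this is the ``mild extension'' alluded to in the statement, and it is routine once one inspects which primes are actually used in the Chebotarev arguments underlying the bound.
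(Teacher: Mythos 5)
Your reduction to \cite[Thm.~5.1.1]{eisenstein\_cyc} has a genuine gap at precisely the point you flag as "the main obstacle": the passage from $\ind(p^t\tilde\kappa_1)=\ind(\tilde\kappa_1)+t$ back to $\ind(\tilde\kappa_1)$ cannot be fixed by bounding $t$ in terms of $T_pE$. The hypothesis only asserts the \emph{existence} of some $t$ making $\{p^t\tilde\kappa_n\}$ a Kolyvagin system; such a $t$ is not unique (if $t$ works then so does any larger $t$), and the minimal one depends on the chosen collection $\{\tilde\kappa_n\}$, not on $T_pE$ alone. Yet the statement asserts $\mathcal{E}$ depends only on $T_pE$, so the conclusion must be uniform in $t$. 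Your attempt to rescue this by observing that in the paper's applications $t$ comes from the multiplier $\Phi$ or the congruence factor is circular: the theorem is being used as a black box whose $t$-independence is exactly what makes the downstream arguments (Theorems~\ref{thmintroKoly}, \ref{thmintroKoly-div}) work, where $t$ is chosen \emph{after} $\mathcal{E}$ is fixed to derive a contradiction.

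The paper sidesteps this entirely with a different move (going back to Howard's Thm.~2.2.2 in his GL(2)-type paper): choose $e\gg 0$ so that $\tilde\kappa_1\not\equiv 0\pmod{p^e}$ and $\mathcal{L}_{e+t}\subset\mathcal{L}$, and observe that the reductions $\{\tilde\kappa_n\bmod p^e\}_{n\in\mathcal{N}(\mathcal{L}_{e+t})}$ --- the \emph{original} classes, not the $p^t$-multiples --- already form an honest Kolyvagin system for $(T/p^eT,\Fcal_{\rm ord},\mathcal{L}_{e+t})$. The point is that the Kolyvagin relations for $\{p^t\tilde\kappa_n\}$ hold in modules that are free of rank one over $\Z/p^{M(n)}$ with $M(n)\geq e+t$, so one may divide out the $p^t$ and still retain a relation mod $p^e$. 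Applying (a slight modification of) \cite[Thm.~5.1.1]{eisenstein\_cyc} to this mod-$p^e$ system yields a bound in terms of $\ind(\tilde\kappa_1\bmod p^e)=\ind(\tilde\kappa_1)$ with error depending only on $T_pE$, and $t$ never enters the error term. If you revise along these lines, the key check is exactly the one you anticipated at the end of your write-up: that the Chebotarev argument in the cited bound is insensitive to replacing $\mathcal{L}_{\rm Heeg}$ by the sub-collection $\mathcal{L}_{e+t}$ and to working with $T/p^eT$ rather than $T_\alpha$.
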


\begin{proof}
Take $f\gg 0$ so that $\tilde{\kappa}_1\not\equiv 0\;({\rm mod}\,p^f)$ and  such that $\tilde{\mathcal{L}}:=\mathcal{L}_{f+t}$ is contained in $\mathcal{L}$, and put $\tilde{\mathcal{N}}=\mathcal{N}(\tilde{\mathcal{L}})$. As in the proof of \cite[Thm.~2.2.2]{howard-gl2-type}, the collection of classes $\{\tilde{\kappa}_n\}_{n\in\tilde{\mathcal{N}}}$ gives a (non-trivial) Kolyvagin system for $(T/p^fT,\mathcal{F}_{\rm ord},\tilde{\mathcal{L}})$, and a slight modification of the argument yielding \cite[Thm.~6.1.1]{eisenstein_cyc} applied to this system 
%as in the proof of \cite[Thm.~2.2.2]{howard-gl2-type} 
%applied to the Kolyvagin system $\{\tilde{\kappa}_n\}_{n\in\tilde{\mathcal{N}}}$ 
yields the result. More precisely, the rank one statement follows as in \cite[Thm 3.3.8]{eisenstein} choosing $k\gg 0$ and larger than $t+f$. For the bound, it is then enough to have a Kolyvagin system modulo $p^f$: the choice of the integer $k$ as in (6.5) of \cite{eisenstein_cyc} and larger than $t+f$ guarantees, for example, that, when working with the image of the system in $\rH^1(K, T^{(k)}_{\alpha})$, we find
\[
\ord(\loc_{\ell}(p^t\tilde{\kappa}_\ell))=\ord(\loc_{\ell}(p^t\tilde{\kappa}_1))\geq \ord(p^t\tilde{\kappa}_1) -  e \gneq 0,
\] 
where $\ell$ is the first prime produced via the \v{C}ebotarev argument in the inductive argument.
Therefore $\ord(\loc_{\ell}(\tilde{\kappa}_\ell))=\ord(\loc_{\ell}(\tilde{\kappa}_1)) \geq  \ord(\tilde{\kappa}_1) -  e$, which is precisely the key relation we use in the proof. Hence the argument to produce the sequence of integers $1=n_0, \ell=n_1, \dots, n_\rho \in \mathcal{N}^{(k)}$ satisfying (a)-(g) carries through.

The claim that $\mathcal{E}=0$ when (\ref{eq:irred}) holds follows from \cite[Rem.~3.3.5]{eisenstein}.
%
%We need to verify that we can apply (the same proof of) \cite[Thm.~5.1.1]{eisenstein_cyc} using the system of classes $(\tilde{\kappa}_n(\alpha))_n$, where $p^{t}\tilde{\kappa}_n(\alpha)={\kappa}_n$.
%If this is a Kolyvagin system for $(T_{\alpha},\Fcal_{\rm ord},\cL)$, the inequality follows from Theorem \ref{thm:bound}. However, we only know that $\phi_{\rm fs}(\loc_{\ell}(p^t\tilde{\kappa}_{n}))=\loc_{\ell}(p^t\tilde{\kappa}_{n\ell})$ where $\ell\nmid n$ and $\phi_{\rm fs}$ is the finite singular isomorphism at $\ell$. Nonetheless, the proof carried out in \cite[$\S$5.4]{eisenstein_cyc} still applies. The choice of the integer $k$ as in (5.7) of \emph{op. cit.} guarantees, for example, that, when working with the image of the system in $\rH^1(K, T^{(k)}_{\alpha})$, we find
%\[
%\ord(\loc_{\ell}(p^t\tilde{\kappa}_\ell))=\ord(\loc_{\ell}(p^t\tilde{\kappa}_1))\geq \ord(p^t\tilde{\kappa}_1) -  e \gneq 0,
%\] 
%where $\ell$ is the first prime produced via the \v{C}ebotarev argument in the inductive argument.
%Therefore $\ord(\loc_{\ell}(\tilde{\kappa}_\ell))=\ord(\loc_{\ell}(\tilde{\kappa}_1)) \geq  \ord(\tilde{\kappa}_1) -  e$, which is precisely the key relation we use in the proof of Theorem \ref{thm:bound}. Hence the argument to produce the sequence of integers $1=n_0, \ell=n_1, \dots, n_\rho \in \mathcal{N}^{(k)}$ satisfying (a)-(g) carries through. (Cf. \cite[Thm.~2.2.2]{howard-gl2-type}.)
\end{proof}

%\begin{rmk}\label{rmkirr}
%If \eqref{eq:irr} holds, then $\CE=0$ 
%if \eqref{eq:irr} is satisfied.
 %and, in particular, 
% In that case Theorem \ref{thm:bound} was proved in 
% by results of \cite{howard} and \cite{bcs}.
%\end{rmk}

\begin{rem}
Under (\ref{eq:irred}), Theorem~\ref{thm:bound} recovers \cite[Thm.~2.2.2]{howard-gl2-type}. Still under a big image hypothesis, a similar result is given \cite[Thm.~9.6.1]{LZ-quadratic} for Kolyvagin systems over the cyclotomic Iwasawa algebra.
%
%If \eqref{eq:irred} holds, this result is quite classical and it is dealing with the subtlety of passing from \emph{weak} Kolyvagin systems (usually obtained from an Euler system) to Kolyvagin systems: see for example \cite[Corollary IV.6.5]{rubin-ES}, \cite[Thm 2.2.2]{howard-gl2-type} and \cite[Appendix A]{mazrub}. As pointed out to the authors by D.\,Loeffler, a bound involving $\Lambda$-primitive Kolyvagin systems similarly to \cite{mazrub} can be found in \cite[Thm.\,9.6.1]{LZ-quadratic}, under a \emph{large Galois image} assumption.
%
%As pointed out by the authors by L.\,Loeffler, we note that a refinement of Kolyvagin's methods in a similar spirit to Theorem~\ref{thm:bound} (but in the setting of \cite{mazrub}) can be found in \cite[Thm.\,9.6.1]{LZ-quadratic}.
\end{rem}

\section{Proofs of Theorem~\ref{thmintroKoly} and Theorem~\ref{thmintroKoly-div}}\label{sec:Kolyproof}

%We can now prove Theorem~\ref{thmintroKoly} in the Introduction.

\subsection{Kolyvagin's conjecture}\label{sec:Kolyf}  
%Based on the tools in Section~\ref{s:Heegner}, we can now prove our main result towards Kolyvagin's original conjecture.

\begin{proof}[Proof of Theorem~\ref{thmintroKoly}]

We shall first proof the result for the Heegner point Kolyvagin system $\kappa_0^{\rm Heeg}=\{\kappa_{0n}^{\rm Heeg}\}_n$ attached to the optimal quotient $\pi_0:X_0(N)\rightarrow E_0$.

Assume that the rational anticyclotomic Main Conjecture  
holds (see Remark~\ref{rem:ac-rational}). In particular, letting $\Fcal_p(E_0/K)\in\Lambda$ be any generator of ${\rm char}_\Lambda(\mathfrak{X}_{\rm Gr}(E_0/K_\infty))$ we have the `lower bound' divisibility\footnote{Note 
%that $h$ is not necessarily a unit or a power of $p$ since we only assume the rational version of the anticyclotomic Main Conjecture
that the proof of Theorem~\ref{thmintroKoly} only requires the divisibility $\Lcal_p^{\rm BDP}(f/K)^2\mid\Fcal_p(E/K)$ in $\Lambda^{\rm ur}[1/p]$.}
\begin{equation}\label{eq:rational-div}
\Fcal_p(E_0/K)\cdot p^R=\Lcal_p^{\rm BDP}(f/K)^2\cdot h
\end{equation}
for some $R\geq 0$ and $h\in\Lambda^{\rm ur}$. Let also
$$x:= v_p(\#\rH^0(\Q_p,E_0[p^\infty]))= v_p(1-\alpha_p).$$ 

Assume for contradiction that
 $\kappa_{0n}^{\rm Heeg}=0$ for all $n\in\mathcal{N}_{\rm Heeg}$. Take $t\in \Z_{>0}$ such that
\begin{equation}\label{eq:t}
t> \tfrac{1}{2}\sum_{w\mid N}\ord_p(c_w^{(p)}) +R+ \CE,
\end{equation}
where $\CE$ is the error term from Theorem \ref{thm:bound}. Consider a  character $\alpha=\alpha_{m}: \Gamma \to \Z_p^{\times}$ as in 
Definition \ref{def:alpha_m}
%$\S\ref{subsec:tw}$ 
such that the following conditions hold:
\begin{itemize}
\item[(a)] $\Lcal_p^{\rm BDP}(f/K)(\al^{-1})\neq 0$, 
\item[(b)] $m> \CM + 2t+2x$,
\item[(c)] The conclusion of Corollary~\ref{cor:index-sha} holds.
\end{itemize}
By the last claim in Theorem~\ref{thm:BDP}, %the $p$-adic $L$-function $\Lcal_p^{\rm BDP}(f/K)$ has only finitely many zeros in the $p$-adic unit disk, 
we can always find such $\alpha$. In particular, by Lemma~\ref{lemma:erlinv} we then have $\bkappa_{01}^{\rm Heeg}(\alpha)\neq 0$, and moreover  
\[
\prod_{w\mid N}c_w^{(p)}(\alpha) = \prod_{w\mid N} c_w^{(p)}
\]
by Lemma \ref{lemma:tam} and Remark~\ref{rmktam}.

Put $\mathcal{N}_m=\mathcal{N}(\mathcal{L}_m)$, 
%where $\mathcal{L}_m=\{\ell\in\mathcal{L}_0\;\colon\;a_\ell\equiv\ell+1\equiv 0\;({\rm mod}\,p^m)\}$, 
so $M(n)\geq m$ for all $n\in\cN_m$. Since we assume that all $\kappa_{0n}^{\rm Heeg}$ vanish, by Lemma~\ref{lemmacongruence} and the assumption $m>t+2x$ we find that $\bkappa_{0n}^{\rm Heeg}(\alpha)\equiv 0\;({\rm mod}\,p^{t+2x})$ for every $n\in\mathcal{N}_m$. We can therefore consider a collection of cohomology classes $\{\tilde{\kappa}_{0n,\alpha}\in\rH^1(K,T_{0\alpha}/p^{M(n)}T_{0\alpha})\}_{n\in\mathcal{N}_m}$ such that
\[
p^{t+2x}\cdot\tilde{\kappa}_{0n,\alpha}=\bkappa_{0n}^{\rm Heeg}(\alpha).
\] 
Note that these classes are not canonically defined, but their orders (and that of their localisations) are.
% and this is what we will need in the next lemma. 
Since $\rH^1(K,T_{0\alpha})$ is torsion-free by hypothesis (\ref{eq:intro-tor}), the nonvanishing of $\bkappa_{01}^{\rm Heeg}(\alpha)$ implies that the bottom class $\tilde{\kappa}_{01,\alpha}$ is non-zero and its index is equal to $\ind(\bkappa_{01}^{\rm Heeg}(\alpha))-t-2x$. Hence from Theorem~\ref{thm:bound} we get
\begin{equation}\label{eq:boundalpha-t}
\tfrac{1}{2}\,{\rm length}_{\Z_p}(\sha(W_{\alpha^{-1}}/K))\leq \ind(\bkappa_{01}^{\rm Heeg}(\alpha)) -t-2x +\CE.
\end{equation}

On the other hand, from Lemma~\ref{lemma:erlinv}, Theorem~\ref{controlthm} with $E=E_0$, and (\ref{eq:rational-div}), we obtain
\begin{equation}\label{eq:mainconjalpha}
\ind(\bkappa_{01}^{\rm Heeg}(\alpha))\leq\tfrac{1}{2}{\rm length}_{\Z_p}(\sha(W_{\alpha^{-1}}/K))+\tfrac{1}{2}\sum_{w\mid N}\ord_p(c_w^{(p)})+\tfrac{R}{2}+2x,
\end{equation}
using $m>\sum_{w\mid N}{\rm ord}_p(c_w^{(p)})$ to apply the observation in Remark~\ref{rmktam}. Combining \eqref{eq:boundalpha-t} and \eqref{eq:mainconjalpha}, we get
\[
t\leq \tfrac{1}{2}\sum_{w\mid N}{\rm ord}_p(c_w^{(p)})+ R+\CE,
\]
which contradicts \eqref{eq:t}, thereby proving the result in this case.
%Clearly, a slight modification of the above arguments lead to the same contradiction just assuming the rational version of Conjecture~\ref{conj:anticyc} (or even just the divisibility ``$\subseteq$'' thereof, leading to an inequality ``$\geq$'' in (\ref{eq:mainconjalpha})). Hence this concludes the proof of Theorem~\ref{thmintroKoly} for $E=E_\bullet$.

Now let $E/\Q$ be any elliptic curve in the isogeny class attached to $f$ and let $\pi: X_{0}(N) \rightarrow E$ be a modular parametrisation. 
The associated Kolyvagin system classes $\kappa_n^{\rm Heeg}$ are images of $\kappa_{0n}^{\rm Heeg}$ under the natural map $\rH^1(K,T_0^{M(n)})\rightarrow\rH^1(K,T^{(M(n))})$ induced by an isogeny $\phi:E_0\rightarrow E$, which depend on $E$ and $\phi$. Let $p^d$ denote the $p$-part of ${\rm deg}(\phi)$. If $\kappa_{n}^{\rm Heeg}=0$ for all $n$, then $p^{d}\cdot\kappa_{0n}^{\rm Heeg}=0$ for all $n$. Hence, repeating the same argument as above but with $m>\mathcal{M}+2t+2x+d$ in (b), we would arrive at a contradiction and so the proof concludes. 
\end{proof}

\begin{rem}\label{rem:upperbound}
In particular, if $E[p]$ satisfies (\ref{eq:irred}), the above argument shows that 
$\mathscr{M}_{\infty,\cN}\leq\tfrac{1}{2}\sum_{w\mid N}{\rm ord}_p(c_w^{(p)})$, 
where $\mathscr{M}_{\infty,\cN}=\mathscr{M}_\infty(\{\kappa_n^{\rm Heeg}\}_n)$ is the divisibility index of $\{\kappa_n^{\rm Heeg}\}_{n\in\mathcal{N}}$ 
%as defined in the Introduction, 
for any $\cN \supset \cN(\cL_m)$.
\end{rem}

\begin{rem}
After the completion of this paper, we learnt from D.~Loeffler that an argument similar to the above first appeared in his work with S.~Zerbes \cite{LZ-quadratic} on the Iwasawa main conjecture for quadratic Hilbert modular forms.
%
%the authors bound the power of height-one primes of the Iwasawa algebra that can divide certain $\Lambda$-adic Euler systems by using the \emph{opposite divisibility} in the Iwasawa main conjecture and a Mazur--Rubin bound for $\Lambda$-primitive Kolyvagin systems. Their goal is to prove an equality in the Iwasawa main conjecture. Note that here we we can additionally work at Eisenstein primes and, crucially, the \emph{opposite divisibility} in the Iwasawa main conjecture is used only point-wise at the character $\alpha_m$ and the key point is to use congruences to pass from $\alpha_m$ to the trivial character. Such congruences allow us, in the non-Eisenstein case, to have better control on the Selmer groups for $\alpha_m$, which leads to the proof of the refined Kolyvagin's conjecture in the upcoming section. 
We thank D.~Loeffler for pointing out the similarities between our independent methods.
\end{rem}

\subsection{The refined Kolyvagin's conjecture}
%Recall that we put $\sha(W_{\alpha^{-1}}/K)=\rH^1_{\Fcal_{\rm ord}}(K,W_{\alpha^{-1}})/\rH^1_{\Fcal_{\rm ord}}(K,W_{\alpha^{-1}})_{\rm div}$.
\subsubsection{Set-up} Let $R$ be the ring of integers of a finite extension of $\Q_p$ with maximal ideal $\fm$ and uniformiser $\varpi\in\fm$. Let $\alpha: \Gamma_{K}\to R^{\times}$ be a character such that $\alpha\equiv 1\;({\rm mod}\,\fm^m)$.
%
%In general, $R_\p$ may not be $\Z_p$ and the definitions in \S\ref{secsel} have to be suitably modified (see \cite[\S\S{3.2-3}]{eisenstein}). However, 
%for the purposes of this paper we 
%
In this section, we do not assume $m\gg 0$, in particular we could also have $m=1$. Since we shall assume \eqref{eq:irred}, without loss of generality we may assume that $E=E_0$ is the optimal quotient of $X_0(N)$ associated to $f$. Put $T_{\alpha}=T_pE\otimes_{\Z_p} R(\alpha)$. 

For the purposes of the proof of Theorem~\ref{thmintroKoly-div} the case $R=\Z_p$ will suffice, but as they may be of independent interest, the preliminary results in $\S\ref{subsec:prelim-div}$ below will be proved in this more general setting. Note that the definitions of $\S\ref{subsec:KS-Heeg}$ readily extend to this more general $R$ (see \cite[\S\S{3.2-3}]{eisenstein}). Similarly as in the introduction, if $\boldsymbol{\kappa}=\{\kappa_n\}_{n\in\cN}$ is a Kolyvagin system for 
%$(T,\Fcal,\mathcal{L})=
$(T_\alpha,\Fcal_{\rm ord},\mathcal{L})$,
for each $n\in\mathcal{N}=\mathcal{N}(\mathcal{L})$, define $\mathscr{M}(n)\in\Z_{\geq 0}\cup\{\infty\}$ by $\mathscr{M}(n)=\infty$ if $\kappa_n=0$, and by
\[
\mathscr{M}(n)=\max\{\mathscr{M}\colon\kappa_n\in \fm^{\mathscr{M}}\rH^1(K,T_\alpha/I_nT_\alpha)\}=\ind(\kappa_n)
\]
otherwise. For any $\cN'\subset \cN$, put $\mathscr{M}_{r,\cN'}=\min\{\mathscr{M}(n)\colon\nu(n)=r, n\in\cN'\}$, where $\nu(n)$ denotes the number of prime factors of $n$. 

One shows, similarly as in \cite{kolystructure}, that $\mathscr{M}_{r,\cN'}\geq\mathscr{M}_{r+1,\cN'}\geq 0$ for all $r\geq 0$. Put 
\[
\mathscr{M}_{\infty,\cN'}(\bkappa)=\lim_{r\to\infty}\mathscr{M}_{r,\cN'}.
\]
We let $\mathscr{M}_{\infty}(\boldsymbol{\kappa})=\mathscr{M}_{\infty,\cN}(\boldsymbol{\kappa})$. Note that $\mathscr{M}_{\infty,\cN'}(\bkappa)\geq  \mathscr{M}_{\infty}(\boldsymbol{\kappa})$ for every $\cN'\subset \cN$.

\subsubsection{Preliminaries}
\label{subsec:prelim-div}
In addition to the ingredients that went into the proof of Theorem~\ref{thmintroKoly}, our approach to the refinement in Theorem~\ref{thmintroKoly-div} relies on the following two results: the independence of the integer $\mathscr{M}_{\infty,\cN'}$ from $\cN'\subset \cN$ and an exact formula for the length of $\sha(W_{\alpha^{-1}}/K)$ in the spirit of Kolyvagin's structure theorem \cite[Thm.~1]{kolystructure}.

\begin{prop}\label{prop:M} Assume that $p\geq 3$ and that \eqref{eq:irred} holds.  Assume also that $\cL\subset\cL_0$ is such that $\cL_e \subset \cL$ for some $e>0$. Let $\boldsymbol{\kappa}=\{\kappa_n\}_{n\in\cN}$ be a Kolyvagin system for $(T,\Fcal,\mathcal{L})$. Then for any $e\gg 0$, we have 
\[
\mathscr{M}_{\infty,\cN^{(e)}}(\boldsymbol{\kappa}) = \mathscr{M}_{\infty,\cN}(\boldsymbol{\kappa}).
\]
\end{prop}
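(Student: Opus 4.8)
The plan is to show the two limiting divisibility indices agree by establishing a two-sided inequality. The inequality $\mathscr{M}_{\infty,\cN^{(e)}}(\boldsymbol{\kappa}) \geq \mathscr{M}_{\infty,\cN}(\boldsymbol{\kappa})$ is automatic, since $\cN^{(e)} \subset \cN$ (it was already noted that restricting to a smaller set of Kolyvagin numbers can only increase the infimum-type quantity). So the content is the reverse inequality $\mathscr{M}_{\infty,\cN^{(e)}}(\boldsymbol{\kappa}) \leq \mathscr{M}_{\infty,\cN}(\boldsymbol{\kappa})$: given any $n\in\cN$ with $\nu(n)=r$ realizing a small value of $\mathscr{M}(n)$, I must produce $n'\in\cN^{(e)}$ with $\mathscr{M}(n')$ no larger (after possibly increasing the number of prime factors, which is harmless in the limit).

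First I would fix the setup: choose $e$ as in the statement, so $\cN^{(e)}=\cN(\cL_e)$ with $\cL_e\subset\cL$, and note that for $n\in\cN$ with $\kappa_n\neq 0$ we may assume $\mathscr{M}(n)<\infty$. The key mechanism is the standard Kolyvagin-system propagation: if $\ell\in\cL$ is a prime with $M(\ell)$ large compared to $\mathscr{M}(n)$, then the Kolyvagin system relations at $\ell$ relate $\loc_\ell(\kappa_n)$ (a transverse class) and $\loc_\ell(\kappa_{n\ell})$ (a finite class) via the finite-singular comparison isomorphism, and in particular $\kappa_{n\ell}$ is nonzero with $\mathscr{M}(n\ell)\leq \mathscr{M}(n)$ provided one picks $\ell$ so that $\loc_\ell(\kappa_n)\neq 0$ in the relevant mod-$p^{\mathscr M(n)+1}$ quotient. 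Using \eqref{eq:irred}, a Chebotarev argument (exactly as in \cite{kolystructure}, or \cite[\S3]{mazrub}) shows one can always choose such an $\ell$ inside $\cL_{e'}$ for any prescribed $e'$; taking $e'$ large enough ($e'\geq e$ and $e'$ exceeding $\mathscr M(n)$ plus the error bounds coming from the image of $\bar\rho_E$) forces $\ell\in\cL_e$, hence $n\ell\in\cN^{(e)}$. Iterating, from a given $n\in\cN$ one builds $n'=n\ell_1\cdots\ell_s\in\cN^{(e)}$ with all $\ell_i\in\cL_e$ and $\mathscr{M}(n')\leq\mathscr{M}(n)$; passing to the limit over $r\to\infty$ yields $\mathscr{M}_{\infty,\cN^{(e)}}(\boldsymbol{\kappa})\leq\mathscr{M}_{\infty,\cN}(\boldsymbol{\kappa})$.

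I would organize the argument around the mod-$p^k$ reductions: fix $k>\mathscr{M}_{\infty,\cN}(\boldsymbol{\kappa})$, work with the induced Kolyvagin system for $(T/p^kT,\Fcal,\cL_e)$ as in the proof of Theorem~\ref{thm:bound} (citing \cite[Thm.~2.2.2]{howard-gl2-type} and \cite[Thm.~3.2.4]{mazrub}), and show that the stabilized value $\mathscr{M}_{r,\cN^{(e)}}$ for $r\gg 0$ cannot exceed $\mathscr{M}_{\infty,\cN}(\boldsymbol{\kappa})$; this is where the hypothesis $p\geq 3$ and \eqref{eq:irred} enter, guaranteeing the Chebotarev conditions on $\cL_e$ are nonempty and the finite-singular isomorphisms behave correctly. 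The main obstacle I anticipate is the bookkeeping needed to guarantee that the auxiliary primes $\ell_i$ can be taken \emph{simultaneously} in $\cL_e$ (i.e. with $a_{\ell_i}\equiv\ell_i+1\equiv 0\pmod{p^e}$) \emph{and} detecting the nonvanishing of $\loc_{\ell_i}$ of the relevant partial Kolyvagin class modulo the appropriate power of $p$ — one must track how the required detection-precision grows along the iteration and check it stays bounded by a constant depending only on $T_pE$, so that a single choice of large $e'$ suffices throughout; this is precisely the kind of estimate carried out in \cite{kolystructure} and adapted with error terms in \cite{eisenstein_cyc}, and the proof will amount to citing and lightly repackaging those inputs.
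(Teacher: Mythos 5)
There is a genuine gap: the iteration you propose never lands in $\cN^{(e)}$. You start from some $n\in\cN$ realizing the stabilized index $\mathscr{M}_{\infty,\cN}$, and then multiply by auxiliary primes $\ell_1,\dots,\ell_s\in\cL_e$ chosen by Chebotarev so that the finite--singular relations preserve $\mathscr{M}$. But the resulting $n'=n\ell_1\cdots\ell_s$ is still a multiple of $n$, and if $n$ has a prime factor $\ell_0\notin\cL_e$ (the only case of interest), then $n'$ does too, so $n'\notin\cN^{(e)}=\cN(\cL_e)$. Adding good primes alone cannot discharge the bad ones. The paper's proof instead \emph{replaces} a single bad prime: starting from $n\in\cN_r$ with $\mathscr{M}(n)=M$ and $\ell_0\mid n$, $\ell_0\notin\cL_e$, it finds $\ell\in\cL_e$ and shows $\mathscr{M}(n\ell/\ell_0)=M$, so that $\nu$ is preserved and the bad prime is gone; iterating this eventually produces an element of $\cN^{(e)}_r$.

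The replacement step is not a mere reprise of the Chebotarev-and-finite-singular argument you sketch; it requires an additional input. After picking $\ell$ so that $\loc_\ell(\kappa_n)\neq 0$ mod $p$ (which, together with the minimality $\mathscr{M}_{r+1,\cN}=\mathscr{M}_{r,\cN}=M$, forces $\mathscr{M}(n\ell)=M$ exactly as you argue), one still has to show that $\loc_{\ell_0}(\kappa_{n\ell})$ is nonzero in order to drop $\ell_0$: the finite--singular relation at $\ell_0$ then gives $\ord\loc_{\ell_0}(\kappa_{n\ell/\ell_0})=\ord\loc_{\ell_0}(\kappa_{n\ell})=1$, hence $\mathscr{M}(n\ell/\ell_0)\leq M$ (and $\geq M$ by the same minimality). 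Detecting $\loc_{\ell_0}(\kappa_{n\ell})\neq 0$ is the crux; the paper does it with a global-duality trick. It chooses an auxiliary class $c$ in the Selmer group relaxed at $\ell_0$ and lying in the eigenspace of complex conjugation opposite to that of $\kappa_n$ mod $p$, and imposes on $\ell$ the additional condition $\loc_\ell(c)\neq 0$. Because the finite--singular map at $\ell$ intertwines the $\pm$-eigenspaces, the reduction $\bar\kappa_\ell$ of $\kappa_{n\ell}$ lies in the same eigenspace as $c$; the Poitou--Tate sum $\sum_v\langle\loc_v(c),\loc_v(\bar\kappa_\ell)\rangle_v=0$ then has only the $\ell$ and $\ell_0$ terms surviving, and since the $\ell$-term pairs a nonzero unramified class with a nonzero transverse class it is nonzero, forcing the $\ell_0$-term, hence $\loc_{\ell_0}(\bar\kappa_\ell)$, to be nonzero. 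None of this mechanism --- the auxiliary class $c$, the eigenspace bookkeeping, the global-duality cancellation --- appears in your sketch, and without it you have no way to remove the bad primes. Your point that the Chebotarev search can be restricted to $\cL_e$ under \eqref{eq:irred} is correct and is used in the paper, but it is only one of the ingredients, not the whole argument.
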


%In addition to the ingredients that went into the proof of Theorem~\ref{thmintroKoly}, the proof of the refinement in Theorem~\ref{thmintroKoly-div} relies on the following generalization of
%Kolyvagin's structure theorem \cite[Thm.~1]{kolystructure}. 

\begin{thm}\label{thm:kol-control}  
In the setting of Proposition~\ref{prop:M}, if $\kappa_1\neq 0$ then
\[
{\rm length}_{R}\bigl(\sha(W_{\alpha^{-1}}/K)\bigr)=2\bigl(\mathscr{M}_0(\boldsymbol{\kappa})-\mathscr{M}_{\infty}(\boldsymbol{\kappa})\bigr),
\]
where $\mathscr{M}_0(\boldsymbol{\kappa})=\ind(\kappa_1)$.
\end{thm}
\begin{rmk}
Note that this 
%structure 
theorem applies to any anticylotomic character $\alpha$ of $\Gamma$, as will be clear from the proof below: 
the need to distinguish non-trivial characters sufficiently close\footnote{with respect to \cite[Thm. 3.3.8]{eisenstein}, where essentially the maximum $m$ such that $\alpha\equiv 1 \mod \fm^m$ was part of the error term} to the trivial character in \cite[6.1.1]{eisenstein_cyc}  disappears when \eqref{eq:irred} holds. In particular, Theorem \ref{thm:kol-control} applies for characters $\alpha:\Gamma \to \Lambda/(\mathfrak{P}+p^m)^{\times}$ for any height one prime ideal $\mathfrak{P}$ of $\Lambda$. 
\end{rmk}

The rest of the subsection establishes the above results. 

We begin with some preparation. Let $\Fcal=\Fcal_{\rm ord}$ and $T=T_{\alpha}$. For any $k\geq 1$, put $R^{(k)}= R/\fm^k$, $T^{(k)}= T/\fm^k$, $\mathcal{L}^{(k)}=\{\ell\in \cL: I_\ell\subset \fm^k\}$, and $\mathcal{N}^{(k)}=\mathcal{N}(\cL^{(k)})$, the set of squarefree products of primes in $\cL^{(k)}$.  
Recall that, by \cite[Prop.~3.3.2]{eisenstein}, there is an integer $\epsilon\in\{0,1\}$ such that for all $k$ and every
every $n\in\cN^{(k)}$ there is an $R^{(k)}$-module $M^{(k)}(n)$ such that
\begin{equation}\label{eq:str}
\rH^1_{\CF(n)}(K,T^{(k)}) \simeq (R/\fm^k)^\epsilon\oplus M^{(k)}(n)\oplus M^{(k)}(n).
\end{equation}
Moreover, by \cite[Thm.~3.3.8]{eisenstein}, if $\boldsymbol{\kappa}=\{\kappa_n\}_{n\in\cN}$ is a Kolyvagin system for $(T,\Fcal,\mathcal{L})$ with $\kappa_1\neq 0$, then $\epsilon=1$ and for $k\gg 0$, ${\rm exp}(M^{(k)}(1))<k$. We also denote by $\boldsymbol{\kappa}^{(k)}=\{\kappa_n^{(k)}\}_{n\in\cN^{(k)}}$ the Kolyvagin system for $(T^{(k)},\Fcal,\mathcal{L}^{(k)})$ obtained from $\boldsymbol{\kappa}$ by reduction modulo $\mathfrak{m}^k$.
%where $\mathcal{L}^{(k)}=\{\ell\in \cL: I_\ell\subset \fm^k\}$. In particular, if $\Lcal=\Lcal_{\rm Heeg}$ and $R=\Z_p$, then $\mathcal{L}^{(k)}=\Lcal_k$.

We adopt the following notation: a finite torsion $R$-module $X$ is isomorphic to a sum of cyclic $R$-modules:  $X \simeq \oplus_{i=1}^{s(X)} R/\fm^{d_i(X)}$ for some uniquely-determined
integers $d_i(X)\geq 0$, with $d_1(X)\geq d_2(X)\geq \cdots \geq d_{s(X)}(X)$. For any $x\in X$, let the order and index of $x$ in $X$ be
$$
\ord(x):=\min\{n\geq 0: \varpi^n\cdot x =0\}, \ \ \ \ind(x):= \max\{t\geq 0: x\in\fm^t X\}.
$$
For an integer $t\geq 0$ we let $\rho_t(X)= \# \{i: d_i(X) > t\}$. Assume $\alpha\equiv 1 \mod \fm^m$ for some $m\geq 0$. For $n\in\cN^{(k)}$ with $k\geq m$, we let
\begin{equation}\label{eq:rho0}
\rho(n)^\pm:=\rho_0(\rH^1_{\F(n)}(K, T^{(m)})^\pm), \ \ \  \rho(n): = \rho(n)^+ +\rho(n)^-;
\end{equation}
i.e., $\rho(n)^\pm$ counts the number of summands of the $\pm$-components of $\rH^1_{\F(n)}(K, T^{(m)})$, on which, since $\alpha\equiv 1 \mod \fm^m$, there is a well-defined action of the complex conjugation. Note that, by \cite[Lem. 3.3.1]{eisenstein}, we are identifying $$\rH^1_{\F(n)}(K, T^{(m)})\simeq \rH^1_{\F(n)}(K, T^{(k)})[\fm^m]$$ and, in particular, $\rho(n)$ does not depend on $k$ or $\alpha$. 

Note also that, by \eqref{eq:str}, if $\rho(n)=1$, we must have $\rH^1_{\CF(n)}(K,T^{(k)}) \simeq R/\fm^k$, i.e. $M^{(k)}(n)=\{0\}$. If this is the case, we say that $n$ is a \emph{core vertex}, as in \cite[Def. 4.1.8]{mazrub} (and \cite{zanarella} in the Heegner point case).

\begin{proof}[Proof of Proposition \ref{prop:M}]
The essential  idea of this proof can be found in the work of Kolyvagin. Indeed, the case $\alpha =1$ can be easily extracted from the proof of \cite[Prop.~5.2]{McCallum}.

%Let $r\geq 0$ be an integer such that $\mathscr{M}_{r,\cN}^{(k)}= \mathscr{M}_{\infty,\cN}^{(k)}$.
%Let $M:= \mathscr{M}_{r,\cN^{(k)}}$ and $\cN_r :=\{n\in \cN: \nu(n)=r\}$.
%\textcolor{blue}{Should it be?: 
Let $r\geq 0$ be an integer such that $\mathscr{M}_{r,\cN}(\boldsymbol{\kappa})= \mathscr{M}_{\infty,\cN}(\boldsymbol{\kappa})$.
Let $\delta:= \mathscr{M}_{r,\cN}(\boldsymbol{\kappa})$ and $\cN_r :=\{n\in \cN: \nu(n)=r\}$. By definition, there exists some $n\in \cN_r$ such that $\mathscr{M}(n) =\delta$. Necessarily $n \in \cN^{(\delta+1)}$, 
so we may assume that 
$e > \delta+1$.  Suppose there exists a prime $\ell_0\mid n$ such that $\ell_0\not\in \mathcal{L}_e$. We will show that there exists a prime 
$\ell\in \mathcal{L}_e$ such that
$n' = \ell n/\ell_0 \in \cN_r$ is such that $\mathscr{M}(n') = \delta$. Replacing $n$ with $n'$ and repeating as necessary, we arrive at an $n\in \cN_r^{(e)}$ such that $\mathscr{M}(n) =\delta$, whence the proposition. 

We have $\kap^{(\delta+1)}_n\in \fm^{\delta}\rH^1_{\cF(n)}(K,T^{(\delta+1)})$ but $\kap^{(\delta+1)}_n \not\in \fm^{\delta+1}\rH^1_{\cF(n)}(K,T^{(\delta+1)})$.
From the identification $$\rH^1_{\cF(n)}(K,T^{(1)}) = \rH^1_{\cF(n)}(K,T^{(\delta+1)})[\fm],$$ we see that $\kap^{(\delta+1)}_n$ is identified with a non-zero 
class $0\neq \bar\kap \in \rH^1_{\cF(n)}(K,T^{(1)})$. Complex conjugation acts on $T^{(1)}$ and hence on $\rH^1_{\cF(n)}(K,T^{(1)})$ and we may fix
a sign $s = \pm$ such that $\bar\kap^s\neq 0$.

Let $0\neq c \in \rH^1_{\cF(n/\ell_0)^{\ell_0}}(K,T^{(1)})^{-s}$. Here the superscript $\ell_0$ on the $\cF(n/\ell_0)$ means that we impose no restrictions on the classes at the prime $\ell_0$. The existence of such a $c$ follows easily from global duality and the fact that $\ell_0 \in \cL$. 

Now let $\ell\in\cL_e$, $\ell\nmid n$, be a prime such that $\loc_\ell(\bar\kap^s)\neq 0$ and $\loc_\ell(c)\neq 0$. The existence of such a prime $\ell$ follows from \cite[Prop. 6.3.1]{eisenstein_cyc} applied with $m=1$ and $k=e$ in the notation of \emph{op. cit.} and noting that the error term is zero since we are assuming \eqref{eq:irred} holds. Then $n\ell \in \cN_{r+1}^{(\delta+1)}$ and so $\kap_{n\ell}^{(\delta+1)}$ is defined. 
By the Kolyvagin relations, $\loc_\ell(\kap_{n\ell}^{(\delta+1)})$ has the same order as $\loc_\ell(\kap_n^{(\delta+1)})$, and the latter equals $1$ as 
$\loc_\ell(\bar\kap)\neq 0$. In particular $\kap_{n\ell}^{(\delta+1)}\not\in\mathfrak{m}^{\delta+1}\rH^1_{\cF(n\ell)}(K,T^{(\delta+1)})$. On the other hand, $n\ell \in \cN_{r+1}$,
so $$\mathscr{M}(n\ell)\geq \mathscr{M}_{r+1,\cN} = \mathscr{M}_{r,\cN} = \delta$$ by the choice of $r$. It follows that $\kap_{n\ell}^{(\delta+1)} \in \fm^\delta\rH^1_{\cF(n\ell)}(K,T^{(\delta+1)})$
and so defines a non-zero $\bar\kap_\ell \in \rH^1_{\cF(n\ell)}(K,T^{(1)})$.
The Kolyvagin relations relate $\loc_\ell(\kappa_n)$ with $\loc_\ell(\kappa_{n\ell})$ via the usual finite-singular (iso)morphism. Since this morphism intertwines the eigenspaces for complex conjugation\footnote{see for example \cite[5.15.1]{nekovar}, recalling that $\ell\in\cL_{e}$  and $e>\delta+1$ imply that $\frob_\ell$ acts as complex conjugation on $T^{(\delta+1)}$}, this shows that $\loc_\ell(\bar\kap_\ell^\pm)\neq 0$ if and only if $\loc_\ell(\bar\kap^\mp)\neq 0$, so
$\loc_\ell(\bar\kap_\ell^{-s}) \neq 0$. 

By global duality we have
$$
0 = \sum_{v} \langle \loc_v(c), \loc_v(\bar\kap_\ell^{-s})\rangle_v  = \langle \loc_\ell(c), \loc_\ell(\bar\kap_\ell^{-s})\rangle_\ell  + \langle \loc_{\ell_0}(c), \loc_{\ell_0}(\bar\kap_\ell^{-s})\rangle_{\ell_0}.
$$
As $0\neq \loc_\ell(c)\in \rH^1_\mathrm{ur}(K,T^{(1)})^{-s}$ and $0\neq \loc_\ell(\bar\kap_\ell^{-s}) \in \rH^1_{\mathrm{tr}}(K,T^{(1)})^{-s}$, it follows
that $\langle \loc_\ell(c), \loc_\ell(\bar\kap_\ell^{-s})\rangle_\ell\neq 0$. This in turn implies that $\langle \loc_{\ell_0}(c), \loc_{\ell_0}(\bar\kap_\ell^{-s})\rangle_{\ell_0}\neq 0$, which means that $\loc_{\ell_0}(\bar\kap_\ell^{-s})\neq 0$. So the order of $\loc_{\ell_0}(\kap_{n\ell}^{(\delta+1)})$ is $1$. But then the Kolyvagin system relations imply that 
$\loc_\ell(\kap_{n\ell/\ell_0}^{(\delta+1)})$ has order $1$, and so $\mathscr{M}(n\ell/\ell_0) = \delta$ by the same reasoning that showed that $\mathscr{M}(n\ell) = \delta$. 
\end{proof}

%Before getting to the proof of Theorem \ref{thm:kol-control}, we need the following result. 
For the proof of Theorem \ref{thm:kol-control}, we will also need the following result.
\begin{lemma}\label{lemma:j} Assume $p>3$.
%For any $k>0$, if $\langle\kappa_h^{(k)}\rangle = \fm^{j+ \length_R (M^{(k)}(h))}\rH^1_{\CF(h)}(K,T^{(k)})$ for some $h\in\cN^{(2k-1)}$ and $j\in\Z_{\geq 0}$, then 
There exists $d\in \Z_{\geq 0} \cup \{\infty\}$ independent on $k$ such that 
\begin{center}
$\langle\kappa_n^{(k)}\rangle = \fm^{d+ \length_R (M^{(k)}(n))}\rH^1_{\CF(n)}(K,T^{(k)})$ for every $n\in \cN^{(2k-1)}$.
\end{center}
If there exists $k$ and $n\in \cN^{(2k-1)}$ such that $\rho(n)=1$ and $\kappa_n^{(k)}\neq 0$, then $d \in \Z_{\geq 0}$.
\end{lemma}
\begin{proof}
The proof runs along the lines of 
%follows as in 
 \cite[$\S$ 2.2]{zanarella}. 
 
 First, one shows that for every two core vertexes $n,m\in \cN^{(k)}$, the $R^{(k)}$-module spanned by $\kappa_n^{(k)},\kappa_m^{(k)}$ are isomorphic, i.e.,
\[
\rho(n)=\rho(m)=1 \ \ \Rightarrow \ \ R/\fm^{k-\ind(\kappa_n^{(k)})}\simeq \langle \kappa_n^{(k)} \rangle \simeq \langle \kappa_m^{(k)} \rangle\simeq R/\fm^{k-\ind(\kappa_m^{(k)})}.
\] 
This follows as in \cite[Cor. 2.2.13]{zanarella}. 
From this one deduces that for any $k$ and any $n\in \cN^{(k)}$ with $\rho(n)=1$,
%Using the identification $\rH^1_{\CF(n)}(K,T^{(k)})=\rH^1_{\CF(n)}(K,T^{(k+1)})[\fm^k]$, we see that
%\[
%\ind(\kappa_n^{(k)}) = \min \{\ind(\kappa_n^{(k+1)}),k\} \leq k
%\]
%and therefore, we have 
\[
\langle \kappa_n^{(k)} \rangle \simeq \fm^{d_k} R/\fm^{k}
\]
for some $d_k\leq k$ independent of $n$. One sees that $d_{k+1}\geq d_k$ and that if $d_k<k$ (i.e. $\kappa_n^{(k)}\neq 0$), then $d_{k+1}=d_k$. Letting $d =\lim_{k\to\infty}d_k$, we find
\[
n\in \cN^{(k)}, \ \ \rho(n)=1 \Rightarrow \langle \kappa_n^{(k)} \rangle \simeq \fm^{d} R/\fm^{k}
\] 
for $d$ independent on $n$ and $k$. Note that if there exists $n$ such that $\kappa_n^{(k)}\neq 0$, $d$ is finite. One then shows that 
\begin{equation}\label{eq:span}
%\text{for any } n\in \cN^{(2k-1)}, \ \ 
\langle\kappa_n^{(k)}\rangle = \fm^{d+ \length_R (M^{(k)}(n))}\rH^1_{\CF(n)}(K,T^{(k)}) \ \ \text{if $n\in \cN^{(2k-1)}$}.
\end{equation}
%In particular, if $\langle\kappa_1^{(k)}\rangle = p^{j+ \length_R (M^{(k)}(1))}\rH^1_{\CF}(K,T^{(k)})$, we must have $d=j$. Note that if $\rho(n) =1$, then \eqref{eq:span} follows from the above discussion. 
Following the proof of \cite[Lemma 2.3.1]{zanarella}, one proves this statement 
without the assumption that $\rho(n)=1$
using \cite[Lemma 1.6.4]{howard} and induction on $r(k,n)=k+\max\{\rho(n)^+,\rho(n)^-\}$. 
%The results then follow since by assumption $h\in\mathcal{N}^{(2k-1)}$ and therefore $j=d$.
%
%We recall the Chebotarev density result we need for the proof. 
%%\cite[Prop. 3.3.6]{eisenstein}
%By \cite[Prop. 5.3.1]{eisenstein_cyc} applied in the case where \eqref{eq:irred} holds (and hence $e=0$), for $k\geq m$ and any pair of classes $c^\pm \in \rH^1(K,T^{(m)})^\pm$,
%there exist infinitely many primes $\ell\in \cL^{(k)}$ such that 
%$\ord(\loc_\ell(c^\pm)) = \ord(c^\pm).$
%Moreover, the same proof, combined with the assumption $p>3$ allows to impose one extra condition, namely given any three classes $c_1,c_2,c_3\in \rH^1(K,T^{(m)})^\pm$, there exist infinitely many primes $\ell\in \cL^{(k)}$ such that 
%$$\ord(\loc_\ell(c_i)) = \ord(c_i), \ \ \ i=1,2,3.$$
\end{proof}

\begin{proof}[Proof of Theorem \ref{thm:kol-control}]
As in the proof of \cite[Thm. 6.1.1]{eisenstein_cyc}, let $s(n)= \dim_{\mathbb{F}}\rH^1_{\F(n)}(K,T^{(1)})-1 = 2\cdot\dim_\mathbb{F}M^{(1)}(n)[\fm]$, where $\mathbb{F}=R/\fm$. Recall that, as in (\ref{eq:rho0}),
if $X$ is a finite torsion $R$-module, we denote by $\rho_0(X)$ the number of its (non-zero) summands and by $\rho_0(n)$ the sum of the number of summands of the $+$ and $-$ eigenspaces of $\rH^1_{\F(n)}(K, T^{(m)})$.
Let $\rho = \rho_0(1)$. Note that $\rho \geq \dim_{\mathbb{F}}\rH^1_{\F}(K,T^{(1)}) = s(1)+1$. 

By \cite[Thm 3.3.8]{eisenstein}, we can write $\rH^1_{\Fcal}(K,W_{\alpha^{-1}})\simeq {\rm Frac}(R)/R \oplus M(1)\oplus M(1)$, where $M(1)$ is a finite torsion $R$-module.
Let $k$ be a fixed integer such that 
\begin{equation}\label{eq:kbig}
k > \mathrm{length}_{R}(M(1)) + \ind(\kap_1)+m
\end{equation}
(recall that $\alpha\equiv 1 \pmod{\fm^m}$).
In particular, $\rH^1_{\Fcal}(K,T^{(k)})\simeq R^{(k)} \oplus  M(1)\oplus M(1)$ and  $\dim_{\mathbb{F}}\rH^1_{\F(n)}(K,T^{(k)})[\fm]-1= s(1)$.
In \emph{op. cit.} we found a sequence of integers $1=n_0, n_1,..., n_\rho \in \cN^{(k)}$ (which we can actually assume to be in $\cN^{(2k-1)}$ applying \cite[Prop. 6.3.1]{eisenstein_cyc} with a different choice of $k$) satisfying certain conditions (denoted by (a)-(g) in \emph{op. cit.}), in particular, if \eqref{eq:irred} holds, the error term $e$ is zero and we found:
\begin{itemize}
%\item[(a)] $s(n_{i+1})-2 \leq s(n_{i})\leq s(n_{i+1})+2$;
%\item[(b)] $\tfrac{1}{2}\mathrm{length}_{R}(M(n_i))\geq  \tfrac{1}{2}\mathrm{length}_{R}(M(n_{i-1}))- b(n_i)$ ;
%\item[(c)] $\tfrac{1}{2}\mathrm{length}_{R}(M(n_i)) \leq \tfrac{1}{2}\mathrm{length}_{R}(M(n_{i-1}))$;
%\item[(d)] $\ord(\kappa_{n_i})\geq \ord(\kappa_{n_{i-1}})$; $\ind(\kappa_{n_i})\leq \ind(\kappa_{n_{i-1}})$
\item[(e)] $\ind(\kappa_{n_{i-1}}^{(k)})\geq \ind(\kappa_{n_i}^{(k)})$;
\item[(f)]  $\rho(n_i) \leq \rho(n_{i-1})$, and
$\rho(n_i) = \rho(n_{i-1})>1$ only if $\rho_0(\rH^1_{\F(n_{i})}(K,T^{(m)})^\pm)\geq 1$;
\item[(g)] if $\rho(n_{i-2})>1$ then 
$\rho(n_{i}) < \rho(n_{i-2})$.
\end{itemize}
From (g), we get that
$\rho(n_\rho) = 1$, implying in particular that 
\[
\rH^1_{\CF(n_\rho)}(K,T^{(k)}) \simeq R/\fm^k, \ \  \text{ i.e.} \ \  M^{(k)}(n_\rho)= 0.
\]
Note that we do not need to assume $m$ to be large in this case, as the proof of (B) in \cite{eisenstein_cyc} only requires $m>\mathcal{M}$ and $\mathcal{M}=0$ when \eqref{eq:irred} holds. In particular, we can take $m=1$ and work with the $\fm$-torsion in the case where the character $\alpha$ is attached to a height one prime ideal of $\Lambda$ different from $(p)$.

%\textcolor{blue}{
Moreover, since $\rho(n_\rho) = 1$, $n_\rho \in\mathcal{N}^{(2k-1)}$ and $\kappa^{(k)}_{n_{\rho}}<k$ (by (e) and the choice \eqref{eq:kbig}),   we can apply Lemma \ref{lemma:j} and find that $\langle \kappa_1^{(k)}\rangle \simeq \fm^{d+ \length_R (M^{(k)}(1))}\rH^1_{\CF}(K,T^{(k)})$ for $d=\ind(\kappa_{n_\rho}^{(k)})$. Therefore
%}
\begin{equation}\label{eq:j+len}
j+\mathrm{length}_{R}(M(1))=\ind(\kappa_1) < k-\length_R(M(1)) < k, \ \ \ j = \ind(\kappa_{n_\rho}^{(k)}).
\end{equation}
In order to prove the theorem, we need to show that $\ind(\kappa_{n_\rho}^{(k)})=\mathscr{M}_{\infty}(\boldsymbol{\kappa})$.
Since $\ind(\kap_{n_\rho}^{(k)}) < k$, we have 
$$
\mathscr{M}(n_\rho) = \ind(\kap_{n_\rho}) = \ind(\kap_{n_\rho}^{(k)}) = j.
$$
Let $\cN' = \cN^{(2k-1)}$. Let $i\geq 0$ and let $n\in \cN'_i$ such that $\mathscr{M}(n) = \mathscr{M}_{\rho+i,\cN'}$.  
As $\mathscr{M}(n) \geq \ind(\kap^{(k)}_{n})$ by the definition of $\mathscr{M}(n)$ and since
$\ind(\kap^{(k)}_{n})  = \min\{k,j+\length_R(M(n))\}$ by Lemma \ref{lemma:j}, we have 
$$
\mathscr{M}_{\rho+i,\cN'} = \mathscr{M}(n) \geq \ind(\kap^{(k)}_{n}) =  \min\{k,j+\length_R(M(n))\} \geq j = \mathscr{M}(n_\rho)\geq \mathscr{M}_{\rho,\cN'}.
$$
Taking $i=0$, it follows that $\mathscr{M}_{\rho,\cN'} = \mathscr{M}(n_\rho)$.  Furthermore, since $\mathscr{M}_{\rho+i,\cN'} \leq \mathscr{M}_{\rho,\cN'}$, 
it follows that $\mathscr{M}_{\rho+i,\cN'} = \mathscr{M}_{\rho,\cN'}$ for all $i\geq 0$. Hence $\mathscr{M}_{\infty,\cN'} = \mathscr{M}_{\rho,\cN'} = \mathscr{M}(n_\rho)$. It then follows from Proposition \ref{prop:M} that
$\mathscr{M}_\infty(\boldsymbol{\kappa}) = \mathscr{M}_{\infty,\cN'} = \mathscr{M}(n_\rho)$.
\end{proof}

\subsubsection{The refined Kolyvagin conjecture}
%Now are ready to prove the refined Kolyvagin's conjecture.

\begin{proof}[Proof of Theorem~\ref{thmintroKoly-div}]
Assume \eqref{eq:irred}, $p>3$ and Conjecture~\ref{conj:anticyc}. By the nonvanishing of $\Lcal_p^{\rm BDP}(f/K)$, %(Theorem~\ref{thm:BDP}), 
we can choose a character $\alpha=\alpha_m:\Gamma\rightarrow\Z_p^\times$ as in $\S\ref{subsec:tw}$ with $m>\sum_{w\mid N}{\rm ord}_p(c_w^{(p)})+\mathscr{M}_\infty(\kappa^{\rm Heeg})$, and such that $\Lcal_p^{\rm BDP}(f/K)(\alpha^{-1})\neq 0$. By Corollary~\ref{cor:index-sha} and Remark~\ref{rmktam} we then have 
\begin{equation}\label{eq:control-divalpha}
{\rm length}_{\Z_p}(\sha(W_{\alpha^{-1}}/K))=2\cdot\ind(\bkappa_1^{\rm Heeg}(\alpha))-4x-\sum_{w\mid N}{\rm ord}_p(c_w^{(p)}),
\end{equation}
where $x= v_p(\#\rH^0(\Q_p,E[p^\infty]))= v_p(1-\alpha_p)$.
Together with Theorem~\ref{thm:kol-control} for the Kolyvagin system $\bkappa(\alpha):=\{\bkappa^{\rm Heeg}_n(\alpha)\}_{n\in\mathcal{N}}$, where $\mathcal{N}=\mathcal{N}(\cL_{m})$,
%for $A_0=\mathrm{length}_{R}(M(1)) + \ind(\kap_1^{\rm Heeg}(\alpha))+m$, this shows that
\[
\mathscr{M}_{\infty,\cN}(\bkappa(\alpha))=\tfrac{1}{2}\sum_{w\mid N}{\rm ord}_p(c_w^{(p)})+2x.
\]
By Remark~\ref{rem:upperbound} we have $\mathscr{M}_{\infty,\cN}(\kappa^{\rm Heeg}) \leq \tfrac{1}{2}\sum_{w\mid N}{\rm ord}_p(c_w^{(p)})$. If the inequality was strict, there would exist $n\in\mathcal{N}$ such that $\ind(\kappa_n^{\rm Heeg})\lneq \tfrac{1}{2}\sum_{w\mid N}{\rm ord}_p(c_w^{(p)})$. By the congruence of Lemma \ref{lemmacongruence}, $\ind(\kappa_n^{\rm Heeg})+2x=\ind(\bkappa_n^{\rm Heeg}(\alpha))$, and hence by the choice of $m$ and the definition of $\mathscr{M}_{\infty,\cN}(\bkappa(\alpha))$, this would imply $\mathscr{M}_{\infty,\cN}(\bkappa(\alpha))\lneq \tfrac{1}{2}\sum_{w\mid N}{\rm ord}_p(c_w^{(p)})+2x$, giving a contradiction. Therefore we have $\mathscr{M}_{\infty,\cN}(\kappa^{\rm Heeg}) = \tfrac{1}{2}\sum_{w\mid N}{\rm ord}_p(c_w^{(p)})$ and, by Proposition \ref{prop:M}, we find
\[
\mathscr{M}_{\infty,\cN_{\rm Heeg}}(\kappa^{\rm Heeg})= \tfrac{1}{2}\sum_{w\mid N}{\rm ord}_p(c_w^{(p)}).
\]
\end{proof}

\begin{rmk}
Note that, conversely, the refined Kolyvagin conjecture together with Theorem \ref{thm:kol-control} implies the ``anticyclotomic Iwasawa main conjecture at $\alpha_m$'' for $m\gg 0$, namely 
\[
\Fcal_p(E/K)(\alpha_m)\sim_p\Lcal_p^{\rm BDP}(f/K)^2(\alpha_m),
\]
where $\Fcal_p(E/K)\in\Lambda$ is any generator of ${\rm char}_\Lambda(\mathfrak{X}_{\rm Gr}(E/K_\infty))$. This follows repeating the proof of
Theorem \ref{thmintroKoly-div} backwards and applying Lemma~\ref{lem:erl} and Theorem~\ref{controlthm}. 
\end{rmk}

\section{The cyclotomic analogue: the non-triviality of Kato's Kolyvagin system}
In this section we work with the cyclotomic Iwasawa algebra over $\Q$. More precisely, let $\Q_{\infty}$ be the unique $\Z_p$-extension of $\Q$, let $ \Gamma_\Q=\gal(\Q_{\infty}/\Q)\simeq \Z_p$, and
\[
\Lambda=\Lambda_\Q=\Z_p[[\Gamma_\Q]].
\]
\subsection{The Kolyvagin system of Kato classes}
We recall the construction of the Kolyvagin system of classes $\kappa_{n}=\kappa_{n}^{\rm Kato}\in \rH^1(\Q,T/I_nT)$ derived from Kato's Euler system and their Iwasawa-theoretic analogues. 
\subsubsection{Selmer structures}\label{selstr2}
Let $E/\Q$ be an elliptic curve of conductor $N$, without complex multiplication, 
and let $p\nmid 2N$ be a prime of good ordinary reduction for $E$. We assume 
\begin{equation}\label{eq:noQtors}
E[p]^{ss} = \mathbb{F}_p(\phi)\oplus\mathbb{F}_p(\phi^{-1}\omega) \implies \phi\neq 1 ,\omega,\tag{${\rm tor}_{\mathbb{Q}}$}
%E(\Q)[p]=0.\tag{${\rm tor}_{\mathbb{Q}}$}
\end{equation}
where $\omega$ is the mod $p$ cyclotomic character. This is equivalent to the assertion that $E'(\Q)[p] =0$ for all curves $E'$ in the isogeny class of $E$.

By assumption the elliptic curve does not have CM, so there exists $\tau\in G_\Q$ such that $V_pE/(\tau-1)V_pE\simeq \Q_p$ since it follows by Serre's open image theorem \cite{serre} that $\left(\begin{smallmatrix}1&x\\0&1\end{smallmatrix}\right)$ is in the image of $\rho_E$ for some $0\neq x\in \Z_p$. We fix $\tau$ such that $\rho_E(\tau)=\left(\begin{smallmatrix}1&x\\0&1\end{smallmatrix}\right)$ with $$t':=v_p(x)=\min_m\{m:\left(\begin{smallmatrix}1&y\\0&1\end{smallmatrix}\right)\in {\rm Im}(\rho_E), v_p(y)=m\}.$$ In particular, we have
\begin{equation}\label{eq:tau}
\tau_{|\mu_{p^{\infty}}}=1 \ \ \ \text{and}  \ \ \ T_pE/(\tau-1)T_pE\simeq\Z_p \oplus \Z_p/p^{t'}.\tag{$\tau$}
\end{equation}

For a character $\alpha:\Gamma_\Q\to R^{\times}$ with values in the ring of integers $R$ of a finite extension $\Phi/\Q_p$, we consider the $G_\Q$-modules
\begin{equation}\label{eq:TVW}
T_\alpha:=T_pE\otimes_{\Z_p}R(\alpha),\quad
V_\alpha:=T_\alpha\otimes_{R}\Phi,\quad W_\alpha:=T_\alpha\otimes_{R}\Phi/R\simeq V_\alpha/T_\alpha,
\end{equation}
where $R(\alpha)$ is the free $R$-module of rank one on which $G_\Q$ acts via the projection $G_\Q\twoheadrightarrow\Gamma_\Q$ composed with $\alpha$, and the $G_\Q$-action on $T_\alpha$ is via $\rho_\alpha = \rho_E\otimes\alpha$.  Recall that as $E$ has ordinary reduction at $p$, there is a unique $G_{\Q_p}$-stable $\Zp$-summand  $\Fil_p^+(T_pE) \subset T_pE$
of rank one such that $T_pE/\Fil_p^+(T_pE)$ is unramified. We put $\Fil_p^+(T_\alpha) := \Fil_p^+(T_pE)\otimes_{\Z_p} R(\alpha)$, $\Fil_p^+(V_\alpha): = \Fil_p^+(T_\alpha)\otimes_R\Phi$,
and $\Fil_p^+(W_\alpha) := \Fil_p^+(T_\alpha)\otimes_R\Phi/R$.

Let $M$ denote any of the modules in (\ref{eq:TVW}) or a quotient $T_\alpha/IT_\alpha$ for a non-zero ideal $I\subset R$.
We define a \emph{Selmer structure} $\mathcal{F}$ on $M$ to be a finite set $\Sigma=\Sigma(\mathcal{F})$ of places of $\Q$ containing $\infty$, the prime $p$, and the primes where $M$ is ramified, together with a choice of a  submodule of \emph{local conditions} $\rH^1_\cF(\Q_w,M)\subset\rH^1(\Q_w,M)$ for every $w\in\Sigma$, similarly as in $\S$\ref{secsel}. The associated \emph{Selmer group} is then defined by
\[
\rH^1_{\Fcal}(\Q,M):={\rm ker}\biggl\{\rH^1(\Q^\Sigma/\Q,M)\rightarrow\prod_{w\in\Sigma}\frac{\rH^1(\Q_w,M)}{\rH^1_{\Fcal}(\Q_w,M)}\biggr\},
\]
where $\Q^\Sigma$ is the maximal extension of $\Q$ unramified outside $\Sigma$. 
The local conditions of interest in this setting are the following:
\begin{itemize}
\item For a finite prime $\ell\neq p$, the \emph{finite} (or \emph{unramified}) local condition:
\[
\rH^1_f(\Q_\ell,V_{\alpha}):={\rm ker}\bigl\{\rH^1(\Q_\ell,V_{\alpha})\rightarrow\rH^1(\Q_\ell^{\rm ur},V_{\alpha})\bigr\}.
\]
\item The \emph{ordinary} condition at $p$:
\[
\rH^1_{\rm ord}(\Q_p,V_\alpha):={\rm im}\bigl\{\rH^1(\Q_p,{\rm Fil}_p^+(V_\alpha))\rightarrow\rH^1(\Q_p,V_\alpha)\bigr\},
\]
which in our case is known to agree with the Bloch--Kato \emph{finite} local condition at $p$:
\[
\rH^1_f(\Q_p,V_{\alpha}):={\rm ker}\bigl\{\rH^1(\Q_p,V_{\alpha})\rightarrow\rH^1(\Q_p,V_{\alpha}\otimes_{\Q_p}\mathbb{B}_{\rm cris})\bigr\},
\]
where $\mathbb{B}_{\rm cris}$ is Fontaine's ring of crystalline periods.
\item The \emph{relaxed condition} at $\ell$: $\rH^1_{\rm rel}(\Q_\ell,V_{\alpha})=\rH^1(\Q_\ell,V_{\alpha})$.
\item The \emph{strict condition} at $\ell$: $\rH^1_{\rm str}(\Q_\ell,V_{\alpha})=\{0\}$.
\end{itemize}

The corresponding local conditions for $M\in\{T_\alpha,W_\alpha,T_\alpha/IT_\alpha\}$ are defined by propagation. (Thus, for example, $\rH^1_{\rm str}(\Q_\ell,T_\alpha)=\rH^1(\Q_\ell,T_\alpha)_{\rm tor}$ and $\rH^1_{\rm rel}(\Q_\ell,W_\alpha)=\rH^1(\Q_\ell,W_\alpha)_{\rm div}$.) 

For the module $T_\alpha/IT_\alpha$ with $I\subset R$ a non-zero ideal,  we also consider:
\begin{itemize}
\item The \emph{transverse} local condition at a prime $\ell\neq p$ is
\[
\rH^1_{\rm tr}(\Q_\ell,T_\alpha/IT_\alpha):={\rm ker}\bigl\{\rH^1(\Q_\ell,T_\alpha/IT_\alpha)\rightarrow\rH^1(\Q_\ell(\mu_{\ell}),T_\alpha/IT_\alpha)\bigr\}.
\]
%and for an ideal $I\subset R$ we define $\rH^1_{\rm tr}(\Q_\ell,T_\alpha/IT_\alpha)$ by propagation.
\end{itemize}

Given a Selmer structure $\mathcal{F}$ on $V_\alpha$, for $\circ\in \{ {\rm rel},{\rm ord},{\rm str}\}$ we put
\[
\rH^1_{\Fcal_{\circ}}(\Q_{\ell},V_\alpha):=
\begin{cases}
\rH^1_{\circ}(\Q_p,V_\alpha) & \textrm{if $\ell =p$,}\\[0.2em]
\rH^1_\cF(\Q_{\ell},V_\alpha)& \textrm{else,}
\end{cases}
\]
and let $\Fcal_{\circ}$ also denote the Selmer structures similarly defined on $T_\alpha$, $W_\alpha$, and $T_\alpha/IT_\alpha$.

For $n\in \Z$ squarefree and coprime to $p$, and $I\subset R$ a nonzero ideal we also define the Selmer structure $\Fcal(n)$ as follows:
\[
\rH^1_{\Fcal(n)}(\Q_{\ell},T_\alpha/IT_\alpha)
=\begin{cases}
\rH^1_{\rm tr}(\Q_{\ell},T_\alpha/IT_\alpha) &\text{if }\ell\mid n, \\[0.2em]
\rH^1_{\Fcal}(\Q_{\ell},T_\alpha/IT_\alpha) & \text{if }\ell\nmid n.
\end{cases}
\]

Also, for $M^*=\Hom(M,\mu_{p^{\infty}})$ (so in particular $M^*\simeq W_{\alpha^{-1}}$ for $M=T_\alpha$),  
%Given a Selmer structure $\mathcal{F}$ on $T_\alpha$, 
we let $\mathcal{F}^*$ be the Selmer structure on $M^*$ determined by the orthogonal complements of $\rH^1_{\mathcal{F}}(\Q_\ell,M)$ under local Tate duality 
\[
\rH^1(\Q_\ell,M)\times\rH^1(\Q_\ell,M^*)\rightarrow\rH^2(\Q_\ell,\mu_{p^\infty}) = \Q_p/\Z_p.
\]

Finally, we define the Bloch--Kato Selmer structure $\cF_{\rm{BK}}$ to be the Selmer structure given by
$\rH^1_{\cF_{\rm{BK}}}(\Q_\ell,M) = \rH^1_f(\Q_\ell,M)$ for all primes $\ell$ and $M\in\{V_\alpha,T_\alpha,W_\alpha,T_\alpha/IT_\alpha\}$.
It is known that $\cF_{\rm{BK}}^*$ agrees with the Bloch--Kato Selmer structure for $M^*$. 
From here on, we put  
\[
\Fcal=\Fcal_{\rm{BK},\operatorname{rel}},\quad\Fcal^*=\Fcal_{\rm{BK},\operatorname{str}}.
\] 
Later on, in the proof of Proposition~\ref{prop:control-alpha}, we shall also need to consider the Selmer structure $\Fcal_{\rm ord} := \Fcal_{\rm{BK},\rm{ord}}$.

\subsubsection{Kato's Kolyvagin system for $T_f$}

Let $f=\sum_{n=1}^\infty a_nq^n\in S_2(\Gamma_0(N))$ be the newform attached to $E$ with associated $p$-adic Galois representation $V_f$, constructed as the maximal quotient of $\rH^1_{\rm et}(\overline{Y_1(N)},\Q_p(1))$ on which the Hecke operators $T_n$ act as multiplication by $a_n$. After possibly replacing $E$ by an isogenous elliptic curve, we suppose that the $p$-adic Tate module $T_pE$ is isomorphic  to the $\Z_p$-lattice $T_f\subset V_f$ generated by the image of $\rH^1_{\rm et}(\overline{Y_1(N)},\Z_p(1))$ in $V_f$. {So $E$ may be assumed to be the elliptic curve $E_\bullet$ in the notation of \S\ref{ss, BDP-f}}.
%\cite{wuthrich-int}.
%By construction, Kato's Euler system classes are integral with respect to this lattice.

Put { $T=T_pE = T_f$} , %=T_{\mathbbm{1}}$, 
and let 
%$\{z_n\}_n$ be the Euler system constructed by Kato \cite{kato-euler-systems} using cup products of Siegel units on the modular curve of level $N$, where $N$ is the conductor of the elliptic curve $E$. They are classes
\[
\{z_n\in \rH^1(\Q(\mu_n),T)\}_n
\]
be the collection of classes obtained by specialising the classes $\mathbf{z}_n\in\rH^1(\Q(\mu_n),\mathbf{T})$ of Theorem~\ref{thm:kato} below at the trivial character.
%
%Euler system constructed by Kato \cite{kato-euler-systems} (see esp. [\emph{op.\,cit.}, Ex.~13.3]). 
Here $n$ runs over the positive integers coprime to $pN$. For varying $n$, it follows from \cite[Prop.~8.12]{kato-euler-systems} that the classes $z_n$ satisfy the following \emph{norm relations}:
\begin{equation}\label{eqNR}
\text{cores}^{\Q(\mu_{n\ell})}_{\Q(\mu_{n})}(z_{n\ell})=\begin{cases}
z_n &\ell\mid n,\\[0.2em]
P_{\ell}(\frob_{\ell}^{-1})z_n &\text{otherwise},
\end{cases}
\end{equation}
where $P_{\ell}(x)=\det(1-{\rm Frob}_{\ell}^{-1}x\,|\,T)$ is the characteristic polynomial of 
%an arithmetic
a geometric Frobenius at $\ell$.

We outline a construction of the Kolyvagin system derived from Kato's Euler system classes. 
{Denote by $\mathcal{L}_E$ the set of primes $\ell$ such that $(\ell,pN)=1$ and the image of $\frob_\ell$ in $\Gal(\Q(E[p^{t'+1}])/\Q)$ equals $\tau$.
}
%Denote by $\mathcal{L}_0$ the set of primes $\ell$ with $(\ell,pN)=1$.
%Consider 
%\[
%\mathcal{L}_E:=\{\ell\in\mathcal{L}_0\;\colon\;\textrm{$\ell\equiv 1\;({\rm mod}\,p)$ and $a_\ell\equiv\ell+1\;({\rm mod}\,p)$}\}.
%\]
Fix a subset $\mathcal{\cL}\subset\mathcal{L}_E$, and let $\mathcal{N}=\mathcal{N}(\cL)$ be the set of squarefree products of primes $\ell\in\mathcal{L}$. 
%(see \cite[Thm. 3.2.4]{mazrub} for the general result about Euler and Kolyvagin systems). 
%Consider $\ell$ a rational prime different from $p$ and such that $E$ has good reduction at $\ell$. We define the 
For $\ell\in\mathcal{L}$ define the ideal
\begin{equation}\label{eq:I-ell}
I_{\ell}=(\ell-1,a_\ell-\ell-1)\subset \Z_p,
\end{equation}
and for $n\in\mathcal{N}$ let $I_n=\sum_{\ell\mid n}I_\ell$. Let $G_n={\rm Gal}(\Q(\mu_n)/\Q)$. The Kolyvagin derivative operator $D_n\in\Z[G_n]$ at $n\in\mathcal{N}$ is defined by 
\[
D_{n}=\prod_{\ell \mid n}\sum_{i=1}^{\ell-2} i \sigma_{\ell}^i,
\] 
where $\sigma_{\ell}$ is a generator of $G_\ell$. Then one easily checks that the natural image of $D_nz_n$ in $\rH^1(\Q(\mu_n),T/I_nT)$ is fixed by $G_n$, and $\kappa_n^{\rm Kato}\in\rH^1(\Q,T/I_nT)$ is defined to be its preimage under the restriction map 
\begin{equation}\label{eq:res-map}
\rH^1\left(\Q, T / I_n T\right) \xrightarrow{\rm res}  \rH^1(\Q(\mu_n), T/I_nT)^{G_n}.
\end{equation}
%Note that this class is well-defined, 
By (\ref{eq:noQtors}) and our assumptions on $n$, the restriction map 
%the extensions $\Q(\mu_n)$ and $\Q(E[p])$ are linearly disjoint 
(\ref{eq:res-map}) is an isomorphism, and so $\kappa_n^{\rm Kato}$ is well-defined.
%
%From now on, we will fix $\tau\in G_\Q$ such that 
%\begin{equation}\label{eq:tau}
%\tag{$\tau$}
%\tau_{|\mu_{p^{\infty}}}=1 \ \ \ \text{and}  \ \ \ T_pE/(\tau-1)T_pE \ \ \text{has rank one}.
%\end{equation}
%Note that such $\tau$ exists by \cite{serre}, since the elliptic curve $E$ does not have complex multiplication. 
%
%\begin{defi}
%We denote by $\mathcal{L}$ the set of Kolyvagin primes, given by primes $\ell\nmid Np$ such that 
%\[
%T/( \frob_\ell -1)T \text{ is free of rank one over $\Z_p$.}
%\]
%We let $\mathcal{N}$ be the set of square-free products of primes in $\mathcal{L}$.
%\end{defi}
%
One can show that after a slight modification (see Theorem~3.24 and $\S{6.2}$  in \cite{mazrub} and \cite[Thm.~4.3.1]{rubin-ES-KS}) the resulting classes -- still denoted $\kappa_n^{\rm Kato}$ --
satisfy
\[
\kappa_n^{\rm Kato}\in \rH^1_{\Fcal(n)}(\Q, T / I_n T)
\]
and form a Kolyvagin system for $(T,\Fcal,\mathcal{L})$ in the sense of \cite{mazrub}, i.e., they satisfy the \emph{finite-singular relations}, which in particular imply that
\begin{equation}\label{eq:finsingkato}
\ord(\loc_\ell(\kappa_n^{\rm Kato}))=\ord(\loc_\ell(\kappa^{\rm Kato}_{n\ell})) \ \ \text{ for every $n\in \cN, \ell\in\cL$ such that $\ell\nmid n$}.
\end{equation} 
%
%
%After a slight modification (see  \cite[$\S\S{3.2},6.2$]{mazrub} and \cite[Thm.\,4.3.1]{rubin-ES-KS}), one can show that the above construction for $n\in \mathcal{N}$ gives classes
%\[
%\kappa_n^{\rm Kato}\in \rH^1_{\Fcal(n)}(\Q, T / I_n T)
%\]
%%where we recall that $\Fcal$ is as in (\ref{eq:rel-str}). 
%%where we recall that $\Fcal(n)$ is the Selmer structure given by the relaxed condition at $p$, the finite local condition at primes $\ell\nmid n$, and the transverse local condition at primes $\ell\mid n$. One can also show that the relations \eqref{eqNR} imply that  this collection of 
%forming a Kolyvagin system for $(T,\Fcal,\mathcal{L})$ in the sense of \cite{mazrub}, i.e., they satisfy the \emph{finite-singular relations}, which in particular imply that
%\begin{equation}\label{eq:finsingkato}
%\ord(\loc_\ell(\kappa_n^{\rm Kato}))=\ord(\loc_\ell(\kappa^{\rm Kato}_{n\ell})) \ \ \text{ for every $n\in \cN, \ell\in\cL$ such that $\ell\nmid n$}.
%\end{equation} 
Moreover, similarly as in the setting of Heegner points, one can show that 
%, under the assumption $E(\Q_p)[p]=\{0\}$, 
reducing such a Kolyvagin system modulo $p^k$ gives a Kolyvagin system for $E[p^k]$ (see \cite[Prop. 5.2.9 and Prop. 6.2.2]{mazrub}).

%\begin{rmk}
%\textcolor{red}{In the above construction we are glossing over a technical assumption in \cite[Theorem 3.2.4]{mazrub}: more precisely, in order to build a Kolyvagin system out of the Euler system classes, it is required in \emph{op.cit.} that the Kolyvagin primes $\ell$ satisfy the fact that $T/(\frob_\ell -1)$ is cyclic. This is used in Prop. A.15, to deduce that   
%${\rm res}_I$ is injective. This still holds on $p^t \rH^1_f(\Q_\ell,T/I)$, since for primes in $\Lcal_E$ we have $T/(\frob_\ell -1)\simeq \Z_p \oplus \Z/p^{t'}$. So we deduce that the class $\kappa'_n$ defined in \cite[(33)]{mazrub} gives a Kolyvagin system once multiplied by $p^{t'}$. This is similar to the case of Heegner points (see Theorem \ref{thm:bound}), where in the proof of the bound, $k$ is taken to be sufficiently large to take this extra error term into account.}
%\end{rmk}

\begin{rmk} 
{
%We are glossing over a technical assumption in \cite[Theorem 3.2.4]{mazrub}: 
In the proof of \cite[Theorem~3.2.4]{mazrub}, building a  Kolyvagin system out of the Euler system classes, it is required that the Kolyvagin primes $\ell$ be such that $T/(\frob_\ell -1)T$ is a cyclic
$\Z_p$-module (i.e., $t'=0$ in the notations of $\S\ref{selstr2}$). This is used in [\emph{op.\,cit.}, Prop.~A.15] to deduce that the map ${\rm res}_{I_n}$ in the notations therein  is injective. In our setting (where $t'$ may be positive), the same argument shows that ${\rm res}_{I_n}$ is injective when  
restricted to $p^{t'}\rH^1_f(\Q_\ell,T/{I_n}T)$. This is because for primes in $\Lcal_E$ we have $T/(\frob_\ell -1)\simeq \Z_p \oplus \Z/p^{t'}$.
From this we easily conclude that if $\kappa'_n$ is the class defined as in \cite[(33)]{mazrub} from Kato's Euler system class $z_n$, then the collection of classes $p^{t'}\kappa_n'$ for varying $n$ gives the Kolyvagin system; this is the system we denoted $\{\kappa_n^{\rm Kato}\}_n$ above.
%This is similar to the case of Heegner points (see Theorem \ref{thm:bound}), where in the proof of the upper bound, the exponent $k$ is taken to be sufficiently large so as to take the extra `error term' into account.}
}
\end{rmk}

\begin{rmk}[Kato's Kolyvagin system for arbitrary $E$] {In this subsection we explained construction of a Kolyvagin system for an elliptic curve in the prime-to-$p$ isogeny class corresponding to the lattice $T_f$, i.e. the prime-to-$p$ isogeny class of the elliptic curve  $E_\bullet$ 
as in \S\ref{ss, BDP-f}.  
If $E/\Q$ is another curve in the isogeny class, then one obtains a Kolyvagin system by taking images of the classes $z_n\in\rH^1(\Q(\mu_n),T_pE_\bullet)$ under the natural map $T_pE_\bullet\rightarrow T=T_pE$ induced by an isogeny of minimal degree. (We take this as definition of the Kolyvagin system for $E$, which is well-defined up to multiplying the classes by a fixed $p$-adic unit.) For proof of Theorem \ref{thmintroKato}, it suffices\footnote{Indeed, one may proceed as in the last paragraph of the proof of Theorem~\ref{thmintroKoly} in \S\ref{sec:Kolyf}.} to consider the case 
 $E=E_\bullet$ (see the start of the proof of Theorem \ref{thmintroKato} in \S\ref{ss, koly-error}), which will be the focus of the rest of this section}. 
 %Indeed, similarly as in the proof of Theorem~\ref{thmintroKoly} given in $\S\ref{sec:Kolyproof}$ -- to prove the non-vanishing of the Kato's Kolyvagin system for one elliptic curve in the isogeny class.}
\end{rmk}

\subsubsection{The $\Lambda$-adic Kolyvagin system}
%Let $\Q_{\infty}$ be the unique $\Z_p$-extension of $\Q$, let $ \Gamma_\Q=\gal(\Q_{\infty}/\Q)\simeq \Z_p$, and
%\[
%\Lambda=\Lambda_\Q=\Z_p[[\Gamma_\Q]].
%\]
We equip the Iwasawa algebra $\Lambda$ with the $G_\Q$-action given by the character $\Psi:G_\Q\rightarrow\Lambda^\times$ arising from the projection $G_\Q\twoheadrightarrow\Gamma_\Q$, and put 
\[
\mathbf{T}=T\otimes_{\Z_p}\Lambda
\]
equipped with the diagonal $G_\Q$-action. 
%
%We also let $\Q(\mu_{p^{\infty}})\supset \Q_{\infty}$ be the extension of $\Q_{\infty}$ obtained adding all $p$-power roots of unity to $\Q$. 
%
%Let $\Q(\mu_{p^{\infty}})$ be the extension of $\Q_{\infty}$ obtained by adjoining to $\Q$ all $p$-power roots of unity. In light of the norm relations \eqref{eqNR}, for every $n\in\cN$ the classes $z_{np^k}$ define an element
%\begin{equation}\notag
%z_{n,\infty}:=\{z_{np^k}\}_k\in 
%\rH^1(\Q(\mu_{p^\infty})(\mu_n), T) = 
%\limm_{k}\rH^1(\Q(\mu_{np^k}), T)\simeq %\rH^1(\Q(\mu_{n}),T\otimes %\Z_p[[\gal(\Q(\mu_{p^{\infty}})/\Q)]]),
%\end{equation}
%where the inverse limits are with respect to the corestriction maps and the isomorphism follows from Shapiro's lemma. Writing $\gal(\Q(\mu_{p^{\infty}})/\Q)=\Delta \times \Gamma_\Q$ and letting $\mathbf{T}=T\otimes_{\Z_p}\Lambda$, we can take the projection to the component where the action of $\Delta$ is trivial, obtaining classes 
%\begin{equation}\notag
%\mathbf{z}_{n}\in  \rH^1(\Q(\mu_{n}),\mathbf{T}).
%\end{equation}
%
Since $\Gamma_{\Q}$ is a pro-$p$ group,  assumption \eqref{eq:noQtors} implies that $E(\Q_\infty)[p]=0$. Therefore the same construction as above directly applied to the classes $\mathbf{z}_n\in\rH^1(\Q(\mu_n),\mathbf{T})$ of Theorem~\ref{thm:kato} below gives classes
\[
\bkappa_{n}^{\rm Kato}\in \rH^1(\Q,\mathbf{T}/I_n\mathbf{T})
\] 
forming a Kolyvagin system for $(\mathbf{T},\Fcal_\Lambda,\mathcal{L})$, where $\Fcal_\Lambda$ is the Selmer structure given by $\rH^1_{\Fcal_\Lambda}(\Q_\ell,\mathbf{T})=\rH^1(\Q_\ell,\mathbf{T})$ for all $\ell\in\Sigma$ (see \cite[$\S\S{5.3}, 6.2$]{mazrub}). In particular, note that by construction $\bkappa_1^{\rm Kato}=\mathbf{z}_1$.
%(so $\rH^1_{\Fcal_\Lambda}(\Q,\mathbf{T})=\rH^1(\Q^\Sigma/\Q,\mathbf{T})$). 

\subsubsection{Kolyvagin system for cyclotomic twists}

We are interested in the image of this system in the cohomology of $T_{\alpha}$ for characters $\alpha:\Gamma_\Q\to \Z_p^{\times}$ with $\alpha\equiv 1 \pmod{p^m}$ and $m\gg 0$.

For any $\alpha:\Gamma_\Q\rightarrow\Z_p^\times$, the ideal 
$\p = (\gamma-\alpha(\gamma))$ is a height one prime ideal of $\Lambda$ with $\Lambda/\p\simeq\Z_p$. Composing $\Psi:G_\Q\rightarrow\Lambda^\times$ with the natural map $\Lambda\rightarrow \Lambda/\p$ recovers the character $\alpha$. 
By \cite[$\S$5.3]{mazrub} (see in particular Corollary 5.3.15 in \emph{op.cit.}), the image of the Kolyvagin system $\{\bkappa_{n}^{\rm Kato}\}_{n\in\mathcal{N}}$ via the natural map $\mathbf{T}\to T_\alpha$ obtained by tensoring $\Lambda\rightarrow\Lambda/\p$ with $T$, specialises to a Kolyagin system
\[
\{\bkappa_{n}^{\rm Kato}(\alpha)\}_{n\in\mathcal{N}}
%\in\rH^1_{\Fcal(n)}(\Q,T_\p/I_n T_\p)\;\colon\;n\in\mathcal{N}\}
\]
for $(T_\alpha,\F,\mathcal{L})$. The following is the analogue of Lemma \ref{lemmacongruence}, and is proved in the same manner.

\begin{lemma}\label{lemmacongruence2}
Suppose $\alpha\equiv 1\;({\rm mod}\,p^m)$. For all $n\in\mathcal{N}^{(m)}$ we have $\bkappa_{n}^{\rm Kato}(\alpha)\equiv \kappa_n^{\rm Kato}\;({\rm mod}\,p^m)$. 
\end{lemma}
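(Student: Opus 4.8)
The plan is to follow the proof of Lemma~\ref{lemmacongruence} essentially verbatim, the only difference being that on the Kato side no Euler-factor multiplier intervenes. Concretely, I would set $\p = (\gamma - \alpha(\gamma))$ and $\p_0 = (\gamma - 1)$, so that $\alpha$ and $\mathds{1}$ are recovered from $\Psi\colon G_\Q \to \Lambda^\times$ by composition with $\Lambda \to \Lambda/\p$ and $\Lambda \to \Lambda/\p_0$ respectively. Since $\alpha \equiv 1 \pmod{p^m}$ forces $\alpha(\gamma) \in 1 + p^m\Z_p$, the ideals $\p + p^m\Lambda$ and $\p_0 + p^m\Lambda$ of $\Lambda$ coincide, whence the two composite ring homomorphisms $\Lambda \to \Lambda/\p \to (\Lambda/\p)/p^m(\Lambda/\p) \simeq \Z_p/p^m\Z_p$ and $\Lambda \to \Lambda/\p_0 \to (\Lambda/\p_0)/p^m(\Lambda/\p_0) \simeq \Z_p/p^m\Z_p$ are equal. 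Applying the induced identification of maps $\mathbf{T}/I_n\mathbf{T} \to T/p^mT$ on cohomology to the $\Lambda$-adic class $\bkappa_n^{\rm Kato} \in \rH^1(\Q, \mathbf{T}/I_n\mathbf{T})$ — this is where the hypothesis $n \in \mathcal{N}^{(m)}$ enters, ensuring $I_n \subseteq p^m\Z_p$ so that the reduction modulo $p^m$ is defined — yields
\[
\bkappa_n^{\rm Kato}(\alpha) \equiv \bkappa_n^{\rm Kato}(\mathds{1}) \pmod{p^m},
\]
the congruence taking place in $\rH^1(\Q, T/p^mT)$ once one uses $\alpha \equiv 1 \pmod{p^m}$ to identify $T_\alpha/p^mT_\alpha$ with $T/p^mT$ as $G_\Q$-modules.

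It then remains to identify $\bkappa_n^{\rm Kato}(\mathds{1})$ with $\kappa_n^{\rm Kato}$. This is the exact analogue of the identity $Q_0[n] = \Phi P[n]$ of \eqref{eq:multiplier} used in the proof of Lemma~\ref{lemmacongruence}, with the crucial distinction that here the multiplier is trivial: by construction the base classes $\mathbf{z}_n \in \rH^1(\Q(\mu_n), \mathbf{T})$ specialise at the trivial character of $\Gamma_\Q$ to $z_n$, and $\bkappa_n^{\rm Kato}$ and $\kappa_n^{\rm Kato}$ are produced from $\mathbf{z}_n$ and $z_n$ by the same recipe — applying the Kolyvagin derivative operator $D_n$, descending along the restriction isomorphism \eqref{eq:res-map}, and performing the same ``slight modification'' that turns the output into a genuine Kolyvagin system. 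The compatibility of that modification with specialisation, and the fact that specialising the $\Lambda$-adic Kato Kolyvagin system at $\p_0$ recovers $\{\kappa_n^{\rm Kato}\}$, is part of the Mazur--Rubin formalism; cf.~\cite[\S\S{5.3},6.2]{mazrub}, and in particular \cite[Cor.~5.3.15]{mazrub}. Combining this identification with the displayed congruence gives $\bkappa_n^{\rm Kato}(\alpha) \equiv \kappa_n^{\rm Kato} \pmod{p^m}$, as desired.

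The only point that requires genuine care — and the step I would verify in detail — is precisely this last identification: that the trivial specialisation of the norm-compatible tower $\{z_{np^k}\}_k$, after passing to the $\Delta$-trivial component, introduces no Euler factor at $p$, so that no analogue of $\Phi$ appears. This is what makes the Kato congruence cleaner than Lemma~\ref{lemmacongruence}; everything else is the same bookkeeping of ideals and specialisation maps as in the Heegner case.
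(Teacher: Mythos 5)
Your proposal follows exactly the route the paper indicates (it says the lemma ``is proved in the same manner'' as Lemma~\ref{lemmacongruence} and gives no further details), and your ideal-theoretic bookkeeping in steps 1--4 --- identifying $(\gamma-\alpha(\gamma),p^m)=(\gamma-1,p^m)$ so that the two specialisation maps $\Lambda\to\Z_p/p^m\Z_p$ coincide --- is clean and correct. You have also correctly identified the one step that requires genuine care, namely the identification $\bkappa_n^{\rm Kato}(\mathds{1})=\kappa_n^{\rm Kato}$ with no analogue of the Heegner multiplier $\Phi$.

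However, the justification you offer for that last step is not actually convincing as written, and this is the potential gap. You assert that ``by construction the base classes $\mathbf{z}_n$ specialise at the trivial character of $\Gamma_\Q$ to $z_n$'', but the Shapiro identification realises $z_{n,\infty}$ as a corestriction-compatible tower in $\varprojlim_k\rH^1(\Q(\mu_{np^k}),T)$, and the augmentation specialisation corresponds to corestricting all the way down to $\Q(\mu_n)$. By the norm relation \eqref{eqNR} applied at $\ell=p$ (with $p\nmid n$), this picks up the Euler factor $P_p(\frob_p^{-1})$, which after descending a $G_n$-invariant class to $\Q$ acts as the scalar $P_p(1)=1-a_p+p=(1-\alpha_p)(1-\beta_p)$ --- nonzero, but not a $p$-adic unit exactly when $a_p\equiv 1\pmod p$. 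So the naive expectation is that a multiplier \emph{does} appear, precisely analogous to $\Phi$ in the Heegner case. If the statement of the lemma (no multiplier) is to be believed, you would need to explain where this factor goes: either it is already built into the Mazur--Rubin ``slight modification'' defining $\kappa_n^{\rm Kato}$ from the raw derivative of $z_n$, or the definition of $\bkappa_n^{\rm Kato}$ is arranged so that its trivial specialisation is the modified level-one class rather than $P_p(1)$ times it. This is exactly the content of the paper's pointer to \cite[\S5.3, Cor.~5.3.15]{mazrub}, and is what you should verify in detail rather than declare trivial; simply appealing to ``the same recipe'' and ``compatibility of the modification with specialisation'' does not resolve the $p$-Euler-factor discrepancy.
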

 
Finally, to conclude the parallel with $\S\ref{seckolylambda}$, we recall the result on the nonvanishing of $\bkappa_{1}^{\rm Kato}$ and, therefore, of $\bkappa_{1}^{\rm Kato}(\alpha)$ for $m\gg 0$. 

\begin{thm}[Kato--Rohrlich]\label{thm:kato-rohrlich}
For $m\gg 0$ and any $\alpha\equiv 1\;({\rm mod}\,p^m)$ with $\alpha \neq 1$, the class $\bkappa_{1}^{\rm Kato}(\alpha)$ is non-zero.
\end{thm}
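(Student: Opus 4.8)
The plan is to follow the same route as for the Cornut--Vatsal theorem (Theorem~\ref{thmCV}): first establish that the base class $\bkappa_1^{\rm Kato}$ of the $\Lambda$-adic Kolyvagin system is not $\Lambda$-torsion, and then deduce the non-vanishing of its specialisations $\bkappa_1^{\rm Kato}(\alpha)$ for $\alpha\equiv 1\pmod{p^m}$, $m\gg 0$, from the $\Lambda$-module structure of $\rH^1(\Q,\mathbf{T})$.

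For the first (and essential) step, recall that $\bkappa_1^{\rm Kato}$ is, up to the Kolyvagin-system modification — which does not alter the $\Lambda$-submodule it generates — the image in $\rH^1(\Q,\mathbf{T})$ of Kato's $\Lambda$-adic zeta element $\mathbf{z}_1$. I would invoke Kato's explicit reciprocity law \cite{kato-euler-systems}, which identifies the image of $\loc_p(\mathbf{z}_1)$, projected to the $\Fil^+$-quotient, under the Perrin-Riou/Coleman map with the Mazur--Swinnerton-Dyer cyclotomic $p$-adic $L$-function $L_p(f)\in\Lambda\otimes\Q_p$, up to a unit. By a theorem of Rohrlich, $L(f,\chi,1)\neq 0$ for all but finitely many Dirichlet characters $\chi$ of $p$-power conductor; combined with the interpolation property of $L_p(f)$ at finite-order characters this forces $L_p(f)\neq 0$. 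Since $\Lambda$ is an integral domain it follows that $\loc_p(\mathbf{z}_1)$, hence $\mathbf{z}_1$, hence $\bkappa_1^{\rm Kato}$, is not $\Lambda$-torsion. This is the ``Kato--Rohrlich'' input, and the only deep ingredient in the proof.

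For the second step, note that by \eqref{eq:noQtors} one has $E(\Q_\infty)[p]=0$, so $\rH^1(\Q,\mathbf{T})=\rH^1_{\rm Iw}(\Q,T)$ is a finitely generated torsion-free $\Lambda$-module, of $\Lambda$-rank one by the global Euler characteristic formula, and for $\alpha\equiv 1\pmod p$ the natural map $\rH^1(\Q,\mathbf{T})/(\gamma-\alpha(\gamma))\rH^1(\Q,\mathbf{T})\to\rH^1(\Q,T_\alpha)$ is injective, its kernel being a quotient of $\rH^0(\Q,T_\alpha)=0$. Embedding $\rH^1(\Q,\mathbf{T})$ into its reflexive hull, which is free of rank one, sends $\bkappa_1^{\rm Kato}$ to a nonzero element $g$; then $\bkappa_1^{\rm Kato}(\alpha)=0$ forces $(\gamma-\alpha(\gamma))\mid g$. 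Writing $\Lambda=\Z_p[[T]]$ with $T=\gamma-1$, the power series $g$ has only finitely many zeros in $p\Z_p$, so once $m$ exceeds the $p$-adic valuations of the (finitely many) nonzero such zeros, no nontrivial $\alpha\equiv 1\pmod{p^m}$ is a zero of $g$, and hence $\bkappa_1^{\rm Kato}(\alpha)\neq 0$ for all such $\alpha$. The trivial character is either covered directly, since $\bkappa_1^{\rm Kato}(\mathds{1})=\kappa_1^{\rm Kato}$ is the Kato class and Lemma~\ref{lemmacongruence2} then forces $\bkappa_1^{\rm Kato}(\alpha)\neq 0$ for all $\alpha\equiv 1\pmod{p^m}$ with $m\gg 0$ whenever $\kappa_1^{\rm Kato}\neq 0$, or else is harmless, the characters $\alpha_m$ used in the applications being of infinite order.

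The main obstacle is thus the non-torsion statement, i.e.\ the combination of Kato's explicit reciprocity law and Rohrlich's non-vanishing theorem; the passage from this to the specialisations at $\alpha\equiv 1\pmod{p^m}$ (in particular at the $\alpha_m$ needed later) is formal, exactly as for the Heegner point Kolyvagin system in the proof of Theorem~\ref{thmCV}, and will be invoked in the proofs of Theorem~\ref{thmintroKato} and its refinement in the same way that Theorem~\ref{thmCV} is used in Section~\ref{sec:Kolyproof}.
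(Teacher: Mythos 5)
Your proposal takes essentially the same route as the paper's proof: both reduce the statement to the combination of Kato's explicit reciprocity law (relating $\loc_p(\bkappa_1^{\rm Kato})$ to $\mathcal{L}_p^{\rm MSD}(E/\Q)$ via the Coleman map) and Rohrlich's non-vanishing theorem, which together show $\bkappa_1^{\rm Kato}$ is not $\Lambda$-torsion, and then use the torsion-freeness of $\rH^1(\Q,\mathbf{T})$ and the fact that a nonzero power series has finitely many zeros to deduce non-vanishing of the specialisations. (One small inaccuracy: the kernel of $\rH^1(\Q,\mathbf{T})/(\gamma-\alpha(\gamma))\to\rH^1(\Q,T_\alpha)$ is not a quotient of $\rH^0(\Q,T_\alpha)$; the injectivity you want is automatic from the long exact sequence of $0\to\mathbf{T}\xrightarrow{\gamma-\alpha(\gamma)}\mathbf{T}\to T_\alpha\to 0$, and $\rH^0(\Q,T_\alpha)=0$ instead controls the $(\gamma-\alpha(\gamma))$-torsion of $\rH^1(\Q,\mathbf{T})$.)
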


\begin{proof}
This follows from a combination of Rohrlich's results \cite{rohrlich-cyc}, implying the nonvanishing of the Mazur--Swinnerton-Dyer $p$-adic $L$-function $\mathcal{L}_p^{\rm MSD}(E/\Q)$, and Kato's explicit reciprocity law \cite[Thm.~16.6]{kato-euler-systems} (which we recall in the next section) relating $\bkappa_{1}^{\rm Kato}$ with $\mathcal{L}_p^{\rm MSD}(E/\Q)$ .
\end{proof}

\subsection{Cyclotomic Main Conjecture and results}  

\subsubsection{The $p$-adic $L$-function} 

%Let $E/\Q$ be an elliptic curve of conductor $N$, and let $p>2$ be a prime of good ordinary reduction for $E$. 
We recall the existence of the cyclotomic $p$-adic $L$-function attached to  $E$. 

Given a modular parametrisation $\pi:X_0(N)\rightarrow E$, we denote by $c_E\in\Z$ the corresponding Manin constant. Let $\omega_E$ be a minimal differential on $E$. Pick generators $\delta^\pm$ of $H_1(E,\Z)^\pm$, and define the Néron periods $\Omega_E^\pm$ by 
\[
\Omega_E^\pm=\int_{\delta^\pm}\omega_E.
\]
We normalise the $\delta^\pm$ so that $\Omega_E^+\in\R_{>0}$ and $\Omega_E^-\in i\R_{\geq 0}$.
We also let $\alpha_p$ be the $p$-adic unit root of $x^2-a_p(f)x+p$, where $a_p(f)$ is the $p$-th Fourier coefficient of the modular form $f$ attached to $E$.

\begin{thm}[Mazur--Swinnerton-Dyer \cite{M-SwD}, W\"uthrich \cite{wuthrich-int}]\label{thm:MSDpadicL}
There exists an element $\Lcal^{\rm MSD}_p(E/\Q)\in\Lambda_\Q$ such that for any finite order character $\chi$ of $\Gamma_\Q$ of conductor $p^r$, we have
\[
\Lcal^{\rm MSD}_p(E/\Q)(\chi)=\begin{cases}
\frac{p^r}{\tau(\overline{\chi})\alpha_p^r}\cdot\frac{L(E,\overline{\chi},1)}{\Omega_E^+} &\text{ if }r>0,\\
(1-\alpha_p^{-1})^2\cdot\frac{ L(E,1)}{\Omega_E^+} &\text{ if }r=0,
\end{cases}
\]
where $\tau(\overline{\chi})=\sum_{a\;{\rm mod}\;p^r}\overline{\chi}(a)e^{2\pi ia/p^r}$ is the usual Gauss sum.
\end{thm}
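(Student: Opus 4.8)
The plan is to follow the classical modular-symbol construction of Mazur and Swinnerton-Dyer, reducing the integrality assertion to Wüthrich's refinement. First I would recall the modular symbol attached to $f$: for $r\in\mathbb{P}^1(\Q)$ set $\varphi_f(r)=2\pi i\int_r^{i\infty}f(z)\,dz$; by Eichler--Shimura together with the Manin--Drinfeld theorem one may write $\varphi_f(r)=\varphi_f^+(r)\,\Omega_E^+ + \varphi_f^-(r)\,\Omega_E^-$ with $\varphi_f^\pm(r)\in\Q$ of bounded denominator (Manin). Writing $\alpha_p,\beta_p$ for the roots of $x^2-a_p(f)x+p$ with $\alpha_p$ the $p$-adic unit (available since $E$ has good ordinary reduction at $p$), I then pass to the $p$-stabilisation $f_\alpha(z)=f(z)-\beta_p f(pz)$, a $U_p$-eigenform with eigenvalue $\alpha_p$, whose modular symbols are expressed through those of $f$. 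Note that every finite-order character of $\Gamma_\Q$ is even (as $\Q_\infty/\Q$ is totally real), so only the plus symbol $\varphi_f^+$ and the period $\Omega_E^+$ intervene, which is why the statement can use $\Omega_E^+$ uniformly.

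Next I would construct the measure. After discarding the tame part, identify $\Gamma_\Q$ with $1+p\Z_p$ and define a distribution $\mu$ on $\Z_p^\times$ by
\[
\mu\bigl(a+p^n\Z_p\bigr):=\alpha_p^{-n}\,\varphi_{f_\alpha}^+(a/p^n).
\]
The distribution relation follows from the identity $U_pf_\alpha=\alpha_p f_\alpha$, which unwinds to $\sum_{j\bmod p}\varphi_{f_\alpha}\bigl((a+jp^n)/p^{n+1}\bigr)=\alpha_p\,\varphi_{f_\alpha}(a/p^n)$. Since $\alpha_p\in\Z_p^\times$ and the partial values have bounded $p$-denominator, $\mu$ is a bounded measure, hence defines $\mathcal{L}_p^{\rm MSD}(E/\Q)\in\Lambda_\Q\otimes_{\Z_p}\Q_p$; no Amice--Vélu admissibility is needed because of ordinarity.

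I would then verify the interpolation. For a primitive Dirichlet character $\chi$ of conductor $p^r$ with $r>0$ one has $\chi(p)=0$, so no Euler factor at $p$ appears; integrating $\chi$ against $\mu$ produces $\alpha_p^{-r}$ times a Gauss-sum-weighted sum of the $\varphi_f^+(a/p^r)$, which by the Birch--Stevens formula equals $\frac{p^r}{\tau(\bar\chi)\alpha_p^r}\cdot\frac{L(E,\bar\chi,1)}{\Omega_E^+}$ (using $\tau(\chi)\tau(\bar\chi)=\chi(-1)p^r$ to rearrange). For the trivial character, $\mu(\Z_p^\times)=\mu(\Z_p)-\mu(p\Z_p)$: passing from $f$ to $f_\alpha$ contributes one factor $(1-\alpha_p^{-1})$ via $L(f_\alpha,s)=(1-\beta_p p^{-s})L(f,s)$ at $s=1$ with $\beta_p/p=\alpha_p^{-1}$, and restricting from $\Z_p$ to $\Z_p^\times$ contributes a second, giving $(1-\alpha_p^{-1})^2\,L(E,1)/\Omega_E^+$.

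The hard part will be the integrality: that $\mathcal{L}_p^{\rm MSD}(E/\Q)$ lies in $\Lambda_\Q=\Z_p[[\Gamma_\Q]]$ and not merely in $\Lambda_\Q\otimes_{\Z_p}\Q_p$, which is Wüthrich's contribution in \cite{wuthrich-int}. It amounts to showing that for the curve with $T_pE\simeq T_f$ (the normalisation fixed in the preceding subsection) and $p$ odd, the plus modular symbols $\varphi_f^+(a/p^n)$ are genuinely $p$-integral; this requires controlling the Manin constant $c_E$ and comparing the Néron differential $\omega_E$ with the motivic lattice $T_f\subset V_f$, with care needed at additive primes. Since the hypotheses of the statement ($p$ odd of good ordinary reduction, together with this normalisation of the lattice) are exactly what makes this hold, once the formal construction above is in place the proof reduces to citing \cite{M-SwD} and \cite{wuthrich-int}.
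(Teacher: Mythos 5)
The paper does not give a proof of this theorem; it is stated with citations to Mazur--Swinnerton-Dyer \cite{M-SwD} and W\"uthrich \cite{wuthrich-int}, which is exactly where your own argument terminates. Your sketch is the standard modular-symbol construction of the cyclotomic $p$-adic $L$-function and is essentially correct: the distribution relation for $\mu(a+p^n\Z_p)=\alpha_p^{-n}\varphi^+_{f_\alpha}(a/p^n)$ follows from $U_pf_\alpha=\alpha_pf_\alpha$; ordinarity (i.e. $\alpha_p\in\Z_p^\times$) makes $\mu$ a bounded measure without any Amice--V\'elu admissibility; Birch--Stevens gives the ramified interpolation once one notes $\sum_a\chi(a)\varphi_f^+(a/p^{r-1})=0$ for $\chi$ primitive of conductor $p^r$, so the $\beta_p$-term of $f_\alpha$ drops out and $\tau(\chi)=p^r/\tau(\bar\chi)$ since $\chi$ is even; and the two $(1-\alpha_p^{-1})$ factors at the trivial character come, as you say, from $\mu(\Z_p^\times)=(1-\alpha_p^{-1})\mu(\Z_p)$ and from $L(f_\alpha,1)=(1-\beta_p/p)L(f,1)$ with $\beta_p/p=\alpha_p^{-1}$. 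The only genuinely hard input beyond \cite{M-SwD} is the integrality statement $\Lcal^{\rm MSD}_p(E/\Q)\in\Lambda_\Q$, which requires the periods to be those of the \emph{specific} curve $E$ with $T_pE\simeq T_f$ fixed just before the theorem in the paper and for which the plus modular symbols $\varphi_f^+(a/p^n)$ are $p$-integral; this is precisely W\"uthrich's contribution in \cite{wuthrich-int}, as you correctly identify.
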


\subsubsection{Kato's explicit reciprocity law}%The eponymous reciprocity relates Kato's zeta element with criticial $L$-values. 
We recall Kato's construction of an Euler system for $T$ and its relation to $\mathcal{L}_p^{\rm MSD}(E/\Q)$ via the Coleman map
\[
{\rm Col}_\infty:\rH^1_{s}(\Q_p,\mathbf{T})
\rightarrow\Lambda
\]
of e.g. \cite[Appendix]{rubindurham}, where $\rH^1_{s}(\Q_p,\mathbf{T}):=\rH^1(\Q_p,\mathbf{T})/\rH^1(\Q_p,{\rm Fil}_p^+(T)\otimes_{\Z_p}\Lambda)$.
%, following the exposition in \cite{kataoka}.

\begin{thm}\label{thm:kato}
There exists a collection of cohomology classes 
\[
\mathbf{z}_n\in\rH^1(\Q(\mu_n),\mathbf{T}),
\]
with $n$ running over the positive integers prime to $pN$, satisfying the norm-relations
\[
{\rm cores}^{\Q(\mu_{n\ell})}_{\Q(\mu_{n})}(\mathbf{z}_{n\ell})=\begin{cases}
\mathbf{z}_n &\ell\mid n,\\[0.2em]
P_{\ell}(\frob_{\ell}^{-1})\mathbf{z}_n &\text{otherwise},
\end{cases}
\]
where $P_{\ell}(x)=\det(1-{\rm Frob}_{\ell}^{-1}x\,|\,T)$ is the characteristic polynomial of 
%an arithmetic
a geometric Frobenius at $\ell$. Moreover,
\[
{\rm Col}_\infty({\rm loc}_s(\mathbf{z}_1))=\Lcal_p^{\rm MSD}(E/\Q),
\]
where ${\rm loc}_s:\rH^1(\Q,\mathbf{T})\rightarrow\rH^1_s(\Q_p,\mathbf{T})$ is the restriction map at $p$ composed with the natural projection.
\end{thm}

\begin{proof}
Since we assume \eqref{eq:noQtors}, this folows from \cite[Thm.~6.1]{kataoka} (see also [\emph{op.\,cit}, Thm.~6.4, Thm.~7.14]). 
\end{proof}

In particular, specialising at cyclotomic characters, we can obtain the following result. 

\begin{lemma}\label{prop:erlKato}
Let $\alpha:\Gamma_\Q\to\Z_p^{\times}$ be a crystalline such that $\Lcal_p^{\rm MSD}(E/\Q)(\alpha^{-1})\neq 0$. %{\color{blue} Suppose $a_p\not\equiv 1\pmod{p}$.} 
Then
\[
\# \left(\Z_p/\Lcal^{\rm MSD}_p(E/\Q)(\alpha^{-1}) \right)\,\sim_p\,  \# \bigl(\rH^1_{s}(\Q_p,T_{\alpha})/\Z_p\cdot\loc_s(\bkappa^{\rm Kato}_{1}({\alpha}))\bigr),
\]
where $\rH^1_{s}(\Q_p,T_{\alpha}):=\rH^1(\Q_p,T_{\alpha})/\rH^1_{\rm ord}(\Q_p,T_{\alpha})$ and $\loc_s$ is the composite of the localisation map $\loc_p:\rH^1(\Q,T_{\alpha})\to \rH^1(\Q_p,T_{\alpha})$ with the projection to $\rH^1_{s}(\Q_p,T_{\alpha})$.
\end{lemma}

\begin{proof} 
This follows from a similar argument as in Lemma~\ref{lem:erl}. The Coleman map ${\rm Col}_\infty$ is an injection with finite cokernel in our setting (see \cite[Prop.~17.11]{kato-euler-systems}), and we claim that the specialisation map $\rH^1_{s}(\Q_p,\mathbf{T})\rightarrow\rH^1_{s}(\Q_p,T_\alpha)$ is surjective. Indeed, from the definitions we have a commutative diagram with exact rows:
\[
\xymatrix{
0\ar[r] &\rH^1_s(\Q_p,\mathbf{T})\ar[r]\ar[d]^a& \rH^1(\Q_p,\Fil_p^-(\mathbf{T}))\ar[d]^b\ar[r]^e & \rH^2(\Q_p,\Fil_p^+(\mathbf{T}))\ar[d]^c \\
0\ar[r] & \rH^1_s(\Q_p,T_\alpha)\ar[r]& \rH^1(\Q_p,\Fil_p^-(T_\alpha))\ar[r]^f & \rH^2(\Q_p,\Fil_p^+(T_\alpha)),
}
\]
where we have written $\Fil_p^\pm(\mathbf{T})$ for $\Fil_p^\pm(T)\otimes_{\Z_p}\Lambda$.  The vertical arrows are the natural specialisation maps. 
By Tate's local duality, $\rH^2(\Q_p,\Fil_p^+(\mathbf{T}))$ is dual to 
\[
\rH^0(\Q_p,\Fil_p^- (T)\otimes_{\Z_p}\Lambda^*)=\rH^0(\Q_{\infty,p},\tilde{E}[p^\infty])=\tilde{E}(\mathbb{F}_p)[p^\infty],
\] 
using that $\mathbb{Q}_{\infty,p}/\mathbb{Q}_p$ is totally ramified for last equality. The same argument shows that $\rH^2(\mathbb{Q}_p,\Fil_p^+(T_\alpha))$ is dual to $\tilde{E}(\mathbb{F}_p)[p^\infty]=0$. Under the above identifications, the map $c$ is dual to the inclusion. Thus replacing the right-most column with the images of the maps $e$ and $f$ yields 
a commutative diagram with short exact rows and columns and with the right-most arrow an injection. Applying the snake lemma to this diagram then yields
$\mathrm{coker}(a)\hookrightarrow \mathrm{coker}(b)$. The latter cokernel is given by $\rH^2(\mathbb{Q}_p,\Fil_p^-(\mathbf{T}))[\gamma-\alpha(\gamma)]$, which is dual to $\rH^0(\mathbb{Q}_{\infty,p},\Fil_p^+(E[p^\infty]))/(\gamma-\alpha(\gamma))$. Since the $\mathbb{Q}_{p,n}$-rational $p$-torsion in the formal group of $E$ is trivial for all $n$, this shows that $b$ is surjective and hence that $a$ is surjective, as claimed. The result now follows from Theorem~\ref{thm:kato}.
\end{proof}

\begin{rmk} 
Note that crystalline characters satisfying the assumptions of Lemma \ref{prop:erlKato} 
are just the powers $\epsilon^n$ for $n\geq 0$ an integer such that  $n\equiv 0 \mod (p-1)$
 and $\epsilon$ the $p$-adic cyclotomic character. 
%can be produced with a construction similar to the one of Definition \ref{def:alpha_m}.
\end{rmk}
\subsubsection{Rational cyclotomic Main Conjecture}

%Similarly as above, recall that 
The Pontryagin dual $\Lambda^{\vee}$ is equipped with a $G_\Q$-action via the inverse of the character $\Psi:G_\Q\rightarrow\Lambda^\times$ arising from the projection $G_\Q\twoheadrightarrow\Gamma_\Q$. 
%
%\begin{defi}\label{defilocalconds}
%We let
%\begin{align*}
%\rH^1_{\rm rel}(\Q_p,T_pE\otimes \Lambda^{\vee})&=\rH^1(\Q_p,T_pE\otimes %\Lambda^{\vee}),\\
%\rH^1_{\ord}(\Q_p,T_pE\otimes \Lambda^{\vee})&= {\rm im}\bigl\{\rH^1(\Q_p,%{\rm Fil}^+(T_pE)\otimes \Lambda^{\vee})\rightarrow\rH^1(\Q_p,T_pE\otimes %\Lambda^{\vee})\bigr\},\\
%\rH^1_{\rm str}(\Q_p,T_pE\otimes \Lambda^{\vee})&=\{0\}, 
%\end{align*}
%where ${\rm Fil}^+(T_pE)={\rm ker}\{T_pE\rightarrow T_p\tilde{E}\}$ is the %kernel of reduction.
%\end{defi}
%
%Let $\Sigma$ be a finite set of places containing $\infty$ and the primes dividing $Np$ and let $G_{\Q,\Sigma}$ denote the Galois group of the maximal extension of $\Q$ unramified outside $\Sigma$. 
%For $\circ\in \{\rm rel, \rm ord, \rm str \}$ and 

Let $M=T_pE\otimes \Lambda^{\vee}$, and put
\[
\rH^1_{\ord}(\Q_p,M)={\rm im}\bigl\{\rH^1(\Q_p,{\rm Fil}^+(T_pE)\otimes \Lambda^{\vee})\rightarrow\rH^1(\Q_p,M)\bigr\}.
\]
%where ${\rm Fil}^+(T_pE)={\rm ker}\{T_pE\rightarrow T_p\tilde{E}\}$ is the kernel of reduction. 
The \emph{ordinary Selmer group} $\rH^1_{\Fcal_{\rm ord}}(\Q,M)$ is
\begin{equation}\label{eq:defidiscretecyc}
\rH^1_{\Fcal_{\rm ord}}(\Q,M):=\ker \biggl\{\rH^1(\Q^\Sigma/\Q,M) \to \prod_{w\in\Sigma,w\nmid p}\rH^1(\Q_w,M)\times\frac{\rH^1(\Q_{p},M)}{\rH^1_{\rm ord}(\Q_{p},M)}\biggr\},\nonumber
\end{equation}
and we write $\X_{\rm ord}(E/\Q_{\infty})$ 
%=\rH^1_{\Fcal_{\ord}}(\Q,M)^{\vee}$ 
for its Pontryagin dual.
% and 
%$\X_{\rm str}(E/\Q_{\infty})$ %:=\rH^1_{\Fcal_{\rm str}}(\Q, T_pE\otimes\Lambda_\Q^{\vee})^{\vee}$.
%These are finitely generated $\Lambda$-modules.
%
%Finally, we can also consider the compact module $T_pE\widehat\otimes\Lambda$, where $\Lambda$ is equipped with a $G_\Q$-action via $\Psi:G_\Q\rightarrow\Lambda^\times$ and define local conditions similarly as in Definition \ref{defilocalconds}. Replacing $M$ with $T_pE\widehat\otimes\Lambda$ in \eqref{eq:defidiscretecyc}, we therefore obtain the Selmer groups $$\rH^1_{\Fcal_{\bullet}}(\Q, T_pE\widehat\otimes\Lambda_\Q) \text{    with   }\bullet \in \{\ord, \rm str, \rm rel \}.$$
%
%\subsubsection{Cyclotomic Main Conjecture} 

We recall the statement of the Iwasawa Main Conjecture formulated by Mazur in \cite{mazur-towers}.

\begin{conj}%[Rational Cyclotomic Main Conjecture]
\label{conj:cyc} The module $\X_{\rm ord}(E/\Q_{\infty})$ is $\Lambda$-torsion, with
\[
{\rm char}_\Lambda\left( \X_{\rm ord}(E/\Q_{\infty})\right)=(\Lcal^{\rm MSD}_p(E/\Q))
\]
as ideals in $\Lambda$. 
\end{conj}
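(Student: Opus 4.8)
This is the classical cyclotomic Iwasawa Main Conjecture for $E$ in the good ordinary case; the plan for establishing it (in the cases currently within reach) is the usual two-divisibility strategy in $\Lambda=\Lambda_\Q=\Z_p[[\Gamma_\Q]]$, with the non-Eisenstein and Eisenstein situations handled by different inputs. First I would record that $\X_{\rm ord}(E/\Q_\infty)$ is $\Lambda$-torsion: this already follows from Kato's Euler system together with the nonvanishing of $\Lcal^{\rm MSD}_p(E/\Q)$ (Rohrlich's theorem, cf.\ Theorem~\ref{thm:kato-rohrlich}), so the real content is the equality of ideals.

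For the divisibility $(\Lcal^{\rm MSD}_p(E/\Q))\subseteq{\rm char}_\Lambda(\X_{\rm ord}(E/\Q_\infty))$, I would use Kato's Euler system. Working $\Lambda$-adically with the classes $\mathbf{z}_n$, the Euler system machine (Kato's own argument, or Mazur--Rubin) bounds the dual of the strict Selmer group by the characteristic ideal of $\Lambda/{\rm Col}_\infty(\loc_s(\bkappa_1^{\rm Kato}))$; by Kato's explicit reciprocity law \eqref{eq:erlKatoimp} and the fact that ${\rm Col}_\infty$ is an isomorphism in our setting, this ideal is $\bigl(\prod_{\ell\mid N}(1-a_\ell\ell^{-1}\gamma_\ell^{-1})\cdot\Lcal^{\rm MSD}_p(E/\Q)\bigr)$ up to a power of $p$. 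Passing from the strict Selmer group to $\X_{\rm ord}$ by Poitou--Tate duality (exactly as in the proof of Lemma~\ref{prop:erlKato}) and removing the imprimitive Euler factors by working with imprimitive $L$-functions and Selmer groups and then reverting gives the divisibility. When $\bar\rho_E$ is irreducible the Euler system error terms vanish and this is clean \cite{kato-euler-systems}; when $p$ is Eisenstein one instead invokes the sharpened Euler system bound of \cite{eisenstein_cyc}, which tolerates a reducible residual representation at the cost of a controlled error term, together with a Greenberg--Vatsal-type analysis of the $\mu$- and $\lambda$-invariants.

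The reverse divisibility ${\rm char}_\Lambda(\X_{\rm ord}(E/\Q_\infty))\subseteq(\Lcal^{\rm MSD}_p(E/\Q))$ is the hard, Eisenstein-ideal half: for $p$ non-Eisenstein it is the theorem of Skinner--Urban \cite{skinner-urban}, who construct enough classes in the Selmer group from congruences of $f$ with Eisenstein series on a group of higher rank (extended to further $E$ by Wan \cite{wan-hilbert} through the Hilbert modular case), and for $p$ Eisenstein it is supplied by \cite{eisenstein_cyc}. Combining the two inclusions in $\Lambda$, together with the matching of $\mu$-invariants (automatic when $\bar\rho_E$ is irreducible, and part of the input of \cite{eisenstein_cyc} otherwise), forces ${\rm char}_\Lambda(\X_{\rm ord}(E/\Q_\infty))=(\Lcal^{\rm MSD}_p(E/\Q))$.

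The main obstacle is the Eisenstein case, in which \emph{both} the Euler system upper bound and the Eisenstein-ideal lower bound must be run with $\bar\rho_E$ reducible, and one must check that the error terms produced on the algebraic and analytic sides coincide so that they cancel; this matching is precisely the content of \cite{eisenstein_cyc} that the present paper relies on. A secondary and essentially routine point is the bookkeeping of the bad Euler factors $\prod_{\ell\mid N}(1-a_\ell\ell^{-1}\gamma_\ell^{-1})$ and the stray powers of $p$ (from periods and the Manin constant) in Kato's reciprocity law, which stays under control thanks to the standing hypothesis that $\pi$ identifies $T_pE$ with $T_f$.
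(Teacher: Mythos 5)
The statement you are attempting to prove is stated in the paper as a \emph{conjecture}, not a theorem, and the paper does not prove it. Conjecture~\ref{conj:cyc} is Mazur's cyclotomic Iwasawa Main Conjecture, which is still open in general (for instance, without any hypothesis on $\bar\rho_E$ or on the local behavior at bad primes); the paper uses it as a \emph{hypothesis} in Theorem~\ref{thmintroKato} and then, in the theorem immediately following the conjecture, simply records (by citation to \cite{kato-euler-systems}, \cite{skinner-urban}, \cite{wan-hilbert}, \cite{eisenstein_cyc}, \cite{greenvats}, \cite{rubinmainconj}) the cases in which it is currently known. There is no argument to compare your proposal against.

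That said, as a \emph{survey} of how those known cases are established, your sketch is essentially accurate: the $\Lambda$-torsionness and one divisibility come from Kato's Euler system (sharpened in the Eisenstein case by \cite{eisenstein_cyc}), the reverse divisibility comes from the Eisenstein-congruence method of Skinner--Urban and its extensions, and the two halves are glued by matching $\mu$- and $\lambda$-invariants. But a correct response here would have been to observe that the statement is conjectural and is \emph{assumed}, not proved, in the paper. Treating it as something to be proved in full generality is the gap: your outline would, at best, reproduce the content of the subsequent theorem (the list of known cases), not a proof of the conjecture as stated.
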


%\begin{rem}\label{rem:IMC-p}
In the following, we refer to the statement of Conjecture~\ref{conj:cyc} with $\Lambda$ replaced by $\Lambda\otimes\Q_p$ %(i.e., up to $\mu$-invariants) 
as the \emph{rational cyclotomic Main Conjecture}. It follows from the results of \cite{schneider-isogenies,PR-isogenies} that this is invariant under isogenies.
%\end{rem}

The first cases of Conjecture~\ref{conj:cyc} were proved by Rubin \cite{rubinmainconj} when $E/\Q$ has complex multiplication. In the non-CM case, the conjecture was  proved in \cite{kato-euler-systems,skinner-urban} (residually irreducible case) and \cite{kato-euler-systems,greenvats} (residually reducible case) under mild hypotheses. More recently, we have the following.

\begin{thm}
Let $E/\Q$ be an elliptic curve, and $p>2$ a prime of good ordinary reduction for $E$. 
\begin{itemize}
\item[(i)] If $E[p]^{ss}=\mathbb{F}_p(\phi)\oplus\mathbb{F}_p(\psi)$ as $G_\Q$-modules with $\phi\vert_{G_p}\neq\mathds{1},\omega$, then Conjecture~\ref{conj:cyc} holds.
\item[(ii)] If $E[p]$ satisfies {\rm (\ref{eq:irr})}, then the rational cyclotomic Main Conjecture holds.
%\item[(iii)] If $E[p]$ satisfies {\rm (\ref{eq:irr})} and {\rm (\ref{eq:im})}, then Conjecture~\ref{conj:cyc} holds.
\end{itemize}
\end{thm}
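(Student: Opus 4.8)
The plan is to deduce both parts by reduction to known cases of the cyclotomic Main Conjecture for $E/\Q$, paralleling the deduction of Theorem~\ref{thm:howard-HP} from \cite{eisenstein_cyc,bcs} in the anticyclotomic setting. A common input is Kato's Euler system divisibility \cite{kato-euler-systems}: his bound on the ordinary Selmer group shows that $\X_{\rm ord}(E/\Q_\infty)$ is $\Lambda$-torsion and that
\[
\bigl(\Lcal^{\rm MSD}_p(E/\Q)\bigr)\subseteq{\rm char}_\Lambda\bigl(\X_{\rm ord}(E/\Q_\infty)\bigr),
\]
at least after inverting $p$, and integrally once $E[p]$ is irreducible with sufficiently large image. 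So in both parts the content lies in the reverse divisibility.

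For (i), under the residually reducible hypothesis with $\phi\vert_{G_p}\neq\mathds{1},\omega$, the full integral equality asserted by Conjecture~\ref{conj:cyc} is established in \cite{eisenstein_cyc} (extending earlier work of \cite{greenvats}). The condition $\phi\vert_{G_p}\neq\mathds{1},\omega$ is precisely the non-exceptional hypothesis ensuring, on the analytic side, that the relevant local Euler factors at the sub/quotient line are units, and, on the algebraic side, that Kato's class generates a non-torsion submodule — so that Kato's divisibility is available integrally — after which the Eisenstein-congruence argument of \emph{op.\,cit.} supplies the matching lower bound.

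For (ii), under \eqref{eq:irr} the residual representation $E[p]$ is irreducible, so Kato's divisibility holds in $\Lambda\otimes\Q_p$, which is all that is needed. The reverse divisibility in $\Lambda\otimes\Q_p$ is obtained by combining Skinner--Urban's three-variable method \cite{skinner-urban}, which yields ${\rm char}_\Lambda(\X_{\rm ord})\subseteq(\Lcal^{\rm MSD}_p(E/\Q))$ under a surjectivity-type hypothesis together with a ramification condition, with Wan's Hilbert-modular extension \cite{wan-hilbert}: base-changing $E$ to a carefully chosen real quadratic field in which $p$ splits and descending the Hilbert-modular Main Conjecture covers all $E[p]$ satisfying \eqref{eq:irr}. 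Since the rational Main Conjecture is insensitive to powers of $p$, to $\mu$-invariants, and to isogenies — the last by \cite{schneider-isogenies,PR-isogenies}, as already recalled — the two divisibilities combine in $\Lambda\otimes\Q_p$ to give the rational cyclotomic Main Conjecture.

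The main obstacle is essentially bookkeeping: one must track the normalization of $\Lcal^{\rm MSD}_p(E/\Q)$ (the Manin constant and the choice of lattice $T_f$, so that $E=E_\bullet$ as in the previous subsection), verify that the auxiliary hypotheses in \cite{skinner-urban,wan-hilbert} can all be met under \eqref{eq:irr} after an appropriate base change, and confirm that the residually reducible divisibility of \cite{eisenstein_cyc} is genuinely integral under $\phi\vert_{G_p}\neq\mathds{1},\omega$ rather than merely rational.
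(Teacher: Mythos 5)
Your proposal is correct and takes essentially the same approach as the paper: the paper's proof is simply the citation that (i) is established in \cite{eisenstein_cyc} and (ii) in \cite{wan-hilbert} (with \cite{kato-euler-systems,skinner-urban} cited alongside in the introduction), which is exactly the reduction you carry out. Your elaboration of the two-divisibility strategy and the role of the base change in \cite{wan-hilbert} is an accurate unpacking of what those references provide.
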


\begin{proof}
This is shown in \cite{eisenstein_cyc} in case (i), and in \cite{wan-hilbert} in case (ii).
\end{proof}

\subsection{Nonvanishing of Kato's Kolyvagin system}\label{sec:proofkato}

In this section we prove Theorem~\ref{thmintroKato} in the Introduction. The key ingredients are: 
\begin{enumerate}
\item The nonvanishing of $\bkappa^{\rm Kato}_{1}$ (Theorem~\ref{thm:kato-rohrlich})
\item For a character $\alpha:\Gamma_\Q\rightarrow\Z_p^\times$ sufficiently close to $1$, an estimate on the divisibility index of $\bkappa^{\rm Kato}_1(\alpha)$ in terms the length of the dual Selmer group and the Tamagawa factors of $E$ (Proposition~\ref{prop:control-alpha} below). 
\item A Kolyvagin system bound with controlled error terms  (Theorem~\ref{thm:mazurrubin} below).
\end{enumerate}
The cyclotomic Main Conjecture\footnote{In fact, as will be clear from the proof, the ``lower bound'' on the size the Selmer group predicted by Conjecture~\ref{conj:cyc} (or its rational version) suffices for the application to nonvanishing.} enters into the proof of (2). With these ingredients in hand, the proof of Theorem~\ref{thmintroKato} proceeds along  similar lines as in $\S\ref{sec:Kolyproof}$.
%
%the cyclotomic Iwasawa main conjecture for $E$ (replacing the anticyclotomic one), a control theorem involving $\rH^1_{\Fcal_{\operatorname{str}}(n)}(\Q, T_{\alpha})$ where now $\alpha$ is a cyclotomic character sufficiently close to the trivial one (the analogue of Theorem \ref{controlthm}), and a Selmer group bound for \emph{divided} Kolyvagin systems (giving the analogue of \eqref{eq:boundalpha-t}).
%

\subsubsection{Cyclotomic control theorem}

As above, %we put $\Lambda=\Lambda_\Q=\Z_p[[\Gamma_\Q]]$, with $\Gamma_\Q=\gal(\Q_{\infty}/\Q)\simeq \Z_p$, and 
we consider the $G_\Q$-modules 
\begin{equation}
T_\alpha:=T_pE\otimes_{\Z_p}\Zp(\alpha),\quad
V_\alpha:=T_\alpha\otimes_{\Z_p}\Q_p,\quad W_\alpha:=T_\alpha\otimes_{\Z_p}\Q_p/\Z_p\nonumber
\end{equation}
for a character $\alpha:\Gamma_\Q\rightarrow \Z_p^\times$. Recall that we set $\Fcal=\Fcal_{\rm{BK},\rm rel}$ and $\Fcal^*=\Fcal_{\rm{BK},\rm str}$, and for now $E=E_\bullet$.

\begin{prop}\label{prop:control-alpha} 
Suppose $\alpha:\Gamma_\Q\rightarrow\Z_p^\times$ be such that $\alpha\equiv 1\pmod{p^m}$. There exists a positive integer $\mathcal{M}$ independent of $\alpha$ such that if $m\geq\mathcal{M}$ and Conjecture~\ref{conj:cyc} holds,  then 
\[
{\rm length}_{\Z_p}(\rH^1_{\Fcal^*}(\Q, W_{\alpha^{-1}}))+\sum_{\ell\mid N} \ord_p(c_\ell(\alpha^{-1}))+2h= [\rH^1_{\Fcal}(\Q,T_{\alpha}):\Zp\cdot\bkappa^{\rm Kato}_1(\alpha)],
%\ind(\bkappa^{\rm Kato}_1(\alpha),\rH^1_{\Fcal}(\Q,T_{\alpha})),
\]
where $c_\ell(\alpha^{-1})=\#\rH^1_{\rm ur}(\Q_\ell,W_{\alpha^{-1}})$ and $h={\rm ord}_p(\#\rH^0(\Q_p,E[p^\infty]))$. 
\end{prop}

%\begin{rem}
%As will be clear from the proof, if we only assume the rational cyclotomic Main Conjecture, then the same conclusion as in Proposition~\ref{prop:control-alpha} holds with $h$ replaced by a possibly different integer $h'\in\Z$ independent of $\alpha$. Similarly as in the proof of Theorem~\ref{thmintroKoly}, this suffices for the proof of Theorem~\ref{thmintroKato} in the Introduction.
%\end{rem}

\begin{proof}%[Proof of Proposition~\ref{prop:control-alpha}]
By Rohrlich's results \cite{rohrlich-cyc}, after possibly excluding finitely many $\alpha$ we may assume  $\Lcal_p^{\rm MSD}(E/\Q)(\alpha)$ and $\Lcal_p^{\rm MSD}(E/\Q)(\alpha^{-1})$ are both nonzero. 
The Poitou--Tate duality gives rise to the exact sequence
\begin{equation}\label{eq:PT-5}
0\rightarrow\rH^1_{\Fcal^*}(\Q,W_{\alpha^{-1}})\rightarrow\rH^1_{\Fcal_{\rm ord}}(\Q,W_{\alpha^{-1}})\xrightarrow{{\rm loc}_p}\rH^1_{\rm ord}(\Q_p,W_{\alpha^{-1}})\xrightarrow{\beta}\rH^1_{\Fcal}(\Q,T_{\alpha})^\vee\rightarrow\rH^1_{\Fcal_{\rm ord}}(\Q,T_{\alpha})^\vee\rightarrow 0.
\end{equation}

By the cyclotomic Main Conjecture and Mazur's control theorem, the non-vanishing of $\Lcal_p^{\rm MSD}(E/\Q)(\alpha)$ and $\Lcal_p^{\rm MSD}(E/\Q)(\alpha^{-1})$ implies that $\rH^1_{\Fcal_{\rm ord}}(\Q,W_{\alpha^{-1}})$ and $\rH^1_{\Fcal_{\rm ord}}(\Q,T_{\alpha})$ are both finite. From (\ref{eq:PT-5}), it follows that $\rH^1_{\Fcal^*}(\Q,W_{\alpha^{-1}})$ is also finite, and that $\rH^1_{\Fcal}(\Q,T_{\alpha})$ has $\Z_p$-rank one; since the latter is torsion-free by \eqref{eq:noQtors}, we have in fact $\rH^1_{\Fcal}(\Q,T_{\alpha})\simeq \Zp$. Moreover, by local Tate duality the map $\beta$  is identified with the Pontryagin dual of the natural map
\[
{\rm loc}_{s}:\rH^1_{\Fcal}(\Q,T_{\alpha})\rightarrow\rH^1_{s}(\Q_p,T_{\alpha}):=\frac{\rH^1(\Q_p,T_{\alpha})}{\rH^1_{\rm ord}(\Q,T_{\alpha})},
\]
and from the above we see that ${\rm loc}_{s}$ has finite cokernel, with
\begin{equation}\label{eq:local-tate}
\#{\rm im}({\rm loc}_p)=\#{\rm coker}({\rm loc}_{s}).
\end{equation}
Since by Lemma \ref{prop:erlKato} the class $\bkappa^{\rm Kato}_{1}(\alpha)\in\rH^1_{\Fcal}(\Q,T_{\alpha})$ is nonzero and has non-torsion image in $\rH^1_{s}(\Q_p,T_{\alpha})$, we find %(which applies similarly when $E[p]$ is irreducible), we find
\begin{equation}\label{eq:kato-erl}
\#{\rm coker}({\rm loc}_s)=\frac{[\rH^1_{s}(\Q_p,T_{\alpha}):\Z_p\cdot{\rm loc}_s(\bkappa_1^{\rm Kato}(\alpha))]}{[\rH^1_{\Fcal}(\Q,T_{\alpha}):\Z_p\cdot\bkappa_1^{\rm Kato}(\alpha)]}=
\frac{\#(\Z_p/\Lcal_p^{\rm MSD}(E/\Q)(\alpha^{-1}))}{[\rH^1_{\Fcal}(\Q,T_{\alpha}):\Z_p\cdot\bkappa^{\rm Kato}_1(\alpha)]}.
\end{equation} 

On the other hand, letting $\Fcal_E\in\Lambda$ be a generator of ${\rm char}_\Lambda(\X_{\rm ord}(E/\Q_\infty))$, by a variant of \cite[Thm.~4.1]{greenberg-cetraro} incorporating the twist by $\alpha$ with $m\gg 0$, we have
\begin{equation}\label{eq:control}
\begin{aligned}
\#\Zp/(\Fcal_E(\alpha^{-1}))&\sim_p\#\rH^1_{\Fcal_{\rm ord}}(\Q,W_{\alpha^{-1}})\cdot\prod_{\ell\vert N}c_\ell(\alpha^{-1})\cdot(\#\rH^0(\Q_p,E[p^\infty]))^2\\
&\sim_p\#\rH^1_{\Fcal^*}(\Q,W_{\alpha^{-1}})\cdot\#{\rm coker}({\rm loc}_s)\cdot\prod_{\ell\vert N}c_\ell(\alpha^{-1})\cdot(\#\rH^0(\Q_p,E[p^\infty]))^2,
\end{aligned}
\end{equation}
using (\ref{eq:PT-5}) and (\ref{eq:local-tate}) for the second equality. Since Conjecture~\ref{conj:cyc} implies 
\[
\#\Z_p/(\Fcal_E(\alpha^{-1}))=\#\Z_p/(\Lcal_p^{\rm MSD}(E/\Q)(\alpha^{-1}))
\] 
the result now follows from (\ref{eq:kato-erl}) and (\ref{eq:control}).
\end{proof}

We have the following analogue of Lemma~\ref{lemma:tam}.

\begin{lemma}\label{lem:tamQ}
Assume that $\alpha:\Gamma_\Q\rightarrow\Z_p^\times$ is such that $\alpha\equiv 1\;({\rm mod}\,p^m)$,  and $\ell\nmid p$ is a finite prime. Then $c_\ell(\alpha) \equiv c_\ell\;({\rm mod}\,p^m)$, where $c_\ell$ is the $p$-part of the Tamagawa factor of $E$ at $\ell$.
\end{lemma}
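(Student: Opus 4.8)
The plan is to follow, essentially verbatim, the argument used for Lemma~\ref{lemma:tam}. First I would recall the standard identification of the unramified local cohomology: for $\ell\nmid p$,
\[
\rH^1_{\rm ur}(\Q_\ell, W_\alpha)=\rH^1(\Ff_\ell, W_\alpha^{I_\ell})\simeq W_\alpha^{I_\ell}/(\frob_\ell-1)W_\alpha^{I_\ell},
\]
and likewise for $W$ in place of $W_\alpha$, where $I_\ell\subset G_{\Q_\ell}$ is inertia. Since $\alpha$ factors through $\Gamma_\Q=\gal(\Q_\infty/\Q)$ and $\Q_\infty/\Q$ is unramified outside $p$, the character $\alpha$ is unramified at $\ell$; hence $I_\ell$ acts trivially on the twisting module and $W_\alpha^{I_\ell}=W^{I_\ell}\otimes_{\Z_p}\Z_p(\alpha)$, with $\frob_\ell$ acting by the scalar $\alpha(\frob_\ell)$ on the second factor.

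Next I would fix a generator of $\Z_p(\alpha)$ so as to identify $W_\alpha^{I_\ell}$ with $W^{I_\ell}$ as $\Zp$-modules; under this identification the transported action of $\frob_\ell$ is $w\mapsto \alpha(\frob_\ell)\cdot\frob_\ell(w)$. From $\alpha\equiv 1\pmod{\varpi^m}$ we get $\alpha(\frob_\ell)\equiv 1\pmod{\varpi^m}$, so this endomorphism is congruent modulo $\varpi^m$ to the action of $\frob_\ell$ on $W^{I_\ell}$. Applying $-\otimes_{\Z_p}\Z_p/\varpi^m$ to the right-exact sequence $W_\alpha^{I_\ell}\xrightarrow{\frob_\ell-1}W_\alpha^{I_\ell}\to\rH^1_{\rm ur}(\Q_\ell,W_\alpha)\to 0$, and to the analogous one for $W$, then yields
\[
\rH^1_{\rm ur}(\Q_\ell, W_\alpha)/\varpi^m\;\simeq\;\rH^1_{\rm ur}(\Q_\ell, W)/\varpi^m .
\]

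Finally I would deduce the congruence of orders. Both groups are finite (each is the $p$-part $c_\ell(\alpha)$, resp. $c_\ell$, of a Tamagawa factor, exactly as in Remark~\ref{rem:tam}, since $\rH^1_f(\Q_\ell,V_\alpha)=0$). The elementary point is that a finite $\Zp$-module $A$ with $\#A<\varpi^m$ has exponent dividing $\varpi^{m-1}$, so $A/\varpi^mA=A$; hence two finite $\Zp$-modules with isomorphic reductions mod $\varpi^m$ either have the same order or both have order divisible by $\varpi^m$. In either case $c_\ell(\alpha)\equiv c_\ell\pmod{\varpi^m}$, as desired. There is really no obstacle here: the only step requiring a word of justification is this last passage from isomorphic mod-$\varpi^m$ reductions to congruent orders, and it is entirely parallel to the corresponding step in Lemma~\ref{lemma:tam}.
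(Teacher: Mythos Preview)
Your proposal is correct and follows exactly the approach the paper intends: the paper's proof of this lemma simply reads ``The proof is exactly the same as the one of Lemma~\ref{lemma:tam},'' and you have reproduced that argument faithfully (identify $\rH^1_{\rm ur}$ with Frobenius coinvariants on $W^{I_\ell}$, use that $\alpha$ is unramified away from $p$, reduce mod $\varpi^m$). Your final paragraph spelling out why isomorphic mod-$\varpi^m$ reductions force the congruence of orders is a harmless elaboration of a step the paper leaves implicit.
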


\begin{proof}
The proof is exactly the same as the one of Lemma \ref{lemma:tam}.
\end{proof}

Thus taking $\alpha:\Gamma_{\Q}\to\Z_p^{\times}$ as in Proposition~\ref{prop:control-alpha} sufficiently close to $1$, from Lemma~\ref{lem:tamQ} we arrive at
\begin{equation}\label{eq:cycMCalpha}
{\rm length}_{\Z_p}(\rH^1_{\Fcal^*}(\Q, W_{\alpha^{-1}}))+ \sum_{\ell\mid N} \ord_p(c_\ell)+2h= \ind(\bkappa^{\rm Kato}_1(\alpha),\rH^1_{\Fcal}(\Q,T_{\alpha})),
\end{equation}
where $h={\rm ord}_p(\#\rH^0(\Q_p,E[p^\infty]))$.

%On the other hand if $p^t$ divides $\kappa_n^{\rm Kato}$ for every $n$, arguing as in the proof of Lemma \ref{lemmadividedKS}, we can apply the proof by Mazur--Rubin in \cite{mazrub} to get their Kolyvagin system bound:

\subsubsection{Kolyvagin system bound with error term}\label{ss, koly-error}

The last ingredient we need is an extension of \cite[Thm.~5.2.2]{mazrub} with error terms. For a positive integer $e$ we put
\[
\mathcal{L}_{E,e}=\{\ell\in\mathcal{L}_E\;:\;I_\ell\subset p^e\Z_p\},
\]
where $I_\ell\subset\Z_p$ is as in (\ref{eq:I-ell}).

\begin{thm}%[$\sim$ Mazur-Rubin, Theorem 5.2.2]
\label{thm:mazurrubin} 
Suppose $\cL\subset\cL_E$ satisfies $\cL_{E,e}\subset\cL$ for $e\gg 0$. Let $\alpha:\Gamma_\Q\rightarrow \Z_p^\times$ be a cyclotomic character such that $\alpha\equiv 1\;({\rm mod}\,p^m)$. Suppose that there is a collection of cohomology classes
\[
\{\tilde{\kappa}_n\in\rH^1(\Q,T_\alpha/I_nT_\alpha)\;:\;n\in\mathcal{N}\}
\]
with $\tilde{\kappa}_1\neq 0$ and that there is an integer $t\geq 0$, independent of $n$, such that $\{p^t\tilde{\kappa}_n\}_{n\in\mathcal{N}}\in{\mathbf{KS}}(T_\alpha,\Fcal,\mathcal{L})$. Then $\rH^1_{\Fcal}(\Q,T_\alpha)$ has $\Zp$-rank $1$, $\rH^1_{\Fcal^*}(\Q,W_{\alpha^{-1}})$ is finite,  and there is a non-negative integer $\mathcal{E}$ depending only on $T_pE$ %and $d={\rm rank}_{\Z_p}R$ 
such that 
\[
{\rm length}_{\Z_p}(\rH^1_{\Fcal^*}(\Q,W_{\alpha^{-1}}))\leq \ind(\tilde{\kappa}_1)+\mathcal{E},
\]
where $\ind(\tilde{\kappa}_1)={\rm length}_{\Zp}(\rH^1_{\Fcal}(\Q,T_{\alpha})/\Z_p\cdot\tilde{\kappa}_{1})$.
\end{thm}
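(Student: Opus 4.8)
The plan is to reduce the statement to a Kolyvagin system bound over $\Z/p^e\Z$ and then invoke the extension of the Mazur--Rubin bound carried out in the last section of the paper. First choose $e\gg 0$ so that $\tilde{\kappa}_1\not\equiv 0\;({\rm mod}\,p^e)$ in $\rH^1(\Q,T_\alpha)$ --- possible since this module is torsion-free by \eqref{eq:noQtors} --- and so that $\cL_{E,e+t}\subseteq\cL$, which holds by hypothesis once $e$ is large. Put $T^{(e)}=T_\alpha/p^eT_\alpha$ and $\tilde{\cN}=\cN(\cL_{E,e+t})$. For $\ell\in\cL_{E,e+t}$ one has $I_\ell\subseteq p^{e+t}\Z_p$, hence $I_n\subseteq p^{e+t}\Z_p$ for all $n\in\tilde{\cN}$, so the classes $p^t\tilde{\kappa}_n$ have well-defined images in $\rH^1(\Q,p^tT_\alpha/p^{e+t}T_\alpha)$; via the natural isomorphism $p^tT_\alpha/p^{e+t}T_\alpha\xrightarrow{\sim}T^{(e)}$ these images transport the finite--singular relations of the Kolyvagin system $\{p^t\tilde{\kappa}_n\}_n$ for $T_\alpha$ into finite--singular relations for the reductions modulo $p^e$ of the $\tilde{\kappa}_n$, so that $\{\tilde{\kappa}_n\;\mathrm{mod}\;p^e\}_{n\in\tilde{\cN}}$ is a nontrivial Kolyvagin system for $(T^{(e)},\Fcal,\cL_{E,e+t})$. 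This is the same device as in the proof of Theorem~\ref{thm:bound} (cf.\ \cite[Prop.~5.2.9]{mazrub} and the argument of \cite[Thm.~2.2.2]{howard-gl2-type}).

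Next, apply to this mod-$p^e$ Kolyvagin system the generalized Mazur--Rubin bound established in the final section of the paper --- the cyclotomic-twist, possibly residually reducible analogue of \cite[Thm.~5.2.2]{mazrub} and \cite[Thm.~5.1.1]{eisenstein_cyc}. That result computes the core rank of $(T^{(e)},\Fcal,\cL_{E,e+t})$ to be $1$ up to a defect absorbed into an error term; together with the nonvanishing of the bottom class and Poitou--Tate duality this shows that $\rH^1_{\Fcal^*}(\Q,W_{\alpha^{-1}})$ is finite and that $\rH^1_{\Fcal}(\Q,T_\alpha)$ has $\Z_p$-rank $1$, hence $\rH^1_{\Fcal}(\Q,T_\alpha)\simeq\Z_p$ by \eqref{eq:noQtors}. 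Moreover, for $e$ large it yields a nonnegative integer $\mathcal{E}$ depending only on the residual representation $E[p]$ --- in particular independent of $\alpha$, $m$ and $e$ --- with $\mathcal{E}=0$ when \eqref{eq:irred} holds, such that
\[
{\rm length}_{\Z_p}\bigl(\rH^1_{\Fcal^*}(\Q,W_{\alpha^{-1}})\bigr)\leq\ind(\tilde{\kappa}_1)+\mathcal{E},
\]
having identified the index of the reduction $\tilde{\kappa}_1\;\mathrm{mod}\;p^e$ with $\ind(\tilde{\kappa}_1)=\mathrm{length}_{\Z_p}(\rH^1_{\Fcal}(\Q,T_\alpha)/\Z_p\cdot\tilde{\kappa}_1)$ for $e$ sufficiently large. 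This is the assertion of the theorem.

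The substance of the argument is thus the generalized bound invoked in the second step, proved along the lines of \cite{eisenstein_cyc,eisenstein}: one computes the relevant core rank via a global Euler characteristic computation and Poitou--Tate duality; one establishes a structure theorem for the Selmer modules $\rH^1_{\Fcal(n)}(\Q,T^{(k)})$ in terms of a ``stub'' whose defect is bounded by invariants of $E[p]$; and one runs the inductive Chebotarev argument producing ``useful'' Kolyvagin primes $\ell\in\cL_{E,k}$ at which the pertinent localization maps are (nearly) surjective, comparing $\kappa_n$ and $\kappa_{n\ell}$ through the finite--singular exact sequences. The main obstacle is the residually reducible (Eisenstein) case: there the Chebotarev step no longer produces exact surjectivity and the finite--singular sequences need not be exact, so one must propagate and bound the resulting discrepancies uniformly in terms of $E[p]$ alone --- it is this bookkeeping that pins down the shape of $\mathcal{E}$ and its independence of $\alpha$, $m$, $e$ and $n$. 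When \eqref{eq:irred} holds these defects all vanish and the bound coincides with that of \cite[Thm.~5.2.2]{mazrub}.
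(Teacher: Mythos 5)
Your proposal is correct and follows essentially the same route as the paper: the paper also reduces to the case $t=0$ via the "work modulo $p^e$ for $e\gg0$" device (citing the argument from Theorem~\ref{thm:bound}, which in turn follows \cite[Thm.~2.2.2]{howard-gl2-type}) and then invokes the extension of the Mazur--Rubin Kolyvagin system bound to the cyclotomic/possibly-residually-reducible setting proved as Theorem~\ref{thm:boundcycleis} in \S\ref{subsec:MR-refined}. Your account of the contents of that theorem (core-rank computation, Selmer-module structure theorem, inductive \v{C}ebotarev argument with an error term $\CE$ bounded by invariants of $T_pE$ and vanishing under \eqref{eq:irred}) matches the paper's.
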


\begin{proof}
By the same argument as in the proof of Theorem~\ref{thm:bound}, the result for $t=0$ easily implies the result for any $t\geq 0$, so it suffices to prove the former. Under hypothesis (\ref{eq:irred}), the result is shown  in \cite[Thm.~5.2.2]{mazrub}; 
%and in this case one can take $\mathcal{E}=0$. The
the proof in the general case %(assuming \eqref{eq:noQtors}) 
is given  in $\S\ref{subsec:MR-refined}$ (see Theorem~\ref{thm:boundcycleis}).
\end{proof}

Granted the results in $\S\ref{subsec:MR-refined}$, we are now ready to conclude the proof of Theorem~\ref{thmintroKato}.

\begin{proof}[Proof of Theorem \ref{thmintroKato}] 

Kato's Kolyvagin system for an arbitrary elliptic curve $E/\Q$ in the isogeny class attached to $f_E$ is obtained from the images of the classes $z_n\in\rH^1(\Q(\mu_n),T_pE_\bullet)$ under the natural map $T_pE_\bullet\rightarrow T=T_pE$ induced by an isogeny, so just as in the last paragraph of proof of Theorem~\ref{thmintroKoly} it suffices to consider the case $E=E_\bullet$. Thus arguing by contradiction, we assume that $\kappa_n=0$ for every $n\in\mathcal{N}$, and take $t$ such that
\[
t>\sum_{\ell\mid N}{\rm ord}_p(c_\ell)+\mathcal{E}+2h,
\]
where $\mathcal{E}$ and $h$ are as in Theorem~\ref{thm:mazurrubin} and \eqref{eq:cycMCalpha}, respectively. Take $\alpha:\Gamma_\Q\rightarrow\Z_p^\times$, $\alpha\equiv 1\;({\rm mod}\,p^m)$, as in Proposition~\ref{prop:control-alpha}, with $m\geq t$
and such that \eqref{eq:cycMCalpha} holds. Then, letting $\mathcal{N}=\mathcal{N}(\mathcal{L}_m)$, from Lemma~\ref{lemmacongruence2} we deduce the existence of a collection of cohomology classes $\{\tilde{\kappa}_{n,\alpha}\in\rH^1(\Q,T_\alpha/I_nT_\alpha)\}_{n\in\mathcal{N}}$ defined by the relation $p^t\cdot\tilde{\kappa}_{n,\alpha}=\bkappa_{n}^{\rm Kato}(\alpha)$. By Theorem~\ref{thm:kato-rohrlich} and Theorem~\ref{thm:mazurrubin} we obtain
\[
{\rm length}_{\Zp}(\rH^1_{\Fcal^*}(\Q,W_{\alpha^{-1}}))\leq{\rm ind}(\bkappa^{\rm Kato}_{1}(\alpha),\rH^1_{\Fcal}(\Q,T_\alpha))-t+\mathcal{E},
\] 
which by our choice of $t$ contradicts (\ref{eq:cycMCalpha}). This concludes the proof of Theorem~\ref{thmintroKato} assuming Conjecture~\ref{conj:cyc}. A straightforward modification of the above argument leads to the same conclusion just assuming the rational cyclotomic Main Conjecture.
\end{proof}

%In order to prove the same result in the Eisenstein case, we need to generalise Theorem \ref{thm:mazurrubin}, allowing for \emph{error terms} as in the anticyclotomic case (Theorem \ref{thm:bound}). The proof will be carried out in the next section and we think the result might be on independent interest.
 
\begin{rem}\label{rem:refined-kato}
Denote by $\mathscr{M}_\infty^{\rm Kato}$ the divisibility index of $\{\kappa_n^{\rm Kato}\}$. 
%defined in the same manner as $\mathscr{M}_\infty$ in the Introduction.
%Then it follows from the structure theorem of Mazur--Rubin
Similarly as in the proof of Theorem~\ref{thmintroKoly-div}, the above argument combined with Mazur--Rubin's structure theorem for $\rH^1_{\Fcal^*}(\Q,W_{\alpha^{-1}})$ in terms of $\bkappa^{\rm Kato}(\alpha)$ (see \cite[Thm.~5.2.12]{mazrub}) shows that under the following hypotheses: 
\begin{itemize}
\item[(i)] (\ref{eq:irred}) holds (so that $\mathcal{E}=0$ in Theorem~\ref{thm:mazurrubin}), 
%\item[(ii)] $h=0$ in the notations of Lemma~\ref{prop:erlKato}, 
\item[(ii)] $a_p\not\equiv 1\pmod{p}$ (so that %$E(\Q_p)[p^\infty]=0$,  and hence 
$h=0$ in Proposition \ref{prop:control-alpha}),
\end{itemize}
if Conjecture~\ref{conj:cyc} holds, then
\begin{equation}\label{eq:kato-div}
\mathscr{M}_\infty^{\rm Kato}=\sum_{\ell\mid N}{\rm ord}_p(c_\ell).
\end{equation}
In particular, by the cases of Conjecture~\ref{conj:cyc} established in \cite{bcs} (using \cite[IV-23, Lem.~3]{serre-ladic} to ensure the existence of the element $\sigma$ in \cite[Thm.~1.1.2]{bcs}, (\ref{eq:kato-div}) holds under the following hypothesis in addition to (i)-(ii): $p>3$.
\end{rem}

\begin{rem} In view of 
 %``Kolyvagin derived'' 
Kato's explicit reciprocity law as in \cite[Thm.~3.11]{kim}, the equality \eqref{eq:kato-div} translates\footnote{under assumption (ii), so that $t=0$ in the notations of \emph{loc.\,cit.}} into a proof of \cite[Conj.~1.9]{kim} concerning the maximal divisibility of the analytic invariants 
$\delta_n\in\Z_p/I_n$  
%(``Kurihara numbers'' in the terminology of \emph{op.\,cit.}) 
introduced by Kurihara \cite{kur-Iw2012} using modular symbols. See also \cite{kurihara-sakamoto} for independent results toward this conjecture. 

% and there exists an element $\sigma\in G_\Q$ fixing $\Q(\mu_{p^\infty})$ such that $T/(\sigma-1)T$ is a free $\Z_p$-module of rank one.
%\begin{equation}\label{eq:im}
%\textrm{there exists an element $\sigma\in{\rm Gal}(\mu_{p^\infty})/\Q)$ such that $T/(\sigma-1)T$ is a free $\Z_p$-module of rank one.}\nonumber
%\end{equation}
%
%\textcolor{blue}{Possibily to be removed: This result does not require to restrict ourselves to the index of the Kolyvagin system for $\cN_{\rm Kato}^{(e)}$ for $e\gg 0$ as in Theorem \ref{thmintroKoly-div}. This is because \cite[Thm.~5.2.12]{mazrub} does not have such restriction, which in the Heegner point case arises from Lemma \ref{lemma:j} (compare this with \cite[Lem. 4.3.2]{mazrub}), giving in particular a structure theorem involving $ \lim_{k\to \infty}\mathscr{M}_r((\kappa^{(k)}_n)_{n\in\cN^{(2k-1)}})$ instead of $ \lim_{k\to \infty}\mathscr{M}_r((\kappa^{(k)}_n)_{n\in\cN^{(k)}})$ as in \cite[Thm.~5.2.12(i)]{mazrub}). Note that this is also required to prove \eqref{eq:kato-div} above: in the final argument by contradiction (as in the proof of Theorem \ref{thmintroKoly-div}) the choice of $n$ such that $\ind(\bkappa_n^{\rm Kato})\lneq \sum_{\ell\mid N}{\rm ord}_p(c_\ell)$ can be guaranteed to be in $\cN^{(m)}$ since $\mathscr{M}_r(\bkappa^{\rm Kato})=\lim_{k\to \infty}\mathscr{M}_r((\kappa^{{\rm Kato},(k)}_n)_{n\in\cN^{(k)}})$.
%}
\end{rem}

\subsection{Extension of Mazur--Rubin's Selmer group bound}
\label{subsec:MR-refined}

In this section we prove Theorem~\ref{thm:boundcycleis} below. 
%, which easily implies Theorem~\ref{thm:mazurrubin}. 
The result extends \cite[Thm.\,5.2.2]{mazrub} and might be of independent interest. 

Let $R$ be the ring of integers of a finite extension of $\Z_p$ with maximal ideal $\fm$ and uniformiser $\varpi\in\fm$. Let $\alpha: \Gamma_{\Q}\to R^{\times}$ be a cyclotomic character such that $\alpha\equiv 1\;({\rm mod}\,\fm)$, and put 
\[
T=T_pE\otimes_{\Z_p} R(\alpha).
\]
Recall that under these assumptions there exists $\tau\in G_\Q$ as in \eqref{eq:tau}.
Note that, since the first condition implies $\tau_{|\Q_\infty}=1$ and $\alpha$ is a character of $\Gamma_\Q$, we also have $T/(\tau -1)T\simeq R\oplus R/\fm^{t}$, where $t=t'\cdot\rank_{\Z_p}R$.

%Let $\varpi\in R$ be a uniformiser, and $d=\ord_{\varpi}(p)$.
As above, we put $\Fcal=\Fcal_{\rm{BK},\rm rel}$, $\Fcal^*=\Fcal_{\rm{BK},\rm str}$, and let $\mathcal{L}\subset\mathcal{L}_E$ be a set of primes with $\mathcal{L}_{E,e}\subset\mathcal{L}$ for $e\gg 0$. We also put $\cN=\cN(\cL)$.

\begin{thm}\label{thm:boundcycleis}
Let $E/\Q$ be an elliptic curve without complex multiplication, and let $p$ be an odd prime of good reduction for $E$ such that 
%\eqref{eq:noQtors} holds. 
$E(\Q)[p] = 0$.
Suppose that there is a Kolyvagin system $\kappa=\{\kappa_{n}\}_{n\in\cN}\in\mathbf{KS}(T,\Fcal,\cL)$ with $\kappa_{1}\neq 0$. Then $\rH^1_{\Fcal}(\Q,T)$ has $R$-rank one, ${\rm H}^1_{\Fcal^*}(\Q,T^*)$ is finite, and there exists a non-negative integer $\CE$ depending only on $T_pE$ and ${\rm rank}_{\Z_p}(R)$ such that 
\[
{\rm length}_{R}({\rm H}^1_{\Fcal^*}(\Q,T^*))\leq\ind(\kappa_1)+\CE,
\]
where $\ind(\kappa_1)={\rm length}_{R}\bigl({\rm H}^1_{\Fcal}(K,T)/R\cdot\kappa_{1}\bigr)$. Moreover, $\CE=0$ if \eqref{eq:irred} holds.
\end{thm}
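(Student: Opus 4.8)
The plan is to adapt the argument of \cite[Thm.~5.2.2]{mazrub} to the possibly residually reducible setting, following the same strategy used in \cite[\S5]{eisenstein_cyc} for the Heegner point side (Theorem~\ref{thm:bound} above). First I would record the structural input: by the self-duality of $T = T_pE\otimes R(\alpha)$ (so $T^* \simeq W_{\alpha^{-1}}$ up to the twist), the core Selmer rank of the pair $(T,\Fcal)$ is $1$, and by the general theory of Kolyvagin systems over principal artinian quotients there is a ``stability'' integer $\epsilon\in\{0,1\}$ and, for each $k$ and each $n\in\cN^{(k)}$, an $R^{(k)}$-module $M^{(k)}(n)$ with $\rH^1_{\Fcal(n)}(\Q,T^{(k)})\simeq (R/\fm^k)^\epsilon\oplus M^{(k)}(n)\oplus M^{(k)}(n)$, exactly as in \cite[Prop.~3.3.2]{eisenstein} quoted in \S\ref{sec:Kolyproof}; here $\epsilon=1$ because the relaxed-strict dual pair has core rank one. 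The point is that the ``error term'' $\CE$ measures how far $(T,\Fcal,\cL)$ is from being a genuine (non-degenerate) Kolyvagin-system datum in the sense of Mazur--Rubin, and it is bounded purely in terms of $\bar\rho_E$ and $\rank_{\Z_p}R$ via the ramification/transversality constants and the failure of \cite[Hyp.~H.0--H.6]{mazrub}; when \eqref{eq:irred} holds all those hypotheses are satisfied and $\CE=0$.

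Next I would reduce to a mod-$\fm^e$ statement. Choose $e\gg 0$ with $\kappa_1\not\equiv 0\pmod{\fm^e}$ and with $\cL_{E,e}\subset\cL$; set $\tilde\cL=\cL_{E,e}$ and $\tilde\cN=\cN(\tilde\cL)$. The collection $\{\kappa_n\bmod \fm^e\}_{n\in\tilde\cN}$ is a nonzero Kolyvagin system for $(T/\fm^eT,\Fcal,\tilde\cL)$, so it suffices to run the Mazur--Rubin machine over the artinian ring $R^{(e)}=R/\fm^e$. Following \cite[\S5.2]{mazrub} one produces a ``stub'' chain of squarefree $n$'s, $1=n_0,n_1,\dots,n_\rho$ with $\nu(n_i)=i$, along which the finite-singular relations force $\ind(\kappa_{n_{i-1}}^{(e)})\geq \ind(\kappa_{n_i}^{(e)})$ and the core rank $\rho(n_i)$ of $\rH^1_{\Fcal(n_i)}(\Q,T^{(1)})$ strictly decreases at least every two steps, until $\rho(n_\rho)=1$, i.e.\ $n_\rho$ is a core vertex and $M^{(e)}(n_\rho)=0$. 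The existence of the primes realizing each step is exactly \cite[Prop.~5.3.1]{eisenstein_cyc} (the Chebotarev-type lemma with controlled error term, for $\mathbf T$-representations with $\bar\rho$ reducible), applied with $m=1$ and $k=e$; the transversality argument that the relevant local pairings are nondegenerate at the auxiliary primes is the usual computation over $R^{(e)}$. Once $n_\rho$ is reached, global duality (the Poitou--Tate sequence \eqref{eq:PT-5}, in the form used in the proof of Proposition~\ref{prop:control-alpha}) converts the bound on $\ind(\kappa_1^{(e)})=\ind(\kappa_{n_0}^{(e)})\geq \ind(\kappa_{n_\rho}^{(e)})$ together with $\length_R(\rH^1_{\Fcal^*}(\Q,T^*))$ expressed via the chain into the inequality $\length_R(\rH^1_{\Fcal^*}(\Q,T^*))\leq \ind(\kappa_1)+\CE$, where $\CE$ collects the (bounded, $\bar\rho_E$- and $\rank_{\Z_p}R$-dependent) discrepancies accumulated when $\rho$ fails to drop by the ``ideal'' amount; finiteness of $\rH^1_{\Fcal^*}(\Q,T^*)$ and $\rank_R\rH^1_{\Fcal}(\Q,T)=1$ fall out of the same chain once $\kappa_1\neq 0$ is known, since the core vertex forces $\rH^1_{\Fcal(n_\rho)}(\Q,T^{(e)})\simeq R/\fm^e$ and then one passes to the limit over $e$.

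Finally I would verify the claim $\CE=0$ under \eqref{eq:irred}: in that case $\bar\rho_E$ is irreducible with big image (after $p>3$, which is not needed here since irreducibility alone already rules out the scalar and Eisenstein obstructions), so the Mazur--Rubin hypotheses hold verbatim and the chain drops by exactly the expected amount at every step, giving $\length_R(\rH^1_{\Fcal^*}(\Q,T^*))=\ind(\kappa_1)$ with no loss; this is \cite[Thm.~5.2.2]{mazrub} (or \cite[Rem.~3.3.5]{eisenstein_cyc} for the bookkeeping that $\CE$ vanishes). I expect the main obstacle to be precisely the Chebotarev step: in the residually reducible case the naive argument for finding a prime $\ell\in\cL_{E,e}$ with prescribed (nonzero) local behavior of two given classes can fail, and one must instead invoke the with-error-term version \cite[Prop.~5.3.1]{eisenstein_cyc}, keep careful track of how the error propagates through the $\rho$-drops along the chain, and check that the accumulated error depends only on $T_pE$ and $\rank_{\Z_p}R$ and not on $e$, $\alpha$, or $\cL$ — this uniformity is what makes $\CE$ a legitimate constant and is the delicate point of the whole proof.
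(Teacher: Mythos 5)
Your high-level plan — mimic Mazur--Rubin, replacing their \v{C}ebotarev step by a version with error terms, then run an inductive chain of squarefree $n$'s and bound the accumulated loss by a constant depending only on $T_pE$ and $\rank_{\Z_p}R$ — is the correct template, and it is what the paper does in \S\ref{subsec:MR-refined}. However, there are three concrete gaps.

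First, the structure theorem you invoke is the wrong one. You claim $\rH^1_{\Fcal(n)}(\Q,T^{(k)})\simeq (R/\fm^k)^\epsilon\oplus M^{(k)}(n)\oplus M^{(k)}(n)$, citing \cite[Prop.~3.3.2]{eisenstein}. That $M\oplus M$ decomposition is a Heegner-side phenomenon: it requires the Selmer structure $\Fcal_{\rm ord}$ over the imaginary quadratic field $K$ to be self-dual and uses the complex conjugation action. The Kato-side structure $\Fcal=\Fcal_{\rm{BK},\rm rel}$ over $\Q$ is \emph{not} self-dual (its dual is $\Fcal^*=\Fcal_{\rm{BK},\rm str}$), and the correct structure result is Theorem~\ref{thm:strMR}, i.e.\ Mazur--Rubin's Theorem 4.1.13: $\rH^1_{\Fcal(n)}(\Q,T^{(k)})\simeq R/\fm^k\oplus\rH^1_{\Fcal^*(n)}(\Q,T^*[\fm^k])$. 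There is no doubled summand and the free part always appears, so the bookkeeping along the chain is genuinely different.

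Second, and more seriously, you propose to reuse \cite[Prop.~5.3.1]{eisenstein_cyc} (the Heegner \v{C}ebotarev lemma) ``with $m=1$ and $k=e$''. That lemma is built around the fact that the auxiliary primes are inert in $K$ with Frobenius a lift of complex conjugation; none of that is available over $\Q$, where the Kolyvagin primes are those with $\ell\equiv 1\pmod{p}$ and $a_\ell\equiv\ell+1$. The paper instead proves a \emph{new} \v{C}ebotarev statement (Proposition~\ref{prop:prime2}) tailored to the cyclotomic case. The key new ingredient, which your proposal never mentions, is the element $\tau\in G_\Q$ furnished by Serre's open image theorem for non-CM $E$ with $\rho_E(\tau)=\SmallMatrix{1&x\\0&1}$; one sets $t'=v_p(x)$ (minimal among unipotents in the image) and the error constant becomes $e=d(C_1+C_2+t')$, with $t'$ a genuinely new contribution absent from the Heegner picture. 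One then arranges $\frob_\ell$ to act as this $\tau$, so that $T^{(k)}/(\frob_\ell-1)T^{(k)}\simeq R/\fm^k\oplus R/\fm^t$ rather than $R/\fm^k\oplus R/\fm^k$. This changes the shape of every local term $\rH^1_f(\Q_\ell,T^{(k)})$ and hence every exact sequence in the induction; you cannot get this by quoting the Heegner lemma.

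Third, your parenthetical remark that irreducibility of $E[p]$ alone (even without $p>3$) already forces $\CE=0$ is not correct. Vanishing of $\CE$ requires $C_1=C_2=t'=0$, i.e.\ $\Z_p^\times\subset\im\rho_E$, $\End_{\Z_p}(T_pE)=\rho_E(\Z_p[G_\Q])$, and $\SmallMatrix{1&1\\0&1}\in\im\rho_E$. These hold under \eqref{eq:irred}, which is the \emph{surjectivity} of $\bar\rho_E$, not merely irreducibility of $E[p]$: an irreducible $\bar\rho_E$ with image inside the normalizer of a Cartan, say, would leave $C_1$ and $C_2$ positive. The theorem statement is correct in asserting $\CE=0$ under \eqref{eq:irred}; your weakening is unjustified.
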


%Note that we can apply this theorem for the proof of Theorem \ref{thmkato}, since our choice of $\alpha$ and the validity of the cyclotomic main conjecture imply that $\rH^1_{\Fcal}(\Q,T)$ is free of $\Z_p$-rank one. Note also that, with this result at hand, we can apply the same strategy as above arguing by contradiction and choosing $t>  \tfrac{1}{2}( \ord_p(\prod_{w\mid N}c_w )) -h +\CE$ to prove Theorem \ref{thmkato} in the Eisenstein case for $E=E_\bullet$.

%\begin{rmk}
%As in Theorem~\ref{thm:bound}, the theorem applies assuming a weaker result than the existence of a Kolyvagin system in the sense of \cite{mazrub}: it is enough to have a collection of classes $\{\kappa_{n}\in \rH^1_{\Fcal(n)}(\Q,T/I_n)\}_{n\in\cN}$ such that 
%\[
%\ord(\loc_\ell(\kappa_n))=\ord(\loc_\ell(\kappa_{n\ell})) \ \ \ \text{for every }\ell\nmid n, n\in \cN, \ell \in \cL.
%\]
%In particular, if $p^t\mid \kappa_n^{\rm Kato}$ for every $n\in \cN$ and $m>t$, then $p^t\mid \kappa_n^{\rm Kato}(\alpha)$ for every $n$ and we can apply the above result to the collection of classes $\{p^{-t} \kappa_n^{\rm Kato}(\alpha)\}$, after working with square-free products of Kolyvagin primes in $\cL^{(k)}\subset \cL$ (as defined below) for $k\gg 0$ such that the order of the image of the bottom class $\kappa_1^{\rm Kato}(\alpha)$ in $\rH^1_{\Fcal}(\Q,T/\fm^k)$ is larger than the error term $\CE$.
%\end{rmk}

As preparation for the proof of Theorem~\ref{thm:boundcycleis}, we collect some preliminary results from \cite[$\S{4.1}$]{mazrub}, whose proof applies verbatim under the assumption $E(\Q)[p]=0$. %We recall, in particular, the following statements.

\begin{lemma}%[{\cite[Lem.~4.1.1]{mazrub}}]
\label{lemma:structure}
Let $k>0$ and $T^{(k)}=T/\fm^k$. For every $n\in\cN$ and $0< i\leq k$ there are natural isomorphisms
\[
\rH^1_{\Fcal(n)}(\Q,T^{(k)}/\fm^iT^{(k)})\xrightarrow{\sim}\rH^1_{\Fcal(n)}(\Q,T^{(k)}[\fm^i])\xrightarrow{\sim}{\rm H}_{\Fcal(n)}^1(\Q,T^{(k)})[\fm^i]\]
and
\[
\rH^1_{\Fcal(n)^*}(\Q, T^*[\fm^i])\xrightarrow{\sim}\rH^1_{\Fcal(n)^*}(\Q, T^*)[\fm^i]
\]
induced by the maps $T^{(k)}/\fm^iT^{(k)}\xrightarrow{\varpi^{k-i}}T^{(k)}[\fm^i]\hookrightarrow T^{(k)}$ and the inclusion $T^*[\fm^i]\hookrightarrow T^*$, respectively.
\end{lemma}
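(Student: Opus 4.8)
The plan is to establish the isomorphisms in Lemma~\ref{lemma:structure} by mimicking the argument of \cite[Lemma 4.1.1]{mazrub} (or the analogous \cite[Lem.~3.3.1]{eisenstein} which appears earlier as Lemma~\ref{lemmamod}), checking that the only input needed is the hypothesis $E(\Q)[p]=0$, which by \eqref{eq:noQtors} holds for $T_pE$ and hence (since $\Gamma_\Q$ is pro-$p$) for $T_\alpha$ and all its quotients. First I would observe that the identification $T^{(k)}/\fm^iT^{(k)} = T^{(k)}[\fm^i]$ as abstract $R$-modules is induced by multiplication by $\varpi^{k-i}$, and that the composite with the inclusion $T^{(k)}[\fm^i]\hookrightarrow T^{(k)}$ gives the map in the statement; at the level of Galois cohomology this yields maps
\[
\rH^1(\Q^\Sigma/\Q,T^{(k)}/\fm^iT^{(k)})\to \rH^1(\Q^\Sigma/\Q,T^{(k)}[\fm^i])\to \rH^1(\Q^\Sigma/\Q,T^{(k)})[\fm^i].
\]
The first map is an isomorphism trivially (same module), and the second is an isomorphism precisely because $\rH^0(\Q^\Sigma/\Q, T^{(k)}/\fm^iT^{(k)}) = (T^{(k)}/\fm^iT^{(k)})^{G_\Q}$ vanishes: indeed $T^{(k)}/\fm^iT^{(k)} = T/\fm^i T$ is a subquotient of $W_\alpha[p^{\infty}]$, and $W_\alpha^{G_\Q}$ is divisible with $\fm$-torsion equal to $(T/\fm T)^{G_\Q} = E[p]^{G_\Q}\otimes\cdots$, which is zero by $E(\Q)[p]=0$. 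Thus the long exact sequence in cohomology attached to $0\to T^{(k)}[\fm^i]\to T^{(k)}\xrightarrow{\varpi^i} T^{(k)}\to 0$ (using $\rH^0(\Q,T^{(k)})=0$) gives the second isomorphism.

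Next I would promote these isomorphisms of $\rH^1(\Q^\Sigma/\Q,-)$ to isomorphisms of the Selmer groups $\rH^1_{\Fcal(n)}(\Q,-)$ by checking compatibility of the maps with the local conditions at each place $w\in\Sigma$. For the relaxed condition at $p$ and the unramified (Bloch--Kato) conditions at $\ell\nmid n$ this is automatic since the maps are functorial; for the transverse condition at $\ell\mid n$ one uses the standard fact (as in \cite[\S1.2, \S4.1]{mazrub}) that the transverse submodule is preserved under the maps induced by $T^{(k)}/\fm^iT^{(k)}\to T^{(k)}[\fm^i]\to T^{(k)}$, which follows from the definition $\rH^1_{\rm tr}(\Q_\ell,M)=\ker(\rH^1(\Q_\ell,M)\to\rH^1(\Q_\ell(\mu_\ell),M))$ together with the fact that $\frob_\ell$ acts on $E[p^k]$ (hence on $T^{(k)}$) as a scalar matrix — or more simply that at Kolyvagin primes $\ell$ the local cohomology splits as the direct sum of the finite and singular parts and each map respects this splitting. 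Hence $\rH^1_{\Fcal(n)}(\Q,T^{(k)}/\fm^iT^{(k)})\xrightarrow{\sim}\rH^1_{\Fcal(n)}(\Q,T^{(k)}[\fm^i])\xrightarrow{\sim}\rH^1_{\Fcal(n)}(\Q,T^{(k)})[\fm^i]$.

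For the dual statement $\rH^1_{\Fcal(n)^*}(\Q,T^*[\fm^i])\xrightarrow{\sim}\rH^1_{\Fcal(n)^*}(\Q,T^*)[\fm^i]$, I would argue similarly using the short exact sequence $0\to T^*[\fm^i]\to T^*\xrightarrow{\varpi^i} T^*\to 0$: the relevant vanishing is now $\rH^0(\Q,T^*)=W_{\alpha^{-1}}(1)^{G_\Q}$, whose $\fm$-torsion is $(E[p]^*)^{G_\Q}$; this vanishes again because $E(\Q)[p]=0$ forces $E[p]$ (equivalently $E[p]^\vee(1)$, by the Weil pairing, up to the twist $\alpha$ which is trivial mod $\fm$) to have no $G_\Q$-invariants — here one uses that the Weil pairing identifies $E[p]^*\simeq E[p]$ as $G_\Q$-modules. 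Then the inclusion $T^*[\fm^i]\hookrightarrow T^*$ induces the stated isomorphism on $\rH^1(\Q^\Sigma/\Q,-)$, and compatibility with the dual local conditions (orthogonal complements under local Tate duality) follows formally from the corresponding compatibility for $T$ established above. The main obstacle I anticipate is purely bookkeeping: carefully verifying that the maps in the statement induce the stated maps on local conditions at the transverse primes $\ell\mid n$, i.e.\ that the finite-singular decomposition is respected; but this is exactly the content of \cite[\S4.1]{mazrub} and goes through verbatim once one knows $E(\Q)[p]=0$, so no genuinely new difficulty arises. I would therefore simply cite \emph{loc.\,cit.} for this compatibility and record the deduction.
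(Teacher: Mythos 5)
Your proposal is correct and follows the same route as the paper, which simply invokes \cite[Lem.~4.1.1]{mazrub} and notes that the argument there applies verbatim once $E(\Q)[p]=0$; your unpacking — the long exact sequences from $0\to T^{(k)}[\fm^i]\to T^{(k)}\xrightarrow{\varpi^i} T^{(k)}\to 0$ (resp.\ the analogous sequence for $T^*$), the vanishing of $\rH^0$ via $E(\Q)[p]=0$ and the Weil pairing, and the compatibility of the maps with the finite/transverse local conditions — is exactly the content of that citation.
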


\begin{proof}
This is \cite[Lem.~4.1.1]{mazrub}.
\end{proof}
Let $\mathcal{L}^{(k)}$ be the set of Kolyvagin primes modulo $\fm^k$, that is,  the primes $\ell\nmid Np$ in $\cL$ such that 
\begin{itemize}
\item[(i)] $T/(\fm^kT+ (\frob_\ell -1)T)\simeq R/\fm^k\oplus R/\fm^{\min\{k,t\}}$,
\item[(ii)] $I_\ell =(\ell -1, \det (1-\frob_{\ell}|T)) \subset \fm^k$.
\end{itemize} 
Note that we can then see the reduction modulo $\fm^k$ of the Kolyvagin system for $T$ yields a Kolyvagin system for $(T/\fm^kT, \Fcal,\mathcal{L}^{(k)})$ given by classes 
$\kappa_n^{(k)}\in \rH^1_{\Fcal(n)}(\Q, T/\fm^kT)$ for $n\in \cN^{(k)} = \cN(\cL^{(k)})$.  

\begin{thm}\label{thm:strMR} 
%If $\rank_{\Z_p}(\rH^1_{\Fcal}(\Q,T))=1$, then $\rH^1_{\Fcal^*}(\Q,T^*)$ is a torsion $\Z_p$-module. Moreover, 
For every $k>0$ and $n\in \mathcal{N}^{(k)}$, we have
\[
\fm^t\rH^1_{\Fcal(n)}(\Q, T/\fm^k)\simeq R/\fm^{k-t} \oplus \fm^t\rH^1_{\Fcal(n)^*}(\Q, T^*[\fm^k]).
\]
In particular, if we write $\rH^1_{\Fcal(n)}(\Q, T/\fm^k)\simeq \bigoplus_i  R/\fm^{d_i(n)}$ and $\rH^1_{\Fcal(n)^*}(\Q, T^*[\fm^k]) \simeq \bigoplus_j  R/\fm^{d^*_j(n)}$, then we have
\[
\bigoplus_{d_i(n)>t}  R/\fm^{d_i(n)} \simeq R/\fm^{k} \oplus \left( \bigoplus_{d^*_j(n)>t}  R/\fm^{d^*_j(n)}\right).
\]
\end{thm}

\begin{proof}
Since our Selmer structure $\Fcal$ has \emph{core rank} $\chi(T,\Fcal)=1$ in the sense of \cite{mazrub} (see [\emph{op.\,cit.}, Prop.~6.2.2]), the result follows from \cite[Thm.~4.1.13]{mazrub}. {
%Note that in \emph{op. cit.}, 
More precisely, in the proof of Lemma 4.1.6 of {\it op. cit.} the authors use cyclicity of the local cohomology modules  $\rH^1_f(\Q_{\ell},T/\fm^k),\rH^1_s(\Q_{\ell},T/\fm^k),\rH^1_f(\Q_{\ell},T^*[\fm^k])$ and $\rH^1_s(\Q_{\ell},T^*[\fm^k])$, which 
%follows from condition (i) of being a Kolyavagin prime mod $\fm^k$ and 
holds in our setting only if $t=0$. In general these cohomology modules are cyclic after multiplication by $\fm^t$; following the proof of \cite{mazrub}, we then obtain the stated  result. 
}
\end{proof}
%
%MOVED THIS AT THE BEGINNING OF SECTION 3.1
%By assumption the elliptic curve does not have CM, so there exists $\tau\in G_\Q$ such that $V_pE/(\tau-1)V_pE\simeq \Q_p$ since it follows by Serre's open image theorem \cite{serre} that $\SmallMatrix{1&x\\0&1}$ is in the image of $\rho_E$ for some $0\neq x\in \Z_p$. We fix $\tau$ such that $\rho_E(\tau)=\SmallMatrix{1&x\\0&1}$ with $$t':=v_p(x)=\min_m\{m:\SmallMatrix{1&y\\0&1}\in {\rm Im}(\rho_E), v_p(y)=m\}.$$ In particular, we have
%\begin{equation}\label{eq:tau}
%\tau_{|\mu_{p^{\infty}}}=1 \ \ \ \text{and}  \ \ \ T_pE/(\tau-1)T_pE\simeq\Z_p \oplus \Z_p/p^{t'}.\tag{$\tau$}
%\end{equation}
%Note that, since the first condition implies $\tau_{|\Q_\infty}=1$ and $\alpha$ is a character of $\Gamma_\Q$, we also have $T/(\tau -1)T\simeq R\oplus R/\fm^{t}$, where $t=t'\cdot\rank_{\Z_p}R$.}

\subsubsection{The \v{C}ebotarev argument}\label{seccheb}
We recall the definitions of the error terms $C_1,C_2$ of \cite[$\S$3.3.1]{eisenstein}. For $U = \Z_p^\times\cap \mathrm{im}(\rho_E)$ let
\[
C_1 := \min\{v_p(u-1)\colon u\in U\}.
\]
As $U$ is an open subgroup, $C_1<\infty$.
Recall also that $\End_{\Z_p}(T_pE)/\rho_E(\Z_p[G_{\Q}])$ is a torsion $\Z_p$-module and let
\[
C_2:=\min\bigl\{ n\geq 0 \colon p^n\End_{\Z_p}(T_pE)\subset\rho_E(\Z_p[G_{\Q}])\bigr\}.
\]
Let $d=\operatorname{rank}_{\Z_p}R$ and 
\[
e:=d(C_1+C_2+t')=d(C_1+C_2)+t,
\]
where $t'$ is determined by the choice of $\tau$ in \eqref{eq:tau}.
%For a finitely-generated torsion $R$-module $M$ and $x\in M$, recall that 
%$$
%\ord(x):=\min\{m\geq 0: \varpi^m\cdot x =0\}.
%$$

The following result is an analogue of \cite[Prop. 3.3.6]{eisenstein}.

%\begin{rmk}\label{rmk:dualcheb}
%Note that, by Theorem \ref{thm:strMR}, we can see $\rH^1_{\Fcal^*}(\Q, (T^{(k)})^*)$ as a subgroup of $ \rH^1(\Q, T^{(k)})$. Moreover, for any $\ell\in\cL^{(k)}$ chosen as in the proof of Proposition \ref{prop:prime2}, we can identify $\rH^1_f(\Q_\ell, (T^{(k)})^*)$ and $\rH^1_f(\Q_\ell, T^{(k)})$, as the conjugacy class of $\frob_\ell$ is given by $\gamma\tau$, for $\tau$ as in \eqref{eq:tau} and $\gamma$ acting trivially on $T_E^{(k)}\otimes R(\alpha)=T^{(k)},T_E^{(k)}\otimes R(\alpha^{-1})=(T^{(k)})^*$, since $\gamma$ is chosen such that it fixes $T_E^{(k)}$ and this implies $\alpha(\gamma)\equiv\alpha^{-1}(\gamma)\equiv 1 \mod\fm^k$ by \eqref{eq:alphaLtrivial}. We can therefore apply the Theorem to any pair of classes $c_0,c_1\in\rH^1_{\Fcal(n)}(\Q, T^{(k)})$ for any $n\in\cN^{(k)}$ and if any of the two belongs to $\rH^1_{\Fcal^*(n)}(\Q, (T^{(k)})^*)$, the same statement holds considering more precisely the order of $\loc_\ell(c_i)$ seen as a class in $\rH^1_f(\Q_\ell, (T^{(k)})^*)\simeq R/\fm^k\simeq \rH^1_f(\Q_\ell, T^{(k)})$, with $\ell\nmid n$.
%\end{rmk}

\begin{prop}\label{prop:prime2} 
Let $k>e$ and consider two classes $c_0\in \rH^1(\Q,T^{(k)})$ and $c_1\in \rH^1(\Q,(T^{(k)})^*)$. 
Then there exist infinitely many primes $\ell\in \cL^{(k)}$ such that 
$$
\ord(\loc_\ell(c_i)) \geqslant \ord(c_i) - e, \ \ i=0,1.
$$
\end{prop}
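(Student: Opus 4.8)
The plan is to run the standard Čebotarev-style argument from \cite[\S3.3.1]{eisenstein}, adapted to the twisted module $T = T_pE \otimes_{\Z_p} R(\alpha)$ with $\alpha$ trivial on $\Gamma_\Q$ restricted to $\mu_{p^\infty}$, so that the element $\tau$ from \eqref{eq:tau} still works. First I would set up the relevant extensions: let $F = \Q(T^{(k)}, (T^{(k)})^*, \mu_{p^k})$, a finite Galois extension of $\Q$, and observe that the classes $c_0, c_1$ restrict to homomorphisms on $G_F$; more precisely, after enlarging by the field cut out by the classes, one gets a Galois extension $L/\Q$ with $\Gal(L/F)$ identified (via $c_0 \oplus c_1$) with a submodule of $T^{(k)} \oplus (T^{(k)})^*$. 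The condition $\ell \in \cL^{(k)}$ is a condition on the conjugacy class of $\frob_\ell$ in $\Gal(F/\Q)$: one wants $\frob_\ell$ to act on $T^{(k)}$ the way $\tau$ does (up to the usual $\pm 1$ / complex-conjugation considerations forced by $\ell \in \cL_E$, i.e. $\ell$ inert-type with $a_\ell \equiv \ell+1$), which by \eqref{eq:tau} guarantees properties (i) and (ii) in the definition of $\cL^{(k)}$.

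Next I would handle the two index conditions $\ord(\loc_\ell(c_i)) \geq \ord(c_i) - e$. The point is that $\loc_\ell(c_i)$ is essentially the value of the cocycle $c_i$ on $\frob_\ell$ (in the residue field cohomology, using that $\frob_\ell$ acts semisimply with the right eigenvalues), so one needs $\frob_\ell$ in $\Gal(L/\Q)$ to satisfy both the $\Gal(F/\Q)$-condition above \emph{and} to have image under $c_i$ that is "large", i.e. of order at least $\ord(c_i) - e$ in the appropriate quotient. The error term $e = d(C_1 + C_2) + t$ enters exactly here: the obstruction to $\rho_E(\Z_p[G_\Q])$ being all of $\End(T_pE)$ is measured by $C_2$, the scalars present are controlled by $C_1$, and the defect in $T_pE/(\tau-1)$ being free is $t'$; these bound how much one loses when forcing the $\frob_\ell$ to simultaneously satisfy the fixed action condition on $T^{(k)}$ and hit a prescribed large-order element under $c_i$. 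Concretely, I would argue that on the $G_F$-part, the relevant submodule of $T^{(k)} \oplus (T^{(k)})^*$ generated by the cocycle values, after dividing out by the ambiguity of size $\le p^e$, still contains elements of order $\ge \ord(c_i) - e$ in the $c_i$-component, and then Čebotarev gives infinitely many $\ell$ with $\frob_\ell$ in any chosen such coset.

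The main obstacle I expect is the bookkeeping in combining the two conditions ($i=0$ and $i=1$) simultaneously for a single prime $\ell$, together with the fixed-action-on-$T^{(k)}$ condition: one must show the relevant coset in $\Gal(L/\Q)$ is nonempty, which requires knowing that $\Gal(L/F)$ surjects onto a large enough chunk of $T^{(k)} \oplus (T^{(k)})^*$ — this is where the non-CM hypothesis (via Serre's open image theorem, giving $\tau$ and controlling $C_1, C_2$) and the irreducibility/non-triviality input are used, and where the argument of \cite[Prop.~3.3.6]{eisenstein} has to be checked to go through verbatim modulo replacing $\Z_p$ by $R$ and keeping track of the factor $d = \rank_{\Z_p} R$. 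I would then note, as in \emph{loc.\,cit.}, that when \eqref{eq:irred} holds the representation is surjective, forcing $C_1 = C_2 = 0$ and $t' = 0$ (so $e$ can be taken to reflect no genuine error in the downstream application), which is the source of the claim $\CE = 0$ under \eqref{eq:irred}. The proof then concludes by invoking Čebotarev density for the extension $L/\Q$ to produce infinitely many primes $\ell$ (necessarily in $\cL^{(k)}$ by the $\frob_\ell$-condition) with the desired lower bounds on $\ord(\loc_\ell(c_i))$.
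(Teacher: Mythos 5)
Your proposal follows essentially the same Čebotarev argument as the paper: restrict to the fixed field $L$ of a suitable mod-$\fm^{k'}$ representation (on which $\alpha$ becomes trivial mod $\fm^k$), view the restricted classes as homomorphisms $G_L\to T_E^{(k)}/(\tau-1)T_E^{(k)}$, use $C_1,C_2,t'$ to bound the loss of order, and pick $\frob_\ell$ in the conjugacy class of $\gamma\tau$ for a well-chosen $\gamma\in G_L$. Two small remarks for when you flesh this out: the ``$\pm1$/complex-conjugation/inert-type'' language is a slip from the Heegner setting and does not belong here (in the cyclotomic Kato case $\ell\in\cL_E$ just means $\ell\equiv 1$ and $a_\ell\equiv\ell+1\pmod p$, i.e.\ $\frob_\ell$ acts unipotently like $\tau$), and the ``bookkeeping'' you flag is resolved in the paper by showing each obstruction set $B_i$ is a coset of a subgroup $J_i\subsetneq G_L$ of index at least $p$, so that $p>2$ forces $G_L\setminus(B_1\cup B_2)\neq\emptyset$.
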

\begin{proof}
Let $T_E^{(k)} = T_pE\otimes R/\fm^k \simeq (T_E^{(k)})^*$ and let $L$ be the fixed field of the action of $G_\Q$ on $T_E^{(k+2)}$. 
Since $\Q(\mu_{p^{k+2}})\subset L$, we have that $\Q_{p^{k+1}}\subset L$, where $\Q_{p^{k+1}}$ is the subfield of $\Q_\infty$ such that $\gal(\Q_\infty/\Q_{p^{k+1}})\simeq p^{k+1}\Z_p$. 

We claim that $\alpha|_{\gal(\Q_\infty/\Q_{p^{k+1}})}\equiv 1\pmod{\fm^k}$, which in turn implies  
\begin{equation}\label{eq:alphaLtrivial}
\alpha|_{G_L}=1 \mod \fm^k.
\end{equation}
The claim follows from the assumption $\alpha\equiv 1\pmod{\varpi}$. If $\gamma$ is a topological generator of $\Gamma_\Q$, we need to show that $\alpha(\gamma)^{p^{k+1}}\equiv 1 \pmod{\varpi^k}$.  
Since $\alpha(\gamma)=1+\varpi x$ for some $x\in R$, we have $$\alpha(\gamma)^{p^{k+1}}= 1+ \sum_{i=1}^{p^{k+1}} {{p^{k+1}}\choose{i}} (\varpi x)^{i}.$$ Noting that $\ord_p({{p^{k+1}}\choose{i}}) \geq k+1 -\ord_p(i)$,
%%and $p^{\ord_{p}(i)}\geq \ord_{p}(i)$, 
we find $$\ord_{\varpi}({{p^{k+1}}\choose{i}}\varpi^i)\geq (k+1)\ord_\varpi(p) -(\ord_\varpi(p)-1)(\ord_pi)\geq (k+1)\ord_\varpi(p)-(\ord_\varpi(p)-1)(k+1).$$ The claim follows.

We therefore have the following identifications
\begin{align}
&\rH^1(L, T^{(k)})=\Hom(G_L, T_E^{(k)})^{(\alpha)} \subset \Hom(G_L, T_E^{(k)}),\label{eq:resL}\\
&\rH^1(L, (T^{(k)})^*) =\Hom(G_L, T_E^{(k)})^{(\alpha^{-1})} \subset \Hom(G_L, T_E^{(k)}),\label{eq:resL2}
\end{align}
where the superscript $(\alpha^{\pm 1})$ denotes the submodule on which $G_\Q$ acts via $\alpha^{\pm 1}$.
It then follows from \cite[(6.3)]{eisenstein_cyc} that $p^{C_1}$ annihilates $\rH^1(\gal(L/\Q),T^{(k)})$ and $\rH^1(\gal(L/\Q),(T^{(k)})^*)$. Hence, using the identifications \eqref{eq:resL} and \eqref{eq:resL2}, we have
\begin{align}
&p^{C_1}\cdot \ker \bigl( \rH^1(\Q, T^{(k)})\to  \Hom(G_L, T_E^{(k)}) \bigr) = 0,\label{eq:kerm}
 \\
&p^{C_1}\cdot \ker \bigl( \rH^1(\Q, (T^{(k)})^*)\to \Hom(G_L, T_E^{(k)}) \bigr) = 0. \label{eq:kerm2}
\end{align}

We now consider the images of the $c_i$ under the following natural maps
\[
\begin{tikzcd}[row sep=0.7mm]
\rH^1(\Q,M)\arrow{r} &\Hom(G_L,T_E^{(k)}) \arrow{r} &\Hom(G_L,T_E^{(k)}/(\tau-1)T_E^{(k)})\\
c_i \arrow[maps to]{r} &f_i \arrow[maps to]{r} &f_i,
\end{tikzcd}
\]
where $M=T^{(k)}$ or $(T^{(k)})^*$ for $i=0$ or $i=1$, respectively, 
and the second map is induced by the projection $T_E^{(k)}\to T_E^{(k)}/(\tau-1)T_E^{(k)}$. By the definition of $C_2$,
the image of $f_i$ contains $p^{C_2}\operatorname{End}(T_p(E))\cdot f_i(G_L)$. 
Since $\ord(f_i) \geq \ord(c_i) - \ord_\varpi(p)C_1$ by \eqref{eq:kerm}, it follows that, letting $d_i=\ord(c_i)-e$,
\begin{equation}\label{eq:imagefi}
R\cdot f_i(G_L) \supset \varpi^{k- d_i} T_E^{(k)}.
\end{equation}
Let $J_i=\{\gamma \in G_L : \ord(f_i(\gamma))\lneq d_i)\}$. This is a subgroup of $G_L$. 

We find
\[
R\cdot f_i(J_i)\subset \varpi^{k-(d_i-1)} T_E^{(k)} + (\tau -1)T_E^{(k)} \subsetneq \varpi^{k-d_i}T_E^{(k)},
\]
where the last inclusion follows from the fact that $T_E^{(k)}/(\tau -1)T_E^{(k)}\simeq R/\fm^k\oplus R/\fm^t$;  it must be strict, because if not $(\tau -1)T_E^{(k)}=  \varpi^{k-d_i}T_E^{(k)}$ and this implies $k=t$. Combined with \eqref{eq:imagefi}, this shows that $J_i \subsetneq G_L$ with index at least $p$. Now consider 
\[
B_i=\{g \in G_L : \ord(f_i(\gamma\tau))\lneq d_i)\}.
\]
Note that, since $g\in G_L$ acts trivially on $T_E^{(k)}$, $f_i(\gamma\tau)=f_i(\gamma)+f_i(\tau)$. Therefore for any $g,g'\in G_L$, we have $f_i(g^{-1}g')=-f_i(g\tau)+f_i(g'\tau)$. It follows that $B_i$ is a coset of $J_i$. Since both $J_1$ and $J_2$ have index at least $p$ in $G_L$ and $p>2$, we have shown
\[
\text{there exists } g \in G_L\setminus (B_1\cup B_2).
\]

Fix a choice of such a $g$. We now let $\ell\nmid Np$ be any prime such that both $c_i$s are unramified at $\ell$ and the conjugacy class of $\frob_\ell$ in
$\Gal(L'/\Q)$ is the same as the one of $g\tau$, where $L'$ is the compositum of the fixed fields of the kernels of $c_1$ and $c_2$ restricted to $G_L$.  The \v{C}ebotarev density theorem implies there are infinitely many such primes. By \eqref{eq:alphaLtrivial} and  the fact that $G_L$ acts trivially on $T_E^{(k)}$, $\frob_\ell$ acts as $\tau$ on $T^{(k)}$, therefore by \eqref{eq:tau}, we obtain
\begin{itemize}
\item[(i)] $T^{(k)}/(\frob_{\ell}-1)T^{(k)}\simeq R/\fm^k\oplus R/\fm^t$ and $\det(1-\frob_\ell|T)=\det(1-\tau|T)\equiv 0\mod\fm^k$;
\item[(ii)] $\ell=\chi_{cyc}(\frob_\ell)=\chi_{cyc}(g\tau)=\det(\rho_E)(g\tau) \equiv 1 \mod \fm^k$.
\end{itemize}
In particular we have shown $\ell$ is a Kolyvagin prime for $T^{(k)}$, that is, $\ell\in \cL_E^{(k)}$. 
Moreover, since $c_0,c_1$ are both unramified at $\ell$, we have $\loc_\ell(c_i)\in \rH^1_f(\Q,T_E^{(k)})\simeq T_E^{(k)}/(\frob_\ell -1)T_E^{(k)}\simeq R/\fm^k\oplus R/\fm^t$, where the first isomorphism is given by evaluation at $\frob_\ell$ and the second one follows from (i) above. In particular, $\ord(\loc_\ell(c_i))$ equals the order of $c_i(\frob_\ell)=f_i(g\tau)$, which, since $g \in G_L\setminus (B_1\cup B_2)$, is at least $d_i$, concluding the proof.
\end{proof}
\begin{rmk}
Recall that, as it can be seen from the proof of \cite[Thm.~4.1.13]{mazrub}, the isomorphism of Theorem \ref{thm:strMR} is not canonical. Therefore if we take $c_1$ to be a class generating $R/\fm^k$ in $\rH^1_{\Fcal(n)}(\Q, T^{(k)})$ and $c_2\in \rH^1_{\Fcal^*(n)}(\Q, T^{(k)})$, even though we have (non-canonical) isomorphisms $\rH^1_f(\Q_\ell, (T^{(k)})^*)\simeq R/\fm^k\oplus R/\fm^t\simeq \rH^1_f(\Q_\ell, T^{(k)})$, with $\ell\nmid n$, the Chebotarev result does not assert some ``linear independence'' of the localisations of the classes.
\end{rmk}
As a prelude to the proof of Theorem \ref{thm:boundcycleis}, we first prove the following weaker result. 
\begin{prop}\label{prop:expbound}
Assume $\rank_{\Z_p}(\rH^1_{\Fcal}(\Q,T))=1$ and let $s_1=\ind(\kappa_1, \rH^1_{\Fcal}(\Q,T))$. For $k\gg 0$ chosen so that $k > m+s_1+2e$, we have $$\fm^{s_1+2e}\rH^1_{\Fcal^*}(\Q,(T^{(k)})^*)=0.$$
\end{prop}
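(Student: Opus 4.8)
The plan is to descend to a statement modulo $\fm^k$ for $k$ large and then run a single \v{C}ebotarev argument coupled with global duality, following the strategy of \cite[\S 5.2]{mazrub}. \textbf{Reduction.} First I would note that the hypothesis $\rank_R\rH^1_\Fcal(\Q,T)=1$ forces $\rH^1_{\Fcal^*}(\Q,T^*)$ to be finite: the core rank of $\Fcal$ is $1$ by \cite[Prop.~6.2.2]{mazrub}, so the Poitou--Tate formula for the difference of (co)ranks gives ${\rm corank}_R\rH^1_{\Fcal^*}(\Q,T^*)=\rank_R\rH^1_\Fcal(\Q,T)-1=0$. By Lemma~\ref{lemma:structure} one then has $\rH^1_{\Fcal^*}(\Q,T^*[\fm^k])=\rH^1_{\Fcal^*}(\Q,T^*)[\fm^k]=\rH^1_{\Fcal^*}(\Q,T^*)$ once $k$ exceeds the (fixed, finite) maximal order of an element of $\rH^1_{\Fcal^*}(\Q,T^*)$, so it suffices to prove $\fm^{s_1+2e}\rH^1_{\Fcal^*}(\Q,T^*[\fm^k])=0$ for one such $k>m+s_1+2e$. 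By Theorem~\ref{thm:strMR}, $\rH^1_\Fcal(\Q,T^{(k)})\simeq R/\fm^k\oplus\rH^1_{\Fcal^*}(\Q,T^*[\fm^k])$; since $\rH^1_\Fcal(\Q,T)\simeq R$ is free (using $E(\Q)[p]=0$) and $\kappa_1=\varpi^{s_1}\cdot(\text{generator})$, the reduction $\kappa_1^{(k)}$ has order $k-s_1$ and index $s_1$ in $\rH^1_\Fcal(\Q,T^{(k)})$.

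\textbf{The \v{C}ebotarev prime and the Kolyvagin relation.} Suppose $\fm^{s_1+2e}\rH^1_{\Fcal^*}(\Q,T^*[\fm^k])\neq 0$ and choose $c\in\rH^1_{\Fcal^*}(\Q,T^*[\fm^k])$ of maximal order $d_1>s_1+2e$. Applying Proposition~\ref{prop:prime2} to $c_0=\kappa_1^{(k)}\in\rH^1(\Q,T^{(k)})$ and $c_1=c\in\rH^1(\Q,(T^{(k)})^*)$ produces a prime $\ell\in\cL^{(k)}$ with $\ord(\loc_\ell(\kappa_1^{(k)}))\geq k-s_1-e$ and $\ord(\loc_\ell(c))\geq d_1-e$. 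Since $\ell\in\cL^{(k)}$ the class $\kappa_\ell^{(k)}\in\rH^1_{\Fcal(\ell)}(\Q,T^{(k)})$ is defined, and by the finite--singular part of the Kolyvagin-system relations (cf.~\eqref{eq:finsingkato} and \cite[\S 5.2]{mazrub}) its transverse localisation $\loc_\ell(\kappa_\ell^{(k)})\in\rH^1_{\rm tr}(\Q_\ell,T^{(k)})$ has the same order as $\loc_\ell(\kappa_1^{(k)})$, hence order $\geq k-s_1-e$.

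\textbf{Global duality and conclusion.} Next I would apply global duality to the pair $(\kappa_\ell^{(k)},c)\in\rH^1_{\Fcal(\ell)}(\Q,T^{(k)})\times\rH^1_{\Fcal^*}(\Q,T^*[\fm^k])$. All local terms $\langle\loc_v\kappa_\ell^{(k)},\loc_v c\rangle_v$ vanish except possibly at $\ell$: at $p$, $c$ lies in the strict subgroup, which is orthogonal to the relaxed subgroup containing $\loc_p\kappa_\ell^{(k)}$; away from $\ell p$ both classes are unramified, hence orthogonal; and $v=\infty$ contributes nothing as $p$ is odd. Thus $\langle\loc_\ell\kappa_\ell^{(k)},\loc_\ell c\rangle_\ell=0$. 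The local Tate pairing restricts to a perfect pairing $\rH^1_{\rm tr}(\Q_\ell,T^{(k)})\times\rH^1_{\rm ur}(\Q_\ell,T^*[\fm^k])\to R/\fm^k$, and condition (i) defining $\cL^{(k)}$ shows both sides are isomorphic to $R/\fm^k\oplus R/\fm^{\min(k,t)}$, i.e.\ cyclic up to a summand killed by $\fm^t\subseteq\fm^e$. Since $k\gg 0$, the orders $k-s_1-e$ and $d_1-e$ both exceed $t$, so the two localisations carry their full orders on their ``cyclic components''; analysing the vanishing of the pairing together with the valuations $j_\kappa\le s_1+e$ and $j_c\le k-(d_1-e)$ of these components, and absorbing the $\fm^t$-deviation into $e$, then yields $d_1\leq s_1+2e$, contradicting the choice of $c$. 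Hence $\fm^{s_1+2e}\rH^1_{\Fcal^*}(\Q,T^*[\fm^k])=0$, which by the reduction above gives $\fm^{s_1+2e}\rH^1_{\Fcal^*}(\Q,T^*)=0$.

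\textbf{Main obstacle.} I expect the delicate point to be the last step: bookkeeping exactly how much divisibility is lost — on the one hand to the \v{C}ebotarev argument (the constants $C_1,C_2$), on the other to the failure of the local cohomology at $\ell$ to be cyclic (the constant $t'$), keeping track of which direct summand of $\rH^1_{\rm tr}(\Q_\ell,T^{(k)})$ and $\rH^1_{\rm ur}(\Q_\ell,T^*[\fm^k])$ each localisation lands in — and checking that the total loss is no worse than $2e$. This also requires the finite--singular relation and the local duality statements for the mod-$\fm^k$ Kolyvagin system to hold verbatim at primes $\ell\in\cL^{(k)}$ in the present, possibly residually reducible, setting, which is where the preparatory results recalled above (Lemma~\ref{lemma:structure}, Theorem~\ref{thm:strMR}, Proposition~\ref{prop:prime2}), all valid under $E(\Q)[p]=0$, are used.
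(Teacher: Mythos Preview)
Your approach is essentially the paper's: pick $k\gg 0$, choose a class $c$ of maximal order $d_1$ in $\rH^1_{\Fcal^*}(\Q,(T^{(k)})^*)$, apply the \v{C}ebotarev result (Proposition~\ref{prop:prime2}) to $\kappa_1^{(k)}$ and $c$, invoke the finite--singular relation for $\kappa_\ell$, and close with global duality. The paper applies Proposition~\ref{prop:prime2} to a class $c_0$ of exact order $k$ with $\varpi^{s_1}c_0=\kappa_1$ rather than to $\kappa_1$ itself, but this yields the same lower bound $\ord(\loc_\ell\kappa_1)\geq k-s_1-e$, so that difference is cosmetic.

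The one substantive divergence is in how global duality is cashed out. You pair $\kappa_\ell^{(k)}$ directly with $c$ and then try to read off the inequality from the vanishing of a single local term in $\rH^1_{\rm tr}\times\rH^1_{\rm ur}\to R/\fm^k$, which forces you into the delicate analysis of the $R/\fm^k\oplus R/\fm^t$ structure that you flag as the obstacle. The paper instead uses global duality in its ``Selmer-group'' form: the image of $\loc_\ell$ on $\rH^1_{\Fcal^*}$ inside $\rH^1_f(\Q_\ell,(T^{(k)})^*)$ and the image of $\loc_\ell$ on $\rH^1_{\Fcal^\ell}$ inside $\rH^1_s(\Q_\ell,T^{(k)})$ are exact annihilators, yielding two short exact sequences whose cokernels are $R/\fm^{x'}\oplus R/\fm^{a}$ and $R/\fm^{k-x}\oplus R/\fm^{a'}$ with $x\geq x'\geq d_1-e$ and $a,a'\leq t$. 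Since $\ord(\loc_\ell\kappa_\ell)>a'$ (this is where $k>m+s_1+2e$ is used), one reads off $\ord(\loc_\ell\kappa_\ell)\leq k-x\leq k-(d_1-e)$ directly, without ever analysing the pairing componentwise. This packaging is what cleanly absorbs the $R/\fm^t$-defect and avoids the bookkeeping you anticipate.
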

\begin{proof}
Choose $k\gg s_1+2e$ such that the image of $\kappa_1$ in $\rH^1_{\Fcal}(\Q,T^{(k)})$ is non-zero and has index $s_1$. Then there exists $c_0\in\rH^1_{\Fcal}(\Q,T^{(k)})$ of order exactly $k$ such that $\varpi^{s_1}c_0 = \kappa_1$. 
%Since $\bar{T}^{G_\Q}=0$, $\rH^1_{\Fcal}(\Q,T )$ is torsion free and in particular $\rH^1_{\Fcal^*}(\Q,(T^{(k)})^*)$ has exponent strictly smaller than $k$ and we can assume $c_0$ is a generator of $\rH^1_{\Fcal}(\Q,T^{(k)})/\rH^1_{\Fcal^*}(\Q,(T^{(k)})^*)$. 
Let us write
\[
\rH^1_{\Fcal^*}(\Q,(T^{(k)})^*)=\bigoplus_{i=1}^{s} R/\varpi^{d_i}\cdot c_i, \ \ d_1\geq d_2 \geq \cdots \geq d_s.
\]
This is a cyclic-module decomposition with factors of the indicated lengths $d_i$. 

We apply Proposition \ref{prop:prime2} to the classes $c_0, c_1$ to find a prime $\ell \in \cL^{(k)}$ such that
\[
\ord(\loc_{\ell}(c_0))\geq k-e, \ \ \ \ord(\loc_{\ell}(c_1))\geq d_1-e.
\]
Recall that $\rH^1_f(\Q_{\ell},T^{(k)})$ and $\rH^1_f(\Q_{\ell}, (T^{(k)})^*)$ are isomorphic to $R/\fm^k\oplus R/\fm^t$. If $d_1<2e$, then the statement holds trivially. We assume $d_1>2e>t$. Then we have
\[
0\to \rH^1_{\Fcal^*_{\ell}}(\Q,(T^{(k)})^*) \to \rH^1_{\Fcal^*}(\Q,(T^{(k)})^*) \xrightarrow{\loc_\ell} R/\fm^{x'}\oplus R/\fm^a \to 0, \ \ \ x'\geq d_1-e, a\leq t,
\]
\[
0\to \rH^1_{\Fcal}(\Q,T^{(k)}) \to \rH^1_{\Fcal^{\ell}}(\Q,T^{(k)}) \xrightarrow{\loc_\ell} R/\fm^{k-x} \oplus R/\fm^{a'}\to 0, \ \ \ x\geq x', a'\leq t,
\]
where the second exact sequence is obtained from the first one by global duality. 

Recall that $\kappa_\ell \in \rH^1_{\Fcal^{\ell}}(\Q,T^{(k)})$ and that we cannot have $\ord(\loc_{\ell}(\kappa_\ell))\leq a'$, 
since this would contradict the assumption $k > m+s_1+2e$. Then by the Kolyvagin system relation \eqref{eq:finsingkato} and the assumptions above, we obtain
\[
k-(d_1 -e)\geq k-x\geq \ord(\loc_{\ell}(\kappa_\ell))=\ord(\loc_{\ell}(\kappa_1))=\ord(\loc_{\ell}(\varpi^{s_1}c_0))\geq k-s_1-e,
\]
which proves $\operatorname{exp}(\rH^1_{\Fcal^*}(\Q,(T^{(k)})^*))=d_1 \leq s_1+2e$. 
%Note that, if $c_0=c_1$, we get $k< s_1+2e$, contradicting the choice of $k\gg 0$.
\end{proof}

\subsubsection{The proof of Theorem \ref{thm:boundcycleis}}
The proof follows the same lines of the proof of \cite[Thm. 3.2.1]{eisenstein} and \cite[Thm. 6.1.1]{eisenstein_cyc}. In particular, exactly as in \emph{op. cit.}, one reduces to proving that, for $k$ big enough, there exists $\mathcal{E}$  depending on $E$ and $\mathrm{rank}_{\Zp}R$, but not on $\alpha$ or $k$, such that
\begin{equation}\label{eq:bound}
\tag{B}
s_1+ \mathcal{E}\geq \length_R(\rH^1_{\Fcal^*}(\Q,(T^{(k)})^*)).
\end{equation}
To do so, we will inductively choose Kolyvagin primes in $\cL^{(k)}$ by repeatedly applying Proposition \ref{prop:prime2}. 

We will abbreviate $\rH^1_{\Fcal(n)}=\rH^1_{\Fcal(n)}(\Q,T^{(k)})$ and $\rH^1_{\Fcal^*(n)}=\rH^1_{\Fcal^*(n)}(\Q,(T^{(k)})^*)$ for any $n\in \cN^{(k)}$. 
We let 
\[
s(n)=\dim_{R/\fm}(\rH^1_{\Fcal^*(n)}[\fm])
\]
and write
\[
\rH^1_{\Fcal^*(n)} = \bigoplus_{i=1}^{s(n)}R/\fm^{d_i(\rH^1_{\Fcal^*(n)})}, \ \ \ \text{where } d_1(\rH^1_{\Fcal^*(n)})\geq d_2(\rH^1_{\Fcal^*(n)})\geq \dots\geq d_{s(n)}(\rH^1_{\Fcal^*(n)}).
\]
Let 
\[
s=s(1)
\]
 and note that this depends only on $E$ in view of Lemma \ref{lemma:structure} and the assumption $\alpha\equiv 1 \mod \fm$. For any $x\geq 0$, let 
\[
\rho_{x}(n)= \# \{ i: d_i(\rH^1_{\Fcal^*(n)})\gneq x\}, \ \ \ \rho:=\rho_{3se}(1)= \# \{ i: d_i(\rH^1_{\Fcal^*})\gneq 3s e\}.
\]
We will find sequences of integers $1=n_0, n_1,..., n_{\rho} \in \cN^{(k)}$ such that
\begin{itemize}
\item[(a)] $s(n_{i-1})-2 \leq s(n_{i})\leq s(n_{i-1})+2$;
\item[(b)] $t+ d_i(\rH^1_{\Fcal^*(n_{i})}) \geq d_{i+1}(\rH^1_{\Fcal^*(n_{i-1})})$ for $1\leq i \leq \rho_t(n_{i-1})-1$;
%$\begin{cases}
%\geq d_i(\rH^1_{\Fcal^*(n_{i-1})}) &\text{if }1\leq i\lneq i_0,\\ 
%\geq d_{i+1}(\rH^1_{\Fcal^*(n_{i-1})}) &\text{if }i_0\lneq i \leq s(n_{i-1})-1,\\
%\leq e &\text{if }i=s(n_{i-1});
%\end{cases}$
\item[(c)] $\length_{R}(\rH^1_{\Fcal^*(n_i)})\leq \length_{R}(\rH^1_{\Fcal^*(n_{i-1})})- d_{1}(\rH^1_{\Fcal^*(n_{i-1})}) +3e$;
\item[(d)] $\ord(\kappa_{n_i})\geq \ord(\kappa_{n_{i-1}})-e$;
\item[(e)] $\ind(\kappa_{n_{i-1}})\geq \ind(\kappa_{n_i})+d_{1}(\rH^1_{\Fcal^*(n_{i-1})}) -3e$;
\item[(f)] $\rho_{x}(n_i)\geq \rho_{x+t}(n_{i-1}) -1$, for any $x\geq 0$.
\end{itemize}
In particular, applying (e) repeatedly, we find
\begin{equation}\label{eq:repeatE}
s_1 \geq \ind(\kappa_{n_1})+d_{1}(\rH^1_{\Fcal^*})-3e\geq \dots \geq \ind(\kappa_{n_{\rho}})+\sum_{i=1}^{\rho}d_{1}(\rH^1_{\Fcal^*(n_{i-1})})-3\rho e.
\end{equation}

For any $1\leq j\leq \rho$, applying (f) repeatedly, starting with $i=j-1$ and $x=3se-(j-1)t=3sd(C_1+C_2+t')-(j-1)t$ (which is bigger than $t$ as $j\leq s$), we find, for $1\leq h\leq j-1$,
\[
\rho_x(n_{j-1})\geq \rho_{x+t}(n_{j-2})-1\geq \dots \geq \rho_{x+ht}(n_{j-h-1})-h\geq \dots \geq \rho_{x+(j-1)t}(1)-(j-1)=\rho-(j-1)\geq 1.
\]
In particular, $1\leq \rho_{x+t}(n_{j-2})-1\leq \rho_{t}(n_{j-2})-1$ and, more generally, for any $1\leq h\leq j-1$, $h\leq \rho_{t}(n_{j-h-1})-1$. We can therefore apply (b) to deduce
\begin{align*}
d_1(\rH^1_{\Fcal^*(n_{j-1})}) &\geq d_2(\rH^1_{\Fcal^*(n_{j-2})}) -t \geq \dots\geq d_{h}(\rH^1_{\Fcal^*(n_{j-h})}) -(h-1)t \geq d_{h+1}(\rH^1_{\Fcal^*(n_{j-h-1})}) -ht \geq \dots \\&\geq d_j(\rH^1_{\Fcal^*})-(j-1)t.
\end{align*}
Combining this with \eqref{eq:repeatE}, we find
\begin{align*}
s_1 + 3\rho e + (s-\rho)3se +t \tfrac{\rho(\rho-1)}{2} &\geq \ind(\kappa_{n_\rho})+\sum_{j=1}^{\rho}(d_{1}(\rH^1_{\Fcal^*(n_{j-1})})+(j-1)t) + \sum_{j=\rho+1}^{s} d_{j}(\rH^1_{\Fcal^*})
\\&\geq \ind(\kappa_{n_\rho})+\sum_{j=1}^{\rho}d_{j}(\rH^1_{\Fcal^*}) + \sum_{j=\rho+1}^{s} d_{j}(\rH^1_{\Fcal^*})  \geq \length_R(\rH^1_{\Fcal^*}).
\end{align*}
Since $3\rho e + (s-\rho)3se +t \tfrac{\rho(\rho-1)}{2}$ is bounded above by $3(s^2+s)e+ts(s-1)$, which depends only on $E$ and $\rank_{\Zp}R$, we have proved the desired inequality \eqref{eq:bound}.

We now prove the existence of such integers by induction. Assume we have found\footnote{Note that for $n_0=1$ these conditions are vacuously satisfied.} integers $n_i$ for $i\leq j$ satisfying (a)-(e).

By Theorem \ref{thm:strMR}, we can write
\[
\rH^1_{\Fcal(n_j)}= R/\fm^k\cdot c_0 \oplus \left(\bigoplus_{d_i(\rH^1_{\Fcal^*(n_j)})>t}R/\fm^{d_i(\rH^1_{\Fcal^*(n_j)})}\right) \oplus D,
\]
where $\exp(D)<t<e$.
Similarly as in the proof of Proposition \ref{prop:expbound}, we apply Proposition \ref{prop:prime2} to $c_0$ and the class $c_1$ generating $R/\fm^{d_1(\rH^1_{\Fcal^*(n_j)})}$, which we identify as a summand of $\rH^1_{\Fcal^*(n_j)}$, to obtain a prime $\ell\in \cL^{(k)}$. We obtain the following exact sequences
\begin{equation}\label{eq:esF}
0\to \rH^1_{\Fcal(n_j)_{\ell}}\to \rH^1_{\Fcal(n_j)} \xrightarrow{\loc_\ell} R/\fm^{k-a} \oplus R/\fm^f\to 0, \ \ \ a\leq e, f\leq t,
\end{equation}
\begin{equation}\label{eq:esF*}
0\to \rH^1_{\Fcal^*(n_j)_{\ell}}\to \rH^1_{\Fcal^*(n_j)} \xrightarrow{\loc_\ell} R/\fm^{x'}\oplus R/\fm^{g'} \to 0, \ \ \ x'\geq d_1(\rH^1_{\Fcal^*(n_j)})-e, g'\leq t.
\end{equation}
By global duality applied to the above exact sequences, we get respectively 
\begin{equation}\label{eq:esFdual}
0\to \rH^1_{\Fcal^*(n_j)_{\ell}} \to \rH^1_{\Fcal^*(n_j\ell)}\xrightarrow{\loc_\ell} R/\fm^{a'} \oplus R/\fm^{f'}\to 0, \ \ \ a'\leq a, f'\leq t.
\end{equation}
\begin{equation}\label{eq:esF*dual}
0\to \rH^1_{\Fcal(n_j)_\ell} \to \rH^1_{\Fcal(n_j\ell)} \xrightarrow{\loc_\ell} R/\fm^{k-x} \oplus R/\fm^{g}\to 0, \ \ \ x\geq x', g\leq t,
\end{equation}
We claim (a)-(f) are satisfied for $n_{j+1}=n_j\ell$. The proof is similar to the one in \cite[$\S$3.3.3]{eisenstein}, but it is simplified by the fact that here the image of the localisation is almost cyclic (i.e., it is cyclic after multiplication by $\varpi^t$, rather than being of rank two over $R/\fm^k$) and, moreover, the ``torsion part'' of $\rH^1_{\Fcal(n_j)}$ (up to summands of small exponents) is itself a Selmer group (namely $\rH^1_{\Fcal^*(n_j)}$) thanks to Theorem \ref{thm:strMR}, and hence we can control better the behavior of the localisation\footnote{Compare the exact sequences \eqref{eq:esF},\eqref{eq:esF*},\eqref{eq:esFdual},\eqref{eq:esF*dual} to \cite[(3.16),(3.17)]{eisenstein}.}.

Using \cite[Lem. 3.3.9]{eisenstein} applied to \eqref{eq:esF*} and \eqref{eq:esFdual} and the fact that $\rH^1_{\mathcal{G}}[\fm]\simeq \rH^1_{\mathcal{G}}(\Q,\bar{T})$ for all the Selmer groups involved (thanks to Lemma \ref{lemma:structure}), we obtain (a). One easily obtains (c) combining again \eqref{eq:esF*} and \eqref{eq:esFdual}. 

In order to prove (b), we note that applying \cite[Lem. 3.3.9]{eisenstein} to the first inclusion in \eqref{eq:esFdual}, we find $d_i(\rH^1_{\Fcal^*(n_j)_{\ell}})\leq d_i(\rH^1_{\Fcal^*(n_j\ell)})$ for $1\leq i\leq s_i(n_j)$, where we let $d_{s_i(n_j)}(\rH^1_{\Fcal^*(n_j)_{\ell}})=0$ if the number of summands of $\rH^1_{\Fcal^*(n_j)_{\ell}}$ is smaller than $s(n_j)$, which happens if and only if $\loc_\ell(\rH^1_{\Fcal^*(n_j)}[\fm])\neq 0$.
From \eqref{eq:esF*} we obtain the exact sequence 
\[
0\to C:=(\rH^1_{\Fcal^*(n_j)_{\ell}}\cap p^t\rH^1_{\Fcal^*(n_j)}) \to p^t\rH^1_{\Fcal^*(n_j)} \xrightarrow{\loc_\ell} R/\fm^{x'-t} \to 0.
\]
We then let $M_0\simeq p^t \cdot R/\fm^{d_{i_0}(\rH^1_{\Fcal^*(n_j)})}$ be a summand of $p^t\rH^1_{\Fcal^*(n_j)}$ surjecting onto $R/\fm^{x'-t}$. By \eqref{eq:esF*}
we then have a surjection $C\twoheadrightarrow  p^t\rH^1_{\Fcal^*(n_j)}/M_0\simeq \oplus^{\rho_t(n_j)}_{i\neq i_0}R/\fm^{d_{i}(\rH^1_{\Fcal^*(n_j)})-t}$. 
Dually, we obtain an injection $ \oplus^{\rho_t(n_j)}_{i\neq i_0}R/\fm^{d_{i}(\rH^1_{\Fcal^*(n_j)})-t} \hookrightarrow C\hookrightarrow \rH^1_{\Fcal^*(n_j)_{\ell}}$, to which we can apply \cite[Lem. 3.3.9]{eisenstein} to obtain $d_i(\rH^1_{\Fcal^*(n_j)_{\ell}})\geq d_i(\rH^1_{\Fcal^*(n_j)})$ if $i\lneq i_0$ and $t+d_i(\rH^1_{\Fcal^*(n_j)_{\ell}})\geq d_{i+1}(\rH^1_{\Fcal^*(n_j)})$ if $i_0\leq i\leq \rho_t(n_j)-1$. In particular, we obtain
\[
 d_i(\rH^1_{\Fcal^*(n_j\ell)}) +t \geq d_i(\rH^1_{\Fcal^*(n_j)_{\ell}}) \geq d_{i+1}(\rH^1_{\Fcal^*(n_j)}) \ \ \ \text{for every } 1\leq i\leq \rho_t(n_j)-1,
\]
which yields (b) for $i=j+1$. This inequality for $i=\rho_{x+t}(n_{j})-1$ also yields (f).

It remains to show (d) and (e). The proof is very similar to the proof of (d) and (e) in \cite[$\S$3.3.3]{eisenstein}, and we leave the details to the interested reader.

\bibliography{references}

\begin{thebibliography}{CGLS22}

\bibitem[BCS25]{bcs}
Ashay Burungale, Francesc Castella, and Christopher Skinner.
\newblock Base change and {I}wasawa main conjectures for {${\rm GL}_2$}.
\newblock {\em Int. Math. Res. Not. IMRN}, (8):Paper No. rnaf082, 15, 2025.

\bibitem[BDP13]{BDP}
Massimo Bertolini, Henri Darmon, and Kartik Prasanna.
\newblock Generalized {H}eegner cycles and {$p$}-adic {R}ankin {$L$}-series.
\newblock {\em Duke Math. J.}, 162(6):1033--1148, 2013.
\newblock With an appendix by Brian Conrad.

\bibitem[Bur20]{bu}
Ashay~A. Burungale.
\newblock On the non-triviality of the {$p$}-adic {A}bel-{J}acobi image of
  generalised {H}eegner cycles modulo {$p$}, {I}: {M}odular curves.
\newblock {\em J. Algebraic Geom.}, 29(2):329--371, 2020.

\bibitem[Buy09]{kazim}
K\^{a}zim Buy\"{u}kboduk.
\newblock Tamagawa defect of {E}uler systems.
\newblock {\em J. Number Theory}, 129(2):402--417, 2009.

\bibitem[CGLS22]{eisenstein}
Francesc Castella, Giada Grossi, Jaehoon Lee, and Christopher Skinner.
\newblock {{{{On the anticyclotomic Iwasawa theory of rational elliptic curves
  at Eisenstein primes}}}}.
\newblock {\em {Invent. Math.}}, 227:517–580, 2022.

\bibitem[CGS25]{eisenstein_cyc}
Francesc Castella, Giada Grossi, and Christopher Skinner.
\newblock Mazur's main conjecture at {E}isenstein primes.
\newblock {\em Math. Ann.}, 393(2):2451--2506, 2025.

\bibitem[CH18]{cas-hsieh1}
Francesc Castella and Ming-Lun Hsieh.
\newblock Heegner cycles and {$p$}-adic {$L$}-functions.
\newblock {\em Math. Ann.}, 370(1-2):567--628, 2018.

\bibitem[Cor02]{cornut}
Christophe Cornut.
\newblock Mazur's conjecture on higher {H}eegner points.
\newblock {\em Invent. Math.}, 148(3):495--523, 2002.

\bibitem[Gre94]{greenberg-motives}
Ralph Greenberg.
\newblock Iwasawa theory and {$p$}-adic deformations of motives.
\newblock In {\em Motives ({S}eattle, {WA}, 1991)}, volume~55 of {\em Proc.
  Sympos. Pure Math.}, pages 193--223. Amer. Math. Soc., Providence, RI, 1994.

\bibitem[Gre99]{greenberg-cetraro}
Ralph Greenberg.
\newblock Iwasawa theory for elliptic curves.
\newblock In {\em Arithmetic theory of elliptic curves ({C}etraro, 1997)},
  volume 1716 of {\em Lecture Notes in Math.}, pages 51--144. Springer, Berlin,
  1999.

\bibitem[GV00]{greenvats}
Ralph Greenberg and Vinayak Vatsal.
\newblock On the {I}wasawa invariants of elliptic curves.
\newblock {\em Invent. Math.}, 142(1):17--63, 2000.

\bibitem[GZ86]{grosszagier}
Benedict~H. Gross and Don~B. Zagier.
\newblock Heegner points and derivatives of {$L$}-series.
\newblock {\em Invent. Math.}, 84(2):225--320, 1986.

\bibitem[How04a]{howard}
Benjamin Howard.
\newblock The {H}eegner point {K}olyvagin system.
\newblock {\em Compos. Math.}, 140(6):1439--1472, 2004.

\bibitem[How04b]{howard-gl2-type}
Benjamin Howard.
\newblock Iwasawa theory of {H}eegner points on abelian varieties of
  {$\mathrm{GL}_2$} type.
\newblock {\em Duke Mathematical Journal}, 124(1):1--45, 2004.

\bibitem[Jet08]{jetchev}
Dimitar Jetchev.
\newblock Global divisibility of {H}eegner points and {T}amagawa numbers.
\newblock {\em Compos. Math.}, 144(4):811--826, 2008.

\bibitem[JSW17]{jsw}
Dimitar Jetchev, Christopher Skinner, and Xin Wan.
\newblock The {B}irch and {S}winnerton-{D}yer formula for elliptic curves of
  analytic rank one.
\newblock {\em Camb. J. Math.}, 5(3):369--434, 2017.

\bibitem[Kat04]{kato-euler-systems}
Kazuya Kato.
\newblock {$p$}-adic {H}odge theory and values of zeta functions of modular
  forms.
\newblock {\em Ast\'{e}risque}, 295:117--290, 2004.

\bibitem[Kat21]{kataoka}
Takenori Kataoka.
\newblock Equivariant {I}wasawa theory for elliptic curves.
\newblock {\em Math. Z.}, 298(3-4):1653--1725, 2021.

\bibitem[Kim24]{kim2}
Chan-Ho Kim.
\newblock A higher {G}ross-{Z}agier formula and the structure of {S}elmer
  groups.
\newblock {\em Trans. Amer. Math. Soc.}, 377(5):3691--3725, 2024.

\bibitem[Kim25]{kim}
Chan-Ho Kim.
\newblock The structure of {S}elmer groups and the {I}wasawa main conjecture
  for elliptic curves.
\newblock 2025.

\bibitem[Kol91]{kolystructure}
Victor~A. Kolyvagin.
\newblock On the structure of {Selmer} groups.
\newblock {\em Math. Ann.}, 291(2):253--259, 1991.

\bibitem[KS25]{kurihara-sakamoto}
Masato Kurihara and Ryotaro Sakamoto.
\newblock Euler and {K}olyvagin systems of rank $0$ and the structure of
  {S}elmer groups, preprint, 2025.

\bibitem[Kur14]{kur-Iw2012}
Masato Kurihara.
\newblock The structure of {S}elmer groups of elliptic curves and modular
  symbols.
\newblock In {\em Iwasawa theory 2012}, volume~7 of {\em Contrib. Math. Comput.
  Sci.}, pages 317--356. Springer, Heidelberg, 2014.

\bibitem[LZ20]{LZ-quadratic}
David Loeffler and Sarah~Livia Zerbes.
\newblock Iwasawa theory for quadratic hilbert modular forms, 2020.

\bibitem[Maz72]{mazur-towers}
Barry Mazur.
\newblock Rational points of abelian varieties with values in towers of number
  fields.
\newblock {\em Invent. Math.}, 18:183--266, 1972.

\bibitem[McC91]{McCallum}
William~G. McCallum.
\newblock Kolyvagin's work on {S}hafarevich-{T}ate groups.
\newblock In {\em {$L$}-functions and arithmetic ({D}urham, 1989)}, volume 153
  of {\em London Math. Soc. Lecture Note Ser.}, pages 295--316. Cambridge Univ.
  Press, Cambridge, 1991.

\bibitem[MR04]{mazrub}
Barry Mazur and Karl Rubin.
\newblock Kolyvagin systems.
\newblock {\em Mem. Amer. Math. Soc.}, 168(799):viii+96, 2004.

\bibitem[MSD74]{M-SwD}
Barry Mazur and Peter Swinnerton-Dyer.
\newblock Arithmetic of {W}eil curves.
\newblock {\em Invent. Math.}, 25:1--61, 1974.

\bibitem[Nek07]{nekovar}
Jan Nekov\'{a}\v{r}.
\newblock The {E}uler system method for {CM} points on {S}himura curves.
\newblock In {\em {$L$}-functions and {G}alois representations}, volume 320 of
  {\em London Math. Soc. Lecture Note Ser.}, pages 471--547. Cambridge Univ.
  Press, Cambridge, 2007.

\bibitem[PR89]{PR-isogenies}
B.~Perrin-Riou.
\newblock Variation de la fonction {$L$} {$p$}-adique par isog\'{e}nie.
\newblock In {\em Algebraic number theory}, volume~17 of {\em Adv. Stud. Pure
  Math.}, pages 347--358. Academic Press, Boston, MA, 1989.

\bibitem[Roh84]{rohrlich-cyc}
David~E. Rohrlich.
\newblock On {$L$}-functions of elliptic curves and anticyclotomic towers.
\newblock {\em Invent. Math.}, 75(3):383--408, 1984.

\bibitem[Rub91]{rubinmainconj}
Karl Rubin.
\newblock The ``main conjectures'' of {I}wasawa theory for imaginary quadratic
  fields.
\newblock {\em Invent. Math.}, 103(1):25--68, 1991.

\bibitem[Rub98]{rubindurham}
Karl Rubin.
\newblock Euler systems and modular elliptic curves.
\newblock In {\em Galois representations in arithmetic algebraic geometry
  ({D}urham, 1996)}, volume 254 of {\em London Math. Soc. Lecture Note Ser.},
  pages 351--367. Cambridge Univ. Press, Cambridge, 1998.

\bibitem[Rub00]{rubin-ES}
Karl Rubin.
\newblock {\em Euler systems}, volume 147 of {\em Annals of Mathematics
  Studies}.
\newblock Princeton University Press, Princeton, NJ, 2000.
\newblock Hermann Weyl Lectures. The Institute for Advanced Study.

\bibitem[Rub11]{rubin-ES-KS}
Karl Rubin.
\newblock Euler systems and {K}olyvagin systems.
\newblock In {\em Arithmetic of {$L$}-functions}, volume~18 of {\em IAS/Park
  City Math. Ser.}, pages 449--499. Amer. Math. Soc., Providence, RI, 2011.

\bibitem[Sch87]{schneider-isogenies}
Peter Schneider.
\newblock The {$\mu$}-invariant of isogenies.
\newblock {\em J. Indian Math. Soc. (N.S.)}, 52:159--170 (1988), 1987.

\bibitem[Ser68]{serre-ladic}
Jean-Pierre Serre.
\newblock {\em Abelian {$l$}-adic representations and elliptic curves}.
\newblock W. A. Benjamin, Inc., New York-Amsterdam, 1968.
\newblock McGill University lecture notes written with the collaboration of
  Willem Kuyk and John Labute.

\bibitem[{Ser}72]{serre}
Jean-Pierre {Serre}.
\newblock {Propri\'et\'es galoisiennes des points d'ordre fini des courbes
  elliptiques}.
\newblock {\em {Invent. Math.}}, 15:259--331, 1972.

\bibitem[SU14]{skinner-urban}
Christopher Skinner and Eric Urban.
\newblock The {I}wasawa main conjectures for {$\rm GL_2$}.
\newblock {\em Invent. Math.}, 195(1):1--277, 2014.

\bibitem[Swe20]{sweeting}
Naomi Sweeting.
\newblock {K}olyvagin's {C}onjecture and patched {E}uler systems in
  anticyclotomic {I}wasawa theory, 2020.

\bibitem[Vat03]{vatsal}
Vinayak Vatsal.
\newblock Special values of anticyclotomic {$L$}-functions.
\newblock {\em Duke Math. J.}, 116(2):219--261, 2003.

\bibitem[Wan15]{wan-hilbert}
Xin Wan.
\newblock The {I}wasawa main conjecture for {H}ilbert modular forms.
\newblock {\em Forum Math. Sigma}, 3:Paper No. e18, 95, 2015.

\bibitem[Wan21]{wan-heegner}
Xin Wan.
\newblock Heegner {P}oint {K}olyvagin {S}ystem and {I}wasawa {M}ain
  {C}onjecture.
\newblock {\em Acta Math. Sin. (Engl. Ser.)}, 37(1):104--120, 2021.

\bibitem[Wut14]{wuthrich-int}
Christian Wuthrich.
\newblock On the integrality of modular symbols and {K}ato's {E}uler system for
  elliptic curves.
\newblock {\em Doc. Math.}, 19:381--402, 2014.

\bibitem[Zan19]{zanarella}
Murilo Zanarella.
\newblock On {H}oward's main conjecture and the {H}eegner point {K}olyvagin
  system, 2019.

\bibitem[Zha14a]{zhang-CDM}
Wei Zhang.
\newblock The {B}irch--{S}winnerton-{D}yer conjecture and {H}eegner points: a
  survey.
\newblock In {\em Current developments in mathematics 2013}, pages 169--203.
  Int. Press, Somerville, MA, 2014.

\bibitem[Zha14b]{zhang_ind}
Wei Zhang.
\newblock Selmer groups and the indivisibility of {Heegner} points.
\newblock {\em Camb. J. Math.}, 2(2):191--253, 2014.

\end{thebibliography}
\bibliographystyle{alpha}
\end{document}